\documentclass{amsproc}
\usepackage{amssymb}
\usepackage{amsmath}
\usepackage{amsfonts}
\usepackage{graphicx}
\setcounter{MaxMatrixCols}{30}
%TCIDATA{OutputFilter=latex2.dll}
%TCIDATA{Version=5.50.0.2953}
%TCIDATA{CSTFile=amsprtci.cst}
%TCIDATA{Created=Wednesday, July 02, 2008 14:14:34}
%TCIDATA{LastRevised=Wednesday, August 27, 2008 11:08:33}
%TCIDATA{<META NAME="GraphicsSave" CONTENT="32">}
%TCIDATA{<META NAME="SaveForMode" CONTENT="1">}
%TCIDATA{BibliographyScheme=Manual}
%TCIDATA{<META NAME="DocumentShell" CONTENT="Articles\SW\AMS Proceedings Article">}
%TCIDATA{Language=American English}
%BeginMSIPreambleData
\providecommand{\U}[1]{\protect\rule{.1in}{.1in}}
%EndMSIPreambleData
\theoremstyle{plain}

\newtheorem{corollary}{Corollary}

\newtheorem{definition}{Definition}
\newtheorem{example}{Example}

\newtheorem{lemma}{Lemma}
\newtheorem{notation}{Notation}

\newtheorem{proposition}{Proposition}
\newtheorem{remark}{Remark}

\newtheorem{theorem}{Theorem}
\numberwithin{equation}{section}
\begin{document}
\title[Nonuniform Thickness]{Nonuniform Thickness and Weighted Distance}
\author{}
\address{Department of Mathematics, University of Iowa, Iowa City, Iowa 52242, U.S.A.}
\email{odurumer@math.uiowa.edu, Phone (319)335-0774, Fax (319)335-0627}
\urladdr{}
\author{}
\curraddr{}
\email{}
\urladdr{}
\author{Oguz C. Durumeric}
\address{\ }
\urladdr{}
\thanks{}
\thanks{}
\thanks{}
\date{May 11, 2007, Revised, July 20, 2008}
\subjclass[2000]{{57M25, 53A04, 53C21, 53C20; }Secondary 58E30}
\keywords{Nonuniform Thickness, Normal Injectivity Radius, Weighted Distance}
\dedicatory{ }
\begin{abstract}
Nonuniform tubular neighborhoods of curves in $\mathbf{R}^{n}$ are studied by
using weighted distance functions and generalizing the normal exponential map.
Different notions of injectivity radii are introduced to investigate singular
but injective exponential maps. A generalization of the thickness formula is
obtained for nonuniform thickness. All singularities within almost injectivity
radius are classified by the Horizontal Collapsing Property. Examples are
provided to show the distinction between the different types of injectivity
radii, as well as showing that the standard differentiable injectivity radius
fails to be upper semicontinuous on a singular set of weight functions.

\end{abstract}
\maketitle

\section{Introduction}

The uniform thickness of a knotted curve is the radius of the largest tubular
neighborhood around the curve without intersections of the normal discs. This
is also known as the normal injectivity radius $IR$ of the normal exponential
map of the curve $K$ in the Euclidean space $\mathbf{R}^{n}$. The ideal knots
are the embeddings of $S^{1}$ into $\mathbf{R}^{3},$ maximizing $IR$ in a
fixed isotopy (knot) class of fixed length. As noted in [Ka],
\textquotedblleft...the average shape of knotted polymeric chains in thermal
equilibrium is closely related to the ideal representation of the
corresponding knot type\textquotedblright. Uniform thickness has been studied
extensively in several articles including [BS] G. Buck and J. Simon, [CKS] J.
Cantarella, R. B. Kusner, and J. M. Sullivan, [Di] Y. Diao, [D1, D2, D3] O. C.
Durumeric, [GL]\ O. Gonzales and R. de La Llave, [GM]\ O. Gonzales and H.
Maddocks, [Ka]\ V. Katrich, J. Bendar, D. Michoud, R.G. Scharein, J. Dubochet
and A. Stasiak, [LSDR]\ A. Litherland, J Simon, O. Durumeric and E. Rawdon,
and [N]\ A. Nabutovsky. The following thickness formula was obtained earlier
in [LSDR] in the smooth case, and in [CKS] for $C^{1,1}$ curves in
$\mathbf{R}^{3}.$

\textbf{UNIFORM THICKNESS\ FORMULA }\emph{[D1, Theorem 1]}

\emph{For every complete smooth Riemannian manifold }$M^{n}$\emph{\ and every
compact }$C^{1,1}$\emph{\ submanifold }$K^{k}$\emph{\ }$(\partial
K=\emptyset)$\emph{\ of }$M,$\emph{\ }
\[
IR(K,M)=\min\{FocRad(K),\frac{1}{2}DCSD(K)\}.
\]

In this article, we study a ball-model to describe nonuniform thickness, which
allows a nonuniform distribution of the strength of the forces along a curve
in the Euclidean space. This model can help us to understand the local shape
of large polymers which do not have a uniform structure. Most of the results
of this article are true for surfaces or submanifolds of $\mathbf{R}^{n}$, but
the results about the focal points are qualitative and the proofs are
detailed. In order to have explicit expressions for the behavior and location
of the singular (focal) points, and to be able to obtain the rigidity in
Theorem 2, we concentrated on the curves in the Euclidean space. Even though
our motivation comes from examples in $\mathbf{R}^{3},$ all results are stated
and proved in $\mathbf{R}^{n}$ since our proofs are independent of the
dimension of the ambient space, and they do not simplify for $n=2,$ $3.$ In
our model, a curve $K$\ is a union of finitely many disjoint closed curves and
it is furnished with a weight function $\mu:K\rightarrow(0,\infty).$ The
nonuniform $R-$tubular neighborhood $O(K,\mu R)$ is the union of metric balls
of radius $R\mu(q)$ centered at each $q\in K.$ As $R$ increases, the size of
these balls increase at fixed rate at each point, but the rate differs from
point to point of $K.$ This model is different from the disc-model which
allows the growth of the normal discs at different rates. One of the reasons
that we chose to investigate the ball-model is that the physical forces, such
as electrical and magnetic forces have effects in every direction rather than
being restricted to chosen planes. Furthermore, the ball-model can be
investigated more thoroughly, since there is a natural potential function,
$\min_{q\in K}\frac{\left\Vert p-q\right\Vert }{\mu(q)}.$%

%TCIMACRO{\FRAME{ftbpFU}{2.783in}{1.6717in}{0pt}{\Qcb{\QTR{small}{A non-uniform
%}$\mu r$\QTR{small}{-neighborhood is shown as a union of balls of radii
%}$r\mu(s)$\QTR{small}{\ centered at }$\gamma(s)$\QTR{small}{\ on the core
%}$\gamma$\QTR{small}{.}}}{}{fig1.eps}{\special{ language "Scientific Word";
%type "GRAPHIC";  maintain-aspect-ratio TRUE;  display "USEDEF";
%valid_file "F";  width 2.783in;  height 1.6717in;  depth 0pt;
%original-width 4.7193in;  original-height 2.8245in;  cropleft "0";
%croptop "1";  cropright "1";  cropbottom "0";
%filename '';file-properties "XNPEU";}}}%
%BeginExpansion
\begin{figure}
[ptb]
\begin{center}
\includegraphics[
height=1.6717in,
width=2.783in
]
{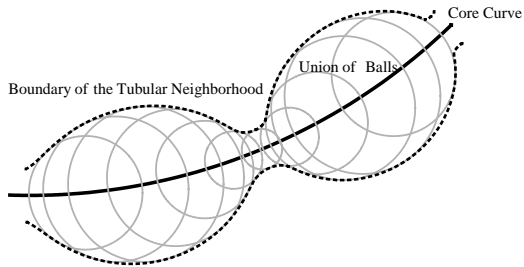}
\caption{{\protect\small A non-uniform }$\mu r${\protect\small -neighborhood
is shown as a union of balls of radii }$r\mu(s)${\protect\small \ centered at
}$\gamma(s)${\protect\small \ on the core }$\gamma${\protect\small .}}%
\end{center}
\end{figure}
%EndExpansion

We study the problem by using distance function methods from Riemannian
geometry. Throughout the article, we use the squared $\mu-$distance functions
$\left\Vert p-x\right\Vert ^{2}\mu(x)^{-2}$. We define the generalized
exponential function $\exp^{\mu}(q,Rv)=p$ to insure that $q$ is a critical
point of the restriction of $\left\Vert p-x\right\Vert ^{2}\mu(x)^{-2}$ to
$K.$ The image $\exp^{\mu}(NK_{q})$ is going to be a sphere normal to $K$ at
$q$ (with radius depending on $\mu$ where $\mu^{\prime}\neq0$) or a plane
(only where $\mu^{\prime}=0$) normal to $K$ at $q,$ where $NK_{q}$ denotes the
set of vectors normal to $K$ at $q.$

Even though there are many parallel results to the standard case $(\mu
\equiv1),$ we also observed many contrasting cases which never occur in the
standard case. In the standard case, the focal points occur at points
$p=\exp(q,Rv)$ where the first and the second derivatives of the restriction
of $E_{p}(x)=\left\Vert p-x\right\Vert ^{2}$ to $K$ are zero at $q$. The
second derivatives become negative immediately after the focal points as $R$
increases. Therefore, a line normal to $K$ is never minimizing the distance to
$K$ past a focal point, and the exponential map can not be injective past a
focal point. This is not always the case for nonconstant $\mu.$ First of all,
$\exp^{\mu}(q,Rv)$ is not always a line for a fixed point $q$ and a normal
vector $v$. Since there is a quadratic term $\frac{R^{2}}{2}(\mu^{2}%
)^{\prime\prime}$ in the second derivative of the restriction of $\left\Vert
p-x\right\Vert ^{2}\mu(x)^{-2}$ to $K,$ points with zero second derivatives
can be isolated away from the set of points with negative second derivatives.
As a result, there are some cases with an exponential map which is a
homeomorphism within the injectivity radius but not a diffeomorphism. In other
words, the injectivity radius can be larger than the $\mu-$distance to first
focal points. As a consequence, we need to modify the notion of injectivity
radius.%
%TCIMACRO{\FRAME{ftbpFU}{2.93in}{1.7634in}{0pt}{\Qcb{{} \ \ }}{}{fig2.eps}%
%{\special{ language "Scientific Word";  type "GRAPHIC";
%maintain-aspect-ratio TRUE;  display "USEDEF";  valid_file "F";
%width 2.93in;  height 1.7634in;  depth 0pt;  original-width 4.7625in;
%original-height 2.8522in;  cropleft "0";  croptop "1";  cropright "1";
%cropbottom "0";  filename '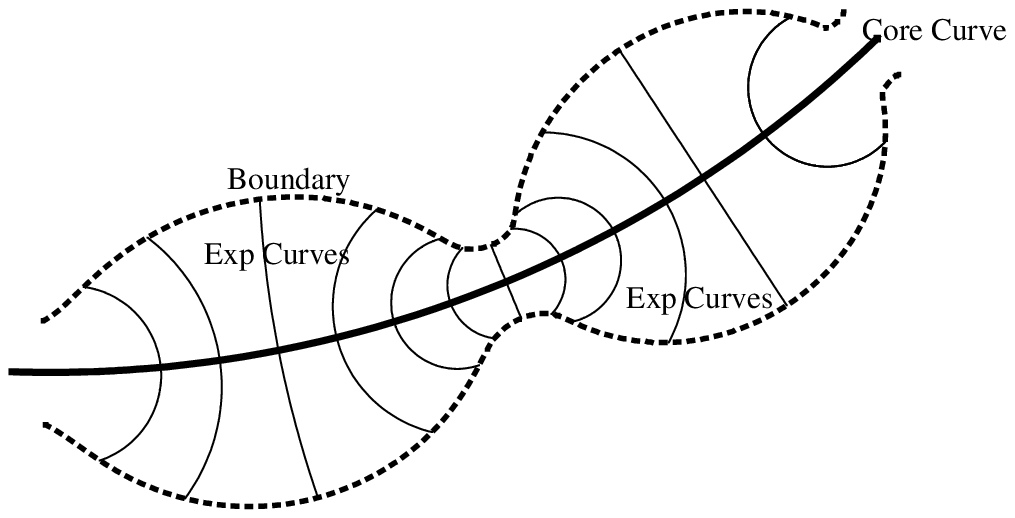';file-properties "XNPEU";}}}%
%BeginExpansion
\begin{figure}
[ptb]
\begin{center}
\includegraphics[
height=1.7634in,
width=2.93in
]%
{Fig2.eps}%
\caption{{} \ \ }%
\end{center}
\end{figure}
%EndExpansion%
%TCIMACRO{\FRAME{ftbpFU}{2.9291in}{1.7634in}{0pt}{\Qcb{\QTR{small}{Some curves
%of type }$\exp^{\mu}(\gamma(s_{i}),tN(s_{i}))$\QTR{small}{\ for }$-r<t<r$%
%\QTR{small}{\ and for some choices }$s_{i}$\QTR{small}{\ are shown in the
%balls of radius }$r\mu(s_{i})$\QTR{small}{\ and center }$\gamma(s_{i}%
%)$\QTR{small}{, where }$N$\QTR{small}{\ is the normal of }$\gamma
%\subset\QTR{bf}{R}^{2}.$\QTR{small}{\ Note the bending direction and the
%curvature of the exponential curves in the balls of radius }$\mu r.$}}%
%{}{fig3.eps}{\special{ language "Scientific Word";  type "GRAPHIC";
%maintain-aspect-ratio TRUE;  display "USEDEF";  valid_file "F";
%width 2.9291in;  height 1.7634in;  depth 0pt;  original-width 4.875in;
%original-height 2.9222in;  cropleft "0";  croptop "1";  cropright "1";
%cropbottom "0";  filename '';file-properties "XNPEU";}}}%
%BeginExpansion
\begin{figure}
[ptb]
\begin{center}
\includegraphics[
height=1.7634in,
width=2.9291in
]%
{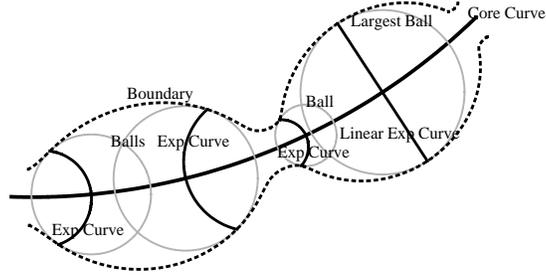}%
\caption{{\protect\small Some curves of type }$\exp^{\mu}(\gamma
(s_{i}),tN(s_{i}))${\protect\small \ for }$-r<t<r${\protect\small \ and for
some choices }$s_{i}${\protect\small \ are shown in the balls of radius }%
$r\mu(s_{i})${\protect\small \ and center }$\gamma(s_{i})${\protect\small ,
where }$N${\protect\small \ is the normal of }$\gamma\subset\mathbf{R}^{2}%
.${\protect\small \ Note the bending direction and the curvature of the
exponential curves in the balls of radius }$\mu r.$}%
\end{center}
\end{figure}
%EndExpansion

\begin{definition}
Let $K$ be a union of finitely many disjoint smoothly closed curves
in\ $\mathbf{R}^{n},$ $\mu:K\rightarrow(0,\infty)$ be a $C^{2}$ function, and
$\mathit{grad}\mu(q)$ be the gradient of $\mu$. Let $NK$ be the normal bundle
of $K$ in \ $\mathbf{R}^{n}$.
\begin{align*}
\text{Define }\exp^{\mu}  &  :W\rightarrow\mathbf{R}^{n}\text{ by}\\
\exp^{\mu}(q,w)  &  =q-\mu(q)\left\Vert w\right\Vert ^{2}\mathit{grad}%
\mu(q)+\mu(q)\sqrt{1-\left\Vert \mathit{grad}\mu(q)\right\Vert ^{2}\left\Vert
w\right\Vert ^{2}}w
\end{align*}%
\[
\text{where }W=\{w\in NK_{q}:q\in K\text{ and }\left\Vert w\right\Vert
\leq\frac{1}{\left\Vert \mathit{grad}\mu(q)\right\Vert }\text{ when
}\left\Vert \mathit{grad}\mu(q)\right\Vert \neq0\}.
\]

\end{definition}

Let $\gamma$ be a parametrization of $K$ locally with respect to arclength
$s.$ We use a standard abuse of notation $\mu(s)=\mu(\gamma(s)).$ We can take
the (intrinsic) gradient $\mathit{grad}\mu(\gamma(s))=\mu^{\prime}%
(s)\gamma^{\prime}(s)$, since $\mu$ is defined only on $K$ which is one
dimensional, see Definition 6 and Remark 1 for justifications. Hence, we can
rewrite $\exp^{\mu}$ as follows.
\[
\exp^{\mu}(\gamma(s),w)=\gamma(s)-\mu(s)\mu^{\prime}(s)\gamma^{\prime
}(s)\left\Vert w\right\Vert ^{2}+\mu(s)\sqrt{1-\left(  \mu^{\prime
}(s)\left\Vert w\right\Vert \right)  ^{2}}w
\]

\begin{definition}
Let $D(r)=\{(q,w)\in NK:q\in K$ and $\left\Vert w\right\Vert <r\}$.

i. The differentiable injectivity radius $DIR(K,\mu)$ is
\[
\sup\{r:\exp^{\mu}\text{ restricted to }D(r)\text{ is a diffeomorphism onto
its image}\}
\]

ii. The topological injectivity radius $TIR(K,\mu)$ is
\[
\sup\{r:\exp^{\mu}\text{ restricted to }D(r)\text{ is a homeomorphism onto its
image}\}
\]

iiii. The almost injectivity radius $AIR(K,\mu)$ is

$\sup\left\{
\begin{array}
[c]{c}%
r:\exp^{\mu}:U(r)\rightarrow U_{0}(r)\text{ is a homeomorphism where
}U(r)\text{ is an open }\\
\text{and dense subset of }D(r),\text{ and }U_{0}(r)\text{ is an open subset
of }\mathbf{R}^{n}.
\end{array}
\right\}  $
\end{definition}

%

%TCIMACRO{\FRAME{ftbpFU}{3.6348in}{1.3733in}{0pt}{\Qcb{\QTR{small}{A
%3-dimensional version of Figure 2. This shows some spherical caps of type
%}$\exp^{\mu}(NK_{q}\cap D(r))$ \QTR{small}{normal to }$K$\QTR{small}{, in the
%}$\mu r$\QTR{small}{-neighborhood, for some choices of }$q$\QTR{small}{\ on
%}$K$\QTR{small}{. See Proposition 1.}}}{}{fig4a.eps}%
%{\special{ language "Scientific Word";  type "GRAPHIC";
%maintain-aspect-ratio TRUE;  display "USEDEF";  valid_file "F";
%width 3.6348in;  height 1.3733in;  depth 0pt;  original-width 17.0731in;
%original-height 6.4186in;  cropleft "0";  croptop "1";  cropright "1";
%cropbottom "0";  filename '';file-properties "XNPEU";}} }%
%BeginExpansion
\begin{figure}
[ptb]
\begin{center}
\includegraphics[
height=1.3733in,
width=3.6348in
]%
{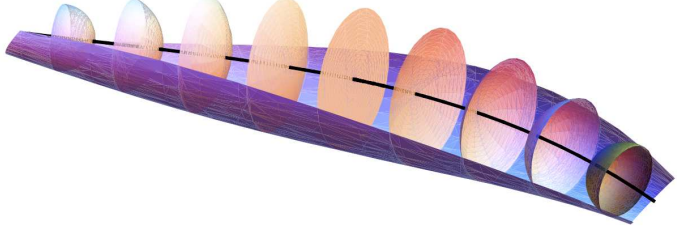}%
\caption{{\protect\small A 3-dimensional version of Figure 2. This shows some
spherical caps of type }$\exp^{\mu}(NK_{q}\cap D(r))$ {\protect\small normal
to }$K${\protect\small , in the }$\mu r${\protect\small -neighborhood, for
some choices of }$q${\protect\small \ on }$K${\protect\small . See Proposition
1.}}%
\end{center}
\end{figure}
%EndExpansion

Observe that $r<TIR(K,\mu)$ is equivalent to that for all $p\in O(K,\mu r)$
there exists a unique minimum of $\left\Vert p-x\right\Vert ^{2}\mu
(x)^{-2}:K\rightarrow\mathbf{R}$, i. e. there is a unique $\mu-$closest point
of $K$ to $p$. There are examples in $\mathbf{R}^{n}$ showing that
$DIR(K,\mu)<TIR(K,\mu)$ and $TIR(K,\mu)<AIR(K,\mu)$ in every dimension
$n\geq2$, see section 5. In the $\mu=1$ case, the injectivity radius
functional is upper semicontinuous in the $C^{1}$ topology. As a consequence,
thickest/tight/ideal knots and links exist, see [CKS], [D1], [D2], [GL], and
[N]. There are examples in $\mathbf{R}^{n}$ showing that $DIR(K,\mu)$ and
$TIR(K,\mu)$ are not upper semicontinuous, see Section 5. Hence,
thickest/tight/ideal knots and links in $DIR$ (or $TIR)$ sense may not exist.

The generalizations of the notion of double critical self distance, two
separate the notions of focal distance, $FocRad^{0}(K,\mu)$ and $FocRad^{-}%
(K,\mu),$ the upper and lower radii for the nonuniform $(K,\mu)$ will be given
immediately after Theorem 1. $FocRad^{-}$ and $FocRad^{0}$ are not necessarily
equal in general, due to certain "even" multiplicity zeroes of $\mu
^{\prime\prime}+\frac{1}{4}\kappa^{2}\mu=0.$ This difference allows
interesting examples mentioned above, which do not occur in the $\mu=1$
case.\pagebreak

\begin{theorem}
Let $K$ be a union of finitely many disjoint simple smoothly closed (possibly
linked or knotted) curves in $\mathbf{R}^{n}$. Then,

i. $LR(K,\mu)=DIR(K,\mu)\leq TIR(K,\mu)\leq AIR(K,\mu)=UR(K,\mu).$

ii. For a fixed choice of embedding $K\subset\mathbf{R}^{n},$ $LR(K,\mu
)=UR(K,\mu)$ holds for $\mu$ in an open and dense subset of $C^{3}%
(K,(0,\infty))$ in the $C^{3}-$ topology.

iii. Let $\left\{  (K_{i},\mu_{i}):i=1,2,...\right\}  $ be a sequence where
each $K_{i}$ is a disjoint union of finitely many simple smoothly closed
curves in $\mathbf{R}^{n}$ with $C^{2}$ weight functions, and similarly for
$(K_{0},\mu_{0})$. If $(K_{i},\mu_{i})\rightarrow(K_{0},\mu_{0})$ in $C^{2}$
topology, then
\[
\underset{i\rightarrow\infty}{\lim\sup}AIR(K_{i},\mu_{i})\leq AIR(K_{0}%
,\mu_{0}).
\]

\end{theorem}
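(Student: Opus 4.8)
The plan is to establish upper semicontinuity of $AIR$ directly from the characterization $AIR = UR$ in part (i), together with explicit control of the singular set of $\exp^{\mu}$ in terms of the zeroes of $\mu'' + \tfrac14\kappa^2\mu$. Suppose for contradiction that $\limsup_{i\to\infty} AIR(K_i,\mu_i) = a > AIR(K_0,\mu_0) = a_0$; after passing to a subsequence we may assume $AIR(K_i,\mu_i) \to a$ and pick $r$ with $a_0 < r < a$, so that for all large $i$ the map $\exp^{\mu_i}$ is a homeomorphism on an open dense $U_i(r) \subset D_i(r)$ onto an open subset of $\mathbf{R}^n$. The goal is to extract from these near-injective maps a limiting near-injective map for $(K_0,\mu_0)$ on $D_0(r)$, contradicting $r > a_0$.

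First I would set up a common parametrization: write each $K_i$ as $\gamma_i:\bigsqcup S^1 \to \mathbf{R}^n$ by arclength (up to rescaling lengths, which converge), with $\gamma_i \to \gamma_0$ in $C^2$ and $\mu_i \to \mu_0$ in $C^2$. The formula $\exp^{\mu}(\gamma(s),w) = \gamma(s) - \mu\mu'\gamma'\|w\|^2 + \mu\sqrt{1-(\mu'\|w\|)^2}\,w$ shows $\exp^{\mu_i} \to \exp^{\mu_0}$ uniformly in $C^1$ on $D(r')$ for any $r' < \liminf 1/\|\mathrm{grad}\,\mu_i\|$ (the constraint set $W$ is stable under $C^1$ convergence since $\mu_i' \to \mu_0'$ uniformly). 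Next, the failure of injectivity of $\exp^{\mu_0}$ on a nonempty-interior set would have to come from one of two sources: (a) a pair of distinct points $(q,w) \neq (q',w')$ in $D_0(r)$ mapping to the same $p$ with both lying in the "good" (locally injective) part, or (b) a positive-measure collapsing locus. In case (a), take points $(q_i,w_i) \to (q,w)$ and $(q_i',w_i') \to (q',w')$ with $(q_i,w_i),(q_i',w_i') \in U_i(r)$ (possible by density) and push the contradiction to finite $i$: $\exp^{\mu_i}(q_i,w_i)$ and $\exp^{\mu_i}(q_i',w_i')$ are close but the points are uniformly separated, which by a compactness/degree argument forces a genuine failure of injectivity of $\exp^{\mu_i}$ on $D_i(r)$ — but one must be careful that $\exp^{\mu_i}$ need only be injective on the dense set $U_i(r)$, so I would instead argue via the $UR = AIR$ equivalence and the potential-function description: $r < UR(K_i,\mu_i)$ means every $p$ in the relevant neighborhood has a well-behaved $\mu_i$-nearest-point structure, and this passes to the limit because the potential $\min_{x} \|p-x\|^2\mu_i(x)^{-2}$ depends continuously (in $C^0$, uniformly) on $(K_i,\mu_i)$.

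The cleanest route, which I would actually pursue, is therefore: show $r < AIR(K_i,\mu_i)$ for large $i$ implies a uniform lower bound on $AIR(K_0,\mu_0)$ by transferring the open-dense injectivity domain. Concretely, $AIR$ should be shown (using part (i) and the Horizontal Collapsing classification referenced in the abstract) to equal $UR(K,\mu) = \min\{\text{something like } DCSD\text{-type quantity}, FocRad^0\}$, and each of these two quantities is lower semicontinuous under $C^2$ convergence: the generalized double-critical self-distance is an infimum of a continuous function over a compact set (so lower semicontinuous in the data), and $FocRad^0(K,\mu)$ is controlled by the first zero of $\mu'' + \tfrac14\kappa^2\mu = 0$ along $K$, a quantity that varies lower-semicontinuously because $\kappa_i \to \kappa_0$ uniformly (needing $C^2$ convergence of the curves) and $\mu_i'' \to \mu_0''$ uniformly, so a zero of the limit expression at radius $\rho$ is approximated by near-zeroes, hence by actual sign-change-or-touch loci, at radii $\to \rho$. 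Combining, $\liminf AIR(K_i,\mu_i) \geq \min\{\liminf(\cdots), \liminf FocRad^0_i\} \geq AIR(K_0,\mu_0)$ is false-direction; rather $\limsup \leq$ follows because if $AIR(K_0,\mu_0) = a_0 < r$ then one of the two defining quantities for $(K_0,\mu_0)$ is $< r$, and by the convergence this forces the corresponding quantity for $(K_i,\mu_i)$ to be $< r' $ for any $r' > a_0$ and large $i$, whence $AIR(K_i,\mu_i) < r'$, contradicting $AIR(K_i,\mu_i) \to a > a_0$.

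The main obstacle I anticipate is the $FocRad^0$ versus $FocRad^-$ distinction flagged in the text: an "even multiplicity" zero of $\mu'' + \tfrac14\kappa^2\mu$ can be destroyed by an arbitrarily small $C^2$ perturbation (a tangential zero that lifts off), which is exactly why $DIR$ and $TIR$ fail to be upper semicontinuous. For $AIR$, the claim is that it is the \emph{even}-multiplicity-tolerant quantity $FocRad^0$ (equivalently $UR$) that governs it, and that this one \emph{is} stable from above: even if the perturbed curve has no focal point near radius $\rho$, the almost-injectivity still breaks at radius $\approx \rho$ because of the horizontal collapsing geometry near the would-be focal locus. Making this rigorous requires the Horizontal Collapsing Property classification of singularities within $AIR$ — I would invoke it to say that at an even-order zero the map $\exp^{\mu_0}$ collapses a horizontal direction onto a lower-dimensional set while remaining injective, and under perturbation this collapse becomes a genuine fold/overlap that destroys injectivity on \emph{every} dense subset just past that radius. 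Verifying that $\limsup$ behavior carefully, rather than the easier $\liminf$ direction, is where the real work lies, and it is essential to use only $C^2$ (not $C^3$) convergence, so I must avoid any argument that needs control of $\mu'''$ or $\kappa'$.
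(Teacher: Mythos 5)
Your reduction of the problem to the identity $AIR=UR=\min\bigl(\tfrac{1}{2}DCSD,FocRad^{-}\bigr)$ and to upper semicontinuity of the two constituent quantities is the right skeleton, and your treatment of the focal radius is essentially the paper's: $FocRad^{-}(K_{0},\mu_{0})<R$ is witnessed by a point where either $\Delta(\kappa_{0},\mu_{0})>0$ and $\Lambda(\kappa_{0},\mu_{0})^{-1/2}<R$, or $\left\vert \mu_{0}^{\prime}\right\vert ^{-1}<R$; these are open conditions on second-order data and persist under $C^{2}$ convergence. But note you have the two focal radii switched: it is $FocRad^{-}$ (which counts only points with $\mu^{\prime\prime}+\tfrac{1}{4}\kappa^{2}\mu>0$ strictly, i.e.\ ignores the even-multiplicity zeroes) that enters $UR$ and is upper semicontinuous; $FocRad^{0}$ enters $LR=DIR$ and is exactly the one that fails to be upper semicontinuous. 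Consequently your final paragraph, which tries to show that an even-order zero still destroys almost-injectivity under perturbation, is effort spent on a non-issue: such a zero does not lower $AIR$ at all, and its fate under perturbation is irrelevant to the $\limsup$ inequality.

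The genuine gap is the $DCSD$ case. You assert that the double-critical self-distance is semicontinuous because it is "an infimum of a continuous function over a compact set," but the set over which the infimum is taken is the set of critical points of $\Sigma$ off the diagonal, and this set does not vary continuously with $(K,\mu)$: a degenerate critical pair of $\Sigma_{0}$ realizing $\tfrac{1}{2}DCSD(K_{0},\mu_{0})$ can simply disappear under an arbitrarily small $C^{2}$ perturbation, so nothing forces $\tfrac{1}{2}DCSD(K_{i},\mu_{i})$ to stay below $R_{0}$. The paper never claims semicontinuity of $DCSD$ itself. Instead, from a minimal double critical pair $(q_{0},q_{1})$ at level $R_{1}<R_{0}<FocRad^{-}$ it uses Lemma 4 to step slightly past $p$ along $\beta_{1}(s)=\exp^{\mu_{0}}(q_{1},sv_{1})$ and finds a point $\beta_{1}(s_{1})$ with $s_{1}>R_{1}$ whose $\mu_{0}$-distance to $K_{0}$ is strictly less than $R_{1}$, hence a second foot point $q_{2}$ with $\beta_{1}(s_{1})=\exp^{\mu_{0}}(q_{2},R_{2}v_{2})$, $R_{2}<R_{1}$. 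This yields two disjoint open sets $V_{1}^{T},V_{2}^{T}\subset D(R_{0})$ on which $\exp^{\mu_{0}}$ is a uniformly bi-Lipschitz local diffeomorphism and whose images overlap in an open set; that overlap is an open condition preserved under the $C^{1}$ convergence $\exp^{\mu_{i}}\rightarrow\exp^{\mu_{0}}$, and two disjoint open sets with overlapping open images defeat injectivity on every open dense subset, forcing $AIR(K_{i},\mu_{i})\leq R_{0}$. Your sketch gestures at a "compactness/degree argument" here but, as you yourself note, does not resolve how to pass the non-injectivity to the dense sets $U_{i}(r)$; the overlapping-open-images construction is the missing idea, and without it the $DCSD$ half of the proof does not close.
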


\begin{definition}
A pair of points $(q_{1},q_{2})\in K\times K$ is called a double critical pair
for $(K,\mu)$, if $q_{1}\neq q_{2}$ and $\mathit{grad}\Sigma(q_{1},q_{2})=0,$
where $\Sigma:K\times K\rightarrow\mathbf{R}$ is defined by $\Sigma
(q_{1},q_{2})=\left\Vert q_{1}-q_{2}\right\Vert ^{2}(\mu(q_{1})+\mu
(q_{2}))^{-2}.$

By taking parametrizations $\gamma_{1},\gamma_{2}$ of $K$ locally with respect
to arclength $s,$ and $\sigma(s,t)=\left\Vert \gamma_{1}(s)-\gamma
_{2}(t)\right\Vert ^{2}(\mu(\gamma_{1}(s))+\mu(\gamma_{2}(t))^{-2}:$ (See
Definition 6.)%
\[
\mathit{grad}\Sigma(q_{1},q_{2})=0\Leftrightarrow\nabla\sigma(s_{1}%
,s_{2})=0,\text{ where }q_{i}=\gamma_{i}(s_{i})\text{ for }i=1,2.
\]
Double critical self $\mu-$distance of $(K,\mu)$ is defined as
\[
\frac{1}{2}DCSD(K,\mu)=\min\left\{  \frac{\left\Vert q_{1}-q_{2}\right\Vert
}{\mu(q_{1})+\mu(q_{2})}:(q_{1},q_{2})\text{ is a double critical pair for
}(K,\mu)\right\}  .
\]

\end{definition}

\begin{definition}
If $K$ is connected, by using a unit speed parametrization $\gamma
(s):\mathbf{R\rightarrow}K,$ such that $\gamma(s+L)=\gamma(s)$ where $L$ is
the length of $K$, $\mu(s)=\mu(\gamma(s)),$ and the curvature $\kappa(s)$ of
$\gamma(s),$ one defines $FocRad^{0}(K,\mu)$ to be%
\[
\left(  \max\left[
\begin{array}
[c]{c}%
\max\left\{
\begin{array}
[c]{c}%
\frac{1}{2}(\mu^{2})^{\prime\prime}+\frac{1}{2}\kappa^{2}\mu^{2}+\kappa
\mu\sqrt{\mu\left(  \mu^{\prime\prime}+\frac{1}{4}\kappa^{2}\mu\right)  }:\\
\text{where }\mu^{\prime\prime}+\frac{1}{4}\kappa^{2}\mu\text{ }\mathbf{\geq0}%
\end{array}
\right\}  ,\\
\max\left\{  \left\vert \mu^{\prime}\right\vert ^{2}:s\in Domain(\gamma
)\right\}
\end{array}
\right]  \right)  ^{-\frac{1}{2}}.
\]
$FocRad^{-}(K,\mu)$ is defined similarly by using the following expression
instead.
\[
\left(  \max\left[
\begin{array}
[c]{c}%
\sup\left\{
\begin{array}
[c]{c}%
\frac{1}{2}(\mu^{2})^{\prime\prime}+\frac{1}{2}\kappa^{2}\mu^{2}+\kappa
\mu\sqrt{\mu\left(  \mu^{\prime\prime}+\frac{1}{4}\kappa^{2}\mu\right)  }:\\
\text{where }\mu^{\prime\prime}+\frac{1}{4}\kappa^{2}\mu\text{ }\mathbf{>0}%
\end{array}
\right\}  ,\\
\max\left\{  \left\vert \mu^{\prime}\right\vert ^{2}:s\in Domain(\gamma
)\right\}
\end{array}
\right]  \right)  ^{-\frac{1}{2}}%
\]
If $K$ has several components $K_{i},$ $i=1,2,...i_{0}$, then $FocRad^{0}%
(K,\mu)$ is the minimum of $FocRad^{0}(K_{i},\mu)$ for $i=1,2,...i_{0},$ and
$FocRad^{-}(K,\mu)$ is the minimum of $FocRad^{-}(K_{i},\mu)$ for
$i=1,2,...i_{0}.$ The lower and upper radii \ are defined as follows:
\begin{align*}
LR(K,\mu)  &  =\min\left(  \frac{1}{2}DCSD(K,\mu),FocRad^{0}(K,\mu)\right) \\
UR(K,\mu)  &  =\min\left(  \frac{1}{2}DCSD(K,\mu),FocRad^{-}(K,\mu)\right)  .
\end{align*}

\end{definition}

If $\mu=1$, then $FocRad^{0}(K,1)=FocRad^{-}(K,1)=\left(  \max\kappa\right)
^{-1}$. Lemma 2 provides us the characterization of $DCSD$ in terms of the
angles that the line segment $\overline{q_{1}q_{2}}$ makes with $K$ at $q_{1}$
and $q_{2},$ generalizing the usual definition of $DCSD$ of the standard case
where $\mu=1$ and line segment $\overline{q_{1}q_{2}}$ is perpendicular to $K$
at both $q_{1}$ and $q_{2}.$%

%TCIMACRO{\FRAME{ftbpFU}{2.7129in}{3.0675in}{0pt}{\Qcb{\QTR{small}{The normal
%exponential map from a portion of a unit circle with }$\mu=\cos\frac{s}{2}%
%$\QTR{small}{\ in }$\QTR{bf}{R}^{3}$\QTR{small}{, showings some spherical caps
%of type }$\exp^{\mu}(NK_{q}\cap D(r))$ \QTR{small}{normal to }$K.$%
%\QTR{small}{\ See Example 1B and Theorem 2. }}}{}{fig5a.eps}%
%{\special{ language "Scientific Word";  type "GRAPHIC";
%maintain-aspect-ratio TRUE;  display "USEDEF";  valid_file "F";
%width 2.7129in;  height 3.0675in;  depth 0pt;  original-width 8.0627in;
%original-height 9.1246in;  cropleft "0";  croptop "1";  cropright "1";
%cropbottom "0";  filename '';file-properties "XNPEU";}} }%
%BeginExpansion
\begin{figure}
[ptb]
\begin{center}
\includegraphics[
height=3.0675in,
width=2.7129in
]%
{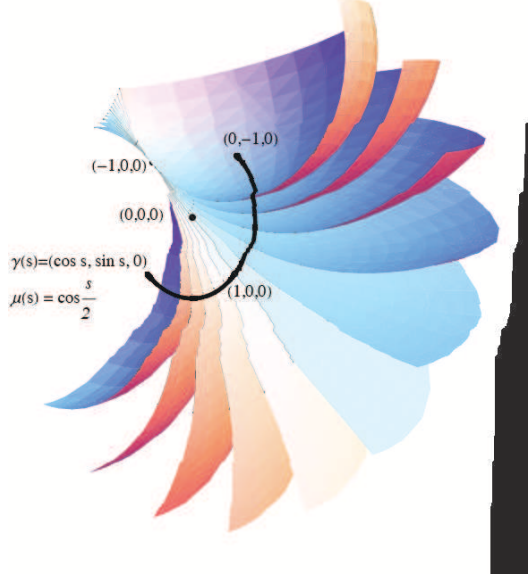}%
\caption{{\protect\small The normal exponential map from a portion of a unit
circle with }$\mu=\cos\frac{s}{2}${\protect\small \ in }$\mathbf{R}^{3}%
${\protect\small , showings some spherical caps of type }$\exp^{\mu}%
(NK_{q}\cap D(r))$ {\protect\small normal to }$K.${\protect\small \ See
Example 1B and Theorem 2. }}%
\end{center}
\end{figure}
%EndExpansion

We studied the properties of the singular $\exp^{\mu}$ maps within $AIR.$
Theorem 2 classifies all collapsing type singularities. If the injectivity of
$\exp^{\mu}$ fails within $UR(K,\mu)$ radius, that is if two distinct points
of $D(UR(K,\mu))$ are identified by $\exp^{\mu}$, then a curve of constant
height in $D(UR(K,\mu))$ joining the identified points collapses to the same
point under $\exp^{\mu}$. Figure 5 shows the unique way the injectivity of
$\exp^{\mu}$ fails within $UR(K,\mu),$ up to rescaling and isometries of
$\mathbf{R}^{3}$.

\begin{theorem}
\textbf{Horizontal Collapsing Property}

Assume that $\exp^{\mu}(q_{1},r_{1}v_{1})=\exp^{\mu}(q_{2},r_{2}v_{2})=p_{0}$
for $r_{1}$, $r_{2}<UR(K,\mu)$, $v_{i}\in UNK_{q_{i}}$ with $(q_{1},r_{1}%
v_{1})\neq(q_{2},r_{2}v_{2})$. Then,

(i) $q_{1}$ and $q_{2}$ belong to the same component of $K,$ which is denoted
by $K_{1}.$

(ii) Let $\gamma(s):\mathbf{R\rightarrow}K_{1}\subset\mathbf{R}^{n}$ be a unit
speed parametrization of $K_{1}$ such that $\gamma(s+L)=\gamma(s)$ where $L$
is the length of $K_{1},$ $N_{\gamma}(s)$ denotes the principal normal of
$\gamma,$ and $q_{i}=\gamma(s_{i})$ for $i=1,2$ with $0\leq s_{1}<s_{2}<L.$
Then, $r_{1}=r_{2},$ $v_{i}=N_{\gamma}(s_{i})$ for $i=1,2,$ and $\exp^{\mu
}(\gamma(s),r_{1}N_{\gamma}(s))=p_{0}$, $\forall s\in I$ where $I=[s_{1}%
,s_{2}]$ or $[s_{2}-L,s_{1}].$

(iii) On the interval $I$, $\kappa$ is a positive constant and all of the
following hold:
\begin{align*}
\left(  \mu^{\prime}\right)  ^{2}-\mu\mu^{\prime\prime}  &  =\frac{1}%
{r_{1}^{2}}\text{ and }\gamma^{\prime\prime\prime}+\kappa^{2}\gamma^{\prime
}=0,\\
\mu &  =\frac{2}{\kappa r_{1}}\cos\left(  \frac{\kappa s}{2}+a\right)  \text{
for some }a\in\mathbf{R.}%
\end{align*}
Therefore, Horizontal Collapsing occurs in a unique way only above arcs of
circles of curvature $\kappa$ and with a specific $\mu.$ $\gamma(I)\neq K_{1}%
$, even if $I$ is chosen to be a maximal interval satisfying above.
\end{theorem}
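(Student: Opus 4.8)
The plan is to write everything in terms of the unit-speed parametrization $\gamma$ of a component of $K$ and translate the hypothesis $\exp^{\mu}(q_1,r_1v_1)=\exp^{\mu}(q_2,r_2v_2)=p_0$ into an analytic statement about the function $f_p(s)=\|p_0-\gamma(s)\|^2\mu(s)^{-2}$. By the construction of $\exp^{\mu}$ (Definition 1 and Proposition 1), a point $p=\exp^{\mu}(\gamma(s),w)$ is exactly a point for which $\gamma(s)$ is a critical point of $f_{p}$ restricted to $K$, with $\|p-\gamma(s)\|^2\mu(s)^{-2}$ equal to a quantity depending only on $\|w\|$ (the ``height''); conversely the radius $r<UR(K,\mu)$ means $\gamma(s)$ is a critical point at which the second derivative $f_{p_0}''(s)$ is $\geq 0$, and in fact $>0$ unless the degenerate focal equation $\mu''+\tfrac14\kappa^2\mu=0$ holds. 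I will first establish (i): if $q_1,q_2$ were on different components, or more generally if the two critical points were not joined by an arc of critical points, then $f_{p_0}$ would have two strict local minima on $K$ (or on a single circle $S^1$), hence a local maximum in between, contradicting that all critical values of $f_{p_0}$ within the $UR$-ball are minima with nonnegative second derivative — the sign analysis of $f_{p_0}''$ from Definition 5 is what forbids an interior maximum. This forces $q_1,q_2$ into one component $K_1$, and forces $f_{p_0}$ to be \emph{constant} on the arc $I$ between them, i.e. $\exp^{\mu}(\gamma(s),r_1N_\gamma(s))=p_0$ for all $s\in I$; constancy then pins $r_1=r_2$ and $v_i=N_\gamma(s_i)$ because along $I$ the vector from $\gamma(s)$ to $p_0$ must be the specific normal direction prescribed by $\exp^{\mu}$ at the given height, and the height is the common constant value of $\sqrt{f_{p_0}}$.

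For (ii)--(iii) the engine is the identity $f_{p_0}(s)\equiv c^2$ on $I$. Differentiating once gives the criticality relation; differentiating twice and using $f_{p_0}''\geq 0$ with equality (since $f_{p_0}$ is constant) forces the degenerate focal equation to hold \emph{with equality throughout} $I$, and moreover forces the quantity under the square root in $FocRad^{-}$ to vanish, i.e. $\mu''+\tfrac14\kappa^2\mu=0$ on $I$ together with the boundary behavior that makes $r_1^{-2}=\tfrac12(\mu^2)''+\tfrac12\kappa^2\mu^2$, which after simplification is exactly $(\mu')^2-\mu\mu''=r_1^{-2}$. Solving the linear ODE $\mu''+\tfrac14\kappa^2\mu=0$ (once we know $\kappa$ is constant) yields $\mu=A\cos(\kappa s/2+a)$, and plugging into $(\mu')^2-\mu\mu''=r_1^{-2}$ evaluates $A$: one computes $(\mu')^2-\mu\mu''=\tfrac14\kappa^2A^2$, giving $A=2/(\kappa r_1)$, which is the claimed formula. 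The remaining geometric fact, $\gamma'''+\kappa^2\gamma'=0$ and $\kappa$ constant, comes from differentiating the vector equation $\exp^{\mu}(\gamma(s),r_1N_\gamma(s))=p_0$ componentwise and using the Frenet equations: the $s$-derivative of the right-hand side is $0$, and expanding the left-hand side with $N_\gamma'=-\kappa\gamma'+\tau B$ (in $\mathbf{R}^3$; in $\mathbf{R}^n$ one uses that the relevant motion stays in the osculating plane) shows the torsion term and the higher Frenet terms must drop out, leaving a planar curve of constant curvature, i.e. a circular arc, which is the ODE $\gamma'''=-\kappa^2\gamma'$.

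The main obstacle I expect is (ii): passing from ``$p_0$ has two preimages'' to ``$f_{p_0}$ is constant on the whole arc between them.'' The naive Morse-theoretic argument only produces a critical point of index $\geq 1$ strictly between $s_1$ and $s_2$, and one must upgrade this, using that $r_1,r_2<UR(K,\mu)$ and the definition of $FocRad^{0}$ via the \emph{``$\geq 0$''} inequality (not just $>0$), to conclude there is no such bad critical point, hence $f_{p_0}$ has no strict local max on $I$, hence — since its endpoints are critical points of equal value $c^2$ and it can have no interior strict max — it is identically $c^2$ on $I$. Making this rigorous requires care about the ``even multiplicity'' zeros of $\mu''+\tfrac14\kappa^2\mu$ that the paper flags right before Theorem 1: these are precisely the configurations where $f_{p_0}''$ vanishes without the curve ceasing to be locally minimizing, so one must show that a whole subinterval of such degenerate behavior is forced rather than isolated points. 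Once constancy on $I$ is in hand, steps (iii) are essentially forced ODE bookkeeping. I would also need to check the final remark that $\gamma(I)\neq K_1$ even for maximal $I$: this follows because a full circle would require $\mu=2(\kappa r_1)^{-1}\cos(\kappa s/2+a)$ to be $L$-periodic with $L=2\pi/\kappa$, forcing $\cos$ to complete exactly a half period, on which $\mu$ would change sign — impossible since $\mu>0$ — so the arc of Horizontal Collapsing is always a proper sub-arc.
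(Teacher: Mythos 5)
There is a genuine gap in your argument for parts (i) and (ii), and it sits exactly at the step you yourself flag as ``the main obstacle.'' Your plan is to rule out a strict interior local maximum of $F_{p_0}(s)=\Vert p_0-\gamma(s)\Vert^{2}\mu(s)^{-2}$ on the arc between $s_1$ and $s_2$ by invoking the fact that critical points of $F_{p_0}$ ``within the $UR$-ball'' have nonnegative second derivative. But an interior local maximum at $s_3$ satisfies $F_{p_0}(s_3)>\min(r_1^2,r_2^2)$, and nothing in the hypotheses bounds $F_{p_0}(s_3)$ by $UR(K,\mu)^2$; on the contrary, by Proposition 2 and Lemma 3 a critical point of $F_{p_0}$ with strictly negative second derivative automatically has $\mu$-height at least $FocRad^{-}(K,\mu)\geq UR(K,\mu)$, so such a maximum is fully consistent with $r_1,r_2<UR(K,\mu)$ and yields no contradiction. (This is the generic picture even for $\mu=1$: a double critical pair gives $F_{p_0}$ two local minima with interior local maxima of larger value on both connecting arcs.) There is also a circularity: your constancy argument needs the two endpoint critical values to be equal, i.e.\ $r_1=r_2$, yet you obtain $r_1=r_2$ only as a consequence of constancy. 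The paper derives $r_1=r_2=\sqrt{G(p_0)}$ from Proposition 6 combined with Proposition 9 ($AIR=UR$), and that step rests on the open-and-dense structure built into the definition of $AIR$, not on second-derivative signs.

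The paper's actual mechanism for forcing the collapse is global and geometric rather than Morse-theoretic: Propositions 8 and 9 show, via the perturbations $\mu_{\varepsilon}=\mu-\varepsilon$ and a limiting argument on double critical pairs, that $\exp^{\mu}$ restricted to $D(UR(K,\mu))-Sng^{NK}(K,\mu)$ is a diffeomorphism and that each fiber image $A_q=\exp^{\mu}(NK_q\cap D(UR(K,\mu)))$ is a spherical cap or disc containing at most one singular image point $q^{\ast}$; Proposition 10 then shows two such caps can meet only tangentially at that point, and the comparison argument of Claims 1 and 2 there propagates the tangency to every $A_{\gamma(s)}$ for $s$ in the arc, which is what produces $\exp^{\mu}(\gamma(s),r_1N_{\gamma}(s))=p_0$ on all of $I$. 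Nothing in your proposal substitutes for this machinery. By contrast, your part (iii) follows the same route as Proposition 11 (differentiate $(\mu')^{2}-\mu\mu''=r_1^{-2}$ together with $\mu''+\tfrac14\kappa^{2}\mu=0$ to get $\kappa\kappa'\mu^{2}=0$, solve the ODE, and differentiate the collapsing identity to get $\gamma'''+\kappa^{2}\gamma'=0$) and is essentially correct once (ii) is granted, as is your closing observation that $\gamma(I)\neq K_1$.
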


As a consequence, we can obtain $TIR(K,\mu)$ in terms of $\mu,\kappa,$ and
$\frac{1}{2}DCSD(K,\mu).$ Theorems 2 and 3 give us a complete understanding of
the differences between $DIR,TIR$ and $AIR.$

\begin{theorem}
Let $K$ be a union of finitely many disjoint simple smoothly closed (possibly
linked or knotted) curves in $\mathbf{R}^{n}$. Let $\gamma:Domain(\gamma
)\rightarrow K$ parametrize $K$ with unit speed and $\mu(s)=\mu(\gamma(s))$.
If $TIR(K,\mu)<UR(K,\mu)$, then $K$ contains a circular arc of curvature
$\kappa$ and positive length, along which $\mu=\frac{2}{\kappa r}\cos\left(
\frac{\kappa s}{2}+a\right)  $ for some $a\in\mathbf{R}$ and $r<UR(K,\mu).$ In
this case, $TIR(K,\mu)$ is equal to the infimum of such $r.$

If $K$ has no such circular arc with a compatible $\mu$, that is, the set%
\[
\left\{
\begin{array}
[c]{c}%
s\in Domain(\gamma):\left(  \mu^{\prime\prime}+\frac{1}{4}\kappa^{2}%
\mu\right)  (s)=0,\text{ and }\kappa^{\prime}(s)=0\text{ with }\kappa
(s)>0,\text{ and }\\
\gamma^{\prime\prime\prime}(s)+\kappa^{2}(s)\gamma^{\prime}(s)=0\text{ and
}\left(  \mu^{\prime}\right)  ^{2}(s)-\mu\mu^{\prime\prime}(s)=\frac{1}{r^{2}%
}\in\mathbf{R}\text{ where }r<UR(K,\mu)\text{.}%
\end{array}
\right\}
\]
has no interior, then $TIR(K,\mu)=AIR(K,\mu)=UR(K,\mu).$
\end{theorem}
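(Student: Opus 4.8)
The plan is to deduce Theorem 4 directly from Theorem 1 (the chain $LR = DIR \leq TIR \leq AIR = UR$) together with the Horizontal Collapsing Property (Theorem 3). First I would handle the first assertion. Suppose $TIR(K,\mu) < UR(K,\mu)$. By definition of $TIR$ via the topological injectivity radius, for every $r$ with $TIR(K,\mu) < r \leq UR(K,\mu)$ the map $\exp^{\mu}$ restricted to $D(r)$ fails to be injective, so there exist $(q_1, r_1 v_1) \neq (q_2, r_2 v_2)$ in $D(r)$ with $\exp^{\mu}(q_1, r_1 v_1) = \exp^{\mu}(q_2, r_2 v_2) = p_0$ and $r_1, r_2 < r \leq UR(K,\mu)$. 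Theorem 3 then applies verbatim: $q_1, q_2$ lie on a common component $K_1$, $r_1 = r_2 =: r_0$, $v_i = N_\gamma(s_i)$, $\kappa$ is a positive constant on the interval $I$ joining $s_1$ to $s_2$, $\gamma(I)$ is a circular arc of curvature $\kappa$ and positive length, and $\mu = \frac{2}{\kappa r_0}\cos(\frac{\kappa s}{2} + a)$ on $I$ for some $a$. This produces exactly the claimed circular arc with compatible $\mu$ and with $r = r_0 < UR(K,\mu)$. Conversely, one must check that the presence of such an arc with such $r$ actually forces non-injectivity at radius slightly above $r$: along the arc, the computation in Theorem 3(iii) shows $\exp^{\mu}(\gamma(s), r N_\gamma(s)) = p_0$ is constant in $s \in I$, so for any $r' > r$ the restriction of $\exp^{\mu}$ to $D(r')$ identifies the distinct points $(\gamma(s_1), r N_\gamma(s_1))$ and $(\gamma(s_2), r N_\gamma(s_2))$; hence $TIR(K,\mu) \leq r$. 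Taking the infimum over all valid $r$ on one side and using the construction on the other gives $TIR(K,\mu) = \inf\{ r < UR(K,\mu) : K$ contains such a circular arc with compatible $\mu$ for that $r\}$.

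For the second assertion, suppose the displayed set $S$ has empty interior. The content is that $S$ is precisely the set of arclength parameters at which Horizontal Collapsing can occur above the point in question: the conditions $(\mu'' + \frac14 \kappa^2 \mu)(s) = 0$, $\kappa'(s) = 0$ with $\kappa(s) > 0$, $\gamma'''(s) + \kappa^2(s)\gamma'(s) = 0$, and $(\mu')^2(s) - \mu\mu''(s) = \frac{1}{r^2}$ for some $r < UR(K,\mu)$ are exactly the conditions extracted in Theorem 3(iii). If $TIR(K,\mu) < UR(K,\mu)$, then by the first part $K$ contains a circular arc of positive length along which all these conditions hold (the identity $(\mu')^2 - \mu\mu'' = \frac{1}{r_0^2}$ on $I$ is the first display in Theorem 3(iii), and $\gamma''' + \kappa^2 \gamma' = 0$ together with $\kappa \equiv$ const $> 0$ is also there), so that arc's parameter interval lies in $S$ and has nonempty interior — a contradiction. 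Therefore $TIR(K,\mu) = UR(K,\mu)$, and combining with Theorem 1, $UR(K,\mu) = AIR(K,\mu) \geq TIR(K,\mu) = UR(K,\mu)$ forces $TIR(K,\mu) = AIR(K,\mu) = UR(K,\mu)$.

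The main obstacle is the bookkeeping in the converse direction of the first assertion and in verifying that the set $S$ really captures \emph{all} the data forced by Theorem 3 — in particular making sure that no extra degenerate configuration (e.g., limits of collapsing arcs, or collapsing across different components) can push $TIR$ below $UR$ without leaving a positive-length circular arc in $S$. Part (i) of Theorem 3 rules out the cross-component case, and the statement "$\gamma(I) \neq K_1$ even for maximal $I$" in Theorem 3 guarantees $I$ is a genuine proper subinterval, so the arc has a well-defined interior; the remaining care is just to confirm that the infimum in the first assertion is attained or approached by honest arcs and that $r < UR(K,\mu)$ is consistent throughout, which is immediate since $r_1, r_2 < UR(K,\mu)$ in the hypothesis of Theorem 3. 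No new computation beyond what Theorems 1 and 3 supply should be needed; the proof is essentially a logical repackaging of those two results.
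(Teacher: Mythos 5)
Your proposal is correct and follows essentially the same route as the paper: the forward direction is exactly the application of the Horizontal Collapsing Property (the paper's Propositions 10 and 11), the converse reverses the collapsing computation to get non-injectivity above any compatible arc (the paper cites its Example 1 for this), and the second assertion follows because the collapsing arc's parameter interval sits inside the displayed set, giving it nonempty interior. The only point you gloss over---that failure of $\exp^{\mu}|D(r)$ to be a homeomorphism for $r>TIR$ already forces failure of injectivity---is supplied by the proof of the paper's Proposition 4(i), which the paper itself explicitly recalls at the start of its argument.
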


The following theorem summarizes the remaining results obtained in the course
of proving the theorems above, the exact structure of the singular set of
$\exp^{\mu}$ within $UR(K,\mu),$ as well as the structure of the set of
regular points.

\begin{theorem}
Let $K_{i}$ denote the components of $K.$ Let $\gamma_{i}:domain(\gamma
_{i})\rightarrow K_{i}$ be an onto parametrization of the component $K_{i}$
with unit speed and $\mu_{i}(s)=\mu(\gamma_{i}(s)).$ Then, the singular set
$Sng^{NK}(K,\mu)$ of $\exp^{\mu}$ within $D(UR(K,\mu))\subset NK$ is a graph
over a portion of $K$:%
\begin{align*}
Sng^{NK}(K,\mu)  &  =%
%TCIMACRO{\tbigcup \nolimits_{i}}%
%BeginExpansion
{\textstyle\bigcup\nolimits_{i}}
%EndExpansion
Sng_{i}^{NK}(K,\mu)\text{ and}\\
Sng_{i}^{NK}(K,\mu)  &  =\left\{
\begin{array}
[c]{c}%
(\gamma_{i}(s),R_{i}(s)N_{\gamma_{i}}(s))\in NK_{i}\text{ where}\\
s\in domain(\gamma_{i}),\text{ }\kappa_{i}(s)>0,\text{ }\\
\left(  \mu_{i}^{\prime\prime}+\frac{1}{4}\kappa_{i}^{2}\mu_{i}\right)
(s)=0\text{, and }\\
0<R_{i}(s)=\left(  \left(  \mu_{i}^{\prime}\right)  ^{2}-\mu_{i}\mu
_{i}^{\prime\prime}\right)  (s)^{-\frac{1}{2}}<UR(K,\mu)
\end{array}
\right\}
\end{align*}
where $\kappa_{i}$ and $N_{\gamma_{i}}$ are the curvature and the principal
normal of $\gamma_{i}$, respectively$.$ $D(UR(K,\mu))-Sng^{NK}(K,\mu)$ is
connected in each component of $NK,$ when $n\geq2.$ Let
\begin{align*}
Sng(K,\mu)  &  =\exp^{\mu}(Sng^{NK}(K,\mu)),\\
A_{q}  &  =\exp^{\mu}\left(  NK_{q}\cap D(UR(K,\mu))\right)  ,\text{ and}\\
A_{q}^{\ast}  &  =\exp^{\mu}\left(  NK_{q}\cap D(UR(K,\mu))-Sng^{NK}%
(K,\mu)\right)  .
\end{align*}

i. $O(K,\mu UR(K,\mu))-Sng(K,\mu)$ has a codimension 1 foliation by
$A_{q}^{\ast}$, which are (possibly punctured) spherical caps or discs.$\ $

ii. $\exp^{\mu}(D(UR(K,\mu))-Sng^{NK}(K,\mu))=O(K,\mu UR(K,\mu))-Sng(K,\mu).$

iii. If $A_{q_{1}}\cap A_{q_{2}}\neq\varnothing$ for $q_{1}\neq q_{2}$ then
$q_{1}$ and $q_{2}$ must belong to the same component of $K,$ and $A_{q_{1}}$
intersects $A_{q_{2}}$ tangentially at exactly one point $p_{0}=\exp^{\mu
}(q_{1},r_{1}v_{1})=\exp^{\mu}(q_{2},r_{2}v_{2})$ where $(q_{i},r_{i}v_{i})\in
Sng^{NK}(K,\mu),$ for $i=1,2.$
\end{theorem}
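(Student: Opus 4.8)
The plan is to first establish the exact form of the singular set $Sng^{NK}(K,\mu)$ by direct computation of the differential of $\exp^{\mu}$, and then to bootstrap from that description and from Theorem 3 (Horizontal Collapsing) to the statements (i)--(iii). I would begin by fixing a component $K_i$ with unit-speed parametrization $\gamma_i$ and, for a fixed $q=\gamma_i(s)$, computing $d\exp^{\mu}$ restricted to $NK_q \cap D(UR)$ and the derivative in the $s$-direction. Since $NK_q$ is $(n-1)$-dimensional and the image $A_q^{\ast}$ is (generically) an $(n-1)$-dimensional spherical cap (Proposition 1), the only way $d\exp^{\mu}$ can drop rank at a point $(\gamma_i(s), Rv)$ is for the $s$-derivative of $\exp^{\mu}(\gamma_i(s), Rv)$ to lie in the tangent space of $A_q$ at that point; working this out using the explicit formula $\exp^{\mu}(\gamma(s),w)=\gamma(s)-\mu\mu'\gamma'\|w\|^2+\mu\sqrt{1-(\mu'\|w\|)^2}\,w$ and the Frenet equations gives the conditions $\kappa_i(s)>0$, $(\mu_i''+\tfrac14\kappa_i^2\mu_i)(s)=0$, $v=N_{\gamma_i}(s)$, and $R^2=((\mu_i')^2-\mu_i\mu_i'')^{-1}(s)$. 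This is essentially the same second-derivative computation that underlies the definition of $FocRad^{0}$ versus $FocRad^{-}$, so I expect it to parallel the focal-point analysis already developed in the preceding sections. The constraint $R<UR(K,\mu)$ restricts us to $D(UR(K,\mu))$; the map $s\mapsto R_i(s)$ is a well-defined function on the closed set where $\mu_i''+\tfrac14\kappa_i^2\mu_i$ vanishes and the other conditions hold, giving the asserted graph structure.

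Next I would prove connectedness of $D(UR(K,\mu))-Sng^{NK}(K,\mu)$ in each component of $NK$ for $n\geq 2$. Since $Sng^{NK}(K,\mu)$ is a graph over a subset of $K$ (one normal direction $N_{\gamma_i}(s)$, one radius $R_i(s)$ per admissible $s$), it has codimension at least $2$ inside the $n$-dimensional manifold $NK_i$ when $n\geq 2$: at each $s$ we remove a single point from the $(n-1)$-disc $NK_{\gamma_i(s)}\cap D(UR)$. Removing a codimension-$\geq 2$ subset from a connected manifold leaves it connected, and $D(UR(K,\mu))$ restricted to one component of $K$ is connected (it is a disc bundle over a circle), so the complement is connected. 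I would make this precise by noting $NK_i \cap D(UR)$ deformation retracts appropriately and a general-position/transversality argument lets one join any two points by a path avoiding the graph.

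For (i) and (ii): statement (ii) is the assertion that $\exp^{\mu}$ maps regular points onto regular values, i.e. $O(K,\mu UR)-Sng(K,\mu)$ is covered by the images of $NK_q\cap D(UR)$ with singular normal points removed; this follows by combining surjectivity of $\exp^{\mu}$ onto $O(K,\mu UR)$ (from the definition of $O$ as the union of balls and the construction of $\exp^{\mu}$) with the fact, from Theorem 3, that any point of $O(K,\mu UR)$ with a noncritical or multiply-attained minimum preimage either lies in $Sng(K,\mu)$ or is hit only at regular points. For (i), the foliation: away from $Sng(K,\mu)$, each $p$ has a unique $\mu$-closest point $q(p)\in K$ (here I would invoke the characterization $r<TIR \Leftrightarrow$ unique $\mu$-closest point, extended to the regular locus via Theorem 3), and $p$ lies on $A_{q(p)}^{\ast}$, which by Proposition 1 is a spherical cap or disc; smoothness and transversality of the leaves follow from $d\exp^{\mu}$ being an isomorphism at regular points, giving the codimension-$1$ foliation. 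For (iii), suppose $A_{q_1}\cap A_{q_2}\neq\varnothing$ with $q_1\neq q_2$; pick $p_0$ in the intersection, so $p_0=\exp^{\mu}(q_1,r_1v_1)=\exp^{\mu}(q_2,r_2v_2)$ with $r_i\leq UR$. If $r_1,r_2<UR$ strictly, Theorem 3 applies directly: $q_1,q_2$ lie in the same component, $v_i=N_{\gamma}(s_i)$, and the horizontal curve through $p_0$ collapses, forcing $(q_i,r_iv_i)\in Sng^{NK}(K,\mu)$; a limiting argument handles the boundary case $r_i=UR$. Tangential intersection at exactly one point then follows because both $A_{q_i}$ are spheres/planes meeting along the collapsed horizontal curve, which Theorem 3(iii) shows degenerates to the single point $p_0$ (the circular-arc rigidity prevents a higher-dimensional intersection).

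The main obstacle I anticipate is (iii), specifically ruling out that $A_{q_1}$ and $A_{q_2}$ meet in more than one point or meet transversally: this requires pushing the rigidity of Theorem 3 to show that two distinct spherical caps arising in the Horizontal Collapsing configuration are mutually tangent and share only the collapse point $p_0$, which means carefully analyzing how the family $\exp^{\mu}(\gamma(s),R_1(s)N_\gamma(s))$ degenerates to a point along the maximal circular arc and checking that no two of the caps $A_q$ over that arc can intersect elsewhere. The boundary case $r_i = UR(K,\mu)$, where Theorem 3's strict inequality hypothesis just fails, will also need a short separate closure argument. Everything else is a matter of assembling the explicit $\exp^{\mu}$ formula, the Frenet equations, and the already-proved Theorems 2 and 3.
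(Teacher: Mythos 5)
There are two genuine gaps. The first is the connectedness of $D(UR(K,\mu))-Sng^{NK}(K,\mu)$ for $n=2$: your codimension argument fails there, because the singular set is a graph over a subset of $K$, hence a $1$-dimensional set inside the $2$-dimensional manifold $NK_i\cap D(UR(K,\mu))$ --- codimension $1$, not $\geq 2$ --- and a codimension-$1$ graph running all the way around the annulus would separate it. What saves the statement is that the domain of the graph is never all of $K_i$: since $K$ is compact and $\mu$ nonconstant forces $\mu''>0$ somewhere (and $\kappa>0$ somewhere if $\mu$ is constant), one has $\Delta(\kappa,\mu)=\mu(\mu''+\frac{1}{4}\kappa^{2}\mu)>0$ on a nonempty set, over which there are no singular points within $UR$; this is the fact the paper uses and your argument omits. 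Relatedly, the explicit conditions $\mu''+\frac{1}{4}\kappa^{2}\mu=0$, $v=N_{\gamma}$, $R^{-2}=(\mu')^{2}-\mu\mu''$ do not come from merely setting $F_{p}''=0$: one must use that $F_{p(R')}''\geq 0$ for all $R'<UR(K,\mu)\leq FocRad^{-}(K,\mu)$, so the zero of the quadratic of Lemma 3 must be a repeated root (Lemma 3(vi)), which is exactly what forces $\Delta=0$ and pins down $R$ and $v$.

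The second and more serious gap is that parts (i)--(iii) all rest on the injectivity of $\exp^{\mu}$ on $D(UR(K,\mu))-Sng^{NK}(K,\mu)$ (equivalently $AIR=UR$, Proposition 9 of the paper), and your proposal contains no argument for it. You cannot get it from ``$r<TIR$ iff unique $\mu$-closest point,'' since $TIR$ can be strictly smaller than $UR$ --- that is precisely the regime the theorem addresses. The paper proves it by perturbing to $\mu_{\varepsilon}=\mu-\varepsilon$, showing $\exp^{\mu_{\varepsilon}}$ is nonsingular on $D(R_{1})$ for each $R_{1}<UR(K,\mu)$ and small $\varepsilon$, and then showing that non-injectivity of $\exp^{\mu}$ on the regular set would force, in the limit, a double critical pair at $\mu$-distance $\leq R_{1}<\frac{1}{2}DCSD(K,\mu)$, a contradiction. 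The same perturbation (Corollary 3) is also what rules out transversal intersection of $A_{q_{1}}$ and $A_{q_{2}}$ in (iii) (transversality is an open condition and the perturbed caps are disjoint), and injectivity on the regular set gives $A_{q_{1}}\cap A_{q_{2}}\subset\{q_{1}^{\ast},q_{2}^{\ast}\}$, i.e., at most one point of tangency. Note finally that the paper's logical order is the reverse of yours: (iii) is proved first from this injectivity/perturbation machinery, and the Horizontal Collapsing Property is then deduced from (iii) via Claims 1 and 2 of Proposition 10. Deriving (iii) from the Collapsing Property, as you propose, is circular unless you supply an independent proof of the latter, and even granting it, it does not by itself yield tangentiality or the single intersection point --- which you yourself flag as an unresolved obstacle.
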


The remaining definitions and notation are given in Section 2. The first and
second order analysis of the $\mu-$distance functions, and basic properties of
$\exp^{\mu}$ are studied in Section 3. Section 4 contains the proofs involving
$DIR$ and $TIR.$ Section 5 has several examples shoving the deviation from the
standard $\mu=1$ case. $AIR$ \ and Horizontal Collapsing Property are studied
in Section 6 after the examples which give the motivation for many proofs.

\section{Further Notation and Definitions}

We assume that $K$ is a union of finitely many disjoint simple smoothly closed
(possibly linked or knotted) curves in $\mathbf{R}^{n}$. Hence, $K$ is a
$1-$dimensional compact submanifold of $\mathbf{R}^{n},$ with finitely many
components. All parametrizations $\gamma:I\rightarrow K$ are with respect to
arclength $s$ and $C^{3},$ unless it is indicated otherwise.\ All
$\mu:K\rightarrow(0,\infty)$ are at least $C^{3}$. For some compactness
arguments on a $K$, we may take $Domain(\gamma)$ to be a disjoint union of
$\mathbf{R}$ $/$ $Length(K_{i})\mathbf{Z}$ by considering $\gamma$ as periodic
function of period $length(K_{i})$ on each component $K_{i}.$

\begin{notation}
$TK$ and $NK$ denote the tangent and normal bundles of $K$ in\ $\mathbf{R}%
^{n}$, respectively$.$ $UTK$ and $UNK$ denote the unit vectors, $NK_{q}$
denotes the set normal vectors to $K$ at $q$, and similarly for the others.
For $v\in T\mathbf{R}_{q}^{n}=TK_{q}\oplus NK_{q},$ $v^{T}$ and $v^{N}$ denote
the tangential and normal components of $v$ to $K$, respectively. $D(r)$
denotes $\{(q,w)\in NK:q\in K$ and $\left\Vert w\right\Vert <r\}.$
\end{notation}

\begin{notation}
i. We use the standard distance function $d(p,q)=\left\Vert p-q\right\Vert $
in $\mathbf{R}^{n}$. $B(p,r)$ and $\bar{B}(p,r)$ denote open and closed metric
balls. For $A\subset\mathbf{R}^{n}$, $B(A,r)=\{x\in X:d(x,A)<r\}.$

ii. The unit direction vector from $q$ to $p$ is $u(q,p)=\frac{p-q}{\left\Vert
p-q\right\Vert }$ for $p\neq q.$
\end{notation}

\begin{definition}
Let $K\subset\mathbf{R}^{n}$ and $\mu:K\rightarrow(0,\infty)$ be given. We define:

i. The $\mu R$ neighborhood of $K$, $O(K,\mu R)=%
%TCIMACRO{\dbigcup \nolimits_{q\in K}}%
%BeginExpansion
{\displaystyle\bigcup\nolimits_{q\in K}}
%EndExpansion
B(q,\mu(q)R),$

ii. For $p\in\mathbf{R}^{n},$

$E_{p}:K\rightarrow\mathbf{R}$ by $E_{p}(x)=\left\Vert p-x\right\Vert ^{2},$

$F_{p}:K\rightarrow\mathbf{R}$ by $F_{p}(x)=\left\Vert p-x\right\Vert ^{2}%
\mu(x)^{-2},$ the square of the $\mu-$distance function from $p,$

$F_{p}^{c}:K\rightarrow\mathbf{R}$ by $F_{p}^{c}(x)=\left\Vert p-x\right\Vert
^{2}(\mu(x)+c)^{-2},$

$G:\mathbf{R}^{n}\rightarrow\mathbf{R}$ by $G(p)=\min_{x\in K}F_{p}(x)$ so
that $O(K,\mu R)=G^{-1}([0,R^{2})),$ and

$\Sigma:K\times K\rightarrow\mathbf{R}$ by $\Sigma(x,y)=\left\Vert
x-y\right\Vert ^{2}(\mu(x)+\mu(y))^{-2},$
\end{definition}

\begin{notation}
For a local parametrization $\gamma:I\rightarrow K$ with respect to arclength
$s,$ we will identify $\mu(s)=\mu(\gamma(s))$, $F_{p}(s)=F_{p}(\gamma
(s))=\left\Vert p-\gamma(s)\right\Vert ^{2}\mu(\gamma(s))^{-2}$, and similarly
for all functions above. We use $s\in\mathbf{R,}$ and $x$ or $q\in K$ to avoid ambiguity.
\end{notation}

\begin{definition}
For a $C^{1}$ function $\mu:K\rightarrow(0,\infty)$, $\operatorname{grad}\mu$
denotes the intrinsic gradient field of $\mu$, that is the unique vector field
tangential to $K$ such that for every tangent vector $v\in TK_{q},$ the
directional derivative of $\mu$ at $q$ in the direction $v$ along $K$ is
$v\cdot\left(  \operatorname{grad}\mu\right)  (q).$ For every $C^{1}$
extension\thinspace$\widetilde{\mu}$ of $\mu$ to an open subset of
$\mathbf{R}^{n}$, containing $q,$ one has $\left(  \operatorname{grad}%
\mu\right)  (q)=\left(  \nabla\widetilde{\mu}(q)\right)  ^{T}$ where $\nabla$
denotes the usual gradient in $\mathbf{R}^{n}$ defined by using the partial
derivatives in $\mathbf{R}^{n}.$ See [T], p. 96. Since $K$ is one dimensional,
one has
\[
\left(  \operatorname{grad}\mu\right)  (\gamma(s))=\mu^{\prime}(\gamma
(s))\gamma^{\prime}(s)=\mu^{\prime}(s)\gamma^{\prime}(s)
\]
for a parametrization $\gamma$ with respect to arclength.
\end{definition}

\begin{remark}
The last line above is justified by the Chain Rule:
\begin{align*}
\mu^{\prime}(s)  &  =\frac{d}{ds}\mu(\gamma(s))=\frac{d}{ds}\widetilde{\mu
}(\gamma(s))=\nabla\widetilde{\mu}(\gamma(s))\cdot\gamma^{\prime}(s)=\left(
\nabla\widetilde{\mu}(\gamma(s))\right)  ^{T}\cdot\gamma^{\prime}(s)\\
&  =\left(  \operatorname{grad}\mu\right)  (\gamma(s))\cdot\gamma^{\prime}(s).
\end{align*}

\end{remark}

\begin{remark}
For a given parametrization $\gamma$ of $K$ with respect to arclength,
$\mu^{\prime\prime}(s_{0})$, $\gamma^{\prime\prime}(s_{0}),$ $\left(
\mu^{\prime}(s_{0})\right)  ^{2}$, $\left\Vert \mathit{grad}\mu(q)\right\Vert
$ and $F_{p}^{\prime\prime}(s_{0})$ are calculated at $q=\gamma(s_{0})$ by
using the given parametrization. However, all of these quantities depend only
on $K,\mu$ and $q,$ but not on the choice of the parametrization with respect
to arclength. Observe that when one reverses the orientation of a
parametrization, both $\mu^{\prime}$ and $\gamma^{\prime}$ change signs at
$q$. $\operatorname{grad}\mu(q)$ and $\left\Vert \mathit{grad}\mu
(q)\right\Vert $ are both well-defined. Although the sign of $\mu^{\prime}(q)$
is ambiguous, depending on the orientation of $\gamma,$ we can use $\left\vert
\mu^{\prime}(q)\right\vert =\left\Vert \mathit{grad}\mu(q)\right\Vert $. If
$\mathit{grad}\mu(q)=0,$ then $\left\Vert \mathit{grad}\mu(q)\right\Vert
^{-1}$ is taken to be $+\infty$. The definitions given in Section 1,
exponential map, focal radii, double critical self distance by using by using
a parametrization, are independent of the choice of the parametrization.
\end{remark}

\begin{notation}
For any function $f:X\rightarrow Y$ and $Z\subset X,$ $f\mid Z$ is the
restriction of $f$ to $Z.$
\end{notation}

\begin{definition}
Let $\gamma:I\rightarrow K\subset\mathbf{R}^{n}$, $\mu:K\rightarrow(0,\infty
)$, $p\in\mathbf{R}^{n}$ and $q=\gamma(s_{0})\in K$ be given.

$q\in CP(p)$, if $q$ is a critical point of $F_{p}(x),$ that is $F_{p}%
^{\prime}(s_{0})=0$,

$q\in CP(p,+),$ if $F_{p}^{\prime}(s_{0})=0$ and $F_{p}^{\prime\prime}%
(s_{0})>0,$

$q\in CP(p,0),$ if $F_{p}^{\prime}(s_{0})=0$ and $F_{p}^{\prime\prime}%
(s_{0})=0,$

$q\in CP(p,-),$ if $F_{p}^{\prime}(s_{0})=0$ and $F_{p}^{\prime\prime}%
(s_{0})<0.$
\end{definition}

\begin{definition}
The radius of regularity is
\[
\text{\textit{RegRad}}(K,\mu)=\sup\{r:\exp^{\mu}\text{ restricted to
}D(r)\text{ is a non-singular }C^{1}\text{ map}\}.
\]

\end{definition}

\section{Basic Properties of $exp^{\mu}$}

\begin{remark}
If $f(s)=\frac{E(s)}{g(s)},$ then by logarithmic differentiation
$\frac{f^{\prime}}{f}=\frac{E^{\prime}}{E}-\frac{g^{\prime}}{g}.$

If $f^{\prime}(s_{0})=0,$ then\ $\frac{E^{\prime}}{E}(s_{0})=\frac{g^{\prime}%
}{g}(s_{0})$ and $\frac{f^{\prime\prime}}{f}(s_{0})=\left(  \frac
{E^{\prime\prime}}{E}-\frac{g^{\prime\prime}}{g}\right)  (s_{0}).$
\end{remark}

\begin{notation}
For $q\in K$ and $p\in\mathbf{R}^{n}-\{q\}:$

$\alpha(q,p)=\measuredangle(\mathit{grad}\mu(q),u(q,p))$ when $\mathit{grad}%
\mu(q)\neq0,$ and

$\alpha(q,p)=\frac{\pi}{2}$ when $\mathit{grad}\mu(q)=0$.
\end{notation}

\begin{lemma}
For $q\in K$ and $p\in\mathbf{R}^{n}-\{q\}$, and $c\in\lbrack0,\infty),$%
\[
q\text{ is a critical point of }F_{p}^{c}(x)\iff u(q,p)^{T}=-\frac{\left\Vert
p-q\right\Vert \mathit{grad}\mu(q)}{\mu(q)+c}.
\]
If $q$ is a critical point of $F_{p}^{c}(x),$ then
\[
\cos\alpha(q,p)=-\frac{\left\Vert p-q\right\Vert \left\Vert \mathit{grad}%
\mu(q)\right\Vert }{\mu(q)+c}\text{ and hence }\frac{\pi}{2}\leq
\alpha(q,p)\leq\pi.
\]

\end{lemma}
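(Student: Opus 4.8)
The statement is a first-order condition, so the plan is to compute $\frac{d}{ds}F_p^c(\gamma(s))$ at $s_0$, set it equal to zero, and reorganize. Write $F_p^c(s) = E_p(s)(\mu(s)+c)^{-2}$ where $E_p(s) = \|p-\gamma(s)\|^2$. By the logarithmic differentiation identity recorded in Remark~4, a critical point of $F_p^c$ at $s_0$ is characterized by $\frac{E_p'}{E_p}(s_0) = \frac{2\mu'(s_0)}{\mu(s_0)+c}$ (the factor $2$ coming from differentiating $(\mu+c)^2$). Now $E_p'(s) = -2\langle p-\gamma(s), \gamma'(s)\rangle$, so $\frac{E_p'}{E_p}(s_0) = \frac{-2\langle p-q, \gamma'(s_0)\rangle}{\|p-q\|^2} = \frac{-2\,u(q,p)\cdot\gamma'(s_0)}{\|p-q\|}$. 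Since $\gamma'(s_0)$ is the unit tangent to $K$ at $q$, the quantity $(u(q,p)\cdot\gamma'(s_0))\,\gamma'(s_0)$ is exactly $u(q,p)^T$, the tangential component of $u(q,p)$. Equating the two expressions for $E_p'/E_p$, multiplying through by $\|p-q\|/2$, and using $\mathit{grad}\,\mu(q) = \mu'(s_0)\gamma'(s_0)$ (Definition~6) yields
\[
u(q,p)^T = -\frac{\|p-q\|\,\mathit{grad}\,\mu(q)}{\mu(q)+c},
\]
which is the claimed equivalence. One should note that this derivation is reversible, so it is genuinely an iff; and it is parametrization-independent because $u(q,p)^T$, $\|p-q\|$, and $\mathit{grad}\,\mu(q)$ are all intrinsic (Remark~3), even though $\mu'(s_0)$ and $\gamma'(s_0)$ individually change sign under reversal of orientation.

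For the second assertion, take norms of both sides of the displayed identity. Since $u(q,p)$ is a unit vector and $u(q,p)^T$ is an orthogonal projection of it, $\|u(q,p)^T\| = |\cos\theta|$ where $\theta$ is the angle between $u(q,p)$ and the tangent line $TK_q$; but more precisely, writing $u(q,p)^T = (u(q,p)\cdot\gamma'(s_0))\gamma'(s_0)$ and $\mathit{grad}\,\mu(q) = \mu'(s_0)\gamma'(s_0)$, the identity reads $(u(q,p)\cdot\gamma'(s_0))\gamma'(s_0) = -\frac{\|p-q\|\mu'(s_0)}{\mu(q)+c}\gamma'(s_0)$, i.e. $u(q,p)\cdot\gamma'(s_0) = -\frac{\|p-q\|\mu'(s_0)}{\mu(q)+c}$. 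When $\mathit{grad}\,\mu(q)\neq 0$, orient $\gamma$ so that $\mu'(s_0) = \|\mathit{grad}\,\mu(q)\| > 0$ (allowed by Remark~3); then $\gamma'(s_0)$ is the unit vector in the direction of $\mathit{grad}\,\mu(q)$, so $u(q,p)\cdot\gamma'(s_0) = \cos\alpha(q,p)$ by definition of $\alpha$ (Notation~5), giving $\cos\alpha(q,p) = -\frac{\|p-q\|\,\|\mathit{grad}\,\mu(q)\|}{\mu(q)+c}$. Because $\|p-q\| > 0$, $\|\mathit{grad}\,\mu(q)\| \geq 0$, $\mu(q) > 0$, and $c\geq 0$, the right side lies in $[-1,0]$; hence $\cos\alpha(q,p)\in[-1,0]$ and therefore $\frac{\pi}{2}\leq\alpha(q,p)\leq\pi$. (Note the right side automatically has absolute value $\leq 1$ because the left side does, so no separate bound is needed.) When $\mathit{grad}\,\mu(q) = 0$, the identity forces $u(q,p)^T = 0$, meaning $u(q,p)\perp TK_q$, consistent with the convention $\alpha(q,p) = \frac{\pi}{2}$, and the displayed cosine formula reads $\cos\frac{\pi}{2} = 0$, which holds trivially.

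The computation is entirely routine; there is no real obstacle. The only points requiring a little care are: keeping track of the constant $2$ from differentiating $(\mu+c)^2$ (it cancels correctly against the $2$ in $E_p'$); making the orientation choice explicit so that the sign of $\cos\alpha$ comes out right; and handling the degenerate case $\mathit{grad}\,\mu(q) = 0$ separately, where the angle is assigned by convention rather than computed. I would present the forward direction as a chain of equalities and then remark that every step is an equivalence.
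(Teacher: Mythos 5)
Your proposal is correct and follows essentially the same route as the paper's proof: logarithmic differentiation of $F_p^c = E_p(\mu+c)^{-2}$ via the identity in Remark~3, identification of the tangential component $u(q,p)^T$, and reversibility of each step for the converse. The only cosmetic difference is in the second part, where you fix an orientation with $\mu'(s_0)>0$ before reading off $\cos\alpha$, while the paper instead dots the identity with $\mathit{grad}\,\mu(q)$ and divides by its norm; both are the same computation.
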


\begin{proof}
For a given $\gamma:I\rightarrow K$ with $q=\gamma(s_{0})$, $v=\gamma^{\prime
}(s_{0}),$ and $E(s)=\left\Vert p-\gamma(s)\right\Vert ^{2},$ one has
$E^{\prime}(s_{0})=2\left(  p-\gamma(s_{0})\right)  \cdot(-\gamma^{\prime
}(s_{0}))=2\left(  p-q\right)  \cdot(-v).$ If $q$ is a critical point of
$F_{p}^{c}(x),$ then $s_{0}$ is a critical point of
\[
F_{p}^{c}(\gamma(s))=\left\Vert p-\gamma(s)\right\Vert ^{2}(\mu(s))+c)^{-2}%
=E(s)(\mu(s))+c)^{-2}.
\]
By Remark 3:%
\begin{align*}
\frac{2\left(  p-q\right)  \cdot(-v)}{\left\Vert p-q\right\Vert ^{2}}  &
=\frac{E^{\prime}}{E}(s_{0})=\frac{\left(  (\mu(s))+c)^{2}\right)  ^{\prime}%
}{(\mu(s))+c)^{2}}(s_{0})=\frac{2\mu^{\prime}(s_{0})}{\mu(s_{0})+c}\\
-2u(q,p)\cdot v  &  =\left\Vert p-q\right\Vert \frac{2\mu^{\prime}(s_{0})}%
{\mu(s_{0})+c}=\left\Vert p-q\right\Vert \frac{2\mu^{\prime}(s_{0})v}%
{\mu(s_{0})+c}\cdot v\\
u(q,p)\cdot v  &  =-\left\Vert p-q\right\Vert \frac{\mathit{grad}\mu(q)}%
{\mu(q)+c}\cdot v\\
u(q,p)^{T}  &  =-\frac{\left\Vert p-q\right\Vert \mathit{grad}\mu(q)}%
{\mu(q)+c}%
\end{align*}

This argument is reversible for the converse. The statement for $\cos\alpha$
is obvious when $\mathit{grad}\mu(q)=0=u(q,p)^{T}.$ In the other case, we have
the following.%
\begin{align*}
\left\Vert \mathit{grad}\mu(q)\right\Vert \cos\alpha(q,p)  &  =u(q,p)\cdot
\mathit{grad}\mu(q)\\
&  =-\left\Vert p-q\right\Vert \frac{\mathit{grad}\mu(q)}{\mu(q)+c}%
\cdot\mathit{grad}\mu(q)\\
&  =-\frac{\left\Vert p-q\right\Vert \left\Vert \mathit{grad}\mu(q)\right\Vert
^{2}}{\mu(q)+c}%
\end{align*}

\end{proof}

\begin{proposition}
i. $p=\exp^{\mu}(q,w)$ if and only if

$\left\{
\begin{tabular}
[c]{ll}%
$q\in CP(p)$, $\left\Vert p-q\right\Vert =\left\Vert w\right\Vert \mu(q),$ and
$w=R\frac{u(q,p)^{N}}{\left\Vert u(q,p)^{N}\right\Vert }$ & when
$u(q,p)^{N}\neq0$\\
$q\in CP(p),$ and ($R=0$ or $R=\left\Vert \mathit{grad}\mu(q)\right\Vert
^{-1}$) & when $u(q,p)^{N}=0.$%
\end{tabular}
\ \ \right.  $

ii. If $p=\exp^{\mu}(q,Rv)$ for a unit vector $v$ and $R>0,$ then
\[
F_{p}(q)=R^{2}\text{ and }\cos\alpha(q,p)=-R\left\Vert \mathit{grad}%
\mu(q)\right\Vert =-\left\Vert u(q,p)^{T}\right\Vert \text{ and}%
\]%
\[
\exp^{\mu}(q,Rv)=\left\{
\begin{array}
[c]{cc}%
q+\mu(q)R\left(  \cos\alpha(q,p)\frac{\mathit{grad}\mu(q)}{\left\Vert
\mathit{grad}\mu(q)\right\Vert }+\sin\alpha(q,p)v\right)  & \text{if
}\mathit{grad}\mu(q)\neq0\\
q+\mu(q)Rv & \text{if }\mathit{grad}\mu(q)=0
\end{array}
\right.
\]

iii. $\exp^{\mu}:W\rightarrow\mathbf{R}^{n}$ is an onto map, where

$W=\{w\in NK_{q}:q\in K$ and $\left\Vert w\right\Vert \leq\left\Vert
\mathit{grad}\mu(q)\right\Vert ^{-1}$ when $\left\Vert \mathit{grad}%
\mu(q)\right\Vert \neq0\}.$

iv. $\exp^{\mu}$ is $C^{1}$ on the interior of $W$ and the differential
$d(\exp^{\mu})(q,\mathbf{0})=\mu(q)Id.$ Consequently, there exists
$\varepsilon>0,$ such that $\exp^{\mu}$ is a diffeomorphism on $\{w\in
NK_{q}:q\in K$ and $\left\Vert w\right\Vert <\varepsilon\}$ by the Inverse
Function Theorem.

v. If $\mathit{grad}\mu(q)=0,$ then $exp^{\mu}(NK_{q})$ is a $(n-1)-$%
dimensional plane normal to $K$ at $q.$ If $\mathit{grad}\mu(q)\neq0,$ then
$exp^{\mu}(NK_{q}\cap W)$ is a $(n-1)-$dimensional sphere normal to $K$ at
$q,$ with the radius $\frac{1}{2}\frac{\mu(q)}{\left\Vert \mathit{grad}%
\mu(q)\right\Vert }$ and the center at $q-\frac{1}{2}\frac{\mu(q)\mathit{grad}%
\mu(q)}{\left\Vert \mathit{grad}\mu(q)\right\Vert ^{2}}.$

vi. If $\mathit{grad}\mu(q)\neq0,$ then $exp^{\mu}(NK_{q}\cap W)\cap K$ has a
least two distinct points. Consequently, $TIR(K,\mu)<\frac{1}{\max_{q\in
K}\left\Vert \mathit{grad}\mu(q)\right\Vert }$.
\end{proposition}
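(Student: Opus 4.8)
The plan is to get parts (i)--(v) out of the explicit formula for $\exp^{\mu}$ combined with Lemma 1, and to save the real argument for (vi). For (i), write $g=\operatorname{grad}\mu(q)$ and $w=Rv$ with $v$ a unit normal; the definition shows $\exp^{\mu}(q,w)-q$ has tangential part $-\mu(q)R^{2}g$ and normal part $\mu(q)R\sqrt{1-\|g\|^{2}R^{2}}\,v$, so squaring gives $\|p-q\|=\mu(q)R=\mu(q)\|w\|$ and then $u(q,p)^{T}=-R g=-\frac{\|p-q\|}{\mu(q)}g$, which is exactly the criticality condition of Lemma 1 with $c=0$; hence $q\in CP(p)$, and $w$ is recovered from $p$ as stated (through $u(q,p)^{N}$ when it is nonzero, otherwise through the two values $R=0$ and $R=\|g\|^{-1}$ that kill the normal part). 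The converse runs the identities backwards: $q\in CP(p)$ forces $u(q,p)^{T}=-\frac{\|p-q\|}{\mu(q)}g$ via Lemma 1, so $\|u(q,p)^{N}\|^{2}=1-\|p-q\|^{2}\|g\|^{2}\mu(q)^{-2}$, and $p=q+\|p-q\|u(q,p)$ reassembles $\exp^{\mu}(q,w)$. Part (ii) is then immediate: $F_{p}(q)=\|p-q\|^{2}\mu(q)^{-2}=R^{2}$, dotting $u(q,p)=-Rg+\sqrt{1-R^{2}\|g\|^{2}}\,v$ with $g/\|g\|$ gives $\cos\alpha(q,p)=-R\|g\|=-\|u(q,p)^{T}\|$, and substituting $\cos\alpha=-R\|g\|$, $\sin\alpha=\sqrt{1-R^{2}\|g\|^{2}}$ into $q+\mu(q)R(\cos\alpha\,\tfrac{g}{\|g\|}+\sin\alpha\,v)$ reproduces the formula (the $g=0$ case is the second line of Definition 1 directly). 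For (iii), $F_{p}$ is continuous on the compact $K$, hence attains a minimum at some $q$; if $p=q$ then $\exp^{\mu}(q,\mathbf{0})=p$, and if $p\neq q$ then $q\in CP(p)$, Lemma 1 gives $\|p-q\|\|g\|\mu(q)^{-1}=|\cos\alpha(q,p)|\leq 1$, so the $w$ from (i) satisfies $\|w\|\leq\|g\|^{-1}$, i.e.\ $(q,w)\in W$. For (iv), the formula is built from $\mu,\gamma,\operatorname{grad}\mu$ and the function $\sqrt{1-\|g\|^{2}t}$ of $t=\|w\|^{2}$, smooth on the interior of $W$; since $\operatorname{grad}\mu$ is $C^{1}$ so is $\exp^{\mu}$ there, the $\|w\|^{2}$-term drops out of the derivative at $(q,\mathbf{0})$, the normal part linearizes to $\mu(q)w$, and moving along the zero section gives $\gamma'$, so the differential is a linear isomorphism (equal to $\mu(q)\,\mathrm{Id}$ on the fibre $NK_{q}$); the Inverse Function Theorem and compactness of $K$ then give a uniform $\varepsilon$.

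For (v), if $g=0$ the radical is $1$ and $\exp^{\mu}(q,w)=q+\mu(q)w$ ranges over the affine normal plane $q+NK_{q}$. If $g\neq0$, put $c_{0}=q-\tfrac12\mu(q)g\|g\|^{-2}$ and $\rho=\tfrac12\mu(q)\|g\|^{-1}$; using $w\perp g$, a direct computation yields $\|\exp^{\mu}(q,w)-c_{0}\|^{2}=\rho^{2}$ identically, so the image lies on that $(n-1)$-sphere. Surjectivity onto the whole sphere follows from the sweep $w=Rv$, $R\in[0,\|g\|^{-1}]$, $v$ in the unit sphere of $NK_{q}$: the $g$-component of $\exp^{\mu}(q,Rv)-c_{0}$ decreases monotonically from $+\rho$ (at $R=0$) to $-\rho$ (at $R=\|g\|^{-1}$), while the part orthogonal to $g$ sweeps every direction in $NK_{q}$. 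Finally $q-c_{0}$ is parallel to $g\in TK_{q}$, so the tangent plane of the sphere at $q$ equals $NK_{q}$; that is what "normal to $K$ at $q$" means, and the stated centre and radius are $c_{0}$ and $\rho$.

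The heart is (vi). Let $q_{0}$ be a point of $K$ where $\|\operatorname{grad}\mu\|$ is maximal, $g_{0}=\operatorname{grad}\mu(q_{0})\neq0$, $\rho_{0}=\|g_{0}\|^{-1}$. By (v), $S:=\exp^{\mu}(NK_{q_{0}}\cap W)$ is a round $(n-1)$-sphere through $q_{0}$ (image of $w=\mathbf{0}$) whose tangent plane at $q_{0}$ is $NK_{q_{0}}$, so $TK_{q_{0}}$ is the normal line of $S$ at $q_{0}$ and the component $K_{1}$ of $K$ through $q_{0}$ meets $S$ transversally there. Concretely, with $h(x)=\|x-c_{0}\|^{2}-\rho^{2}$ on $K_{1}$ one has $h(q_{0})=0$ and $h'(s_{0})=2(q_{0}-c_{0})\cdot\gamma'(s_{0})=\mu(q_{0})/\mu'(s_{0})\neq0$, a nondegenerate zero; hence $h$ takes both signs on $K_{1}$ near $q_{0}$, and since $K_{1}$ is a compact connected curve and $h$ is continuous, traversing $K_{1}$ away from $q_{0}$ forces a second zero $x_{1}\neq q_{0}$, i.e.\ a second point of $S\cap K$. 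Writing $x_{1}=\exp^{\mu}(q_{0},w_{1})$ with $\|w_{1}\|\leq\rho_{0}$ (by (i),(iii)) and noting $x_{1}=\exp^{\mu}(x_{1},\mathbf{0})$, we get $\exp^{\mu}(q_{0},w_{1})=\exp^{\mu}(x_{1},\mathbf{0})$ with distinct arguments, so $\exp^{\mu}$ fails to be injective on $D(r)$ once $r>\|w_{1}\|$; therefore $TIR(K,\mu)\leq\|w_{1}\|\leq\rho_{0}=1/\max_{q\in K}\|\operatorname{grad}\mu(q)\|$.

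The main obstacle is upgrading this to the strict inequality. The only case in which strictness is not automatic is $\|w_{1}\|=\rho_{0}$, which by (i) forces $x_{1}$ to be the ``antipode'' $q_{-}=q_{0}-\mu(q_{0})\|g_{0}\|^{-2}g_{0}$, now lying on $K$; since the collapsing is then on $\partial D(\rho_{0})$ rather than inside $D(\rho_{0})$, $\exp^{\mu}\mid D(\rho_{0})$ could a priori still be injective. To close this gap one exploits the extra structure this configuration carries: because $s_{0}$ maximizes $(\mu')^{2}$ and $\mu'(s_{0})\neq0$ one gets $\mu''(s_{0})=0$, the segment $\overline{q_{0}q_{-}}$ is tangent to $K$ at $q_{0}$, and a short computation with Remark 3 shows $F_{q_{-}}''(q_{0})=0$ while $F_{q_{-}}(q_{0})=\rho_{0}^{2}>0=\min_{K}F_{q_{-}}$; one then uses this degenerate focal-type configuration (for instance by perturbing the centre $c_{0}$, or by moving $q_{0}$ within the set where $\|\operatorname{grad}\mu\|$ is maximal, to produce a non-injectivity at some radius $r<\rho_{0}$, or by invoking that $q_{-}\in K$ forces $S\cap K$ to contain a third point) to conclude $TIR(K,\mu)<\rho_{0}$. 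Everything else in (vi) and all of (i)--(v) is direct; this boundary/antipode case is where the argument needs the most care.
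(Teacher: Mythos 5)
Parts (i)--(v) of your proposal are correct and follow essentially the same computations as the paper. In (vi), your replacement of the paper's mod-2 intersection-theory argument by the sign change of $h(x)=\|x-c_{0}\|^{2}-\rho^{2}$ along $K_{1}$ (nondegenerate zero at $q_{0}$, intermediate value theorem on the complementary arc) is a valid and more elementary way to produce the second point of $\exp^{\mu}(NK_{q}\cap W)\cap K$; note only that the statement asks for this at every $q$ with $\mathit{grad}\mu(q)\neq0$, not just at a maximizer of $\left\Vert \mathit{grad}\mu\right\Vert$, though your argument applies verbatim to any such $q$.

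The genuine gap is the strict inequality $TIR(K,\mu)<\rho_{0}$, which you explicitly leave open in the case $\left\Vert w_{1}\right\Vert =\rho_{0}$ (second intersection point equal to the antipode). None of the three remedies you list is developed, and at least one is not viable: $q_{-}\in K$ does not force a third point of $S\cap K$, since a transversal intersection at both $q_{0}$ and $q_{-}$ already has even mod-2 count; likewise the set where $\left\Vert \mathit{grad}\mu\right\Vert$ is maximal may be a single point, so there may be nothing to "move $q_{0}$" within. The paper avoids the case split entirely: let $q^{\prime}$ be any second intersection point and $U$ a neighborhood of $q^{\prime}$ in $K$ with $q\notin U$. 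The tube $\exp^{\mu}(\{(y,w):y\in U,\ \left\Vert w\right\Vert <\varepsilon\})$ is an open neighborhood of $q^{\prime}$ in $\mathbf{R}^{n}$, so it meets the sphere $S=\exp^{\mu}(NK_{q}\cap W)$ in a nonempty relatively open subset of $S$. Choose a point $z$ of this subset that is not the antipode of $q$ (possible since a nonempty open subset of an $(n-1)$-sphere is not a single point). Then $z=\exp^{\mu}(q,w_{z})$ with $\left\Vert w_{z}\right\Vert <\rho_{0}$ strictly, and also $z=\exp^{\mu}(y,w)$ with $y\in U$ and $\left\Vert w\right\Vert <\varepsilon$; these are distinct preimages both lying in $D(r)$ for some $r<\rho_{0}$, so injectivity fails strictly before $\rho_{0}$. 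You should replace your final paragraph with an argument of this kind (your own tube-plus-open-set version would do), since as written the antipodal case is not proved.
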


\begin{proof}
i. Assume that $p=\exp^{\mu}(q,w)$ for some $w\in NK_{q}$. $\mathit{grad}%
\mu(q)\in TK_{q}$ and $w\in NK_{q}.$
\begin{align*}
p-q  &  =-\mu(q)\left\Vert w\right\Vert ^{2}\mathit{grad}\mu(q)+\mu
(q)\sqrt{1-\left\Vert \mathit{grad}\mu(q)\right\Vert ^{2}\left\Vert
w\right\Vert ^{2}}w\\
\left\Vert p-q\right\Vert  &  =\mu(q)\left\Vert w\right\Vert \\
u(q,p)^{T}  &  =\left(  \frac{p-q}{\left\Vert p-q\right\Vert }\right)
^{T}=-\left\Vert w\right\Vert \mathit{grad}\mu(q)=-\frac{\left\Vert
p-q\right\Vert \mathit{grad}\mu(q)}{\mu(q)}%
\end{align*}
By Lemma 1, we conclude that $q\in CP(p).$

For the converse, assume that $q$ is a critical point of $F_{p}(x)$ for some
$p\in\mathbf{R}^{n}$ and $\left\Vert p-q\right\Vert =R\mu(q)$ for some $R.$

If $R=0,$ then $p=q=\exp^{\mu}(q,0).$

Suppose that $R>0$. By Lemma 1 for $c=0$, one obtains that
\begin{align*}
u(q,p)^{T}  &  =-\frac{\left\Vert p-q\right\Vert \mathit{grad}\mu(q)}{\mu
(q)}=-R\mathit{grad}\mu(q)\\
\cos\alpha(q,p)  &  =-R\left\Vert \mathit{grad}\mu(q)\right\Vert =-\left\Vert
u(q,p)^{T}\right\Vert \geq-1\\
\sin\alpha(q,p)  &  =\sqrt{1-\left\Vert \mathit{grad}\mu(q)\right\Vert
^{2}R^{2}}=\left\Vert u(q,p)^{N}\right\Vert .
\end{align*}

If $\sin\alpha(q,p)>0,$ then one takes $w=R\frac{u(q,p)^{N}}{\left\Vert
u(q,p)^{N}\right\Vert }$ so that $R=\left\Vert w\right\Vert $ and
\begin{align*}
p-q  &  =R\mu(q)u(q,p)=R\mu(q)\left(  u(q,p)^{T}+u(q,p)^{N}\right) \\
&  =-R^{2}\mu(q)\mathit{grad}\mu(q)+\mu(q)\left\Vert u(q,p)^{N}\right\Vert w\\
&  =\exp^{\mu}(q,w)-q.
\end{align*}

If $\sin\alpha(q,p)=0,$ then $\cos\alpha(q,p)=-1=-R\left\Vert \mathit{grad}%
\mu(q)\right\Vert .$%
\begin{align*}
u(q,p)  &  =u(q,p)^{T}=-\frac{\mathit{grad}\mu(q)}{\left\Vert \mathit{grad}%
\mu(q)\right\Vert }\\
p  &  =q+\left\Vert p-q\right\Vert u(p,q)=q-R\mu(q)\frac{\mathit{grad}\mu
(q)}{\left\Vert \mathit{grad}\mu(q)\right\Vert }=q-R^{2}\mu(q)\mathit{grad}%
\mu(q)\\
&  =\exp^{\mu}(q,Rv),\forall v\in UNK_{q}%
\end{align*}

ii. This follows the proof of (i).

iii. For every $p\in\mathbf{R}^{n},~$the continuous map $F_{p}:K\rightarrow
\mathbf{R}$ must have a minimum on compact $K$, and hence it has a critical
point $q\in K.$ By the construction in (i), $p=\exp^{\mu}(q,w)$ for some $w\in
NK_{q},$ and $\left\Vert w\right\Vert =R\leq\left\Vert \mathit{grad}%
\mu(q)\right\Vert ^{-1}.$

iv. $\exp^{\mu}(q,w)=q-\mu(q)\left\Vert w\right\Vert ^{2}\mathit{grad}%
\mu(q)+\mu(q)\sqrt{1-\left\Vert \mathit{grad}\mu(q)\right\Vert ^{2}\left\Vert
w\right\Vert ^{2}}w$ is $C^{1}$ except when $\left\Vert \mathit{grad}%
\mu(q)\right\Vert \left\Vert w\right\Vert =1.$ For a fixed $q\in K$, $v\in
UNK_{q}$ and taking $w=Rv,$%
\begin{align*}
&  \frac{d}{dR}\left.  \exp^{\mu}(q,Rv)\right\vert _{R=0}\\
&  =\frac{d}{dR}\left.  \left(  q-\mu(q)R^{2}\mathit{grad}\mu(q)+\mu
(q)\sqrt{1-\left\Vert \operatorname{grad}\mu(q)\right\Vert ^{2}R^{2}%
}vR\right)  \right\vert _{R=0}\\
&  =\mu(q)v
\end{align*}

v. $exp^{\mu}(NK_{q})$ is a $(n-1)-$dimensional is a plane normal to $K$ at
$q$ when $\mathit{grad}\mu(q)=0$ by the definition of $\exp^{\mu}$.

Assume that $\mathit{grad}\mu(q)\neq0,$\thinspace and choose an arbitrary
$v\in UNK_{q}$. For every $p=\exp^{\mu}(q,Rv),$ where $0\leq R\leq\left\Vert
\mathit{grad}\mu(q)\right\Vert ^{-1},$%
\begin{align*}
\cos(\pi-\alpha(q,p))  &  =R\left\Vert \mathit{grad}\mu(q)\right\Vert
=\frac{\left\Vert p-q\right\Vert }{\mu(q)}\left\Vert \mathit{grad}%
\mu(q)\right\Vert \\
\left\Vert p-q\right\Vert  &  =\frac{\mu(q)}{\left\Vert \mathit{grad}%
\mu(q)\right\Vert }\cos(\pi-\alpha(q,p))
\end{align*}
where $\mu(q)\left\Vert \mathit{grad}\mu(q)\right\Vert ^{-1}$ does not depend
on $p.$ This is an equation of a semi-circle in the polar coordinates of the
2-plane passing through $q$ and parallel to $\mathit{grad}\mu(q)$ and $v,$
where $q$ is the origin, $\theta$ is angle from $-\mathit{grad}\mu
(q)\left\Vert \mathit{grad}\mu(q)\right\Vert ^{-1}$ turning towards $v$, and
$r=\left\Vert p-q\right\Vert .$ The radius of the circle is $\frac{1}{2}%
\mu(q)\left\Vert \mathit{grad}\mu(q)\right\Vert ^{-1}$, the center is at
$q-\frac{1}{2}\mu(q)\mathit{grad}\mu(q)\left\Vert \mathit{grad}\mu
(q)\right\Vert ^{-2}$, and the circle is tangent to $v$ at $q.$ Since the
center and the radius depend only on $q$ and not on $v,$ one concludes that
$exp^{\mu}(NK_{q}\cap W)$ is a $(n-1)-$dimensional sphere normal to $K$ at
$q.$

vi. Intuitively, since $K$ goes into $\exp^{\mu}(NK_{q}\cap W)$ (an $(n-1)-
$dimensional plane sphere in $\mathbf{R}^{n}$) transversally at $q,$ it has to
come out of it somewhere else. By using the mod-2 intersection theory [G],
page 77, the mod 2 intersection number of $K$ and $\exp^{\mu}(NK_{q}\cap W)$
must be zero mod 2, since one can isotope two compact submanifolds away from
each other in $\mathbf{R}^{n}.$ Since $q\in\exp^{\mu}(NK_{q}\cap W)$, and the
intersection of $K$ and $\exp^{\mu}(NK_{q}\cap W)$ is transversal at $q$, the
number of points in $K\cap\exp^{\mu}(NK_{q}\cap W)$ is more than 1. For
another point $q^{\prime}\in K\cap\exp^{\mu}(NK_{q}\cap W),$ and for every
open neighborhood $U$ of $q^{\prime}$ in $K$ with $q\notin U,$ $\exp^{\mu
}(\{(y,w)\in NK:y\in U$ and $\left\Vert w\right\Vert <\varepsilon\})$
intersects $\exp^{\mu}(NK_{q}\cap W)$ along an open subset. The injectivity of
$\exp^{\mu}$ must fail strictly before reaching $q^{\prime}$ and the antipodal
point of $q$ in $\exp^{\mu}(NK_{q}\cap W)$, that is when $R=\left\Vert
\mathit{grad}\mu(q)\right\Vert ^{-1}$.
\end{proof}

\begin{corollary}
By the proof of Proposition 1 (iii), for every $p\in O(K,\mu R),$ there exists
$q\in K$ and $v\in UNK_{q}$ such that $p=\exp^{\mu}(q,rv)$ for some
$r=\sqrt{G(p)}<R.$ Consequently, $\exp^{\mu}(D(R))=O(K,\mu R)=G^{-1}%
([0,R^{2})),$ for all $R>0.$
\end{corollary}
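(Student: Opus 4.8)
The plan is to extract the statement from the argument already given for Proposition 1(iii), making explicit the radius $r=\sqrt{G(p)}$ at which the preimage sits, and then to append the (essentially definitional) reverse inclusion. First I would prove $O(K,\mu R)\subseteq\exp^{\mu}(D(R))$: fix $p\in O(K,\mu R)$; by Definition 3(i) there is $q_{0}\in K$ with $\|p-q_{0}\|<\mu(q_{0})R$, hence $F_{p}(q_{0})<R^{2}$ and $G(p)=\min_{x\in K}F_{p}(x)<R^{2}$. Since $K$ is compact and $F_{p}$ continuous, this minimum is attained at some $q\in K$, so $q\in CP(p)$ and $\|p-q\|=\sqrt{G(p)}\,\mu(q)$. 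Put $r=\sqrt{G(p)}$. The construction in the proof of Proposition 1(i),(iii) then produces $w\in NK_{q}$ with $p=\exp^{\mu}(q,w)$ and $\|w\|=r$: if $u(q,p)^{N}\neq0$, take $w=r\,u(q,p)^{N}/\|u(q,p)^{N}\|$; if $u(q,p)^{N}=0$, then either $r=0$ and $p=q=\exp^{\mu}(q,\mathbf{0})$, or $r=\|\mathit{grad}\,\mu(q)\|^{-1}$ and $p=\exp^{\mu}(q,rv)$ for any $v\in UNK_{q}$. In all cases $\|w\|=r<R$, so $(q,w)\in D(R)$, and $r\leq\|\mathit{grad}\,\mu(q)\|^{-1}$ holds automatically, since Lemma 1 gives $\cos\alpha(q,p)=-r\|\mathit{grad}\,\mu(q)\|$ with $|\cos\alpha(q,p)|\leq1$; thus $(q,w)$ lies in the domain $W$ of $\exp^{\mu}$ and $p\in\exp^{\mu}(D(R))$.

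For the reverse inclusion I would take $(q,w)$ in $D(R)$ and in the domain of $\exp^{\mu}$, set $p=\exp^{\mu}(q,w)$, and apply Proposition 1(ii), which gives $\|p-q\|=\mu(q)\|w\|$, so that $\|p-q\|<\mu(q)R$ and $p\in B(q,\mu(q)R)\subseteq O(K,\mu R)$ (the case $\|w\|=0$ being trivial). Together with the first inclusion this gives $\exp^{\mu}(D(R))=O(K,\mu R)$, and the identity $O(K,\mu R)=G^{-1}([0,R^{2}))$ follows immediately from the equivalences $p\in O(K,\mu R)\iff(\exists q\in K:F_{p}(q)<R^{2})\iff G(p)<R^{2}$ already recorded in Definition 3.

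There is no genuinely hard step here: the corollary is bookkeeping on top of Proposition 1. The only points requiring care are the case split when invoking Proposition 1(i) — separating $u(q,p)^{N}=0$ from $u(q,p)^{N}\neq0$, and $r=0$ from $r>0$ — and the observation that the minimizing critical point always yields a preimage inside $W$, not merely inside $D(R)$, which is precisely what $|\cos\alpha(q,p)|\leq1$ ensures. To keep the equality unambiguous when $R>\min_{q\in K}\|\mathit{grad}\,\mu(q)\|^{-1}$, I would read $\exp^{\mu}(D(R))$ as $\exp^{\mu}(D(R)\cap W)$; the first inclusion shows every $p\in O(K,\mu R)$ still has a preimage in that set.
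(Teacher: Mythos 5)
Your proposal is correct and follows essentially the same route as the paper: the corollary is exactly the observation that the minimizing critical point produced in the proof of Proposition 1(iii) sits at weighted distance $r=\sqrt{G(p)}<R$ when $p\in O(K,\mu R)$, combined with $\Vert p-q\Vert=\mu(q)\Vert w\Vert$ for the reverse inclusion and the definitional identity $O(K,\mu R)=G^{-1}([0,R^{2}))$. The case split on $u(q,p)^{N}$ and the remark that $\lvert\cos\alpha(q,p)\rvert\leq1$ keeps the preimage inside $W$ are careful but consistent with what Proposition 1 already establishes.
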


\begin{lemma}
i. $(q_{1},q_{2})$ is a double critical pair for $(K,\mu)$ if and only if
there exists $R>0$ and $p$ on the line segment joining $q_{1}$ and $q_{2}$
such that $\left\Vert p-q_{i}\right\Vert =R\mu(q_{i})$ and $p=\exp^{\mu}%
(q_{i},Rv_{i})$ with $v_{i}\in UNK_{q_{i}}$ for $i=1$ and $2.$ Consequently,
$(q_{1},q_{2})$ is a double critical pair for $(K,\mu)$ if and only if
$q_{1},q_{2}\in CP(p)$ and $F_{p}(q_{1})=F_{p}(q_{2})>0.$

ii. If $(q_{1},q_{2})$ is a double critical pair for $(K,\mu),$ then for $i=1$
and $2,$
\[
\cos\alpha(q_{i},p)=-\frac{\left\Vert q_{1}-q_{2}\right\Vert \left\Vert
\mathit{grad}\mu(q_{i})\right\Vert }{\mu(q_{1})+\mu(q_{2})}=\frac{\left\Vert
p-q_{i}\right\Vert \left\Vert \mathit{grad}\mu(q_{i})\right\Vert }{\mu(q_{i}%
)}=-R\left\Vert \mathit{grad}\mu(q_{i})\right\Vert .
\]

\end{lemma}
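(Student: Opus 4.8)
The plan is to deduce both parts from Lemma~1 and Proposition~1 by recognizing the restriction $x\mapsto\Sigma(x,q_{2})$ as one of the auxiliary functions $F_{p}^{c}(x)=\Vert p-x\Vert^{2}(\mu(x)+c)^{-2}$ introduced in Section~2. Fix local arclength parametrizations $\gamma_{1},\gamma_{2}$ of $K$ with $q_{i}=\gamma_{i}(s_{i})$, and recall that, by its definition, $(q_{1},q_{2})$ is a double critical pair precisely when $q_{1}\neq q_{2}$ and $\nabla\sigma(s_{1},s_{2})=0$, where $\sigma(s,t)=\Vert\gamma_{1}(s)-\gamma_{2}(t)\Vert^{2}(\mu(\gamma_{1}(s))+\mu(\gamma_{2}(t)))^{-2}$. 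For a fixed value $t=s_{2}$ the function $s\mapsto\sigma(s,s_{2})$ equals $F_{p}^{c}$ evaluated along $\gamma_{1}$ with base point $p=q_{2}$ and constant $c=\mu(q_{2})$, so Lemma~1 shows that $\partial_{s}\sigma(s_{1},s_{2})=0$ is equivalent to $u(q_{1},q_{2})^{T}=-\Vert q_{1}-q_{2}\Vert\,\mathit{grad}\mu(q_{1})\,(\mu(q_{1})+\mu(q_{2}))^{-1}$, and, symmetrically, $\partial_{t}\sigma(s_{1},s_{2})=0$ is equivalent to $u(q_{2},q_{1})^{T}=-\Vert q_{1}-q_{2}\Vert\,\mathit{grad}\mu(q_{2})\,(\mu(q_{1})+\mu(q_{2}))^{-1}$.

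Next I introduce the candidate radius and point: set $R=\Vert q_{1}-q_{2}\Vert(\mu(q_{1})+\mu(q_{2}))^{-1}$ and let $p$ divide the segment $\overline{q_{1}q_{2}}$ in the ratio $\mu(q_{1}):\mu(q_{2})$, so that $\Vert p-q_{i}\Vert=R\mu(q_{i})$ and hence $F_{p}(q_{i})=R^{2}$ for $i=1,2$. When $q_{1}\neq q_{2}$ this $R$ is positive, $p$ lies strictly between $q_{1}$ and $q_{2}$, and so $u(q_{i},p)=u(q_{i},q_{j})$ whenever $\{i,j\}=\{1,2\}$. Applying Lemma~1 a second time, now with base point $p$ and $c=0$, the condition $q_{1}\in CP(p)$ becomes $u(q_{1},p)^{T}=-\Vert p-q_{1}\Vert\,\mathit{grad}\mu(q_{1})\,\mu(q_{1})^{-1}=-R\,\mathit{grad}\mu(q_{1})$; substituting $\Vert p-q_{1}\Vert=R\mu(q_{1})$, the value of $R$, and $u(q_{1},p)=u(q_{1},q_{2})$, this is precisely the first identity of the previous paragraph, and likewise $q_{2}\in CP(p)$ corresponds to $\partial_{t}\sigma(s_{1},s_{2})=0$. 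Thus $(q_{1},q_{2})$ is a double critical pair if and only if $q_{1}\neq q_{2}$ and $q_{1},q_{2}\in CP(p)$ for this particular $p$.

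It remains to pass between ``$q_{i}\in CP(p)$ together with $\Vert p-q_{i}\Vert=R\mu(q_{i})$ and $R>0$'' and ``$p=\exp^{\mu}(q_{i},Rv_{i})$ for some $v_{i}\in UNK_{q_{i}}$'', which is exactly Proposition~1(i): in the branch $u(q_{i},p)^{N}\neq0$ one takes $v_{i}=\frac{u(q_{i},p)^{N}}{\Vert u(q_{i},p)^{N}\Vert}$, while in the branch $u(q_{i},p)^{N}=0$ one has $R=\Vert\mathit{grad}\mu(q_{i})\Vert^{-1}$ and any unit normal $v_{i}$ serves. For the converse half of (i), if $R>0$, $p$ lies on $\overline{q_{1}q_{2}}$, $\Vert p-q_{i}\Vert=R\mu(q_{i})$, and $p=\exp^{\mu}(q_{i},Rv_{i})$ with $v_{i}\in UNK_{q_{i}}$, then Proposition~1(i) gives $q_{i}\in CP(p)$, while $p$ being on the segment forces $\Vert q_{1}-q_{2}\Vert=R\mu(q_{1})+R\mu(q_{2})>0$, so $q_{1}\neq q_{2}$ and $R$ is the value above; hence $\nabla\sigma(s_{1},s_{2})=0$ and $(q_{1},q_{2})$ is a double critical pair. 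The ``consequently'' clause is the same statement read through Proposition~1(i),(ii): for $R>0$, ``$p=\exp^{\mu}(q_{i},Rv_{i})$ for some $v_{i}\in UNK_{q_{i}}$'' is equivalent to ``$q_{i}\in CP(p)$ and $F_{p}(q_{i})=R^{2}$'', and the common value is $R^{2}=F_{p}(q_{1})=F_{p}(q_{2})>0$.

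Part (ii) is then immediate: by (i), $p=\exp^{\mu}(q_{i},Rv_{i})$, so Proposition~1(ii) gives $\cos\alpha(q_{i},p)=-R\Vert\mathit{grad}\mu(q_{i})\Vert$, and substituting $R=\Vert q_{1}-q_{2}\Vert(\mu(q_{1})+\mu(q_{2}))^{-1}$ and $\Vert p-q_{i}\Vert=R\mu(q_{i})$ yields the remaining two forms (the middle one carrying a minus sign). The only point needing care is the sign bookkeeping relating $u(q_{i},p)$, $u(q_{i},q_{j})$, and the gradient terms, together with keeping the two branches of Proposition~1(i) apart in the degenerate case $u(q_{i},p)^{N}=0$ --- i.e.\ when $\mathit{grad}\mu(q_{i})\neq0$ and the chord $\overline{q_{1}q_{2}}$ is tangent to $K$ at $q_{i}$; there is no deeper obstacle.
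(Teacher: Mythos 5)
Your proposal is correct and follows essentially the same route as the paper: identify the restriction of $\Sigma$ as $F_{q_2}^{\mu(q_2)}$, apply Lemma~1 once with $c=\mu(q_2)$ at base point $q_2$ and once with $c=0$ at the intermediate point $p$ dividing $\overline{q_1q_2}$ in the ratio $\mu(q_1):\mu(q_2)$, and then pass to the $\exp^{\mu}$ formulation via Proposition~1. Your treatment is a bit more explicit than the paper's (notably on the converse, the degenerate branch $u(q_i,p)^N=0$, and the sign in the middle expression of (ii)), but there is no substantive difference.
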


\begin{proof}
Assume that $(q_{1},q_{2})$ is a double critical pair for $(K,\mu)$ and take
$R=\frac{\left\Vert q_{1}-q_{2}\right\Vert }{\mu(q_{1})+\mu(q_{2})}.$ There
exists a unique $p$ on the line segment joining $q_{1}$ and $q_{2}$ such that
$\left\Vert p-q_{i}\right\Vert =R\mu(q_{i})$ for $i=1$ and $2.$ Let $q_{2}$ be
fixed. $\mathit{grad}\Sigma(x,q_{2})\mid_{x=q_{1}}=0,$ that is $q_{1}$ is a
critical point of $\left(  \frac{\left\Vert x-q_{2}\right\Vert }{\mu
(x)+\mu(q_{2})}\right)  ^{2}=F_{q_{2}}^{\mu(q_{2})}(x).$ By Lemma 1,
\begin{align*}
u(q_{1},p)^{T}  &  =u(q_{1},q_{2})^{T}=-\frac{\left\Vert q_{1}-q_{2}%
\right\Vert \mathit{grad}\mu(q_{1})}{\mu(q_{1})+\mu(q_{2})}\\
&  =-R\mathit{grad}\mu(q_{1})=-\frac{\left\Vert q_{1}-p\right\Vert
\mathit{grad}\mu(q_{1})}{\mu(q_{1})}%
\end{align*}
and consequently $q_{1}\in CP(p).$ By Proposition 1, $p=\exp^{\mu}%
(q_{1},Rv_{1})$ for some $v_{1}\in UNK_{q_{1}}$. The $q_{2}$ case is similar.
This argument is reversible for the converse. The second statement of (i) and
(ii) are straightforward by using Lemma 1.
\end{proof}

\begin{lemma}
Let $A,B,C\in\mathbf{R}$ \ with $A,B\geq0$, $f(t)=1-\frac{1}{2}Ct^{2}%
-At\sqrt{1-B^{2}t^{2}}$ for $t\in I,$ where $I=[0,\frac{1}{B}]$ if $B>0,$ and
$I=[0,\infty)$ if $B=0.$

i. The equation (3.1) has no solution when $\frac{C}{2}+\frac{A^{2}}{4}%
-B^{2}<0$ or $A=C=0:$
\begin{equation}
1-\frac{1}{2}Ct^{2}-At\sqrt{1-B^{2}t^{2}}=0\text{ for }t\in I.
\end{equation}
Assume $A^{2}+C^{2}\neq0$ and $\frac{C}{2}+\frac{A^{2}}{4}-B^{2}\geq0$ for the
rest of the lemma.

ii. $\frac{C}{2}+\frac{A^{2}}{2}>0,$ and $\frac{C}{2}+\frac{A^{2}}{2}\geq
A\sqrt{\frac{C}{2}+\frac{A^{2}}{4}-B^{2}},$ where the equality occurs if and
only if $B=C=0<A.$

iii. The equation (3.1), $f(t)=0$ has at most 2 solutions on $I$, and they are
in the form $t_{0}^{+}$ or $t_{0}^{-}$ when they exist:
\[
t_{0}^{\pm}=\left(  \frac{C}{2}+\frac{A^{2}}{2}\pm A\sqrt{\frac{C}{2}%
+\frac{A^{2}}{4}-B^{2}}\right)  ^{-\frac{1}{2}}.
\]

Both $t_{0}^{+}$ and $t_{0}^{-}$ are the solutions of (3.1) unless $B=C=0$
($t_{0}^{-}=\infty\notin\mathbf{R}$). $t_{0}^{-}=\frac{1}{B}$ if and only if
$2B^{2}=C\neq0.$ Also, $t_{0}^{\pm}=\frac{1}{B}$ if and only if $2B^{2}%
=C\neq0=A.$

iv. $f^{\prime}(t)=0$ has at most one solution on $(0,\frac{1}{B}).$

v. If $B=C=0<A,$ then $t_{0}^{+}=\frac{1}{A}$ is the only solution of (3.1),

\qquad and $f(t)<0\Longleftrightarrow t_{0}^{+}<t.$

vi. If $\frac{C}{2}+\frac{A^{2}}{4}-B^{2}=0,$ then $t_{0}^{+}=t_{0}^{-}$ is
the only solution of (3.1),

\qquad and $f(t)>0$, for all $t\neq t_{0}^{+}.$

vii. If $\frac{C}{2}+\frac{A^{2}}{4}-B^{2}>0$ and $B^{2}+C^{2}\neq0$ then both
$t_{0}^{+}<t_{0}^{-}$ are the solutions of (3.1), and
$f(t)<0\Longleftrightarrow t_{0}^{+}<t<t_{0}^{-}$.
\end{lemma}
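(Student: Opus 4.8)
The plan is to pivot everything on one quadratic. Since $f(0)=1\neq 0$, every zero of $f$ lies in $(0,\infty)$, where $f(t)=0$ is equivalent to $1-\tfrac12Ct^{2}=At\sqrt{1-B^{2}t^{2}}$; squaring and dividing by $t^{4}$ (using $t\neq 0$), with $w=t^{-2}$, turns this into $w^{2}-(C+A^{2})w+\bigl(\tfrac14C^{2}+A^{2}B^{2}\bigr)=0$, whose discriminant is $(C+A^{2})^{2}-(C^{2}+4A^{2}B^{2})=4A^{2}\bigl(\tfrac{C}{2}+\tfrac{A^{2}}{4}-B^{2}\bigr)$ and whose roots are precisely $w^{\pm}=\tfrac{C}{2}+\tfrac{A^{2}}{2}\pm A\sqrt{\tfrac{C}{2}+\tfrac{A^{2}}{4}-B^{2}}=(t_{0}^{\pm})^{-2}$. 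This already yields the explicit formula for $t_{0}^{\pm}$ and the ``at most two solutions'' clause of (iii). For (i), when $\tfrac{C}{2}+\tfrac{A^{2}}{4}-B^{2}<0$ and $A\neq0$ the leading coefficient $\tfrac14C^{2}+A^{2}B^{2}$ is positive while the discriminant is negative, so $\bigl(1-\tfrac12Ct^{2}\bigr)^{2}>A^{2}t^{2}(1-B^{2}t^{2})\ge 0$ everywhere and $f$ has no zero; the cases $A=0$ (where $f=1-\tfrac12Ct^{2}$ is either everywhere positive on $I$ or has its zero $\sqrt{2/C}$ lying outside $I$) and $A=C=0$ ($f\equiv1$) are immediate.

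For (ii) I would expand the two identities $\bigl(\tfrac{C}{2}+\tfrac{A^{2}}{2}\bigr)^{2}-A^{2}\bigl(\tfrac{C}{2}+\tfrac{A^{2}}{4}-B^{2}\bigr)=\tfrac14C^{2}+A^{2}B^{2}\ge 0$ and $\tfrac{C}{2}+\tfrac{A^{2}}{2}\ge\bigl(B^{2}-\tfrac{A^{2}}{4}\bigr)+\tfrac{A^{2}}{2}=B^{2}+\tfrac{A^{2}}{4}\ge 0$; the standing hypothesis $A^{2}+C^{2}\neq0$ makes the second strict, and the first gives the asserted inequality with equality exactly when $C=0=AB$, i.e.\ $B=C=0<A$. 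Consequently $w^{+}>0$ always and $w^{-}\ge 0$, with $w^{-}=0$ (so $t_{0}^{-}=+\infty$) only in that degenerate case, which is the parenthetical in (iii). The rest of (iii) I read off from $w^{\pm}-B^{2}=\bigl(\sqrt{D}\pm\tfrac{A}{2}\bigr)^{2}$ and $w^{\pm}-\tfrac{C}{2}=\tfrac{A^{2}}{2}\pm A\sqrt{D}$, with $D:=\tfrac{C}{2}+\tfrac{A^{2}}{4}-B^{2}\ge0$: the first shows $w^{\pm}\ge B^{2}$, so $t_{0}^{\pm}\le 1/B$ whenever real, and that $t_{0}^{-}=1/B\iff D=\tfrac{A^{2}}{4}\iff 2B^{2}=C$, while $t_{0}^{\pm}=1/B\iff A=0,\ 2B^{2}=C$; the identity $w^{+}-\tfrac{C}{2}\ge0$ shows $1-\tfrac12C(t_{0}^{+})^{2}\ge0$, so $t_{0}^{+}$ is always a true zero of $f$, while whether $t_{0}^{-}$ is a true zero (rather than a root of $1-\tfrac12Ct^{2}+At\sqrt{1-B^{2}t^{2}}$ created by squaring) is decided by the sign of $w^{-}-\tfrac{C}{2}=\tfrac{A^{2}}{2}-A\sqrt{D}$.

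For (iv) and the sign statements (v)--(vii) I would use the shape of $f$. When $B>0$, substituting $t=\tfrac1B\sin\theta$, $\theta\in[0,\tfrac{\pi}{2}]$, gives $f=\bigl(1-\tfrac{C}{4B^{2}}\bigr)+\rho\cos(2\theta+\psi)$ with $\rho=\sqrt{(\tfrac{C}{4B^{2}})^{2}+(\tfrac{A}{2B})^{2}}>0$; since $2\theta$ sweeps the open interval $(0,\pi)$ as $t$ sweeps $(0,1/B)$, the derivative $-2\rho\sin(2\theta+\psi)$ vanishes at most once there, which is (iv), and $f=\mathrm{const}$ has at most two roots there. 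When $B=0$, $f$ is a nonconstant polynomial of degree $\le 2$, so again $f'$ has at most one zero. Combined with $f(0)=1>0$ and $f'(0)=-A\le0$, the unique possible interior critical point can only be a minimum, so the zero set of $f$ on $I$ is $\varnothing$, a single (tangency) point, or a pair bracketing that minimum; matching this trichotomy with $D$ finishes it: $B=C=0<A$ makes $f(t)=1-At$ linear and gives (v); $D=0$ forces $t_{0}^{+}=t_{0}^{-}$ and the squared quadratic to be a perfect square, so $\bigl(1-\tfrac12Ct^{2}\bigr)^{2}\ge A^{2}t^{2}(1-B^{2}t^{2})$ with equality only at $t_{0}^{+}$, and a short check that $1-\tfrac12Ct^{2}>0$ on $I\setminus\{t_{0}^{+}\}$ (which holds since $C\le 2B^{2}$ here) gives (vi); and $D>0$ with $B^{2}+C^{2}\neq0$ is the remaining case (vii).

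The one genuinely delicate point, and where I expect essentially all of the residual case analysis to sit, is precisely this last matching: checking that the two roots produced by squaring really are zeros of $f$ and not of $1-\tfrac12Ct^{2}+At\sqrt{1-B^{2}t^{2}}$, which hinges on the sign of $\tfrac{A^{2}}{2}-A\sqrt{D}$ (equivalently of $C-2B^{2}$) at $t_{0}^{-}$, together with tracking the behaviour at the endpoint $t=1/B$ and in the degenerate regimes $A=0$, $B=0$, $C=0$, $2B^{2}=C$ in which $t_{0}^{-}$ can fail to be a true zero or can coalesce with $t_{0}^{+}$ or with $1/B$; each of these has to be settled by hand.
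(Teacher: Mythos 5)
Your route is the same as the paper's --- the paper's entire proof is the instruction to square, obtain a quadratic in $t^{2}$, solve for $u=1/t^{2}$, and substitute $t=\frac{1}{B}\sin\theta$ for (iv), with ``the rest is elementary and long'' --- and your algebra (the quadratic $w^{2}-(C+A^{2})w+(\tfrac{1}{4}C^{2}+A^{2}B^{2})=0$, its discriminant $4A^{2}D$, and the identities $w^{\pm}-B^{2}=(\sqrt{D}\pm\tfrac{A}{2})^{2}$ and $w^{\pm}-\tfrac{C}{2}=\tfrac{A^{2}}{2}\pm A\sqrt{D}$) is correct and already carries (i), (ii), (iv), (v) and (vi).

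The gap is exactly the step you defer to be ``settled by hand,'' and it does not settle in the direction parts (iii) and (vii) assert. Your identity $w^{-}-\tfrac{C}{2}=A(\tfrac{A}{2}-\sqrt{D})$ shows that $1-\tfrac{1}{2}C(t_{0}^{-})^{2}<0$ whenever $A>0$ and $C>2B^{2}$, so in that regime $t_{0}^{-}$ is a root of $1-\tfrac{1}{2}Ct^{2}+At\sqrt{1-B^{2}t^{2}}$ and not of (3.1). That regime is permitted by the hypotheses of (iii) and (vii): for $A=C=1$, $B=0$ one has $D=\tfrac{3}{4}>0$ and $B^{2}+C^{2}\neq0$, yet $f(t)=1-t-\tfrac{1}{2}t^{2}$ has the single root $t_{0}^{+}=\sqrt{3}-1$ on $I$ and is negative for every larger $t$, while $t_{0}^{-}=(1-\tfrac{\sqrt{3}}{2})^{-1/2}\approx 2.73$ is not a root. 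Consequently the clause ``both $t_{0}^{+}$ and $t_{0}^{-}$ are the solutions of (3.1) unless $B=C=0$'' and the equivalence $f(t)<0\Longleftrightarrow t_{0}^{+}<t<t_{0}^{-}$ in (vii) can only be established under the extra condition $C\leq 2B^{2}$ (in the application, $\mu\mu''\leq0$); when $A>0$ and $C>2B^{2}$ the correct conclusion is that $t_{0}^{+}$ is the unique root and $f<0$ on all of $I\cap(t_{0}^{+},\infty)$, which is exactly the behavior the paper itself allows in case (b) of Proposition 2(ii). For the same reason your trichotomy ``empty, a single tangency, or a pair bracketing the minimum'' omits the single transversal crossing. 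So the defect lies in the lemma's statement rather than in your computations, but as a proof of the lemma as written the plan cannot be completed.
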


\begin{proof}
Squaring both sides of $1-\frac{1}{2}Ct^{2}=At\sqrt{1-B^{2}t^{2}}$ gives a
quadratic equation in $t^{2},$ and then solve for $u=1/t^{2}.$ For (iv),
substitute $t=\frac{1}{B}\sin\theta.$ The rest is elementary and long.
\end{proof}

\begin{figure}
[ptb]
\begin{center}
\includegraphics[
height=3.0191in,
width=4.3414in
]%
{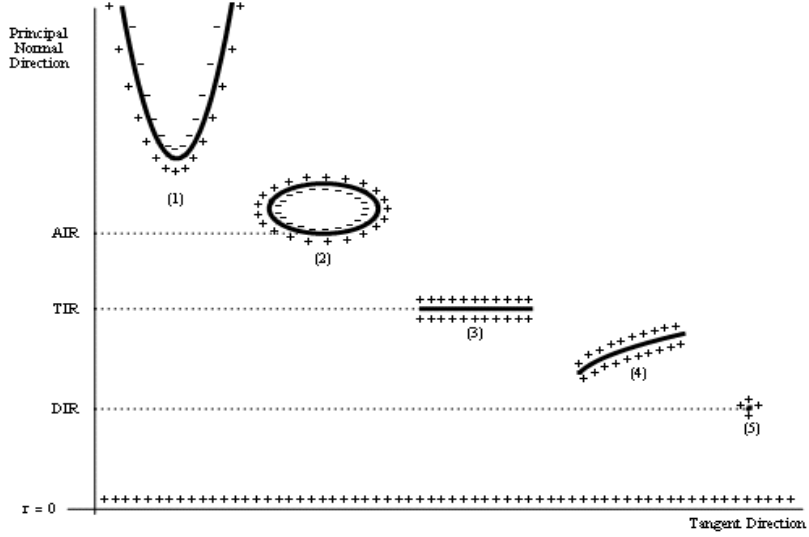}%
\caption{{\protect\small An example of the graph of the singular set in the
domain of }$\exp^{\mu}${\protect\small \ along the principal normal direction
}$N${\protect\small \ of a curve }$\gamma${\protect\small \ of positive
curvature is shown, as indicated in Proposition 2 and 5(ii). It is assumed
that }$DCSD${\protect\small \ is larger than }$2FocRad^{-}$%
{\protect\small \ in this example in order to indicate exact values of }%
$AIR${\protect\small , }$TIR${\protect\small , and }$DIR${\protect\small . The
second derivative of the squared weighted distance function }$\left\Vert
p-x\right\Vert ^{2}/\mu^{2}(x)${\protect\small \ is }$0$%
{\protect\small \ along the singular set, and its signs at nearby points are
indicated. Type (1) is the most common behavior, it is the only possibility
when }$\mu${\protect\small \ is sufficiently close to a constant, and it is
the graph of }$1/\kappa${\protect\small \ when }$\mu=1${\protect\small . The
"positive to negative and then to back to positive" behavior shown in (2)
occurs in Figure 8 (see Example 3), and Figure 11 (see Example 6). (3) depicts
the Horizontal Collapsing Property, as in Figure 7 (see Example 1A) and Figure
5 (Example 1B). (5) is a "Fake" focal point around which the }$\mu
${\protect\small -exponential map is a local homeomorphism but not a local
diffeomorphism, as in Figure 10, (see Example 4).}}%
\end{center}
\end{figure}
%EndExpansion

\begin{proposition}
Let a local parametrization $\gamma:I\rightarrow K$ with respect to arclength
$s$ be given, $\kappa(s)$ denote the curvature of $K$ at $\gamma(s),$
$\mu(s)=\mu(\gamma(s)):I\rightarrow\mathbf{R}^{+}\mathbf{,}$ and
$q=\gamma(s_{0}).$

i. If $p=\exp^{\mu}(q,Rv)$ for some $R\in(0,\left\Vert \mathit{grad}%
\mu(q)\right\Vert ^{-1})$ and $v\in UNK_{q},$ then
\[
F_{p}^{\prime\prime}(s_{0})=\frac{2}{\mu^{2}(s_{0})}\left(  1-\kappa
(s_{0})R\mu(s_{0})\sqrt{1-\left\Vert \mathit{grad}\mu(s_{0})\right\Vert
^{2}R^{2}}\cos\beta-\frac{R^{2}}{2}(\mu^{2})^{\prime\prime}(s_{0})\right)
\]
where $\beta=\measuredangle(\gamma^{\prime\prime}(s_{0}),u(q,p)^{N})$ when
both vectors are non-zero, and $\beta=0$ otherwise.

ii. Let $q$ and $v\in UNK_{q}$ be fixed, and $R$ vary. For $p(R)=\exp^{\mu
}(q,Rv),$ the sign of $\frac{d^{2}}{ds^{2}}\left.  F_{p(R)}(s)\right\vert
_{s=s_{0}}$ behaves in only one of the following manners, and in all cases
$q\in CP(q,+)$ at $R=0$:

\qquad a. $\forall R,$ $q\in CP(p(R),+)$

\qquad b. $\exists R_{1}>0,$ such that
\[
q\in\left\{
\begin{array}
[c]{cc}%
CP(p(R),+) & \text{if }R\in(0,R_{1})\text{ \ \ \ \ \ \ \ \ \ \ \ \ }\\
CP(p(R),0) & \text{if }R=R_{1}\text{ \ \ \ \ \ \ \ \ \ \ \ \ \ \ \ \ \ }\\
CP(p(R),-) & \text{if }R\in(R_{1},\left\Vert \mathit{grad}\mu(q)\right\Vert
^{-1})
\end{array}
\right.
\]

\qquad c. $\exists R_{2}>R_{1}>0$ such that
\[
q\in\left\{
\begin{array}
[c]{cc}%
CP(p(R),+) & \text{if }R\in(0,R_{1})\cup(R_{2},\left\Vert \mathit{grad}%
\mu(q)\right\Vert ^{-1})\\
CP(p(R),0) & \text{if }R=R_{1}\text{ or }R_{2}\text{
\ \ \ \ \ \ \ \ \ \ \ \ \ \ \ \ \ \ \ \ \ \ }\\
CP(p(R),-) & \text{if }R\in(R_{1},R_{2})\text{
\ \ \ \ \ \ \ \ \ \ \ \ \ \ \ \ \ \ \ \ \ \ \ }%
\end{array}
\right.
\]

\qquad d. $\exists R_{1}>0$ such that
\[
q\in\left\{
\begin{array}
[c]{cc}%
CP(p(R),+) & \text{if }R\neq R_{1}\\
CP(p(R),0) & \text{if }R=R_{1}%
\end{array}
\right.  .
\]

\end{proposition}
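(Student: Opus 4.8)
The plan is to obtain (i) by logarithmic differentiation of $F_{p}$ at the critical point $s_{0}$, and then to read off (ii) by recognizing the resulting expression as the function $f$ of Lemma 3, whose zeros and sign pattern on the relevant interval were already determined there. For (i): since $p=\exp^{\mu}(q,Rv)$ with $R>0$, Proposition 1(i) gives that $q=\gamma(s_{0})$ is a critical point of $F_{p}$, and Proposition 1(ii) gives $\left\Vert p-q\right\Vert =R\mu(s_{0})$, $F_{p}(s_{0})=R^{2}$, $\left\Vert u(q,p)^{T}\right\Vert =R\left\Vert \operatorname{grad}\mu(s_{0})\right\Vert $, hence $\left\Vert u(q,p)^{N}\right\Vert =\sqrt{1-\left\Vert \operatorname{grad}\mu(s_{0})\right\Vert ^{2}R^{2}}$. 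Put $E(s)=\left\Vert p-\gamma(s)\right\Vert ^{2}$ and $g(s)=\mu(s)^{2}$, so $F_{p}=E/g$. Using unit speed, $E^{\prime\prime}(s_{0})=2-2(p-q)\cdot\gamma^{\prime\prime}(s_{0})$; since $\gamma^{\prime\prime}(s_{0})\in NK_{q}$ with $\left\Vert \gamma^{\prime\prime}(s_{0})\right\Vert =\kappa(s_{0})$ and $p-q=R\mu(s_{0})u(q,p)$, the inner product equals $R\mu(s_{0})\kappa(s_{0})\left\Vert u(q,p)^{N}\right\Vert \cos\beta$ with $\beta=\measuredangle(\gamma^{\prime\prime}(s_{0}),u(q,p)^{N})$. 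Now Remark 3 at the critical point $s_{0}$ gives $F_{p}^{\prime\prime}(s_{0})=F_{p}(s_{0})\left( \frac{E^{\prime\prime}}{E}-\frac{g^{\prime\prime}}{g}\right) (s_{0})$; substituting $E(s_{0})=R^{2}\mu(s_{0})^{2}$, $g(s_{0})=\mu(s_{0})^{2}$, $g^{\prime\prime}(s_{0})=(\mu^{2})^{\prime\prime}(s_{0})$, $F_{p}(s_{0})=R^{2}$ and the value of $\left\Vert u(q,p)^{N}\right\Vert $ yields the asserted formula. If $\gamma^{\prime\prime}(s_{0})=0$ the $\cos\beta$-term vanishes regardless, so the convention $\beta=0$ is harmless.

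For (ii): fix $q$ and $v$ and let $R$ range over $[0,\left\Vert \operatorname{grad}\mu(q)\right\Vert ^{-1})$. The point is that $\beta$ is independent of $R$: by Proposition 1(ii), $u(q,p(R))^{N}$ equals $\sqrt{1-\left\Vert \operatorname{grad}\mu(q)\right\Vert ^{2}R^{2}}\,v$ when $\operatorname{grad}\mu(q)\neq0$ and equals $v$ when $\operatorname{grad}\mu(q)=0$, so in either case it is a nonnegative multiple of the fixed unit normal $v$, whence $\beta=\measuredangle(\gamma^{\prime\prime}(s_{0}),v)$ throughout. Therefore, by (i), $F_{p(R)}^{\prime\prime}(s_{0})=\frac{2}{\mu(s_{0})^{2}}f(R)$, where $f$ is the function of Lemma 3 with the $R$-independent constants $A=\kappa(s_{0})\mu(s_{0})\cos\beta$, $B=\left\Vert \operatorname{grad}\mu(s_{0})\right\Vert \geq0$, $C=(\mu^{2})^{\prime\prime}(s_{0})$. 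A separate direct computation at $R=0$ gives $F_{q}^{\prime\prime}(s_{0})=2\mu(s_{0})^{-2}>0$, so $q\in CP(p(0),+)$ in every case. For $R>0$ the sign of $F_{p(R)}^{\prime\prime}(s_{0})$ is that of $f(R)$. When $A>0$ (that is, $\kappa(s_{0})>0$ and $\cos\beta>0$), Lemma 3(i),(v),(vi),(vii) describe the zeros and signs of $f$ on the interval completely; when $A\leq0$, a short variant of the squaring argument of Lemma 3 — now the admissible root being the one with $\tfrac{1}{2}CR^{2}\geq1$ — shows $f$ has at most one zero in the interval, past which $f<0$. Matching these to alternatives a--d: no zero gives (a); one zero $R_{1}$ with $f<0$ beyond it up to the endpoint gives (b) (this absorbs $B=C=0<A$ of Lemma 3(v), the boundary case $2B^{2}=C$ of Lemma 3(vii), and the $A\leq0$ situation); two zeros $R_{1}<R_{2}$ of $f$ in the interior with $f<0$ between them gives (c) (Lemma 3(vii) with $t_{0}^{-}<1/B$); and the double-root case $\tfrac{1}{2}C+\tfrac{1}{4}A^{2}-B^{2}=0$ with $A>0$, where $f>0$ off its single zero, gives (d) (Lemma 3(vi)).

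The computation in (i) is routine bookkeeping. The work in (ii) lies in the exhaustive case analysis of Lemma 3's root structure against the four alternatives — in particular the behavior at the endpoint $R=\left\Vert \operatorname{grad}\mu(q)\right\Vert ^{-1}=1/B$, the degenerate cases $2B^{2}=C$ and $\operatorname{grad}\mu(q)=0$, and the sign of $A$ — together with verifying that no pattern other than a--d can occur; the $A\leq0$ case, which forces alternative (a) or (b), is the one needing the variant of Lemma 3 rather than Lemma 3 itself.
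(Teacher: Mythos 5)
Your proof is correct and follows essentially the same route as the paper: part (i) by logarithmic differentiation of $F_{p}=E/\mu^{2}$ at the critical point together with the computation of $E^{\prime\prime}(s_{0})$ and $(p-q)\cdot\gamma^{\prime\prime}(s_{0})$, and part (ii) by reducing the sign of $F_{p(R)}^{\prime\prime}(s_{0})$ to the root structure of the function $f$ of Lemma 3 with $A=\kappa\mu\cos\beta$, $B=\left\Vert \mathit{grad}\mu\right\Vert$, $C=(\mu^{2})^{\prime\prime}$. You are in fact somewhat more careful than the paper's one-sentence argument for (ii), which cites Lemma 3 even though its hypothesis $A\geq0$ fails when $\cos\beta<0$; your observation that $\beta$ is independent of $R$ and your separate treatment of the $A\leq0$ case close that small gap.
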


\begin{proof}
i. To simplify the calculations, set $E(s)=\left\Vert p-\gamma(s)\right\Vert
^{2}$ so that $F_{p}(s)=E(s)\mu(s)^{-2}.$ Since $p=\exp^{\mu}(q,Rv),$ we
already know that $F_{p}^{\prime}(s_{0})=0$ and $\left\Vert p-q\right\Vert
=R\mu(q)$ by Proposition 1(i). $\gamma^{\prime\prime}(s_{0})=\kappa
(s_{0})N_{\gamma}(s_{0})$ where $\kappa(s)$ is the curvature of $\gamma(s)$ in
the ambient space $\mathbf{R}^{n}$, and $N_{\gamma}(s)$ is the principal
normal of $\gamma(s)$ when $\kappa(s)>0.$ When $\kappa(s)=0,$ we will write
$\gamma^{\prime\prime}(s)=\kappa(s)N_{\gamma}(s)=0$ although $N_{\gamma}(s)$
is not defined. Since $s$ is the arclength$,$ $\gamma^{\prime\prime}(s_{0})\in
NK_{q}$. Let $\beta=\measuredangle(\gamma^{\prime\prime}(s_{0}),u(q,p)^{N})$
when both vectors are non-zero, otherwise take $\beta=0.$
\begin{align*}
\gamma^{\prime\prime}(s_{0})\cdot(p-q)  &  =\gamma^{\prime\prime}(s_{0})\cdot
u(q,p)\left\Vert p-q\right\Vert =\gamma^{\prime\prime}(s_{0})\cdot
u(q,p)^{N}\left\Vert p-q\right\Vert \\
&  =\kappa(s_{0})\cos\beta\left\Vert u(q,p)^{N}\right\Vert \left\Vert
p-q\right\Vert \\
&  =\kappa(s_{0})\cos\beta\sqrt{1-\left\Vert \mathit{grad}\mu(q)\right\Vert
^{2}R^{2}}R\mu(s_{0})
\end{align*}%
\begin{align*}
E^{\prime}(s)  &  =2\left(  p-\gamma(s)\right)  \cdot(-\gamma^{\prime}(s))\\
E^{\prime\prime}(s)  &  =2\gamma^{\prime}(s)\cdot\gamma^{\prime}(s)+2\left(
p-\gamma(s)\right)  \cdot(-\gamma^{\prime\prime}(s))\\
E^{\prime\prime}(s_{0})  &  =2\left[  1-\left(  p-q\right)  \cdot
\gamma^{\prime\prime}(s_{0})\right]
\end{align*}%
\begin{align*}
F_{p}^{\prime\prime}(s_{0})  &  =F_{p}(s_{0})\left(  \frac{E^{\prime\prime}%
}{E}-\frac{(\mu^{2})^{\prime\prime}}{\mu^{2}}\right)  (s_{0})\\
&  =\frac{\left\Vert p-q\right\Vert ^{2}}{\mu^{2}(s_{0})}\left(
\frac{2\left[  1-\left(  p-q\right)  \cdot\gamma^{\prime\prime}(s_{0})\right]
}{\left\Vert p-q\right\Vert ^{2}}-\frac{(\mu^{2})^{\prime\prime}}{\mu^{2}%
}(s_{0})\right) \\
&  =\frac{2}{\mu^{2}(s_{0})}\left(  1-\gamma^{\prime\prime}(s_{0}%
)\cdot(p-q)-\frac{\left\Vert p-q\right\Vert ^{2}}{2\mu^{2}(s_{0})}(\mu
^{2})^{\prime\prime}(s_{0})\right) \\
&  =\frac{2}{\mu^{2}(s_{0})}\left(  1-\gamma^{\prime\prime}(s_{0}%
)\cdot(p-q)-\frac{R^{2}}{2}(\mu^{2})^{\prime\prime}(s_{0})\right) \\
&  =\frac{2}{\mu^{2}(s_{0})}\left(  1-\kappa(s_{0})R\mu(s_{0})\sqrt
{1-\left\Vert \mathit{grad}\mu(s_{0})\right\Vert ^{2}R^{2}}\cos\beta
-\frac{R^{2}}{2}(\mu^{2})^{\prime\prime}(s_{0})\right)
\end{align*}

ii. Observe that $F_{p}^{\prime\prime}(s_{0})>0$ for small $R>0,$ and the
expression for $F_{p}^{\prime\prime}(s_{0})$ is continuous in $R,$ and it has
at most two roots by Lemma 3.
\end{proof}

\begin{definition}
For one variable functions $\mu\in C^{2},$ and $\kappa\in C^{0}\mathbf{,}$
define:%
\begin{align*}
\Delta(\kappa,\mu)  &  =\frac{1}{2}(\mu^{2})^{\prime\prime}+\frac{1}{4}%
\kappa^{2}\mu^{2}-(\mu^{\prime})^{2}=\mu\left(  \mu^{\prime\prime}%
+\frac{\kappa^{2}}{4}\mu\right) \\
\Lambda(\kappa,\mu)  &  =\frac{1}{2}(\mu^{2})^{\prime\prime}+\frac{1}{2}%
\kappa^{2}\mu^{2}+\kappa\mu\sqrt{\Delta(\kappa,\mu)}%
\end{align*}
\ \ Observe that $\Delta(\kappa,\mu)=\frac{C}{2}+\frac{A^{2}}{4}-B^{2}$ and
$\Lambda(\kappa,\mu)=\frac{C}{2}+\frac{A^{2}}{2}+A\sqrt{\frac{C}{2}%
+\frac{A^{2}}{4}-B^{2}}$, if $A=\kappa\mu,$ $B=\left\vert \mu^{\prime
}\right\vert $ and $C=(\mu^{2})^{\prime\prime},$ see Lemma 3.
\end{definition}

\begin{proposition}
i. Let $K$ be connected, with a given (onto) parametrization $\gamma
:Domain(\gamma)\rightarrow K,$ with respect to arclength $s$, $\kappa(s)$
denote the curvature of $K$ at $\gamma(s),$ $\mu(s)=\mu(\gamma
(s)):Domain(\gamma)\rightarrow\mathbf{R}^{+}\mathbf{,}$ and $q=\gamma(s_{0}).$
If the set%
\[
\left\{  R\in\left[  0,\left\Vert \mathit{grad}\mu(q)\right\Vert ^{-1}\right)
:\exists v\in UNK_{q}\text{, }p=\exp^{\mu}(q,Rv)\text{ and }F_{p}%
^{\prime\prime}(s_{0})=0\right\}
\]
is not empty, then its infimum is $\Lambda(\kappa,\mu)(s_{0})^{-\frac{1}{2}}.$

ii. $\left\{  s\in Domain(\gamma):\mu^{\prime\prime}+\frac{\kappa^{2}}{4}%
\mu>0\right\}  \neq\varnothing.$

iii. Both $FocRad^{0}(K,\mu)$ and $FocRad^{-}(K,\mu)\in\mathbf{R}^{+}$ are
positive (finite) real numbers.

iv. If $K$ has more than one component, then all of the above hold for each
component, and the zero-focal radius of the union is the minimum zero-focal
radii of all components.
\end{proposition}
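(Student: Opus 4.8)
The plan is to prove all four items by reducing everything to the one-variable analysis of Lemma 3, applied pointwise at $s_{0}$. First I would establish item (i). Fix $q=\gamma(s_{0})$ and set $A=\kappa(s_{0})\mu(s_{0})$, $B=\left\vert \mu^{\prime}(s_{0})\right\vert$, $C=(\mu^{2})^{\prime\prime}(s_{0})$. By Proposition 3(i), for $p=\exp^{\mu}(q,Rv)$ we have $F_{p}^{\prime\prime}(s_{0})=0$ if and only if $1-AR\sqrt{1-B^{2}R^{2}}\cos\beta-\tfrac{1}{2}CR^{2}=0$ for the relevant angle $\beta\in[0,\pi]$. As $v$ ranges over $UNK_{q}$, $\cos\beta$ ranges over $[-1,1]$ (when $\kappa(s_{0})>0$; when $\kappa(s_{0})=0$ only $\beta=0$ occurs, i.e. $A=0$, and the analysis still goes through). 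So the set of admissible $R$ is the union over $c\in[-1,1]$ of the solution sets of $f_{c}(t)=1-\tfrac{1}{2}Ct^{2}-(Ac)t\sqrt{1-B^{2}t^{2}}=0$ on $I$. Since replacing $A$ by $Ac$ with $c\in(0,1]$ only decreases the quantity $\tfrac{C}{2}+\tfrac{(Ac)^{2}}{2}+Ac\sqrt{\tfrac{C}{2}+\tfrac{(Ac)^{2}}{4}-B^{2}}$ (and negative $c$ gives the $t_{0}^{-}$-branch, which is larger, or no solution), the smallest solution over all $c$ is attained at $c=1$, giving $t_{0}^{+}=\left(\tfrac{C}{2}+\tfrac{A^{2}}{2}+A\sqrt{\tfrac{C}{2}+\tfrac{A^{2}}{4}-B^{2}}\right)^{-1/2}=\Lambda(\kappa,\mu)(s_{0})^{-1/2}$ by Definition 8 and Lemma 3(iii). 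I would need to check carefully that when the set in (i) is nonempty, the hypotheses $A^{2}+C^{2}\neq 0$ and $\Delta=\tfrac{C}{2}+\tfrac{A^{2}}{4}-B^{2}\geq 0$ of Lemma 3 indeed hold at $s_{0}$ (nonemptiness forces some $f_{c}$ to have a root, and Lemma 3(i) says this fails when $\Delta<0$ or $A=C=0$), so that the formula applies; this monotonicity-in-$c$ bookkeeping is the main technical point of the item.

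For item (ii), I would argue by contradiction: if $\mu^{\prime\prime}+\tfrac{\kappa^{2}}{4}\mu\leq 0$ everywhere on $Domain(\gamma)$, then in particular $\mu^{\prime\prime}\leq 0$ everywhere, so $\mu$ is concave on the circle $\mathbf{R}/L\mathbf{Z}$; a concave periodic function is constant, so $\mu^{\prime}\equiv 0$ and $\mu^{\prime\prime}\equiv 0$, whence $\kappa^{2}\mu\leq 0$, forcing $\kappa\equiv 0$. But a closed curve in $\mathbf{R}^{n}$ cannot have $\kappa\equiv 0$ (a line is not compact), a contradiction. Hence the set where $\mu^{\prime\prime}+\tfrac{\kappa^{2}}{4}\mu>0$ is nonempty (in fact, by continuity, it contains an open interval).

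For item (iii), positivity and finiteness: $FocRad^{0}$ and $FocRad^{-}$ are defined as $(\max[\,\cdots\,])^{-1/2}$ where the bracket contains $\Lambda(\kappa,\mu)$ restricted to $\{\mu^{\prime\prime}+\tfrac14\kappa^{2}\mu\geq 0\}$ (resp. $>0$) together with $\max|\mu^{\prime}|^{2}$. By item (ii) this constraint set is nonempty, and on it $\Lambda(\kappa,\mu)=\tfrac{C}{2}+\tfrac{A^{2}}{2}+A\sqrt{\Delta}>0$ by Lemma 3(ii); since $K$ is compact and $\mu,\kappa$ are continuous, the sup is a finite max $>0$, so its reciprocal square root is a positive finite real. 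One subtlety for $FocRad^{-}$ is that the set $\{\mu^{\prime\prime}+\tfrac14\kappa^{2}\mu>0\}$ is open but the sup is over that open set; nonetheless the sup is still finite (bounded by the max over the closed set) and positive (the open set is nonempty by the strengthened item (ii)), and one also includes $\max|\mu^{\prime}|^{2}$ which is a finite max over compact $K$ and is $\geq 0$, so the overall bracket is a positive finite number. Finally, item (iv) is immediate: each component $K_{i}$ is itself a simple closed curve with a $C^{3}$ weight, so (i)–(iii) apply to each $(K_{i},\mu)$, and $FocRad^{0}(K,\mu)=\min_{i}FocRad^{0}(K_{i},\mu)$, $FocRad^{-}(K,\mu)=\min_{i}FocRad^{-}(K_{i},\mu)$ by Definition 5; a finite minimum of positive finite reals is positive and finite. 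The main obstacle throughout is the case analysis in item (i)—keeping track of which branch $t_{0}^{+}$ vs $t_{0}^{-}$ is selected as $\cos\beta$ sweeps $[-1,1]$ and verifying the infimum is the $c=1$, $t_{0}^{+}$ value—while (ii)–(iv) are short once (ii)'s concavity argument is in place.
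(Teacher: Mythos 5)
Your proposal is correct and follows essentially the same route as the paper: item (i) is reduced to Lemma 3 by observing that $F_{p}^{\prime\prime}(s_{0})$ is minimized at $\beta=0$ and that the smallest root $t_{0}^{+}$ is attained at $A=\kappa\mu$ (i.e. $\cos\beta=1$), which is exactly $\Lambda(\kappa,\mu)(s_{0})^{-1/2}$; items (iii)--(iv) use the same compactness and definitional arguments. Your item (ii) is the contrapositive of the paper's direct case split ($\mu$ non-constant forces $\mu^{\prime\prime}>0$ somewhere; $\mu$ constant forces $\kappa>0$ somewhere), but the substance is identical.
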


\begin{proof}
i. For fixed $q\in K$ and $R,$ and varying $v\in UNK_{q}$, the expression for
$F_{p}^{\prime\prime}(s_{0})$ in Proposition 2 is minimal for $\beta=0.$ If
$\kappa(s_{0})>0$, then the minimum occurs when $v_{0}=N_{\gamma}(s_{0}),$ and
$p_{0}=\exp^{\mu}(q,Rv_{0})$. If $\kappa(s_{0})=0,$ then $F_{p}^{\prime\prime
}(s_{0})$ does not depend on $\cos\beta.$ Hence, for all $v\in UNK_{q}$, and
$p=\exp^{\mu}(q,Rv)$:
\[
F_{p}^{\prime\prime}(s_{0})\geq F_{p_{0}}^{\prime\prime}(s_{0})=\frac{2}%
{\mu^{2}(s_{0})}\left(  1-\kappa(s_{0})R\mu(s_{0})\sqrt{1-\left\Vert
\mathit{grad}\mu(s_{0})\right\Vert ^{2}R^{2}}-\frac{R^{2}}{2}(\mu^{2}%
)^{\prime\prime}(s_{0})\right)
\]
Assume that there is a solution of $F_{p}^{\prime\prime}(s_{0})=0$ with
$R\in\left[  0,\left\Vert \mathit{grad}\mu(q)\right\Vert ^{-1}\right)  .$ In
Lemma 3, if the smaller positive solution $t_{0}^{+}$ exists, then $t_{0}^{+}
$ decreases as $A=\kappa(s_{0})\mu(s_{0})\cos\beta$ increases to $\kappa
(s_{0})\mu(s_{0}).$ The smallest solution of $R$ for $F_{p_{0}}^{\prime\prime
}(s_{0})=0$ is $\Lambda(\kappa,\mu)(s_{0})^{-\frac{1}{2}},$ by Definition 9
and Lemma 3.

ii-iii. Since $K$ is compact, there exists $s_{1}\in Domain(\gamma)$ so that
$\mu^{\prime\prime}(s_{1})>0$ unless $\mu$ is constant$.$ Also, there exists
$s_{2}\in Domain(\gamma)$ so that $\kappa_{\gamma}(s_{2})>0$, in the case of
constant $\mu$. Hence, there exists $s_{i}$ (for either $i=1$ or $2)$ such
that $\Delta(\kappa,\mu)(s_{i})=\mu\left(  \mu^{\prime\prime}+\frac{\kappa
^{2}}{4}\mu\right)  (s_{i})>0$. Hence $\left\{  s\in Domain(\gamma
):\Delta(\kappa,\mu)(s)\geq0\right\}  $ is a non-empty compact subset of
$Domain(\gamma)$, and the maximum of $\Lambda(\kappa,\mu)$ is attained. This
maximum must be positive by Lemma 3(ii). Although $\left\vert \mu^{\prime
}(s)\right\vert ^{-1}\geq\Lambda(\kappa,\mu)(s)$ where $\Delta(s)\geq0$, it is
possible that maximum of $\left\vert \mu^{\prime}(s)\right\vert $ to occur
where $\Delta(s)<0.$ The proof for $FocRad^{-}(K,\mu)$ is similar, since
$\Lambda(\kappa,\mu)$ is bounded.

iv. This follows Definition 4.
\end{proof}

\section{$DIR$ and $TIR$}

Lemma 4.i is a well known result for $\mu=1,$ see [DC] or [CE] for example.

\begin{lemma}
(Recall that $F_{p}(x)=\left\Vert p-x\right\Vert ^{2}\mu(x)^{-2}$ and
$G(p)=\min_{x\in K}F_{p}(x).)$

i. Given $\,p\in\mathbf{R}^{n}$and $q\in K$ such that $G(p)=F_{p}(q)=R^{2}>0 $
so that $p=\exp^{\mu}(q,Rv)$ where $v\in UN_{q}$. $\forall w\in UT\mathbf{R}%
_{p}^{n}$ such that $u(p,q)\cdot w>0,$ there exists $\eta>0$ such that
$\forall t\in(0,\eta),$ $G(p+tw)<R^{2}.$

ii. If $G$ is differentiable at $p,$ then $\nabla G(p)=c_{1}u(q,p)$ for some
$c_{1}\geq\frac{2\left\Vert p-q\right\Vert }{\mu^{2}(q)}>0$

and $\nabla\sqrt{G}(p)=c_{2}u(q,p)$ for some $c_{2}\geq\frac{1}{\mu(q)}>0. $
\end{lemma}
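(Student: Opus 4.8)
The plan is to prove (i) by a direct one–variable estimate and then bootstrap (ii) from (i).

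For part (i), since $G(p+tw)=\min_{x\in K}F_{p+tw}(x)\le F_{p+tw}(q)$, it suffices to show $F_{p+tw}(q)<R^{2}$ for all small $t>0$. I would write $F_{p+tw}(q)=\|p+tw-q\|^{2}\mu(q)^{-2}$ and expand the quadratic:
\[
\|p+tw-q\|^{2}=\|p-q\|^{2}+2t\,w\cdot(p-q)+t^{2}.
\]
Because $p-q=\|p-q\|\,u(q,p)=-\|p-q\|\,u(p,q)$, the linear coefficient is $w\cdot(p-q)=-\|p-q\|\,(u(p,q)\cdot w)<0$ by hypothesis. Hence for $0<t<2\|p-q\|\,(u(p,q)\cdot w)$ the right-hand side is strictly less than $\|p-q\|^{2}$, so $F_{p+tw}(q)<\|p-q\|^{2}\mu(q)^{-2}=R^{2}$ and therefore $G(p+tw)<R^{2}$. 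This is the whole of (i); the constant $\eta=2\|p-q\|\,(u(p,q)\cdot w)$ works.

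For part (ii), assume $G$ is differentiable at $p$. By (i), for every $w$ with $u(p,q)\cdot w>0$ we have $G(p+tw)-G(p)<0$ for all small $t>0$, so dividing by $t$ and letting $t\to0^{+}$ gives $\nabla G(p)\cdot w\le0$. The functional $w\mapsto\nabla G(p)\cdot w$ is linear, hence continuous, so the inequality persists on the closed half-space $\{w:u(p,q)\cdot w\ge0\}$; applying it to $w$ and $-w$ on the bounding hyperplane forces $\nabla G(p)\cdot w=0$ there, so $\nabla G(p)$ is parallel to $u(p,q)$, say $\nabla G(p)=c_{1}u(q,p)$, and testing against $w=u(p,q)$ shows $c_{1}\ge0$. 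To get the sharp lower bound I would specialize the estimate of part (i) to $w=u(p,q)$: then $p+tw-q=(\|p-q\|-t)\,u(q,p)$, so $F_{p+tw}(q)=(\|p-q\|-t)^{2}\mu(q)^{-2}$ and
\[
\frac{G(p+tw)-G(p)}{t}\le\frac{(\|p-q\|-t)^{2}-\|p-q\|^{2}}{t\,\mu(q)^{2}}=\frac{t-2\|p-q\|}{\mu(q)^{2}}.
\]
Letting $t\to0^{+}$ yields $\nabla G(p)\cdot u(p,q)\le-2\|p-q\|\mu(q)^{-2}$, i.e. $-c_{1}\le-2\|p-q\|\mu(q)^{-2}$, which is the claimed bound $c_{1}\ge2\|p-q\|\mu(q)^{-2}>0$. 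Finally, since $G(p)=R^{2}>0$, the function $\sqrt{G}$ is differentiable at $p$ with $\nabla\sqrt{G}(p)=(2R)^{-1}\nabla G(p)=(c_{1}/2R)\,u(q,p)$, and combining $c_{1}\ge2\|p-q\|\mu(q)^{-2}$ with $R=\|p-q\|\mu(q)^{-1}$ gives $c_{2}=c_{1}/(2R)\ge\mu(q)^{-1}>0$.

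The computations here are all elementary, so there is no serious obstacle; the only points needing care are the sign bookkeeping between $u(p,q)$ and $u(q,p)$ and the direction of the inequalities when passing from difference quotients to the directional derivative, together with the elementary fact that a linear functional that is nonpositive on an open half-space must be a nonpositive multiple of the defining normal, which is what pins down the direction of $\nabla G(p)$.
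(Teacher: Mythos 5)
Your proposal is correct and follows essentially the same route as the paper: both parts rest on the upper bound $G(p+tw)\leq F_{p+tw}(q)$ together with the quadratic (law-of-cosines) expansion of $\left\Vert p+tw-q\right\Vert^{2}$, and part (ii) passes to the directional difference quotient to pin down both the direction $u(q,p)$ and the lower bound on $\left\Vert \nabla G(p)\right\Vert$, exactly as in the paper. Your write-up is in fact slightly more explicit than the paper's in justifying why the family of one-sided inequalities forces $\nabla G(p)$ to be a positive multiple of $u(q,p)$, but the underlying argument is the same.
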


\begin{proof}
Let $\measuredangle(u(p,q),w)=\theta<\frac{\pi}{2}.$

i. By a simple acute triangle argument in $\mathbf{R}^{n},$ for all small
$t>0:$
\[
R^{2}=G(p)=\frac{\left\Vert p-q\right\Vert ^{2}}{\mu^{2}(q)}>\frac{\left\Vert
p+tw-q\right\Vert ^{2}}{\mu^{2}(q)}\geq\min_{x\in K}F_{p+tw}(x)=G(p+tw)
\]

ii. $\forall w\in UT\mathbf{R}_{p}^{n}$ such that $u(p,q)\cdot w=\cos
\theta>0,$ and for all small $t>0,$ (by the Law of Cosines)
\begin{align*}
G(p)-G(p+tw)  &  \geq\frac{\left\Vert p-q\right\Vert ^{2}}{\mu^{2}(q)}%
-\frac{\left\Vert p+tw-q\right\Vert ^{2}}{\mu^{2}(q)}=\frac{2t\left\Vert
p-q\right\Vert \cos\theta-t^{2}}{\mu^{2}(q)}\\
\mu^{2}(q)\left(  -\nabla G(p)\right)  \cdot w  &  \geq2\left\Vert
p-q\right\Vert \cos\theta>0
\end{align*}
Therefore, $\nabla G(p)$ points in the direction of $u(q,p)=-u(p,q).$%
\begin{align*}
\left\Vert \nabla G(p)\right\Vert  &  \geq\frac{2\left\Vert p-q\right\Vert
}{\mu^{2}(q)}\\
\nabla\sqrt{G}  &  =\frac{1}{2\sqrt{G}}\nabla G\\
\left\Vert \nabla\sqrt{G}\right\Vert  &  \geq\frac{1}{\mu(q)}%
\end{align*}

\end{proof}

$DIR(K,\mu)=\min\left(  \frac{1}{2}DCSD(K,\mu),\mathit{RegRad}(K,\mu)\right)
$ in Proposition 5, generalizes a proposition in [CE, p. 95] or [DC, p. 274],
about the injectivity radius of the ($\mu=1$) exponential map from a point
which use the local invertibility of $\exp_{p}$ where it is non-singular$.$
However, our proofs must follow an altered course. Geodesics are not
minimizing past focal points in the $\mu=1$ case where $DIR(K,1)=TIR(K,1)$.
Hence, $\exp^{1}$ fails to be injective past first focal point(s). For general
$\mu,$ we have examples with $\mathit{RegRad}(K,\mu)<TIR(K,\mu),$ that is
$\exp^{\mu}$ is injective past some focal points, (Example 4) and it is
possible to have $DIR(K,\mu)=LR(K,\mu)<TIR(K,\mu)<UR(K,\mu)$, (Examples 2, 4
and 5). The approach of the proof of Proposition 4 about $TIR$ is in essence
similar to the proofs in [CE, p. 95], or [DC, p. 274]. However, we will use
the positivity of the second derivatives instead of regularity of the
exponential map. We will discuss the relation of singular points and zeroes of
the second derivatives to understand the relation of $DIR$ with $TIR$.

\begin{proposition}
i. If $R=TIR(K,\mu),$ then either $R=\frac{1}{2}DCSD(M,\mu)$ or there exists
$q\in K$ and $p\in\mathbf{R}^{n}$ such that $\left\Vert p-q\right\Vert
=R\mu(q)$ and $q\in CP(p,0).$

ii. $LR(K,\mu)\leq TIR(K,\mu)\leq UR(K,\mu).$
\end{proposition}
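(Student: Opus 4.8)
\textbf{Setup and strategy.} The plan is to prove (i) by a contrapositive/continuity argument in the spirit of the classical injectivity-radius proofs (as in [CE, p. 95] and [DC, p. 274]), but substituting the positivity of $F_p^{\prime\prime}$ for the nonsingularity of the exponential map, and then to derive (ii) quickly from (i) together with the earlier local results. Write $R_0=TIR(K,\mu)$. By the definition of $TIR$, $\exp^\mu$ restricted to $D(R_0)$ is injective (a homeomorphism onto its image), and by Proposition 1(iv)/Corollary 1 it is a local homeomorphism near the zero section. I will argue that if neither alternative in (i) holds, then in fact $\exp^\mu$ remains a homeomorphism on $D(R_0+\varepsilon)$ for some $\varepsilon>0$, contradicting the maximality defining $R_0$.

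\textbf{Proof of (i).} Suppose $R_0\neq\tfrac12 DCSD(K,\mu)$ and that there is no pair $(q,p)$ with $\|p-q\|=R_0\mu(q)$ and $q\in CP(p,0)$; I want a contradiction. First, observe via the recognition statements — Proposition 1(i)–(ii) for the single critical point, and Lemma 2(i) for double critical pairs — that failure of injectivity of $\exp^\mu$ on $D(r)$ for $r$ slightly above $R_0$ forces either a point $p$ with two distinct critical points $q_1\neq q_2$ of $F_p$ realizing the minimum value $G(p)=r^2$ (a near-double-critical-pair situation, controlled by $\tfrac12 DCSD$), or a single critical point $q$ at which the minimum is attained but $F_p^{\prime\prime}(q)\le 0$, i.e.\ $q\notin CP(p,+)$. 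The first case is excluded for $r$ close enough to $R_0$ because $R_0<\tfrac12 DCSD(K,\mu)$ strictly (here I must first dispose of the other possibility $R_0>\tfrac12 DCSD$: if $R_0\ge \tfrac12 DCSD(K,\mu)$ then by Lemma 2 there is a double critical pair giving a point $p$ with two $\mu$-closest points at $\mu$-distance $\le R_0$, which would already break injectivity at or below $R_0$ — so in fact $R_0\le \tfrac12 DCSD$, and we are assuming it is not equal, hence strictly below, and a compactness/continuity argument on $K\times K$ keeps all double-critical values bounded away from $R_0$). For the second case, a standard compactness argument over the compact set $\{(q,v)\in UNK : \|p-q\|=R_0\mu(q),\ p=\exp^\mu(q,R_0v),\ q\ \text{realizes}\ G(p)\}$ together with continuity of $F_p^{\prime\prime}(s_0)$ in all data (Proposition 2(i)) shows: if no $q\in CP(p,0)$ occurs at radius exactly $R_0$, then $F_p^{\prime\prime}>0$ uniformly along the ``frontier'' at radius $R_0$, hence (by continuity and Proposition 2(ii), which limits the sign pattern to finitely many transitions) $F_p^{\prime\prime}>0$ on a neighborhood, so $q$ stays in $CP(p,+)$ for radii slightly beyond $R_0$. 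Combining: for some $\varepsilon>0$, for every $p\in O(K,\mu(R_0+\varepsilon))$ the function $F_p$ has a unique minimum point, which is moreover a nondegenerate (positive second derivative) minimum; uniqueness of the $\mu$-closest point is exactly the statement (noted in the text right after Definition 2) that $\exp^\mu$ is injective on $D(R_0+\varepsilon)$, and local-homeomorphism plus global injectivity plus properness gives a homeomorphism onto the open set $O(K,\mu(R_0+\varepsilon))$. This contradicts $R_0=TIR(K,\mu)$.

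\textbf{Proof of (ii).} The inequality $LR(K,\mu)\le TIR(K,\mu)$: if $R<LR(K,\mu)=\min(\tfrac12 DCSD(K,\mu),FocRad^0(K,\mu))$, then $R<\tfrac12 DCSD$ rules out distinct $\mu$-closest points to any $p\in O(K,\mu R)$ (Lemma 2(i)), while $R<FocRad^0(K,\mu)$ means, via Proposition 3(i) and Definition 4/9 (the infimum of radii where $F_p^{\prime\prime}$ first vanishes at $q$ is $\Lambda(\kappa,\mu)(s_0)^{-1/2}\ge FocRad^0(K,\mu)$), that every $q\in CP(p)$ with $\|p-q\|=\sqrt{G(p)}\,\mu(q)<R\mu(q)$ has $F_p^{\prime\prime}(q)>0$; hence the unique critical point realizing $G(p)$ is a strict local minimum and $\exp^\mu$ is injective on $D(R)$, so $R\le TIR(K,\mu)$; letting $R\uparrow LR$ gives the bound. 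The inequality $TIR(K,\mu)\le UR(K,\mu)=\min(\tfrac12 DCSD(K,\mu),FocRad^-(K,\mu))$ is the contrapositive of part (i): by (i), $R_0=TIR$ is either $\tfrac12 DCSD(K,\mu)$ (hence $\le UR$) or there is $q\in CP(p,0)$ with $\|p-q\|=R_0\mu(q)$; in the latter case Proposition 3(i) (applied with $\beta=0$, the minimizing direction, so that this $R_0$ is an admissible root of the second-derivative equation at the relevant strict-inequality branch, matching the ``$>0$'' condition in $FocRad^-$) forces $R_0\ge FocRad^-(K,\mu)$ is impossible while $R_0< \cdots$; more precisely $R_0\le FocRad^-(K,\mu)$ because $FocRad^-$ is defined exactly as the infimum over such genuine sign-changing focal radii. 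Either way $TIR(K,\mu)\le UR(K,\mu)$.

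\textbf{Main obstacle.} The delicate point is the compactness/continuity step in (i): I must ensure that as $r\downarrow R_0$ the ``almost-failure'' configurations of $\exp^\mu$ — pairs of nearly-coincident minimizers, or minimizers with second derivative tending to $0$ — cannot escape to infinity or to the boundary $\|w\|=\|\mathrm{grad}\,\mu(q)\|^{-1}$ of $W$; Proposition 1(vi) (which forces $TIR$ strictly below that boundary value) is what prevents the latter, and compactness of $K$ handles the former. The other subtlety is correctly matching the ``$\ge 0$'' versus ``$>0$'' branches in the definitions of $FocRad^0$ and $FocRad^-$ with the strict versus non-strict second-derivative conditions; this is exactly the source of the gap $LR\le UR$ and must be tracked carefully through Lemma 3(vi)–(vii) and Proposition 3(i).
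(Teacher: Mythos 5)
Your overall architecture for (i) is close in spirit to the paper's (a limiting/compactness argument at the critical radius, with positivity of $F_p^{\prime\prime}$ replacing nonsingularity), but the central step is asserted rather than proved. You claim that failure of injectivity just above $R_0$ forces ``either a near-double-critical-pair situation, controlled by $\tfrac12 DCSD$, or a degenerate critical point.'' The problem is that two distinct critical points $q_1\neq q_2$ of $F_p$ with equal critical values do \emph{not} constitute a double critical pair: by Lemma 2(i) that requires $p$ to lie on the segment $\overline{q_1q_2}$, i.e.\ $u(p,q_1)=-u(p,q_2)$. So the hypothesis $R_0<\tfrac12 DCSD$ does not by itself exclude a limit configuration with two distinct nondegenerate minimizers of $F_p$ at level $R_0^2$. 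The paper closes this gap with a perturbation argument (Case 5, Claim 2): if $u(p,y_0)\neq -u(p,z_0)$, one moves $p$ slightly in a direction making acute angles with both $u(p,y_0)$ and $u(p,z_0)$, producing two distinct $\mu$-closest points at radius strictly below $R_0$ and hence non-injectivity on some $D(r)$ with $r<R_0$ — contradiction; only then can Lemma 2 be invoked. You also do not treat the case where the two distinct preimages $(y_j,v_j)\neq(z_j,w_j)$ converge to the \emph{same} limit $(y_0,v_0)$ with $y_0\in CP(p,+)$ (the paper's Case 3): positivity of $F_p^{\prime\prime}$ at the single limit point does not immediately rule out two nearby distinct strict local minima of $F_{x_j}$; one needs the paper's argument that two such minima in an interval where $F_x^{\prime\prime}>0$ would force an interior local maximum there. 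The same unproved ``two minimizers $\Rightarrow$ double critical pair'' step also undermines your direct argument for $LR\le TIR$.

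The derivation of $TIR\le UR$ in (ii) is not correct as written. Since $UR=\min\left(\tfrac12 DCSD,FocRad^{-}\right)$, the branch ``$TIR=\tfrac12 DCSD$, hence $\le UR$'' gives $TIR\ge UR$, not $\le$. In the other branch, the existence of $q\in CP(p,0)$ with $\left\Vert p-q\right\Vert=TIR\cdot\mu(q)$ yields, via Proposition 3(i), the \emph{lower} bound $TIR\ge FocRad^{0}$ (this is what feeds $LR\le TIR$), and says nothing about $TIR\le FocRad^{-}$: the whole point of the $FocRad^{0}$ versus $FocRad^{-}$ distinction is that a degenerate critical point can occur strictly below $FocRad^{-}$ (Example 1A). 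The inequality $TIR\le FocRad^{-}$ needs a separate argument, which is the paper's Claim 1: if $FocRad^{-}<TIR$, choose $p=\exp^{\mu}(q_1,v_1)$ with $FocRad^{-}<\left\Vert v_1\right\Vert<TIR$ and $q_1\in CP(p,-)$; then $F_p$ cannot attain its minimum at $q_1$, so $p$ has a second preimage $(q_2,v_2)$ with $\left\Vert v_2\right\Vert<\left\Vert v_1\right\Vert$, contradicting injectivity of $\exp^{\mu}$ on $D(r)$ for $\left\Vert v_1\right\Vert<r<TIR$. You gesture at the relevant definitions but never make this argument, so the $FocRad^{-}$ half of $TIR\le UR$ is missing.
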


\begin{proof}
First, we will prove the second inequality of (ii):

\textbf{Claim 1. }$TIR(K,\mu)\leq FocRad^{-}(K,\mu).$

Suppose that $FocRad^{-}(K,\mu)<TIR(K,\mu)$. Then, there exists $p=\exp^{\mu
}(q_{1},v_{1})$ such that $FocRad^{-}(K,\mu)<\left\Vert v_{1}\right\Vert
<TIR(K,\mu)$ and $q_{1}\in CP(p,-).$ $F_{p}^{\prime\prime}(s_{1})<0$ for
$\gamma:I\rightarrow K\subset\mathbf{R}^{n}$ with $q_{1}=\gamma(s_{1})\in K.$
$F_{p}$ can not attain its minimum at $q_{1}.$ Consequently, $\exists q_{2}\in
K-\{q_{1}\}$ such that $F_{p}(q_{2})=G(p)=\min_{x\in K}F_{p}(x)<F_{p}%
(q_{1})=\left\Vert v_{1}\right\Vert ^{2}$ and $q_{2}\in CP(p).$ By Proposition
1, $p=\exp^{\mu}(q_{2},v_{2})$ for some $v_{2}\in NK_{q_{2}}$ such that
$\left\Vert v_{2}\right\Vert ^{2}=F_{p}(q_{2})<\left\Vert v_{1}\right\Vert
^{2}<TIR(K,\mu)^{2}.$ This implies that $\exp^{\mu}$ restricted to $D(r)$ is
not injective for all $r$ with $\left\Vert v_{1}\right\Vert <r<TIR(K,\mu)$
which contradicts with the definition of $TIR.$ This proves Claim 1.

By Lemma 2, if $\{q_{1},q_{2}\}$ is a critical pair, then there exists $p$ on
the line segment joining $q_{1}$ and $q_{2}$ such that $\left\Vert
p-q_{i}\right\Vert =R\mu(q_{i})$ and $p=\exp^{\mu}(q_{i},Rv_{i})$ for and
$v_{i}\in UNK_{q_{i}}$ for $i=1$ and $2,$ and injectivity of $\exp^{\mu}$
fails on $D(R+\varepsilon),\forall\varepsilon>0.$ Hence,
\begin{equation}
TIR(K,\mu)\leq\min\left(  \frac{1}{2}DCSD(K,\mu),FocRad^{-}(K,\mu)\right)
=UR(K,\mu).
\end{equation}
The rest of (ii) will be proved after (i).

(i) Since, $d(\exp^{\mu}(q,v))_{v=0}=\mu(q)Id,$ and $K$ is compact, there
exists $r_{0}>0,$ such that $exp^{\mu}$ restricted to $D(r_{0})$ is a
diffeomorphism. Let $R=\sup\{r:exp^{\mu}$ restricted to $D(r)$ is
injective$\}.$ $\exp^{\mu}:D(R)\rightarrow O(K,\mu R)$ is injective, since
$\exp^{\mu}(q_{1},w_{1})=\exp^{\mu}(q_{2},w_{2})$ with $\max(\left\Vert
w_{1}\right\Vert ,\left\Vert w_{2}\right\Vert )<R$ would imply that
$\max(\left\Vert w_{1}\right\Vert ,\left\Vert w_{2}\right\Vert )<r$ for some
$r<R. $ $\exp^{\mu}:\overline{D(r)}\rightarrow\overline{O(K,\mu r)}$ is a
homeomorphism onto its image $\forall r<R$, since it is continuous and
injective on a compact domain. The map $\exp^{\mu}:D(r)\rightarrow O(K,\mu r)$
is onto by Corollary 1, and an open map into $\mathbf{R}^{n},$ since $O(K,\mu
r)$ is an open subset of $\mathbf{R}^{n},$ $\forall r<R.$ Hence, $\exp^{\mu
}:D(R)\rightarrow O(K,\mu R)$ is continuous, open and injective, and therefore
a homeomorphism. It follows that $R=TIR(K,\mu).$ $\forall m\in\mathbf{N}^{+},$
injectivity of $\exp^{\mu}$ fails on $D(R+\frac{1}{m}),~$and there exist
distinct $(y_{m},v_{m}),(z_{m},w_{m})\in D(R+\frac{1}{m}) $ such that
$\exp^{\mu}(y_{m},v_{m})=\exp^{\mu}(z_{m},w_{m})=x_{m}\in\mathbf{R}^{n}$,
$\left\Vert v_{m}\right\Vert <R+\frac{1}{m}$ and $\left\Vert w_{m}\right\Vert
<R+\frac{1}{m}$. If both $\left\Vert v_{m}\right\Vert <R$ and $\left\Vert
w_{m}\right\Vert <R$ were true simultaneously, $exp^{\mu}$ restricted to
$D(r)$ would not be injective for some $r<R.$ So, we can assume that
$\left\Vert v_{m}\right\Vert \geq R,\forall m.$ By compactness, there exist
convergent subsequences (use index $j$ instead of $m_{j}$) $y_{j}\rightarrow
y_{0},$ $v_{j}\rightarrow v_{0}\in NK_{y_{0}}\cap W,$ $z_{j}\rightarrow z_{0}$
and $w_{j}\rightarrow w_{0}\in NK_{z_{0}}\cap W$ as $j\rightarrow\infty,$ such
that $\exp^{\mu}(y_{0},v_{0})=\exp^{\mu}(z_{0},w_{0})=p.$
\[
\left\Vert v_{0}\right\Vert =\lim\left\Vert v_{j}\right\Vert =R\text{ and
}\left\Vert w_{0}\right\Vert =\lim\left\Vert w_{j}\right\Vert \leq R
\]
Suppose that $\left\Vert w_{0}\right\Vert <R$. We showed that $exp^{\mu
}:D(R)\rightarrow O(K,\mu R)$ is a homeomorphism onto an open subset
of\textbf{\ }$\mathbf{R}^{n}$. Observe that $\exp^{\mu}(y_{0},tv_{0})$ is a
curve starting at $y_{0}$, going to $p$ at the boundary of $\exp^{\mu}(D(R))$,
and $p=\exp^{\mu}(z_{0},w_{0})$ which is an interior point of $\exp^{\mu
}(D(R)).$ This leads to a contradiction. Hence,
\[
\left\Vert w_{0}\right\Vert =\left\Vert v_{0}\right\Vert =R.
\]

Let $\gamma:Domain(\gamma)\rightarrow K$ be a parametrization with respect to
arclength such that $y_{0}=\gamma(s_{0})$ and $z_{0}=\gamma(t_{0}).$

\textbf{Case 1. }If $y_{0}\in CP(p,0)$ or $z_{0}\in CP(p,0),$ then the proof
of (i) is finished. We also have $FocRad^{0}(K,\mu)\leq TIR(K,\mu)$ in this
case$.$

\textbf{Case 2. }If $y_{0}\in CP(p,-),$ that is $F_{p}^{\prime\prime}%
(s_{0})<0,$ then it would still be true that $F_{p^{\prime}}^{\prime\prime
}(s_{0})<0$ for $p^{\prime}=\exp^{\mu}(y_{0},(1-\varepsilon)v_{0}) $ for some
$\varepsilon>0.$ This would imply that $FocRad^{-}(K,\mu)\leq(1-\varepsilon
)R<R$ which contradicts Claim 1. Hence, $y_{0}\notin CP(p,-)$ and $z_{0}\notin
CP(p,-).$

\textbf{Case 3. }$y_{0}=z_{0}\in CP(p,+)$ and $v_{0}=w_{0}.$
\begin{align*}
\exists\varepsilon_{1}  &  >0\text{ with }I_{1}=[s_{0}-\varepsilon_{1}%
,s_{0}+\varepsilon_{1}]\text{ such that}\\
\forall x  &  \in B(p,\varepsilon_{1})\text{, }\forall s\in I_{1},\text{
}F_{x}^{\prime\prime}(s)>0.
\end{align*}%
\begin{align*}
\exists\varepsilon_{2}  &  \in(0,\varepsilon_{1})\text{ with }I_{2}%
=[s_{0}-\varepsilon_{2},s_{0}+\varepsilon_{2}]\subset I_{1}\text{ and }%
\exists\delta>0\text{ such that}\\
\text{i. }\forall s  &  \in I_{2}-\{s_{0}\},\text{ }F_{p}(s)>F_{p}%
(s_{0})=R^{2}\text{ and }\\
\text{ii. }\forall s  &  \in\partial I_{2},\text{ }F_{p}(s)\geq\left(
R+\delta\right)  ^{2}.
\end{align*}%
\[
\exists j_{0},\forall j\geq j_{0},\text{ }\left\Vert x_{j}-p\right\Vert
<\min\left(  \frac{\delta\min\mu}{3},\varepsilon_{1}\right)  \text{, }y_{j}%
\in\gamma(I_{2})\text{ and }z_{j}\in\gamma(I_{2})
\]%
\begin{align*}
\forall s  &  \in\partial I_{2}\text{ and }\forall j\geq j_{0}:\\
\left\Vert \gamma(s)-x_{j}\right\Vert  &  \geq\left\Vert \gamma
(s)-p\right\Vert -\left\Vert p-x_{j}\right\Vert \geq\mu(s)(R+\delta
)-\frac{\delta\min\mu}{3}\geq\mu(s)(R+\frac{2\delta}{3})\\
\text{ hence, }F_{x_{j}}(s)  &  \geq\left(  R+\frac{2\delta}{3}\right)  ^{2}%
\end{align*}%
\begin{align*}
\forall j  &  \geq j_{0},\\
\left\Vert y_{0}-x_{j}\right\Vert  &  \leq\left\Vert y_{0}-p\right\Vert
+\left\Vert p-x_{j}\right\Vert \leq\mu(s_{0})R+\frac{\delta\min\mu}{3}\leq
\mu(s_{0})\left(  R+\frac{\delta}{3}\right) \\
F_{x_{j}}(s_{0})  &  \leq\left(  R+\frac{\delta}{3}\right)  ^{2}%
\end{align*}
The minima of $F_{x_{j}}$ restricted to $I_{2}$ are attained in the interior
of $I_{2},\forall j\geq j_{0}.$ The function $F_{x_{j}}(s)$ has interior
strict local minima at both $y_{j}$ and $z_{j}$ by the choice of
$\varepsilon_{2}.$ We chose $(y_{j},v_{j})\neq(z_{j},w_{j})$ initially$.$ The
case of $y_{j}=z_{j}$ with $v_{j}\neq w_{j}$ and $\exp^{\mu}(y_{j},v_{j}%
)=\exp^{\mu}(z_{j},w_{j})$ implies that $\left\Vert v_{j}\right\Vert
=\left\Vert w_{j}\right\Vert =\left\Vert \operatorname{grad}\mu(y_{j}%
)\right\Vert ^{-1}>TIR(K,\mu)$ by Proposition 1(ii, vi). There exist
$j_{1}\geq j_{0}$ such that $\forall j\geq j_{1},$ $y_{j}\neq z_{j}.$ For
otherwise, one would obtain $R=\left\Vert v_{0}\right\Vert =\left\Vert
w_{0}\right\Vert =\left\Vert \operatorname{grad}\mu(y_{0})\right\Vert
^{-1}>TIR(K,\mu)$ which is not the case. There must be a local maximum of
$F_{x_{j}}(s)$ between $y_{j}$ and $z_{j}$ at an interior point of
$\gamma(I_{2}),$ which contradicts with the choice of $\varepsilon_{1}.$ Case
3 can not occur.

\textbf{Case 4. }$y_{0}=z_{0}$ and $v_{0}\neq w_{0}.$ The injectivity of
$exp^{\mu}\mid(NK_{y_{0}}\cap W)$ can only fail at $\left\Vert v_{0}%
\right\Vert =\left\Vert w_{0}\right\Vert =\left\Vert \mathit{grad}\mu
(y_{0})\right\Vert ^{-1}$, Proposition 1(ii). However, $\left\Vert
\mathit{grad}\mu(y_{0})\right\Vert ^{-1}>R=TIR(K,\mu)$ by Proposition 1(vi).
Case 4 can not occur.

\textbf{Case 5. }$y_{0}\neq z_{0}$ with $y_{0}\in CP(p,+)$ and $z_{0}\in
CP(p,+).$ Recall $y_{0}=\gamma(s_{0})$ and $z_{0}=\gamma(t_{0}).$

\textbf{Claim 2.} $u(p,y_{0})=-u(p,z_{0}).$

There exists $\varepsilon_{1}>\varepsilon_{2}>0$ and $\delta>0$ (as in Case 3)
with $I_{i}=[s_{0}-\varepsilon_{i},s_{0}+\varepsilon_{i}]$ and $J_{i}%
=[t_{0}-\varepsilon_{i},t_{0}+\varepsilon_{i}]$ for $i=1,$ $2$ such that

i. $\gamma(I_{1})\cap\gamma(J_{1})=\varnothing,$

ii. $\forall x\in B(p,\varepsilon_{1})$ and $\forall s\in I_{1}\cup J_{1},$
$F_{x}^{\prime\prime}(s)>0,$

iii. $\forall s\in I_{2}-\{s_{0}\},$ $F_{p}(s)>F_{p}(s_{0})=R^{2}$ and
$\forall s\in J_{2}-\{t_{0}\},$ $F_{p}(s)>F_{p}(t_{0})=R^{2}$, and

iv. $\forall s\in\partial I_{2},$ $F_{p}(s)\geq\left(  R+\delta\right)  ^{2} $
and $\forall s\in\partial J_{2},$ $F_{p}(s)\geq\left(  R+\delta\right)  ^{2}.$

Suppose that $u(p,y_{0})\neq-u(p,z_{0}).$ There exists $w\in UT\mathbf{R}%
_{p}^{n}$ with $u(p,y_{0})\cdot w>0$ and $u(p,z_{0})\cdot w>0.$ As in the
proof of Lemma 4, there exists $\eta\in(0,\delta\min\mu)$ such that the point
$p_{1}=p+\eta w$ satisfies that
\begin{align*}
0  &  <\left\Vert y_{0}-p_{1}\right\Vert <\left\Vert y_{0}-p\right\Vert
=R\mu(y_{0})\\
0  &  <\left\Vert z_{0}-p_{1}\right\Vert <\left\Vert z_{0}-p\right\Vert
=R\mu(z_{0})
\end{align*}%
\begin{align*}
\forall s  &  \in\partial I_{2},\\
\left\Vert \gamma(s)-p\right\Vert  &  \geq(R+\delta)\mu(s)\\
\left\Vert \gamma(s)-p_{1}\right\Vert  &  \geq\left\Vert \gamma
(s)-p\right\Vert -\left\Vert p-p_{1}\right\Vert \\
&  \geq(R+\delta)\mu(s)-\delta\min\mu\\
&  \geq R\mu(s)\\
F_{p_{1}}(s)  &  \geq R^{2}\\
F_{p_{1}}(s_{0})  &  =\left\Vert y_{0}-p_{1}\right\Vert ^{2}\mu(y_{0}%
)^{-2}<R^{2}%
\end{align*}
The minimum of $F_{p_{1}}$ restricted to $I_{2}$ is attained at $q_{1}%
=\gamma(s_{0}^{\prime})$ with $s_{0}^{\prime}\in interior(I_{2})$ and
$F_{p_{1}}(q_{1})<R^{2}.$ In fact, $q_{1}$ is unique (see the very end of Case
3). Similarly, there exists $q_{2}=\gamma(t_{0}^{\prime})$ with $t_{0}%
^{\prime}\in interior(J_{2})$ such that $F_{p_{1}}(q_{2})=\min\left(
F_{p_{1}}\mid J_{2}\right)  <R^{2}.$ Clearly, $q_{1}\neq q_{2}.$ $p_{1}%
=\exp^{\mu}(q_{1},R_{1}u_{1})=\exp^{\mu}(q_{2},R_{2}u_{2}),$ for some
$u_{i}\in UNK_{q_{i}}$ and $R_{i}<R,$ for $i=1,2$. This would imply that
$exp^{\mu}$ is not injective on $D(r)$ for some $r<R=TIR(K,\mu),$ which
contradicts the definition of $TIR.$ This concludes the proof of Claim 2,
$u(p,y_{0})=-u(p,z_{0}).$

We have three colinear points $y_{0},p,z_{0},$ where $y_{0}$ and $z_{0}$ are
both in $CP(p)$ and $R=\frac{\left\Vert p-y_{0}\right\Vert }{\mu(y_{0})}%
=\frac{\left\Vert p-z_{0}\right\Vert }{\mu(z_{0})}.$ By Lemma 2,
$\{y_{0},z_{0}\}$~is a critical pair for $(K,\mu)$ and $R\geq\frac{1}%
{2}DCSD(K,\mu).$ By (4.1), $R=TIR(K,\mu)=\frac{1}{2}DCSD(K,\mu).$ This
finishes all cases for (i).

ii. Summarizing all the cases, we have either $FocRad^{0}(K,\mu)\leq
TIR(K,\mu)$ in Case 1, or $TIR(K,\mu)=\frac{1}{2}DCSD(K,\mu)$ in Case 5.
\[
LR(K,\mu)=\min\left(  \frac{1}{2}DCSD(K,\mu),FocRad^{0}(K,\mu)\right)  \leq
TIR(K,\mu).
\]
\pagebreak
\end{proof}

\begin{lemma}
Let $\gamma(s):I\rightarrow K$ be a parametrization of $K$ with respect to
arclength, $v(s):I\rightarrow UNK$ be $C^{1}$ with $v(s)\in UNK_{\gamma(s)}$
and $R\in\mathbf{R}^{+}$ be such that $(\gamma(s),Rv(s))\in interior(W)$ for
$\left|  s-s_{0}\right|  <\varepsilon$, $\eta(s)=\exp^{\mu}(\gamma(s),Rv(s)),$
$q=\gamma(s_{0})$ and $p=\eta(s_{0})$. Then,
\[
\eta^{\prime}(s_{0})\cdot\gamma^{\prime}(s_{0})=\frac{\mu^{2}(s_{0})}{2}%
\frac{d^{2}}{ds^{2}}\left.  F_{p}(\gamma(s))\right|  _{s=s_{0}}=\frac{\mu
^{2}(s_{0})}{2}F_{p}^{\prime\prime}(s_{0})
\]
\[
\eta^{\prime}(s_{0})\cdot\left(  \eta(s_{0})-c(s_{0})\right)  =\frac{\mu
^{3}(s_{0})}{4\mu^{\prime}(s_{0})}\frac{d^{2}}{ds^{2}}\left.  F_{p}%
(\gamma(s))\right|  _{s=s_{0}}=\frac{\mu^{3}(s_{0})}{4\mu^{\prime}(s_{0}%
)}F_{p}^{\prime\prime}(s_{0})
\]
provided that in the second equality one has $\mu^{\prime}(s)\neq0$ and
$c(s)=\gamma(s)-\frac{\mu(s)}{2\mu^{\prime}(s)}\gamma^{\prime}(s)$ to be the
center of the $n-1$ dimensional sphere containing $\exp^{\mu}(NK_{\gamma
(s)}\cap W).$
\end{lemma}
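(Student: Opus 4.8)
The plan is to work with the two quantities that are constrained along the curve $\eta$, differentiate each once, and read off the two equalities. Write $h(s,\tau)=\|\eta(s)-\gamma(\tau)\|^{2}\mu(\tau)^{-2}=F_{\eta(s)}(\gamma(\tau))$ and abbreviate $q=\gamma(s_{0})$, $p=\eta(s_{0})$, $\mu=\mu(s_{0})$, $\mu'=\mu'(s_{0})$. Since $(\gamma(s),Rv(s))$ lies in the interior of $W$, the map $\eta$ is $C^{1}$ by Proposition 1(iv), and $\partial_{\tau}h$ is jointly $C^{1}$ in $(s,\tau)$ near $(s_{0},s_{0})$ (it involves only $\eta,\eta',\gamma,\gamma',\gamma'',\mu,\mu',\mu''$, all continuous), so the differentiations below are legitimate. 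Two identities hold for every $s$ with $|s-s_{0}|<\varepsilon$: (a) by Proposition 1(i), $\gamma(s)$ is a critical point of $F_{\eta(s)}$, i.e. $\partial_{\tau}h(s,\tau)\big|_{\tau=s}=0$; and (b) by Proposition 1(ii), $\|\eta(s)-\gamma(s)\|=R\mu(s)$, i.e. $\|\eta(s)-\gamma(s)\|^{2}=R^{2}\mu(s)^{2}$.

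The first thing I would prove is the auxiliary orthogonality $\eta'(s_{0})\cdot(\eta(s_{0})-\gamma(s_{0}))=0$. Differentiating (b) at $s_{0}$ gives $(p-q)\cdot(\eta'(s_{0})-\gamma'(s_{0}))=R^{2}\mu\mu'$. On the other hand $p-q=\|p-q\|\,u(q,p)$ with $\|p-q\|=R\mu$, and since $u(q,p)^{N}$ is orthogonal to $\gamma'(s_{0})$, Lemma 1 (case $c=0$) together with $\operatorname{grad}\mu(q)=\mu'\gamma'(s_{0})$ gives $u(q,p)^{T}=-R\mu'\gamma'(s_{0})$, hence $(p-q)\cdot\gamma'(s_{0})=\|p-q\|\,u(q,p)^{T}\cdot\gamma'(s_{0})=-R^{2}\mu\mu'$. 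Subtracting, $(p-q)\cdot\eta'(s_{0})=0$.

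Now both equalities follow. For the first, differentiate (a) in $s$ at $s_{0}$: by the chain rule $\partial_{s}\partial_{\tau}h(s_{0},s_{0})+\partial_{\tau}^{2}h(s_{0},s_{0})=0$, and since $h(s_{0},\tau)=F_{p}(\gamma(\tau))$ the second term equals $F_{p}''(s_{0})$. A direct computation from $h=\|\eta(s)-\gamma(\tau)\|^{2}\mu(\tau)^{-2}$ yields $\partial_{s}\partial_{\tau}h(s_{0},s_{0})=-2\mu^{-2}\,\eta'(s_{0})\cdot\gamma'(s_{0})-4\mu^{-3}\mu'\,(p-q)\cdot\eta'(s_{0})$, whose last term vanishes by the orthogonality; hence $F_{p}''(s_{0})=2\mu^{-2}\,\eta'(s_{0})\cdot\gamma'(s_{0})$, which is the first equality. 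For the second, Proposition 1(v) identifies the center of $\exp^{\mu}(NK_{\gamma(s)}\cap W)$ as $c(s)=\gamma(s)-\tfrac{\mu(s)}{2\mu'(s)}\gamma'(s)$, so $\eta(s_{0})-c(s_{0})=(p-q)+\tfrac{\mu}{2\mu'}\gamma'(s_{0})$; dotting with $\eta'(s_{0})$ and discarding the $(p-q)$-term via the orthogonality gives $\eta'(s_{0})\cdot(\eta(s_{0})-c(s_{0}))=\tfrac{\mu}{2\mu'}\,\eta'(s_{0})\cdot\gamma'(s_{0})=\tfrac{\mu}{2\mu'}\cdot\tfrac{\mu^{2}}{2}F_{p}''(s_{0})=\tfrac{\mu^{3}(s_{0})}{4\mu'(s_{0})}F_{p}''(s_{0})$.

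The heart of the argument is the orthogonality $\eta'(s_{0})\perp(\eta(s_{0})-\gamma(s_{0}))$: it is not a mere ``constant length'' statement (the length $R\mu(s)$ varies), but a genuine consequence of the critical-point condition (which pins down the tangential part of $p-q$) combined with the norm identity. Once it is in hand, both formulas drop out of the ``moving critical point'' identity (a). The only delicate points I anticipate are bookkeeping — keeping straight which quantities are evaluated at $s_{0}$ versus at the running parameter $\tau$ — and checking the joint $C^{1}$-regularity of $\partial_{\tau}h$ needed to justify applying the chain rule to (a); the computation of $\partial_{s}\partial_{\tau}h$ itself is routine.
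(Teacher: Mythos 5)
Your proof is correct, and it takes a genuinely different route from the paper's. The paper proves both identities by brute-force differentiation of the explicit formula $\eta=\gamma-\mu\mu'R^{2}\gamma'+\mu R\sqrt{1-(\mu'R)^{2}}\,v$: the first equality is obtained by computing $(\eta\cdot\gamma')'$ and matching the result against the expression for $F_{p}''$ from Proposition 2(i), and the second requires a further page of manipulations of $\eta'\cdot v$ and the derivative of $\mu R\sqrt{1-(\mu'R)^{2}}$ (equations (4.4)--(4.8)). You instead work from the implicit characterization of $\exp^{\mu}$: the critical-point identity $\partial_{\tau}F_{\eta(s)}(\gamma(\tau))|_{\tau=s}=0$ and the norm identity $\|\eta(s)-\gamma(s)\|=R\mu(s)$, both supplied by Proposition 1. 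Your key intermediate step, the orthogonality $\eta'(s_{0})\cdot(\eta(s_{0})-\gamma(s_{0}))=0$, is verifiably correct (indeed it is implicit in the paper's final formula, since $\eta'\cdot(\eta-c)=\eta'\cdot(\eta-\gamma)+\frac{\mu}{2\mu'}\eta'\cdot\gamma'$ and the paper's answer equals exactly the second summand), and your computation of the mixed partial $\partial_{s}\partial_{\tau}h$ is right. What your approach buys is economy and unity: both equalities fall out of one differentiated identity plus one orthogonality, the second equality becomes a two-line consequence of the first, and you never need the explicit second-derivative formula of Proposition 2(i). What the paper's approach buys is that it stays entirely within elementary vector calculus on the explicit parametrization and produces along the way the auxiliary identities (4.5)--(4.8) that are reused elsewhere. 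Your regularity remarks (that $\eta$ is $C^{1}$ on the interior of $W$ and that $\partial_{\tau}h$ is jointly $C^{1}$, justifying the chain rule) are the right hypotheses to check and they do hold under the paper's standing smoothness assumptions.
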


\begin{proof}
By the definition of $\exp^{\mu}$ and $grad$ $\mu$, and proof of Proposition
2(i)$:$%
\[
\eta=\gamma-\mu\mu^{\prime}R^{2}\gamma^{\prime}+\mu R\sqrt{1-\left(
\mu^{\prime}R\right)  ^{2}}v
\]%
\begin{align}
\eta\cdot\gamma^{\prime}  &  =\gamma\cdot\gamma^{\prime}-\mu\mu^{\prime}%
R^{2}=\gamma\cdot\gamma^{\prime}-\frac{1}{2}R^{2}\left(  \mu^{2}\right)
^{\prime}\nonumber\\
\eta^{\prime}\cdot\gamma^{\prime}  &  =\left(  \eta\cdot\gamma^{\prime
}\right)  ^{\prime}-\eta\cdot\gamma^{\prime\prime}\nonumber\\
\eta^{\prime}\cdot\gamma^{\prime}  &  =1+\left(  \gamma-\eta\right)
\cdot\gamma^{\prime\prime}-\frac{1}{2}R^{2}\left(  \mu^{2}\right)
^{\prime\prime}\\
\eta^{\prime}(s_{0})\cdot\gamma^{\prime}(s_{0})  &  =1-(p-q)\cdot
\gamma^{\prime\prime}(s_{0})-\frac{1}{2}R^{2}\left(  \mu^{2}\right)
^{\prime\prime}(s_{0})\nonumber\\
&  =\frac{\mu^{2}(s_{0})}{2}F_{p}^{\prime\prime}(s_{0})=\frac{\mu^{2}(s_{0}%
)}{2}\frac{d^{2}}{ds^{2}}\left.  F_{p}(\gamma(s))\right\vert _{s=s_{0}}%
\end{align}

For the second part, assume that $\mu^{\prime}(s)\neq0$ locally.
\begin{align}
\eta &  =\gamma-\mu\mu^{\prime}R^{2}\gamma^{\prime}+\mu R\sqrt{1-\left(
\mu^{\prime}R\right)  ^{2}}v\nonumber\\
c  &  =\gamma-\frac{\mu}{2\mu^{\prime}}\gamma^{\prime}\nonumber\\
\eta^{\prime}\cdot\left(  \eta-c\right)   &  =\eta^{\prime}\cdot\gamma
^{\prime}\left(  -\mu\mu^{\prime}R^{2}+\frac{\mu}{2\mu^{\prime}}\right)
+\eta^{\prime}\cdot v\left(  \mu R\sqrt{1-\left(  \mu^{\prime}R\right)  ^{2}%
}\right)
\end{align}
By $v\cdot\gamma^{\prime}=$ $v\cdot v^{\prime}=0$, $\gamma^{\prime}\cdot
\gamma^{\prime}=v\cdot v=1,$ and the proof of Proposition 2(i):
\begin{align}
\eta^{\prime}\cdot v  &  =\left(  \gamma-\mu\mu^{\prime}R^{2}\gamma^{\prime
}+\mu R\sqrt{1-\left(  \mu^{\prime}R\right)  ^{2}}v\right)  ^{\prime}\cdot
v\nonumber\\
\eta^{\prime}\cdot v  &  =-\mu\mu^{\prime}R^{2}\gamma^{\prime\prime}\cdot
v+\left(  \mu R\sqrt{1-\left(  \mu^{\prime}R\right)  ^{2}}\right)  ^{\prime}%
\end{align}%
\[
\left(  \mu R\sqrt{1-\left(  \mu^{\prime}R\right)  ^{2}}\right)  \left(  \mu
R\sqrt{1-\left(  \mu^{\prime}R\right)  ^{2}}\right)  ^{\prime}=\frac{1}%
{2}\left(  \mu^{2}R^{2}\left(  1-\left(  \mu^{\prime}R\right)  ^{2}\right)
\right)  ^{\prime}%
\]%
\begin{equation}
=\mu\mu^{\prime}R^{2}-\left(  \mu\left(  \mu^{\prime}\right)  ^{3}+\mu^{2}%
\mu^{\prime}\mu^{\prime\prime}\right)  R^{4}%
\end{equation}
By the proof of Proposition 1(i) and $\gamma^{\prime\prime}(s)\in
NK_{\gamma(s)}$:
\begin{align}
\gamma^{\prime\prime}\cdot(\eta-\gamma)  &  =\gamma^{\prime\prime}\cdot
u(\gamma,\eta)R\mu=\gamma^{\prime\prime}\cdot u(\gamma,\eta)^{N}%
R\mu\nonumber\\
\gamma^{\prime\prime}\cdot(\eta-\gamma)  &  =\gamma^{\prime\prime}\cdot
v\left\Vert u(\gamma,\eta)^{N}\right\Vert R\mu=\gamma^{\prime\prime}\cdot
vR\mu\sqrt{1-\left(  \mu^{\prime}R\right)  ^{2}}%
\end{align}
By combining (4.5), (4.6), (4.7) and using (4.2) in the last step:%
\[
\eta^{\prime}\cdot v\left(  \mu R\sqrt{1-\left(  \mu^{\prime}R\right)  ^{2}%
}\right)  =\text{
\ \ \ \ \ \ \ \ \ \ \ \ \ \ \ \ \ \ \ \ \ \ \ \ \ \ \ \ \ \ \ \ \ \ \ \ \ \ \ \ \ \ \ \ \ \ \ \ \ \ \ \ \ \ \ \ \ \ \ \ \ \ \ \ \ \ \ \ \ \ \ \ \ \ \ \ \ \ \ \ \ }%
\]%
\begin{align}
&  =\mu R\sqrt{1-\left(  \mu^{\prime}R\right)  ^{2}}\left(  -\mu\mu^{\prime
}R^{2}\gamma^{\prime\prime}\cdot v+\left(  \mu R\sqrt{1-\left(  \mu^{\prime
}R\right)  ^{2}}\right)  ^{\prime}\right) \nonumber\\
&  =-\mu\mu^{\prime}R^{2}\left(  \mu R\sqrt{1-\left(  \mu^{\prime}R\right)
^{2}}\right)  \gamma^{\prime\prime}\cdot v+\mu\mu^{\prime}R^{2}-\left(
\mu\left(  \mu^{\prime}\right)  ^{3}+\mu^{2}\mu^{\prime}\mu^{\prime\prime
}\right)  R^{4}\nonumber\\
&  =-\mu\mu^{\prime}R^{2}\gamma^{\prime\prime}\cdot(\eta-\gamma)+\mu
\mu^{\prime}R^{2}-\mu\mu^{\prime}\left(  \left(  \mu^{\prime}\right)  ^{2}%
+\mu\mu^{\prime\prime}\right)  R^{4}\nonumber\\
&  =\mu\mu^{\prime}R^{2}\left(  1-\gamma^{\prime\prime}\cdot(\eta
-\gamma)-\frac{1}{2}R^{2}\left(  \mu^{2}\right)  ^{\prime\prime}\right)
\nonumber\\
&  =\mu\mu^{\prime}R^{2}\left(  \eta^{\prime}\cdot\gamma^{\prime}\right)
\end{align}
By combining (4.4), (4.8) and using (4.3) in the last step:%
\begin{align*}
\eta^{\prime}\cdot\left(  \eta-c\right)   &  =\left(  -\mu\mu^{\prime}%
R^{2}+\frac{\mu}{2\mu^{\prime}}\right)  \left(  \eta^{\prime}\cdot
\gamma^{\prime}\right)  +\mu\mu^{\prime}R^{2}\left(  \eta^{\prime}\cdot
\gamma^{\prime}\right) \\
&  =\frac{\mu}{2\mu^{\prime}}\left(  \eta^{\prime}\cdot\gamma^{\prime}\right)
\\
\eta^{\prime}(s_{0})\cdot\left(  \eta(s_{0})-c(s_{0})\right)   &  =\frac
{\mu(s_{0})}{2\mu^{\prime}(s_{0})}\eta^{\prime}(s_{0})\cdot\gamma^{\prime
}(s_{0})=\frac{\mu(s_{0})}{2\mu^{\prime}(s_{0})}\frac{\mu^{2}(s_{0})}{2}%
F_{p}^{\prime\prime}(s_{0})\\
&  =\frac{\mu^{3}(s_{0})}{4\mu^{\prime}(s_{0})}F_{p}^{\prime\prime}(s_{0})
\end{align*}

\end{proof}

\begin{proposition}
Let $K$ be a union of finitely many disjoint simple smoothly closed possibly
linked or knotted curves in $\mathbf{R}^{n}$ and $\mu:K\rightarrow(0,\infty)$
be given.

i. $\exp^{\mu}$ restricted to the normal plane $NK_{q}\cap int(W)$ is
non-singular, for each $q\in K$. $\exp^{\mu}$ is singular at the boundary of
$W$ where the spheres $\exp^{\mu}(NK_{q}\cap W)$ close up at the antipodal of
$q.$

ii. Let $(q,w)$ be an interior point of $W$, $\exp^{\mu}(q,w)=p$,
$\gamma:I\rightarrow K$ be a parametrization of $K$ with respect to arclength
and $q=\gamma(s_{0})$.%
\[
\exp^{\mu}\text{ is singular at }(q,w)\text{ if and only if }\frac{d^{2}%
}{ds^{2}}\left.  F_{p}(\gamma(s))\right\vert _{s=s_{0}}=0.
\]

iii$.$%
\[
\mathit{RegRad}(K,\mu)=FocRad^{0}(K,\mu)
\]
\[
DIR(K,\mu)=LR(K,\mu)=\min\left(  \frac{1}{2}DCSD(K,\mu),\mathit{RegRad}%
(K,\mu)\right)
\]

\end{proposition}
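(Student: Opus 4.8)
The plan is to prove (i) by directly differentiating the closed formula for $\exp^{\mu}$, deduce (ii) from (i) together with the two inner‑product identities of Lemma 5, and finally assemble (iii) from (ii), Proposition 3, and Proposition 4. For (i): if $\mathit{grad}\mu(q)=0$ the map $w\mapsto\exp^{\mu}(q,w)=q+\mu(q)w$ on $NK_{q}$ is affine with nonzero factor $\mu(q)$, hence nonsingular, and $W\cap NK_{q}=NK_{q}$ has empty boundary. If $g:=\left\Vert \mathit{grad}\mu(q)\right\Vert \neq0$, write $w=Rv$ with $v\in UNK_{q}$ and $e:=\mathit{grad}\mu(q)/g\in TK_{q}$, so $\exp^{\mu}(q,Rv)=q-\mu(q)gR^{2}e+\mu(q)R\sqrt{1-g^{2}R^{2}}\,v$. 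Differentiating along $UNK_{q}$ yields $\mu(q)R\sqrt{1-g^{2}R^{2}}$ times a unit normal orthogonal to $v$, so for $0<R<1/g$ the $UNK_{q}$‑directions map isomorphically onto the $(n-2)$‑plane of $NK_{q}$ orthogonal to $v$; differentiating in $R$ contributes the further term $-2\mu(q)gR\,e$, a nonzero vector of $TK_{q}$ and hence independent of that plane. With Proposition 1(iv) for $R=0$, this shows the differential of $\exp^{\mu}$ restricted to $NK_{q}\cap int(W)$ has rank $n-1$. At $R=1/g$ the factor $\sqrt{1-g^{2}R^{2}}$ vanishes, killing the $UNK_{q}$‑directions; indeed $\exp^{\mu}(q,w)=q-(\mu(q)/g)e$ is constant for $\left\Vert w\right\Vert =1/g$, so the boundary sphere collapses to the antipode of $q$ on the sphere $\exp^{\mu}(NK_{q}\cap W)$ of Proposition 1(v), giving the asserted singularity at $\partial W$.

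For (ii): fix an interior point $(q,w)$, $p=\exp^{\mu}(q,w)$, and set $A_{q}=\exp^{\mu}(NK_{q}\cap W)$, which by Proposition 1(v) is the $(n-1)$‑sphere through $q$ with center $c(s_{0})=\gamma(s_{0})-\frac{\mu(s_{0})}{2\mu'(s_{0})}\gamma'(s_{0})$ if $\mu'(s_{0})\neq0$, and the affine plane $q+NK_{q}$ if $\mu'(s_{0})=0$. Since $w$ is interior, $p$ is not the collapsed antipode, so near $p$ the set $A_{q}$ is a genuine $(n-1)$‑submanifold, and by (i) the restriction of $d(\exp^{\mu})_{(q,w)}$ to the fibre directions is injective with image $T_{p}A_{q}$. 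Choosing a $C^{1}$ unit normal field $v(s)\in UNK_{\gamma(s)}$ with $v(s_{0})=w/\left\Vert w\right\Vert$ and $R=\left\Vert w\right\Vert$, and putting $\eta(s)=\exp^{\mu}(\gamma(s),Rv(s))$, the image of $d(\exp^{\mu})_{(q,w)}$ is spanned by $T_{p}A_{q}$ together with $\eta'(s_{0})$; hence $\exp^{\mu}$ is singular at $(q,w)$ iff $\eta'(s_{0})\in T_{p}A_{q}$. If $\mu'(s_{0})\neq0$ this means $\eta'(s_{0})\cdot(p-c(s_{0}))=0$, which by the second identity of Lemma 5 equals $\frac{\mu^{3}(s_{0})}{4\mu'(s_{0})}F_{p}''(s_{0})$; if $\mu'(s_{0})=0$ then $T_{p}A_{q}=NK_{q}$, so the condition is $\eta'(s_{0})\cdot\gamma'(s_{0})=0$, which by the first identity of Lemma 5 equals $\frac{\mu^{2}(s_{0})}{2}F_{p}''(s_{0})$. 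Either way the singularity condition is exactly $F_{p}''(s_{0})=0$.

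For (iii): by (ii), inside $int(W)$ the map $\exp^{\mu}$ is singular over $q=\gamma(s_{0})$ precisely at radii $R$ with $F''_{\exp^{\mu}(q,Rv)}(s_{0})=0$ for some $v$; Proposition 3(i) (with Lemma 3 and Definition 9) identifies the least such radius as $\Lambda(\kappa,\mu)(s_{0})^{-1/2}$ when $\Delta(\kappa,\mu)(s_{0})\geq0$, and shows there is no such radius in $[0,\left\Vert \mathit{grad}\mu(q)\right\Vert ^{-1})$ when $\Delta(\kappa,\mu)(s_{0})<0$. Since moreover $D(r)\subset int(W)$ iff $r\leq(\max_{s}\left\vert \mu'(s)\right\vert ^{2})^{-1/2}$, and $\exp^{\mu}$ fails to be a nonsingular $C^{1}$ map along $\partial W$ by (i), the map $\exp^{\mu}\mid D(r)$ is a nonsingular $C^{1}$ map iff $r\leq(\max_{s}\left\vert \mu'(s)\right\vert ^{2})^{-1/2}$ and $r\leq\Lambda(\kappa,\mu)(s_{0})^{-1/2}$ for all $s_{0}$ with $\Delta(\kappa,\mu)(s_{0})\geq0$. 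Taking the supremum, and using Proposition 3(ii,iii,iv) so the relevant maxima are attained and positive,
\[
\mathit{RegRad}(K,\mu)=\min\!\left(\Big(\max_{s}\left\vert \mu'(s)\right\vert ^{2}\Big)^{-\frac{1}{2}},\ \Big(\max_{\Delta(\kappa,\mu)(s)\geq0}\Lambda(\kappa,\mu)(s)\Big)^{-\frac{1}{2}}\right)=FocRad^{0}(K,\mu).
\]
For the $DIR$ identity: if $r<DIR(K,\mu)$ then $\exp^{\mu}\mid D(r)$ is a diffeomorphism onto its image, hence both a nonsingular $C^{1}$ map and a homeomorphism, so $r\leq\mathit{RegRad}(K,\mu)$ and, by Proposition 4, $r\leq TIR(K,\mu)\leq UR(K,\mu)\leq\frac{1}{2}DCSD(K,\mu)$; thus $DIR(K,\mu)\leq\min(\frac{1}{2}DCSD(K,\mu),\mathit{RegRad}(K,\mu))$. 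Conversely, if $r<\min(\frac{1}{2}DCSD(K,\mu),\mathit{RegRad}(K,\mu))=\min(\frac{1}{2}DCSD(K,\mu),FocRad^{0}(K,\mu))=LR(K,\mu)\leq TIR(K,\mu)$ by Proposition 4, then $\exp^{\mu}\mid D(r)$ is an injective $C^{1}$ immersion of the $n$‑manifold $D(r)$ into $\mathbf{R}^{n}$, hence a local diffeomorphism, hence open, and being injective and continuous it is a diffeomorphism onto its image; so $r\leq DIR(K,\mu)$. Therefore $DIR(K,\mu)=\min(\frac{1}{2}DCSD(K,\mu),\mathit{RegRad}(K,\mu))=\min(\frac{1}{2}DCSD(K,\mu),FocRad^{0}(K,\mu))=LR(K,\mu)$.

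The main obstacle is (ii): one must see that all the degeneracy of the $n$‑dimensional differential $d(\exp^{\mu})_{(q,w)}$ is concentrated in a single base direction, which is exactly the content of (i), and then recognize the two inner‑product identities of Lemma 5 as the tests for that base vector being tangent to the normal sphere (respectively plane) $A_{q}$. After that, (iii) reduces to bookkeeping, the only care being to track both the collapse at $\partial W$ and the interior zeros of $F_{p}''$ when matching $\mathit{RegRad}$ to $FocRad^{0}$, and to invoke Proposition 4 for the injectivity half of the $DIR$ formula.
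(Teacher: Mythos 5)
Your proposal is correct and takes essentially the same route as the paper: part (i) by inspecting the differential of the explicit formula on each normal fibre, part (ii) by reducing singularity to whether the transversal image vector $\eta'(s_{0})$ lies in the tangent space of the normal sphere (resp.\ plane) $A_{q}$ and then invoking the two inner-product identities of Lemma 5, and part (iii) by combining (ii) with Propositions 3 and 4 and the Inverse Function Theorem. Your write-up of $\mathit{RegRad}=FocRad^{0}$ is in fact more explicit than the paper's, which dismisses it as immediate from (ii) and the definitions.
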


\begin{proof}
i. For a fixed $q,$ by Proposition 1(ii):
\[
\exp^{\mu}(q,Rv)=\left\{
\begin{array}
[c]{cc}%
q+\mu(q)R\left(  \cos\alpha(R)\frac{\mathit{grad}\mu(q)}{\left\Vert
\mathit{grad}\mu(q)\right\Vert }+\sin\alpha(R)v\right)  & \text{if
}\mathit{grad}\mu(q)\neq0\\
q+\mu(q)Rv & \text{if }\mathit{grad}\mu(q)=0
\end{array}
\right.
\]%
\[
\text{where }\cos\alpha(R)=-R\left\Vert \mathit{grad}\mu(q)\right\Vert \text{
and }\sin\alpha(R)=\sqrt{1-\left(  R\left\Vert \mathit{grad}\mu(q)\right\Vert
\right)  ^{2}}.
\]

If $\mathit{grad}\mu(q)=0,$ $\exp^{\mu}$ restricted to $NK_{q}$ is a dilation
and translation, and it is non-singular along \thinspace$NK_{q}$. If
$\mathit{grad}\mu(q)\neq0$, for each fixed $v\in UNK_{q},$ $\exp^{\mu}(q,Rv)$
follows the great circles of the sphere $\exp^{\mu}(NK_{q}\cap W)$ starting at
$q$ with non-zero speed until $q^{\prime}=\exp^{\mu}(q,v\left\Vert
\mathit{grad}\mu(q)\right\Vert ^{-1})$ and $\exp^{\mu}$ is non-singular along
\thinspace$NK_{q}\cap int(W)$. However, $q^{\prime}=\exp^{\mu}(q,v\left\Vert
\mathit{grad}\mu(q)\right\Vert ^{-1})$ for all $v\in UNK_{q},$ the sphere
$\exp^{\mu}(NK_{q}\cap W)$ closes up at $q^{\prime}$, the antipodal of $q$.
Hence, $\exp^{\mu}$ is singular along \thinspace$NK_{q}\cap\partial W.$

ii. \textbf{Case 1}. $\mu^{\prime}(s_{0})\neq0.$

Assume that $\exp^{\mu}$ is singular at $(q,w)$ where $\exp^{\mu}(q,w)=p$,
$(q,w)\in int(W).$ There exists a regular curve $\overline{\beta}(t)$ in $NK$,
such that $\overline{\beta}(t_{0})=(q,w)$ and $\exp^{\mu}(\overline{\beta
}(t))$ is singular at $t=t_{0}.$ $\overline{\beta}(t)=(\overline{\gamma
}(t),\overline{R}(t)\overline{v}(t))$ for $\overline{v}(t)\in UNK_{\overline
{\gamma}(t)}.$ By (i), the singular directions can not be tangential to
$NK_{q},$ and $0\neq\frac{d\overline{\gamma}}{dt}(t_{0})=\frac{d\overline
{\gamma}}{ds}\frac{ds}{dt}(t_{0}).$ Hence, one can reparametrize
$\overline{\beta}(t)=\beta(s)=(\gamma(s),R(s)v(s))$, with respect to the
arclength $s$ of $\gamma$ for $\left\vert s-s_{0}\right\vert <\varepsilon,$
and $s(t_{0})=s_{0},$ and still have a regular curve $\beta(s)$ such that
$\exp^{\mu}(\beta(s))=\exp^{\mu}(\gamma(s),R(s)v(s))$ is singular at
$s=s_{0}.$ The curve $\varphi(R)=\exp^{\mu}(\gamma(s_{0}),Rv(s_{0}))$ lies on
the sphere $\exp^{\mu}(NK_{q}\cap W)$ with center $c(s_{0})$ and it is normal
to the radial vectors from the center$.$ The curve $\eta(s)=\exp^{\mu}%
(\gamma(s),R(s_{0})v(s))$ satisfies Lemma 5(ii), and $p=\eta(s_{0}%
)=\varphi(R(s_{0})).$%
\begin{align*}
0  &  =\frac{d}{ds}\left.  \exp^{\mu}(\beta(s))\right\vert _{s=s_{0}}\\
&  =\frac{d}{ds}\left.  \exp^{\mu}(\gamma(s),R(s_{0})v(s))\right\vert
_{s=s_{0}}+\left.  \frac{dR}{ds}\right\vert _{s=s_{0}}\frac{d}{dR}\left.
\exp^{\mu}(\gamma(s_{0}),Rv(s_{0}))\right\vert _{R=R(s_{0})}\\
0  &  =\frac{d}{dR}\left.  \exp^{\mu}(\gamma(s_{0}),Rv(s_{0}))\right\vert
_{R=R(s_{0})}\cdot\left(  \varphi(R(s_{0}))-c(s_{0})\right) \\
0  &  =\frac{d}{ds}\left.  \exp^{\mu}(\gamma(s),R(s_{0})v(s))\right\vert
_{s=s_{0}}\cdot\left(  \eta(s_{0})-c(s_{0})\right) \\
&  =\frac{d\eta}{ds}(s_{0})\cdot\left(  \eta(s_{0})-c(s_{0})\right)
=\frac{\mu^{3}(s_{0})}{4\mu^{\prime}(s_{0})}F_{p}^{\prime\prime}(s_{0})
\end{align*}
This finishes the proof of $\left(  \Rightarrow\right)  $ in Case 1.

Assume that $F_{p}^{\prime\prime}(s_{0})=0$ where $\exp^{\mu}(q,w)=p$, and
$(q,w)\in int(W).$ Consider $\eta(s)=\exp^{\mu}(\gamma(s),Rv(s))$ where
$v(s):I\rightarrow UNK$ be $C^{1}$ with $v(s)\in UNK_{\gamma(s)}$ and
$R\in\mathbf{R}^{+}$ be such that $(\gamma(s),Rv(s))\in interior(W)$ for
$\left\vert s-s_{0}\right\vert <\varepsilon$, and $w=Rv(s_{0})$.
\[
0=\frac{\mu^{3}(s_{0})}{4\mu^{\prime}(s_{0})}F_{p}^{\prime\prime}(s_{0}%
)=\eta^{\prime}(s_{0})\cdot\left(  \eta(s_{0})-c(s_{0})\right)
\]

The non-zero vector $(\gamma^{\prime}(s_{0}),Rv^{\prime}(s_{0}))$ is not
tangential to $NK_{q}\cap int(W).$ $\eta^{\prime}(s_{0})$ is either zero or it
is normal to the radial vector $\eta(s_{0})-c(s_{0}).$ Therefore,
$\eta^{\prime}(s_{0})$ is tangent to the $n-1$ dimensional sphere
$\mathbf{S=}\exp^{\mu}(NK_{q}\cap W)$ at $p.$%
\[
d(\exp^{\mu})(q,w):T(NK)_{(q,w)}=T(NK_{q})_{w}\oplus\mathbf{R}\approx
\mathbf{R}^{n}\rightarrow T\mathbf{R}_{p}^{n}=T\mathbf{S}_{p}\oplus
\mathbf{R}\approx\mathbf{R}^{n}%
\]
\begin{align*}
d(\exp^{\mu})(q,w)|T(NK_{q})_{w}  &  :T(NK_{q})_{w}\rightarrow T\mathbf{S}%
_{p}\text{ is an isomorphism by (i)}\\
(\gamma^{\prime}(s_{0}),Rv^{\prime}(s_{0}))  &  \in T(NK)_{(q,w)}\\
(\gamma^{\prime}(s_{0}),Rv^{\prime}(s_{0}))  &  \notin T(NK_{q})_{w}\\
d(\exp^{\mu})(q,w)((\gamma^{\prime}(s_{0}),Rv^{\prime}(s_{0})))  &
=\eta^{\prime}(s_{0})\in T\mathbf{S}_{p}%
\end{align*}
\[
d(\exp^{\mu})(q,w):T(NK)_{(q,w)}\approx\mathbf{R}^{n}\rightarrow
T\mathbf{R}_{p}^{n}\approx\mathbf{R}^{n}\text{ is not one-to one.}%
\]
Therefore, $\exp^{\mu}$ is singular at $(q,w)$ to conclude the proof of
$\left(  \Leftarrow\right)  $ in Case 1.

\textbf{Case 2. }$\mu^{\prime}(s_{0})=0.$ The proof is essentially the same as
in Case 1 by replacing all \textquotedblleft$\cdot\left(  \eta(s_{0}%
)-c(s_{0})\right)  $\textquotedblright\ with \textquotedblleft$\cdot
\gamma^{\prime}(s_{0})\text{\textquotedblright}\ ,$ since $\exp^{\mu}(NK_{q})$
is an $n-1$ dimensional plane through $q=\gamma(s_{0})$ normal to
$\gamma^{\prime}(s_{0}),$ and one uses the first equation of Lemma 5,
$\eta^{\prime}(s_{0})\cdot\gamma^{\prime}(s_{0})=\frac{1}{2}\mu^{2}%
(s_{0})F_{p}^{\prime\prime}(s_{0})$ instead of the second equation.

iii. $\mathit{RegRad}(K,\mu)=FocRad^{0}(K,\mu)$ immediately follows (ii) and
the definitions. Combining Proposition 4, definitions of $DIR(K,\mu),$
$TIR(K,\mu),$ $LR(K,\mu)$ and $UR(K,\mu):$%
\begin{align*}
LR(K,\mu)  &  \leq TIR(K,\mu)\leq UR(K,\mu)\\
LR(K,\mu)  &  =\min\left(  \frac{1}{2}DCSD(K,\mu),FocRad^{0}(K,\mu)\right) \\
UR(K,\mu)  &  =\min\left(  \frac{1}{2}DCSD(K,\mu),FocRad^{-}(K,\mu)\right) \\
DIR(K,\mu)  &  \leq TIR(K,\mu)\leq\frac{1}{2}DCSD(K,\mu)\\
DIR(K,\mu)  &  \leq\mathit{RegRad}(K,\mu)=FocRad^{0}(K,\mu)\\
DIR(K,\mu)  &  \leq\min\left(  \frac{1}{2}DCSD(K,\mu),\mathit{RegRad}%
(K,\mu)\right)
\end{align*}
For all $0<r<\min\left(  \frac{1}{2}DCSD(K,\mu),\mathit{RegRad}(K,\mu)\right)
\leq TIR(K,\mu),$ $\exp^{\mu}$ restricted to $D(r)$ is a homeomorphism onto an
open subset $O(K,\mu r)$ of $\mathbf{R}^{n}$ by the proof of Proposition 4(i),
it is $C^{1}$ and non-singular, by Proposition 1. $\exp^{\mu}$ restricted to
$D(r)$ is a diffeomorphism, for all $0<r<\min\left(  \frac{1}{2}%
DCSD(K,\mu),\mathit{RegRad}(K,\mu)\right)  $, by the Inverse Function
Theorem.
\begin{align*}
DIR(K,\mu)  &  =\min\left(  \frac{1}{2}DCSD(K,\mu),\mathit{RegRad}%
(K,\mu)\right) \\
&  =\min\left(  \frac{1}{2}DCSD(K,\mu),FocRad^{0}(K,\mu)\right)  =LR(K,\mu)
\end{align*}

\end{proof}

\begin{lemma}
$LR(K,\mu)=UR(K,\mu)$ holds for $\mu$ on an open and dense subset of
$C^{3}(K,(0,\infty))$ in the $C^{3}-$ topology, for a fixed choice of
embedding $K\subset\mathbf{R}^{n}.$
\end{lemma}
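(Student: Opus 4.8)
The plan is to reduce the statement to a scalar genericity condition. Fix the embedding $K\subset\mathbf{R}^{n}$, let $\kappa$ be its curvature, and set $h_{\mu}(s)=\mu''(s)+\tfrac{1}{4}\kappa^{2}(s)\mu(s)$, so that $\Delta(\kappa,\mu)=\mu\,h_{\mu}$; since $\mu>0$, the sets $\{\Delta(\kappa,\mu)\geq0\}$ and $\{\Delta(\kappa,\mu)>0\}$ occurring in the definitions of $FocRad^{0}$ and $FocRad^{-}$ equal $\{h_{\mu}\geq0\}$ and $\{h_{\mu}>0\}$. The only discrepancy between $FocRad^{0}(K,\mu)$ and $FocRad^{-}(K,\mu)$ is that the first uses $\max\{\Lambda(\kappa,\mu):h_{\mu}\geq0\}$ and the second uses $\sup\{\Lambda(\kappa,\mu):h_{\mu}>0\}$ (the $\max\{|\mu'|^{2}\}$ term is common to both). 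So it is enough to exhibit an open dense set of weights for which these two numbers agree: then $FocRad^{0}(K,\mu)=FocRad^{-}(K,\mu)$, whence $LR(K,\mu)=\min(\tfrac{1}{2}DCSD(K,\mu),FocRad^{0}(K,\mu))=\min(\tfrac{1}{2}DCSD(K,\mu),FocRad^{-}(K,\mu))=UR(K,\mu)$.

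First I would isolate the sufficient condition that $0$ be a regular value of $h_{\mu}$ (a $C^{1}$ function, since $\kappa^{2}=\gamma''\cdot\gamma''\in C^{1}$ and $\mu\in C^{3}$); equivalently, that $h_{\mu}$ and $h_{\mu}'$ have no common zero. If it holds, $\{h_{\mu}=0\}$ is finite and $h_{\mu}$ changes sign at each of its zeros, so $\{h_{\mu}>0\}$ is dense in the compact set $\{h_{\mu}\geq0\}$; since $\Lambda(\kappa,\mu)$ is continuous on $\{h_{\mu}\geq0\}$ (there $\kappa=\sqrt{\kappa^{2}}$ is continuous and the radicand $\mu h_{\mu}$ is nonnegative), its supremum over the dense subset equals its maximum over $\{h_{\mu}\geq0\}$; both sets are nonempty by Proposition~3(ii). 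Hence $0$ being a regular value of $h_{\mu}$ forces $FocRad^{0}(K,\mu)=FocRad^{-}(K,\mu)$. Let $\mathcal{O}\subset C^{3}(K,(0,\infty))$ be the set of weights with this property; by the above $LR=UR$ on $\mathcal{O}$, and it remains to prove $\mathcal{O}$ is open and dense in the $C^{3}$ topology.

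Openness is routine: $\mu\mapsto h_{\mu}$ and $\mu\mapsto h_{\mu}'=\mu'''+\tfrac{1}{4}(\kappa^{2})'\mu+\tfrac{1}{4}\kappa^{2}\mu'$ are continuous from $C^{3}(K)$ into $C^{0}$ (the coefficients $\kappa^{2},(\kappa^{2})'$ are fixed continuous functions), and $Domain(\gamma)$ is a compact $1$-manifold, so the strict positivity of $h_{\mu_{0}}^{2}+(h_{\mu_{0}}')^{2}$ for $\mu_{0}\in\mathcal{O}$ survives small $C^{3}$ perturbations. For density, given $\mu_{0}$ and $\varepsilon>0$, I would first choose a smooth $\mu_{1}$ with $\|\mu_{1}-\mu_{0}\|_{C^{3}}<\varepsilon/2$. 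The obstacle is that perturbing $\mu$ does not merely translate $h_{\mu}$, so one cannot apply Sard to $h_{\mu}$ directly; instead, perturb within a small finite-dimensional space $\mathcal{P}$ of smooth functions on $Domain(\gamma)$ — for instance a small space of trigonometric polynomials, one block per component of $K$ — chosen so that for every $s_{0}$ the linear map $\mathcal{P}\to\mathbf{R}$, $\psi\mapsto\psi''(s_{0})+\tfrac{1}{4}\kappa^{2}(s_{0})\psi(s_{0})$, is onto (if $\kappa^{2}(s_{0})\neq0$ vary the constant term, otherwise vary the degree-one part). Then $\Phi:\mathcal{P}\times Domain(\gamma)\to\mathbf{R}$, $\Phi(\psi,s)=h_{\mu_{1}+\psi}(s)$, is a $C^{1}$ submersion; $\Phi^{-1}(0)$ is a $C^{1}$ manifold of dimension $\dim\mathcal{P}$, the projection $\Phi^{-1}(0)\to\mathcal{P}$ is a $C^{1}$ map between manifolds of equal dimension, and by Sard's theorem almost every $\psi\in\mathcal{P}$ is a regular value of it — which, by the standard argument, is exactly the statement that $0$ is a regular value of $h_{\mu_{1}+\psi}$, i.e.\ $\mu_{1}+\psi\in\mathcal{O}$. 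Choosing such a $\psi$ with $\|\psi\|_{C^{3}}<\varepsilon/2$ (and $\mu_{1}+\psi>0$, automatic for small $\psi$) gives a point of $\mathcal{O}$ within $\varepsilon$ of $\mu_{0}$.

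The step I expect to require the most care is the density argument just sketched: one must set up the finite-dimensional parameter space $\mathcal{P}$ so that $\Phi$ is a genuine submersion (the only point needing a moment's thought is the behaviour at points where $\kappa$ vanishes, where $\kappa^{2}$ and $(\kappa^{2})'$ both vanish), and then quote the parametric form of Sard's theorem in the $C^{1}$ category — which is legitimate here since $h_{\mu_{1}+\psi}$ is only $C^{1}$ (because $\kappa^{2}$ is only $C^{1}$), but $C^{1}$ is precisely the regularity Sard requires for maps between $1$-manifolds. Everything else is compactness and continuity bookkeeping.
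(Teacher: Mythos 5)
Your argument is correct and shares the paper's skeleton: both proofs reduce the lemma to the condition that $0$ be a regular value of $h_{\mu}=\mu''+\tfrac{1}{4}\kappa^{2}\mu$ (which forces the zeros of $h_{\mu}$ into the closure of $\{h_{\mu}>0\}$ and hence $FocRad^{0}=FocRad^{-}$), both get openness from stability of regularity on the compact $Domain(\gamma)$, and both get density from Sard's theorem. The two proofs diverge only in how Sard is deployed. The paper perturbs along a one-parameter pencil $\mu_{\varepsilon}=\mu-\varepsilon\mu_{0}$, where $\mu_{0}=\mu_{1}+c_{1}$ is constructed so that $h_{\mu_{0}}>0$ everywhere (the requirement $\mu_{1}''>0$ near the zeros of $\kappa$ plays exactly the role of your degree-one trigonometric block at points with $\kappa(s_{0})=0$); the classical one-variable Sard theorem applied to the quotient $f=h_{\mu}/h_{\mu_{0}}$ then shows $0$ is a regular value of $h_{\mu_{\varepsilon}}$ for almost every small $\varepsilon$. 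You instead build a finite-dimensional family $\mathcal{P}$ making $\Phi(\psi,s)=h_{\mu_{1}+\psi}(s)$ a submersion and quote parametric transversality. The paper's version is more economical --- one parameter, no submersion to verify, the positivity of $h_{\mu_{0}}$ being precisely what makes its pencil transverse in your sense --- while yours is the more systematic route and transplants mechanically to other genericity statements, at the cost of the extra care you correctly flag at the locus $\kappa=0$. Both versions use only the $C^{1}$ regularity of $h_{\mu}$, which is all Sard requires for maps between one-dimensional manifolds.
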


\begin{proof}
For simplicity, we will assume that $K$ has one component. For a given onto
parametrization $\gamma:domain(\gamma)=\mathbf{R/}(lengthK)\mathbf{Z}%
\rightarrow K$, that is given $\kappa(s)$, define $X_{\kappa}=\left\{  \mu\in
C^{3}(K,(0,\infty)):0\text{ is a regular value of }\mu^{\prime\prime}%
+\frac{\kappa^{2}}{4}\mu\right\}  .$ This condition is equivalent to "the
graph of $\mu^{\prime\prime}+\frac{\kappa^{2}}{4}\mu$ intersects $s-$axis
transversally at every point of intersection" and it implies that $\left\{
s:\left(  \mu^{\prime\prime}+\frac{1}{4}\kappa^{2}\mu\right)  (s)=0\right\}  $
is a subset of the closure of $\left\{  s:\left(  \mu^{\prime\prime}+\frac
{1}{4}\kappa^{2}\mu\right)  (s)<0\right\}  $ to conclude that $FocRad^{0}%
(K,\mu)=FocRad^{-}(K,\mu).$ $X_{\kappa}$ is an open subset, since it is
defined by an open condition, regularity. $X_{\kappa}$ is dense in
$C^{3}(K,(0,\infty))$, if we prove that for every given $\mu,$ we have
$\mu_{\varepsilon}=\mu-\varepsilon\mu_{0}$ in $X_{\kappa}$ for almost all
small $\left\vert \varepsilon\right\vert $, for a fixed and appropriate choice
of $\mu_{0}.$ $\kappa$ can not be zero everywhere, since $K$ is compact.
Choose $\mu_{1}:domain(\gamma)\rightarrow(0,\infty)$ such that $\mu
_{1}^{\prime\prime}(s)>0$ on a proper open subinterval of $domain(\gamma),$
containing the points where $\kappa(s)=0.$ Choose $c_{1}>0 $ sufficiently
large so that $\mu_{0}=\mu_{1}+c_{1}$ satisfies that $\mu_{0}^{\prime\prime
}+\frac{\kappa^{2}}{4}\mu_{0}=\mu_{1}^{\prime\prime}+\frac{\kappa^{2}}{4}%
\mu_{1}+\frac{\kappa^{2}}{4}c_{1}>0.$ Let $f=\left(  \mu^{\prime\prime}%
+\frac{1}{4}\kappa^{2}\mu\right)  \left(  \mu_{0}^{\prime\prime}+\frac{1}%
{4}\kappa^{2}\mu_{0}\right)  ^{-1}:domain(\gamma)\rightarrow\mathbf{R.}$ By
Proposition 3(ii), $\mu^{\prime\prime}+\frac{\kappa^{2}}{4}\mu\leq0,\forall s$
is not possible. If $\mu^{\prime\prime}+\frac{\kappa^{2}}{4}\mu>0,\forall s $,
then $\mu\in X_{\kappa}$ which is open, and the proof is done. If $\mu
^{\prime\prime}+\frac{\kappa^{2}}{4}\mu>0,\forall s$ is not true, then $f $ is
not constant, and $range(f)=[a,b]$ with $a\leq0<b$. By Sard's Theorem [M], for
almost all $\varepsilon\in range(f),$ $\varepsilon$ is a regular value of $f$
(that is $f(s)=\varepsilon$ and $f^{\prime}(s)=0$ have no common roots).
Consequently, for the same $\varepsilon,$ $0$ is a regular value
of\ $\mu_{\varepsilon}^{\prime\prime}+\frac{1}{4}\kappa^{2}\mu_{\varepsilon
}=\mu^{\prime\prime}+\frac{1}{4}\kappa^{2}\mu-\varepsilon\left(  \mu
_{0}^{\prime\prime}+\frac{1}{4}\kappa^{2}\mu_{0}\right)  $. Hence,
$\mu_{\varepsilon}$ is in $X_{\kappa}$ for almost all small $\varepsilon$.
\end{proof}

\section{Examples}

We will use the pointwise focal radii for $\gamma(s)$ and $\mu(s)$ in the examples:

$FocRad^{0}(\gamma(s),\mu(s))=\Lambda(\kappa,\mu)(s)^{-\frac{1}{2}}$ if
$\Delta(\kappa,\mu)(s)\geq0,$ and $\left\vert \mu^{\prime}(s)\right\vert
^{-1}$ otherwise$.$

$FocRad^{-}(\gamma(s),\mu(s))=\Lambda(\kappa,\mu)(s)^{-\frac{1}{2}}$ if
$\Delta(\kappa,\mu)(s)>0,$ and $\left\vert \mu^{\prime}(s)\right\vert ^{-1}$
otherwise$.$%

%TCIMACRO{\FRAME{ftbpFU}{3.2162in}{2.8617in}{0pt}{\Qcb{$\gamma(s)=(\cos s,\sin
%s)$\QTR{small}{\ and }$\mu(s)=\cos s/2$\QTR{small}{. This figure depicts the
%Horizontal Collapsing Property in dimension 2.}}}{}{fig7.eps}%
%{\special{ language "Scientific Word";  type "GRAPHIC";
%maintain-aspect-ratio TRUE;  display "USEDEF";  valid_file "F";
%width 3.2162in;  height 2.8617in;  depth 0pt;  original-width 5.5841in;
%original-height 4.9666in;  cropleft "0";  croptop "1";  cropright "1";
%cropbottom "0";  filename '';file-properties "XNPEU";}} }%
%BeginExpansion
\begin{figure}
[ptb]
\begin{center}
\includegraphics[
height=2.8617in,
width=3.2162in
]%
{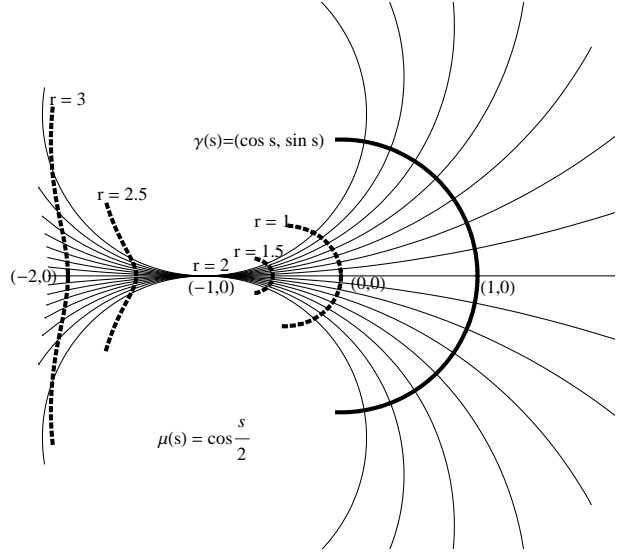}%
\caption{$\gamma(s)=(\cos s,\sin s)${\protect\small \ and }$\mu(s)=\cos
s/2${\protect\small . This figure depicts the Horizontal Collapsing Property
in dimension 2.}}%
\end{center}
\end{figure}
%EndExpansion

\begin{example}
A. Figure 7. Let $\gamma(s)=(\cos s,\sin s):\left(  -\frac{\pi}{2},\frac{\pi
}{2}\right)  \mathbf{\rightarrow}K\subset\mathbf{S}^{1}\subset\mathbf{R}^{2}$
and $\mu(s)=\cos\frac{s}{2}.$ $K$ is the half of $\mathbf{S}^{1}$ with $x>0.$
For all $s,$
\begin{align*}
\Delta(\kappa,\mu) &  =\mu\left(  \mu^{\prime\prime}+\frac{1}{4}\mu\right)
=0\\
\Lambda(\kappa,\mu) &  =\frac{1}{2}(\mu^{2})^{\prime\prime}+\frac{1}{2}\mu
^{2}=\frac{1}{4}\\
FocRad^{0}(K,\mu) &  =2\\
FocRad^{-}(K,\mu) &  =\inf\left\vert \mu^{\prime}(s)\right\vert ^{-1}%
=\inf2\left\vert \sin\frac{s}{2}\right\vert ^{-1}=2\sqrt{2}\\
FocRad^{0}(K,\mu) &  <FocRad^{-}(K,\mu)
\end{align*}
Since $\mu^{\prime}(0)=0$, $\exp^{\mu}(NK_{(1,0)})$ is the $x-axis.$ For
$s\neq0,$ $exp^{\mu}(NK_{\gamma(s)}\cap W)$ is a circle of radius $\left\vert
\frac{\mu}{2\mu^{\prime}}\right\vert =\left\vert \cot\frac{s}{2}\right\vert $
and with center $\gamma-\frac{\gamma^{\prime}\mu}{2\mu^{\prime}}=(-1,\cot
\frac{s}{2}).$ For $s\neq0$, all $exp^{\mu}$-circles are tangent to $x-axis$
at $(-1,0),$ and all intersecting $\mathbf{S}^{1}$ perpendicularly at both
points of intersection. For all $s$, $\exp^{\mu}(\gamma(s),2(-\cos s,-\sin
s))=(-1,0).$ Hence, $\exp^{\mu}$ is singular and not injective along the $R=2$
curve in $NK.$ However, $\exp^{\mu}$ is still injective for $R>2$. This type
of singularity does not occur for ($\mu=1$)-exponential map in which case
after the first focal point the exponential map is not injective.

B. Figure 5. Let $\gamma(s)=(\cos s,\sin s,0,...,0):[a,b]\mathbf{\rightarrow
}K\subset E_{12}\subset\mathbf{R}^{n}$ and $\mu(s)=\cos\frac{s}{2},$ where
$E_{12}$ is the $2-$ plane with $x_{i}=0$ for $i\geq3$ and $[a,b]\subset
(-\pi/2,\pi/2).$ $\exp^{\mu}(NK_{(1,0,..0)})$ is the $x_{2}=0$ hyperplane, and
all the spheres containing $\exp^{\mu}(NK_{q}\cap W)$ have centers on $E_{12}$
and $\exp^{\mu}(NK_{q}\cap W)\cap E_{12}$ are the circles discussed in part A.
Consequently, all $\exp^{\mu}(NK_{q}\cap W)$ are tangent to the plane
$\exp^{\mu}(NK_{(1,0,..,0)})$ at $(-1,0,0,..,0).$ The horizontal collapsing,
$\exp^{\mu}(\gamma(s),2N_{\gamma}(s))=(-1,0,0,..,0)$ is the only singularity,
since $\gamma^{\prime}$ and $\gamma^{\prime\prime}$ being parallel to $E_{12}$
implies that the singular set $Sng(K,\mu)\subset E_{12}$ by Proposition 8 of
Section 6.
\end{example}

\begin{example}
The open arc of Example 1A can be extended to a simple closed curve with an
appropriate $\mu$ to obtain examples with $TIR<UR.$ Let $C_{1}$ be the unit
circle centered at the origin. Given a small $\varepsilon>0,$ let $q_{1}%
^{+}=(\cos\varepsilon,\sin\varepsilon)\in C_{1}$ and $q_{1}^{-}=(\cos
\varepsilon,-\sin\varepsilon)$. Let $L^{+}$ and $L^{-}$ be the tangent lines
to $C_{1}$ at $q_{1}^{+}$ and $q_{1}^{-}$, respectively. Given a large $\ell,$
take $q_{2}^{+}\in L^{+}$ so that the line segment between $q_{1}^{+}$ and
$q_{2}^{+}$ has length $\ell$ and the $y-$coordinate $q_{2}^{+}$ is larger
than of $q_{1}^{+}$. Take $q_{2}^{-}\in L^{-}$ in a symmetric manner with
respect to the $x-$axis. Let $C_{2}$ be the circle tangent to $L^{+}$ at
$q_{2}^{+}$ and to $L^{-}$ at $q_{2}^{-}$. Consider the continuously
differentiable closed convex curve $\overline{\gamma}$ which is a
concatenation of $C_{1}$ between $q_{1}^{-}$ and $q_{1}^{+},$ $L^{+}$ between
$q_{1}^{+}$ and $q_{2}^{+},$ $C_{2}$ between $q_{2}^{+}$ and $q_{2}^{-}$, and
$L^{-}$ between $q_{2}^{-}$ and $q_{1}^{-}$. Let $\gamma$ be the smooth closed
curve which is the same as $\overline{\gamma}$ outside small $(0<\delta
\ll\varepsilon)$ $\delta-$neighborhoods $U_{i}^{\pm}$ of $q_{i}^{\pm},$ such
that the curvature is strictly monotone on each $U_{i}^{\pm},$ and $\gamma$ is
symmetric with respect to the $x-$axis. Parametrize $\gamma(s)$ with the
domain $[-A,A]$, $\gamma(0)=(1,0)$, arclength $s$, and take $K=\gamma
([-A,A]).$

We will construct $\mu$ so that $\mu(-s)=\mu(s).$ Let $\mu=\cos\frac{s}{2}$
for $\left\vert s\right\vert \leq2\varepsilon.$ For small $\varepsilon>0,$
$\mu(2\varepsilon)\approx1-\frac{\varepsilon^{2}}{2},$ $\mu^{\prime
}(2\varepsilon)\approx-\frac{\varepsilon}{2},$ and $\mu^{\prime\prime
}(2\varepsilon)\approx-\frac{1}{4}\left(  1-\frac{\varepsilon^{2}}{2}\right)
.$ By taking $\ell$ sufficiently large, one can extend $\mu$ smoothly to
$[0,A]$ so that $\frac{-1}{4}\leq\mu^{\prime\prime}\leq\frac{1}{20},$
$-\varepsilon\leq\mu^{\prime}\leq0,$ and $\frac{1}{4}\leq\mu\leq1$ over
$[2\varepsilon,\ell],$ and $\mu\equiv c_{0}\geq\frac{1}{4}$ on $[\ell-1,A].$
Observe that $\gamma(\ell)$ is on $L^{+}$ before $q_{2}^{+},$ and $\left\vert
\mu^{\prime}\right\vert \leq\varepsilon$ on all of $[-A,A]$.

On $[0,\varepsilon-\delta]:\Delta(\kappa,\mu)=0,$ $\Lambda(\kappa,\mu
)=\frac{1}{4},$ $FocRad^{0}(\gamma(s),\mu(s))=2$, and $\frac{4}{\varepsilon
}\leq\left\vert \mu^{\prime}(s)\right\vert ^{-1}=FocRad^{-}(\gamma
(s),\mu(s)).$ Moreover, for all $s\in\lbrack0,\varepsilon-\delta]$, $(-1,0)=$
$\exp^{\mu}(\gamma(s),2(-\cos s,-\sin s)).$ Hence, $\exp^{\mu}$ is singular
and not injective along the $R=2$ curve in $NK$ and $TIR(K,\mu)\leq2.$

On $(\varepsilon-\delta,\varepsilon+\delta):\Delta(\kappa,\mu)=\mu\left(
\mu^{\prime\prime}+\frac{1}{4}\kappa^{2}\mu\right)  <0,$ since $\kappa$ is
decreasing from $1$ to $0,$ and $\mu=\cos\frac{s}{2}$. Hence, $FocRad^{0}%
(\gamma(s),\mu(s))=FocRad^{-}(\gamma(s),\mu(s))\geq\frac{1}{\varepsilon}.$

On $[\varepsilon+\delta,\ell],$ $\kappa\equiv0.$ Hence, $\Lambda(\kappa
,\mu)=\frac{1}{2}(\mu^{2})^{\prime\prime}=\mu\mu^{\prime\prime}+\left(
\mu^{\prime}\right)  ^{2}\leq\frac{1}{20}+\varepsilon^{2}\leq\frac{1}{16},$ to
conclude that $FocRad^{0}(\gamma(s),\mu(s))=FocRad^{-}(\gamma(s),\mu
(s))\geq4.$ Observe that when $\mu\mu^{\prime\prime}+\left(  \mu^{\prime
}\right)  ^{2}<0,$ both pointwise radii are equal to $\left\vert \mu^{\prime
}(s)\right\vert ^{-1}.$

On $[\ell-1,A],$ $\mu\equiv c_{0}$. $\Delta(\kappa,\mu)=\frac{\kappa^{2}%
c_{0}^{2}}{4}$, $\Lambda(\kappa,\mu)=\kappa^{2}c_{0}^{2}$ and $FocRad^{0}%
(\gamma(s),\mu(s))$ $=FocRad^{-}(\gamma(s),\mu(s))\geq\frac{R_{2}}{c_{0}}$
where $R_{2}$ is the radius of $C_{2}.$

Overall, $FocRad^{0}(K,\mu)=2$ controlled by $C_{1}$ part and $FocRad^{-}%
(K,\mu)\geq4.$ For the double critical points $p$ and $q$ on $\gamma,$
$\cos\alpha(p,q)=-R\mu^{\prime}(p)$, and $\left\vert \mu^{\prime
}(p)\right\vert \leq\varepsilon.$ By taking $\varepsilon>0$ sufficiently small
and $\ell$ sufficiently large, one can keep $\alpha(p,q) $ close to $\frac
{\pi}{2}$ and $\frac{1}{2}DCSD\geq5.$ By Proposition 5(ii):

$DIR(K,\mu)=TIR(K,\mu)=2<4\leq UR(K,\mu).$
\end{example}

%

%TCIMACRO{\FRAME{ftbpFU}{3.2292in}{2.8729in}{0pt}{\Qcb{\QTR{small}{Compare the
%normal exponential maps from a portion of the unit circle with }$\mu(s)=t+\cos
%s/2$\QTR{small}{\ for }$t=0.1$\QTR{small}{\ and }$t=-0.1$\QTR{small}{\ with
%}$t=0$\QTR{small}{\ of Figure 7. The diagrams also show the curves of type
%}$\exp^{\mu}(\gamma(s),rN(s)$\QTR{small}{\ for some choices of }$r.$%
%\QTR{small}{\ Figures 7-9 together show the instability of DIR under small
%perturbations}.}}{}{fig8a.eps}{\special{ language "Scientific Word";
%type "GRAPHIC";  maintain-aspect-ratio TRUE;  display "USEDEF";
%valid_file "F";  width 3.2292in;  height 2.8729in;  depth 0pt;
%original-width 5.5564in;  original-height 4.9398in;  cropleft "0";
%croptop "1";  cropright "1";  cropbottom "0";
%filename '';file-properties "XNPEU";}} }%
%BeginExpansion
\begin{figure}
[ptb]
\begin{center}
\includegraphics[
height=2.8729in,
width=3.2292in
]%
{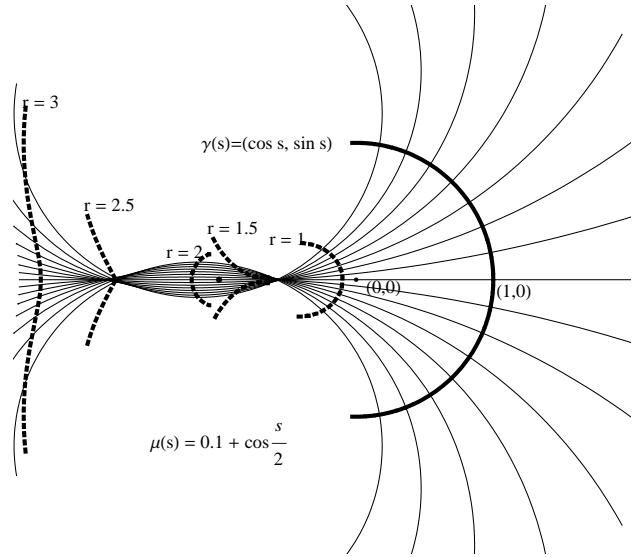}%
\caption{{\protect\small Compare the normal exponential maps from a portion of
the unit circle with }$\mu(s)=t+\cos s/2${\protect\small \ for }%
$t=0.1${\protect\small \ and }$t=-0.1${\protect\small \ with }$t=0$%
{\protect\small \ of Figure 7. The diagrams also show the curves of type
}$\exp^{\mu}(\gamma(s),rN(s)${\protect\small \ for some choices of }%
$r.${\protect\small \ Figures 7-9 together show the instability of DIR under
small perturbations}.}%
\end{center}
\end{figure}
%EndExpansion%
%TCIMACRO{\FRAME{ftbpFU}{3.1652in}{2.8158in}{0pt}{\Qcb{ \ \ }}{}{fig8b.eps}%
%{\special{ language "Scientific Word";  type "GRAPHIC";
%maintain-aspect-ratio TRUE;  display "USEDEF";  valid_file "F";
%width 3.1652in;  height 2.8158in;  depth 0pt;  original-width 5.9179in;
%original-height 5.2607in;  cropleft "0";  croptop "1";  cropright "1";
%cropbottom "0";  filename '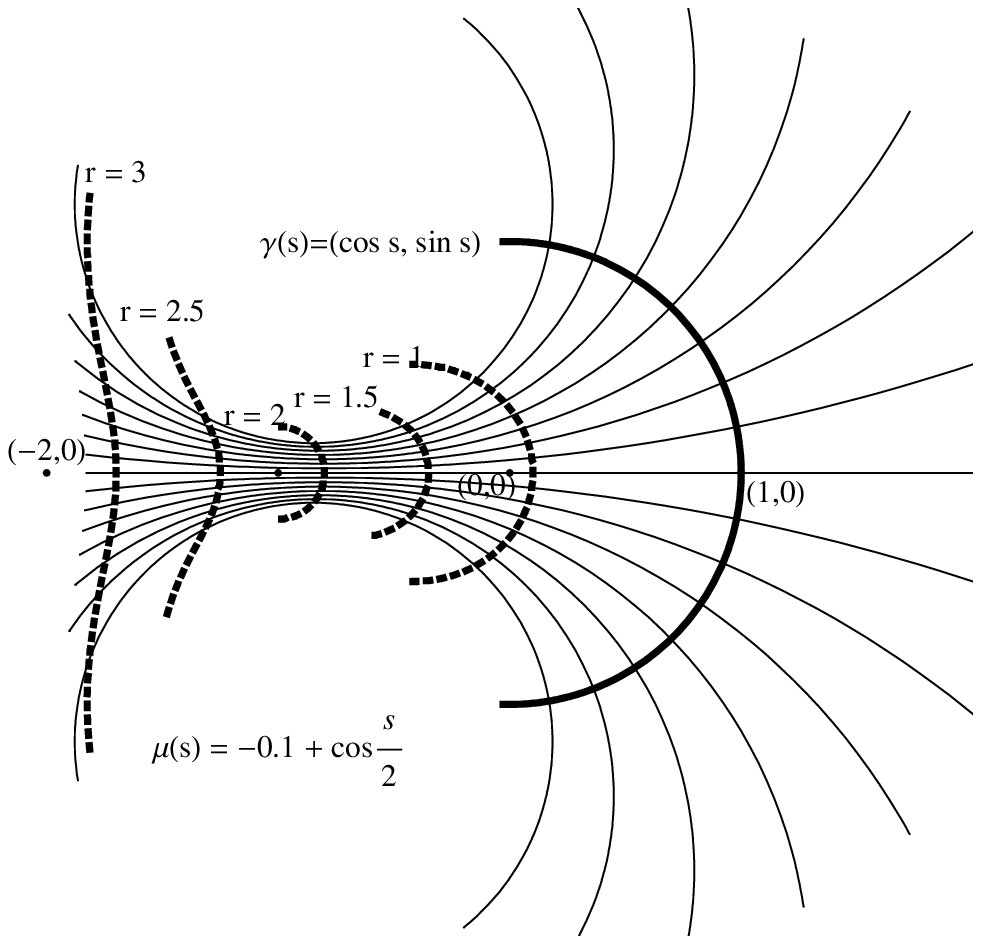';file-properties "XNPEU";}} }%
%BeginExpansion
\begin{figure}
[ptb]
\begin{center}
\includegraphics[
height=2.8158in,
width=3.1652in
]%
{Fig8b.eps}%
\caption{ \ \ }%
\end{center}
\end{figure}
%EndExpansion

\begin{example}
Figures \ 8 and 9. Let $\varepsilon,\ell,\gamma$ and $\mu$ be as in Example 2,
and $\mu_{t}(s)=t+\mu(s)=t+\cos\frac{s}{2}.$ For small $t>0,$ and $\left\vert
s\right\vert <\varepsilon-\delta,$ and $\kappa=1,$%
\begin{align*}
\Delta(\kappa,\mu_{t}) &  =\mu_{t}\left(  \mu_{t}^{\prime\prime}+\frac{1}%
{4}\mu_{t}\right)  >0\\
\Lambda(\kappa,\mu_{t}) &  =\frac{1}{2}(\mu_{t}^{2})^{\prime\prime}+\frac
{1}{2}\mu_{t}^{2}+\mu_{t}\sqrt{\Delta(\kappa,\mu_{t})}>\frac{1}{4}\\
FocRad^{-}(\gamma(s),\mu_{t}(s)) &  =FocRad^{0}(\gamma(s),\mu_{t}(s))<2
\end{align*}
On the interval $(\varepsilon-\delta,\varepsilon+\delta),$ $\mu=\cos\frac
{s}{2},$ but $\kappa$ starts to decrease to $0$ and $\Delta$ becomes
negative$.$ $\mu_{t}^{\prime\prime}+\frac{1}{4}\kappa^{2}\mu_{t}=\mu
^{\prime\prime}+\frac{1}{4}\kappa^{2}\left(  \mu+t\right)  =\frac{1}{4}\left(
\mu(\kappa^{2}-1)+t\kappa^{2}\right)  $ should have $0$ as a regular value for
almost all small $t$ to secure that $FocRad^{-}=FocRad^{0}$, see the proof of
Lemma 6. The effects of $t$ on the remainder of $\gamma$ and $DCSD$ are small.
Hence, for almost all small $t>0,$ $DIR(K,\mu_{t})=TIR(K,\mu_{t})=UR(K,\mu
_{t})<2.$

For small $t<0$ and $\left\vert s\right\vert <2\varepsilon:$%
\begin{align*}
\Delta(\kappa,\mu_{t})  &  =\mu_{t}\left(  \mu_{t}^{\prime\prime}+\frac{1}%
{4}\kappa^{2}\mu_{t}\right)  <0\\
FocRad^{0}(\gamma(s),\mu_{t}(s))  &  =FocRad^{-}(\gamma(s),\mu_{t}%
(s))\geq\frac{1}{\varepsilon}%
\end{align*}
The effects of $t$ on the remainder of $\gamma$ and $DCSD$ are small. For all
small $t<0:$
\begin{align*}
FocRad^{0}(K,\mu_{t})  &  =FocRad^{-}(K,\mu_{t})\geq3\\
DIR(K,\mu_{t})  &  =TIR(K,\mu_{t})=UR(K,\mu_{t})\geq3
\end{align*}
We see that $TIR$ and $DIR$ are not upper semicontinuous:
\begin{align*}
\underset{t\rightarrow0^{-}}{\lim\inf}DIR(K,\mu_{t})  &  =\underset
{t\rightarrow0^{-}}{\lim\inf}TIR(K,\mu_{t})\geq3>2=TIR(K,\mu)=DIR(K,\mu)\\
\underset{n\rightarrow\infty}{\lim}\text{ }UR(K,\mu_{t_{n}})  &  \leq2<4\leq
UR(K,\mu)\text{ for some sequence }0<t_{n}\rightarrow0.
\end{align*}

\end{example}

\begin{example}
Figure \ 10. Let $\gamma(s)=(\cos s,\sin s):\mathbf{R\rightarrow}%
K\subset\mathbf{S}^{1}\subset\mathbf{R}^{2}$ and $\mu(s)=1-\frac{s^{2}}{8}$
for $\left\vert s\right\vert <1.$ Observe that $0<\left(  \cos\frac{s}%
{2}\right)  -\left(  1-\frac{s^{2}}{8}\right)  =o(s^{3})$ for $s\neq0.$%
\begin{align*}
\forall s,\text{ }\Delta(\kappa,\mu) &  =\mu\left(  \mu^{\prime\prime}%
+\frac{1}{4}\mu\right)  =-\frac{1}{256}s^{2}(s^{2}-8)\leq0\\
\forall s,\text{ }\Lambda(\kappa,\mu) &  =\left\{
\begin{array}
[c]{cc}%
\frac{1}{4} & \text{if }s=0\\
\text{not a real number} & \text{if }s\neq0
\end{array}
\right.  \\
\forall s,\text{ }FocRad^{0}(\gamma(s),\mu(s)) &  =\left\{
\begin{array}
[c]{cc}%
2 & \text{if }s=0\\
\frac{4}{\left\vert s\right\vert } & \text{if }s\neq0
\end{array}
\right.  \\
\forall s,\text{ }FocRad^{-}(\gamma(s),\mu(s)) &  =\frac{1}{\left\vert
\mu^{\prime}(s)\right\vert }=\frac{4}{\left\vert s\right\vert }\\
FocRad^{0}(K,\mu) &  =2<4=FocRad^{-}(K,\mu)
\end{align*}
Since $\mu^{\prime}(0)=0$, $\exp^{\mu}(NK_{(1,0)})$ is the $x-axis.$ For
$s\neq0,$ $exp^{\mu}(NK_{\gamma(s)}\cap W)$ is a circle of radius $\left\vert
\frac{\mu}{2\mu^{\prime}}\right\vert =\frac{8-s^{2}}{4s}$ and with center
$(\cos s,\sin s)+\frac{8-s^{2}}{4s}(-\sin s,\cos s).$ $exp^{\mu}%
(NK_{\gamma(s)}\cap W)$ intersects $\mathbf{S}^{1}$ perpendicularly at both
$(\cos s,\sin s)\in K$ and $(\cos\theta(s),\sin\theta(s))\notin K$ where
$\theta(s):(-1,1)\rightarrow(\frac{\pi}{2},\frac{3\pi}{2})$ is a smooth
function, and
\[
\theta(s)=s+2\arctan\frac{8-s^{2}}{4s}\text{ and }\theta^{\prime}%
(s)=\frac{s^{2}(s^{2}-8)}{s^{4}+64},\text{ for }s>0.
\]
This shows that $\theta(s)$ is an injective function, but $\theta^{\prime
}(0)=0.$ All of the circles $exp^{\mu}(NK_{\gamma(s)}\cap W)$ are disjoint
from each other and the $x-axis.$ As $s\rightarrow0,$ the pointwise focal
radii tend to $\infty,$ and the circles converge to the $x-axis.$
Consequently, for all $\varepsilon$ with $0<\varepsilon<1,$ $exp^{\mu}((\cos
s,\sin s),R(-\cos s,-\sin s))$ is injective and a homeomorphism onto its image
for $\left\vert s\right\vert <\varepsilon$ and $\left\vert R\right\vert
<\frac{4}{\varepsilon}=\inf\frac{1}{\left\vert \mu^{\prime}\right\vert }.$
However, $\exp^{\mu}$ is singular at one isolated point
$(q,Rv)=((1,0),2(-1,0))$, $p=\exp^{\mu}((1,0),2(-1,0))=(-1,0).$ Hence, there
exists a non-closed curve with:

$2=DIR(K,\mu)<TIR(K,\mu)=\frac{4}{\varepsilon}$ and $0<\varepsilon<1.$
\end{example}

%

%TCIMACRO{\FRAME{ftbpFU}{3.077in}{2.738in}{0pt}{\Qcb{$\gamma(s)=(\cos s,\sin
%s)$\QTR{small}{\ \ and }$\mu(s)=1-\frac{s^{2}}{8}$\QTR{small}{. This figure
%shows an exponential map which is a local homeomorphism but not a local
%diffeomorphism near }$(-1,0)$\QTR{small}{, See Example 4.}}}{}{fig9.eps}%
%{\special{ language "Scientific Word";  type "GRAPHIC";
%maintain-aspect-ratio TRUE;  display "USEDEF";  valid_file "F";
%width 3.077in;  height 2.738in;  depth 0pt;  original-width 6.1834in;
%original-height 5.4993in;  cropleft "0";  croptop "1";  cropright "1";
%cropbottom "0";  filename '';file-properties "XNPEU";}} }%
%BeginExpansion
\begin{figure}
[ptb]
\begin{center}
\includegraphics[
height=2.738in,
width=3.077in
]%
{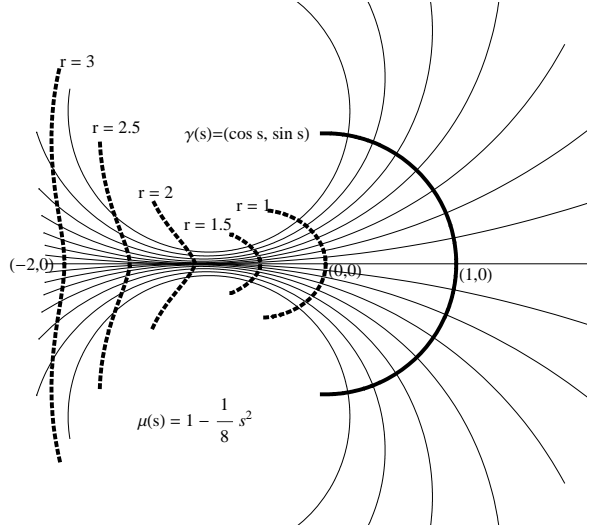}%
\caption{$\gamma(s)=(\cos s,\sin s)${\protect\small \ \ and }$\mu
(s)=1-\frac{s^{2}}{8}${\protect\small . This figure shows an exponential map
which is a local homeomorphism but not a local diffeomorphism near }%
$(-1,0)${\protect\small , See Example 4.}}%
\end{center}
\end{figure}
%EndExpansion

\begin{example}
Construct $\gamma$ and $\mu$ exactly in the same fashion as in Example 2, with
$\mu(s)=1-\frac{s^{2}}{8}$ instead of $\cos\frac{s}{2}$ on $(-2\varepsilon
,2\varepsilon).$ On $[\delta-\varepsilon,\varepsilon-\delta]$ one has
$\Delta(\kappa,\mu)=-\frac{1}{256}s^{2}(s^{2}-8)\leq0, $ $\Lambda(\kappa
,\mu)(0)=\frac{1}{4}.$ For $s=0,$ $FocRad^{0}(\gamma(0),\mu(s))=2$, and
$FocRad^{-}(\gamma(0),\mu(s))=\infty.$ For $s\neq0,$ $FocRad^{0}(\gamma
(s),\mu(s))=FocRad^{-}(\gamma(s),\mu(s))$ $=\frac{1}{\left\vert \mu^{\prime
}(s)\right\vert }\geq\frac{2}{\varepsilon}$. The remaining estimates are the
same as in Example 2. Overall, $FocRad^{0}(K,\mu)=2$ controlled only by one
point, $\gamma(0),$ and $FocRad^{-}(K,\mu)\geq4.$ Observe that there is only
one point $(q,Rv)$ where $p=\exp^{\mu}(q,Rv) $, $F_{p}^{\prime\prime}(s)=0,$
and $R<3,$ namely $((1,0),2(-1,0)). $ Suppose that $3>TIR(K,\mu)$ and repeat
the proof of Proposition 4. Since, $\frac{1}{2}DCSD\geq5,$ the only
possibilities left are the Cases 1 and 5. If both $y_{0}=z_{0}=\gamma
(0),~$then this would contradict the $\exp^{\mu}$ being a local homeomorphism
as discussed in Example 4. If $z_{0}\neq\gamma(0),$ then one still can repeat
the argument of Case 5, by finding $\mu-$closest point $q_{1}$ to $p_{1}$ by
using the fact that $\exp^{\mu}$ is a local homeomorphism again, to obtain a
double critical point, which is not the case. This shows that $DIR(K,\mu
)=2<3\leq TIR(K,\mu).$\pagebreak
\end{example}

%

%TCIMACRO{\FRAME{ftbpFU}{3.109in}{2.4933in}{0pt}{\Qcb{\QTR{small}{Compare the
%normal exponential maps from a portion of the unit circle with }$\mu
%(s)=t+1-s^{2}/8$ \QTR{small}{for }$t=0.2$ \QTR{small}{and }$t=-0.05$%
%\QTR{small}{\ with }$t=0$\QTR{small}{\ of Figure 10. The diagrams also show
%the curves of type }$\exp^{\mu}(\gamma(s),rN(s)$\QTR{small}{\ for some choices
%of }$r.$ \QTR{small}{The example below is a local diffeomorphism. Figures
%10-12 together show the instability of TIR under small perturbations.}}}%
%{}{fig10a.eps}{\special{ language "Scientific Word";  type "GRAPHIC";
%maintain-aspect-ratio TRUE;  display "USEDEF";  valid_file "F";
%width 3.109in;  height 2.4933in;  depth 0pt;  original-width 5.4587in;
%original-height 4.3708in;  cropleft "0";  croptop "1";  cropright "1";
%cropbottom "0";  filename '';file-properties "XNPEU";}} }%
%BeginExpansion
\begin{figure}
[ptb]
\begin{center}
\includegraphics[
height=2.4933in,
width=3.109in
]%
{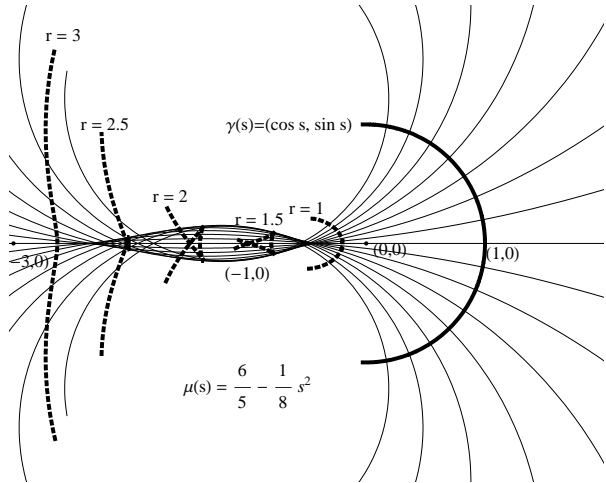}%
\caption{{\protect\small Compare the normal exponential maps from a portion of
the unit circle with }$\mu(s)=t+1-s^{2}/8$ {\protect\small for }$t=0.2$
{\protect\small and }$t=-0.05${\protect\small \ with }$t=0$%
{\protect\small \ of Figure 10. The diagrams also show the curves of type
}$\exp^{\mu}(\gamma(s),rN(s)${\protect\small \ for some choices of }$r.$
{\protect\small The example below is a local diffeomorphism. Figures 10-12
together show the instability of TIR under small perturbations.}}%
\end{center}
\end{figure}
%EndExpansion%
%TCIMACRO{\FRAME{ftbpFU}{2.7294in}{2.4267in}{0pt}{\Qcb{ \ }}{}{fig10b.eps}%
%{\special{ language "Scientific Word";  type "GRAPHIC";
%maintain-aspect-ratio TRUE;  display "USEDEF";  valid_file "F";
%width 2.7294in;  height 2.4267in;  depth 0pt;  original-width 5.5564in;
%original-height 4.9398in;  cropleft "0";  croptop "1";  cropright "1";
%cropbottom "0";  filename '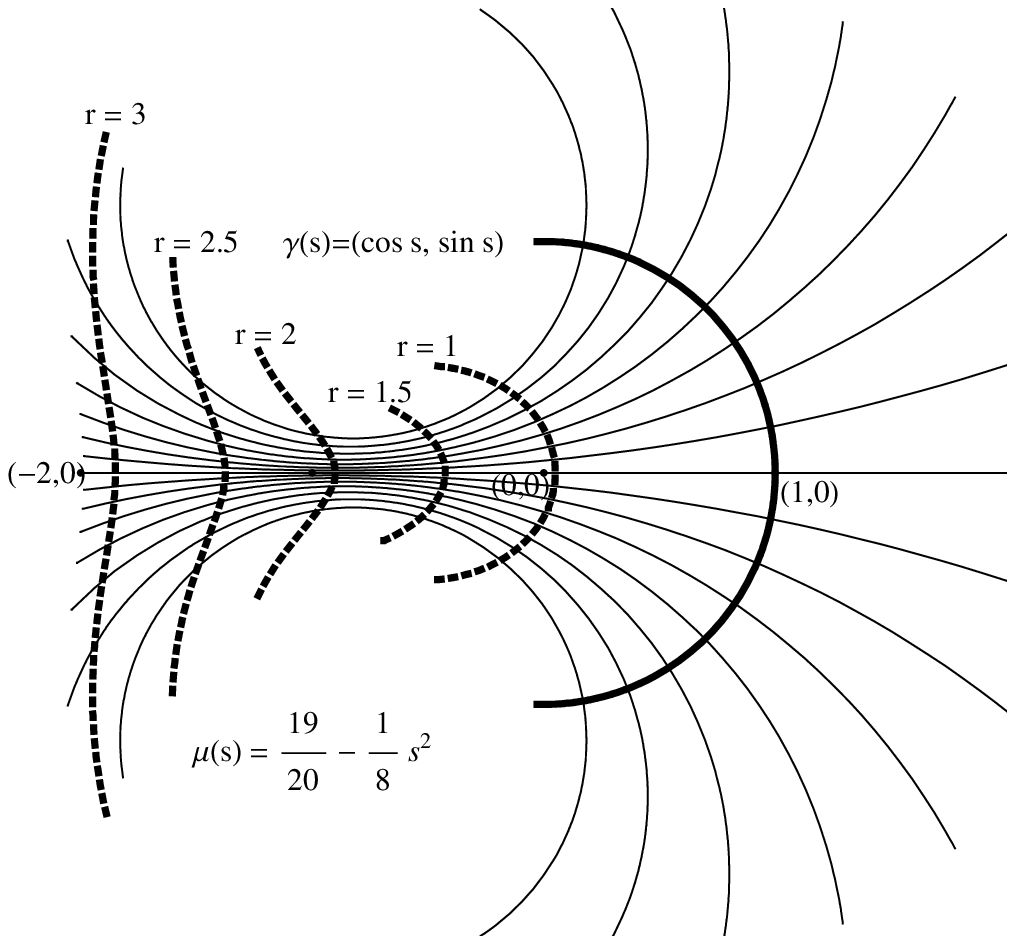';file-properties "XNPEU";}} }%
%BeginExpansion
\begin{figure}
[ptb]
\begin{center}
\includegraphics[
height=2.4267in,
width=2.7294in
]%
{Fig10b.eps}%
\caption{ \ }%
\end{center}
\end{figure}
%EndExpansion

\begin{example}
Figures\ 11 and 12. Let $\gamma(s)=(\cos s,\sin s):\mathbf{R\rightarrow
}K\subset\mathbf{S}^{1}\subset\mathbf{R}^{2}$ and $\mu_{t}(s)=t+1-\frac{s^{2}%
}{8}$ for $\left\vert s\right\vert <1=\varepsilon.$ For small $t>0,$
\begin{align*}
\Delta(\kappa,\mu_{t}) &  =\mu_{t}\left(  \mu_{t}^{\prime\prime}+\frac{1}%
{4}\mu_{t}\right)  >0\text{ for }\left\vert s\right\vert <\sqrt{8t}\\
\Lambda(\kappa,\mu_{t}) &  >\frac{1}{4}\text{ for }\left\vert s\right\vert
<\sqrt{8t}\text{ }\\
\Delta(\kappa,\mu_{t}) &  <0\text{ for }\sqrt{8t}<\left\vert s\right\vert
<1\text{ }%
\end{align*}%
\[
FocRad^{-}(\gamma(s),\mu_{t}(s))=FocRad^{0}(\gamma(s),\mu_{t}(s))<2\text{ for
}\left\vert s\right\vert <\sqrt{8t}%
\]%
\[
DIR(K,\mu_{t})=TIR(K,\mu_{t})<2
\]
For small $t<0$ and $\left\vert s\right\vert <1:$%
\begin{align*}
\Delta(\kappa,\mu_{t}) &  =\mu_{t}\left(  \mu_{t}^{\prime\prime}+\frac{1}%
{4}\mu_{t}\right)  <0\\
FocRad^{0}(\gamma(s),\mu_{t}(s)) &  =FocRad^{-}(\gamma(s),\mu_{t}(s))=\frac
{4}{\left\vert s\right\vert }\geq4
\end{align*}
Suppose that there is a double critical pair $(p,q)$ for $(K,\mu)$. Then, both
$\alpha(p,q)$ and $\alpha(q,p)$ must be larger than or equal to $\frac{\pi}%
{2},$ by Lemma 1. On $\gamma(s),$ $\mu(s)$ is increasing as $\left\vert
s\right\vert \rightarrow0.$ Hence, $\mathit{grad}\mu$ points in the direction
of $\gamma(0)=(1,0),$ and $\mathit{grad}\mu(0)=0.$ For any two points $p$ and
$q$ on $\gamma(s),$ $\left\vert s\right\vert <1$, the line segment joining
them can not make angle larger than or equal to $\frac{\pi}{2}$ with
$\mathit{grad}\mu$ at both end points, at least one of them is acute. Hence,
there is no double critical pair on $\gamma.$ For $t<0,$%
\[
DIR(K,\mu_{t})=TIR(K,\mu_{t})=4.
\]
Combining with Example 4, we see that $TIR$ and $DIR$ have different
semicontinuity properties:
\begin{align*}
\underset{t\rightarrow0^{-}}{\lim}DIR(K,\mu_{t}) &  =4>2=DIR(K,\mu
)\geq\underset{t\rightarrow0^{+}}{\text{ }\lim\sup}\text{ }DIR(K,\mu_{t})\\
\underset{t\rightarrow0^{-}}{\lim}TIR(K,\mu_{t}) &  =4=TIR(K,\mu
)>2\geq\underset{t\rightarrow0^{+}}{\text{ }\lim\sup}\text{ }TIR(K,\mu_{t})
\end{align*}

\end{example}

\section{$AIR$ and $TIR$}

The almost injectivity radius $AIR(K,\mu,\mathbf{R}^{n})$ is%
\[
\sup\left\{
\begin{array}
[c]{c}%
r:\exp^{\mu}:U(r)\rightarrow U_{0}(r)\text{ is a homeomorphism where
}U(r)\text{ is an open }\\
\text{and dense subset of }D(r),\text{ and }U_{0}(r)\text{ is an open subset
of }\mathbf{R}^{n}.
\end{array}
\right\}  .
\]
We observe that $\exp^{\mu}:D(r)\rightarrow O(K,\mu r)$ is a smooth onto map,
where both $D(r)$ and $O(K,\mu r)$ are open subsets $($for $r>0)$ of
$n-$dimensional manifolds. For $0<r<AIR(K,\mu)$ and all nonempty open subsets
$V$ of $D(r),$ $\exp^{\mu}(V\cap U(r))$ is a nonempty open subset of $O(K,\mu
r),$ and $\exp^{\mu}(V\cap U(r))$ is dense in $\exp^{\mu}(V).$ $\exp^{\mu}(V)$
is not necessarily open in $O(K,\mu r)$ when $V$ contains singular points of
$\exp^{\mu},$ see Figure 7 around $(-1,0)$.

\begin{proposition}
If $p_{0}=\exp^{\mu}(q_{1},R_{1}v_{1})=\exp^{\mu}(q_{2},R_{2}v_{2})\,$with
$v_{i}\in UNK_{q_{i}}$ for $i=1,2,$ and $0\leq\sqrt{G(p_{0})}=R_{2}<R_{1}$,
then $AIR(K,\mu)<R_{1}.$
\end{proposition}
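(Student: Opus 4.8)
The plan is to exploit the asymmetry in Corollary 1: the ``lowest sheet'' of $\exp^{\mu}$, namely the $\mu$-nearest-point construction, already maps onto all of $O(K,\mu R)$, so a strictly higher preimage of $p_0$ forces the $\exp^{\mu}$-image of \emph{every} dense subset of the domain to overlap itself near $p_0$, killing injectivity on dense sets for all $r$ just below $R_1$. First I would record two soft facts. (a) $G(p)=\min_{x\in K}F_p(x)$ is continuous, since $F_p(x)=\|p-x\|^{2}\mu(x)^{-2}$ is jointly continuous and $K$ is compact. (b) The condition defining $AIR$ is monotone in $r$: if for some $r$ there is an open dense $U\subset D(r)$ with $\exp^{\mu}\mid U$ a homeomorphism onto an open subset of $\mathbf{R}^{n}$, then $\exp^{\mu}\mid U$ is an open map, so for every $r'<r$ the set $U\cap D(r')$ is open and dense in $D(r')$ and $\exp^{\mu}$ restricts to a homeomorphism of it onto the open set $\exp^{\mu}(U\cap D(r'))$. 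Hence it is enough to exhibit a single $r_0<R_1$ at which no such $U$ exists; then $AIR(K,\mu)\le r_0<R_1$. (If $R_1>1/\max_{q}\|\mathit{grad}\mu(q)\|$ there is nothing to prove, since the condition already fails above that threshold; so one may assume $R_1$ is at most that threshold, and then $D(r)\subset$ domain of $\exp^{\mu}$ for all $r<R_1$.)

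Next I would pass to an interior situation. Since $R\mapsto\exp^{\mu}(q_1,Rv_1)$ is continuous with $F_{\exp^{\mu}(q_1,Rv_1)}(q_1)=R^{2}$ by Proposition 1(ii), and $G$ is continuous, we get $\sqrt{G(\exp^{\mu}(q_1,Rv_1))}\to\sqrt{G(p_0)}=R_2<R_1$ as $R\to R_1$; so I can fix $\widehat R\in(R_2,R_1)$ close enough to $R_1$ that $p^{*}:=\exp^{\mu}(q_1,\widehat Rv_1)$ satisfies $R^{*}:=\sqrt{G(p^{*})}<\widehat R$. Fix $r_0\in(\widehat R,R_1)$ and a separating value $R^{**}\in(R^{*},\widehat R)$. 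By continuity of $G$ choose a neighborhood $W_0$ of $p^{*}$ with $G<(R^{**})^{2}$ on $W_0$, i.e.\ $W_0\subset O(K,\mu R^{**})$. Since $\widehat R>R^{**}$, choose a neighborhood $V$ of $(q_1,\widehat Rv_1)$ in $NK$ with $V\subset D(r_0)$, with $\exp^{\mu}(V)\subset W_0$, and with $\|w\|>R^{**}$ for every $(q,w)\in V$. Note $D(R^{**})$ is a nonempty open subset of $D(r_0)$.

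For the contradiction, suppose some open dense $U\subset D(r_0)$ has $\exp^{\mu}\mid U$ a homeomorphism onto an open subset of $\mathbf{R}^{n}$; then $\exp^{\mu}\mid U$ is an open map. On one hand $\exp^{\mu}(U\cap V)$ is a nonempty open subset of $O(K,\mu R^{**})$ (nonempty because $V$ is open and $U$ dense, and contained in $\exp^{\mu}(V)\subset W_0\subset O(K,\mu R^{**})$). On the other hand $\exp^{\mu}(U\cap D(R^{**}))$ is open and \emph{dense} in $O(K,\mu R^{**})$: for any nonempty open $B\subset O(K,\mu R^{**})$, Corollary 1 gives $B\subset\exp^{\mu}(D(R^{**}))$, so $(\exp^{\mu})^{-1}(B)\cap D(R^{**})$ is a nonempty open set, which therefore meets the dense set $U$, whence $B$ meets $\exp^{\mu}(U\cap D(R^{**}))$. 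A nonempty open set meets a dense set, so there is $p'=\exp^{\mu}(q,w)=\exp^{\mu}(q',w')$ with $(q,w)\in U\cap V$ and $(q',w')\in U\cap D(R^{**})$. But $\|w\|>R^{**}>\|w'\|$, so $(q,w)\ne(q',w')$, contradicting injectivity of $\exp^{\mu}\mid U$. Thus no admissible $U$ exists at level $r_0$, and by the monotonicity above $AIR(K,\mu)\le r_0<R_1$. The degenerate case $R_2=0$ (i.e.\ $p_0\in K$) needs no change.

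I expect the one genuinely substantive step to be the density of $\exp^{\mu}(U\cap D(R^{**}))$ in $O(K,\mu R^{**})$: that is where the abstract hypothesis ``$U$ open and dense'' is coupled to the geometric content of Corollary 1 (onto-ness of the nearest-point sheet), and it is what makes the two disjoint height ranges, $\|w\|>R^{**}$ on $V$ versus $\|w\|<R^{**}$ on $D(R^{**})$, collide over a common image point. Everything else is bookkeeping: the elementary monotonicity of the $AIR$-condition, the interior reduction via continuity of $G$, and the choice of the separating radius $R^{**}$.
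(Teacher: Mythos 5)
Your proof is correct. The underlying strategy is the paper's own: manufacture two disjoint nonempty open subsets of the domain whose images under $\exp^{\mu}\mid U$ must overlap because one image is open and the other is dense in a common open set, contradicting injectivity on $U$. But you execute the separation differently. The paper separates the two sheets by base point: it shows by triangle-inequality estimates that $q_{2}$ lies outside a short arc $A(q_{1},\sigma)$, shrinks a neighborhood $V_{0}$ of $p_{0}$ so that every $\mu$-closest point of every $p\in\overline{V_{0}}$ also lies outside that arc, takes $D_{1},D_{2}$ to be the portions of $NK$ over the arc and over its complement, and gets density of the lower sheet locally in $V_{0}$ from $\exp^{\mu}(V_{2})=V_{0}$. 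You separate by fiber norm instead ($\left\Vert w\right\Vert >R^{\ast\ast}$ on $V$ versus $\left\Vert w\right\Vert <R^{\ast\ast}$ on $D(R^{\ast\ast})$), which falls straight out of the hypothesis $R_{2}<R_{1}$ with no metric estimates, and you obtain density of the lower sheet globally in $O(K,\mu R^{\ast\ast})$ directly from Corollary 1. You also build in strictness from the start by retreating to $\widehat{R}<r_{0}<R_{1}$ via continuity of $G$, whereas the paper first proves the weak inequality $AIR(K,\mu)\leq R_{1}$ and then perturbs $p_{0}$ to $p_{1}=\exp^{\mu}(q_{1},(R_{1}-\delta)v_{1})$ to reapply it. The price of your route is that you must make explicit the downward monotonicity of the $AIR$-condition in $r$ and the reduction to $R_{1}\leq\left(  \max_{q}\left\Vert \mathit{grad}\mu(q)\right\Vert \right)  ^{-1}$ so that $D(r_{0})$ lies in the domain of $\exp^{\mu}$; you supply both, so the argument is complete.
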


\begin{proof}
Let $R_{0}=AIR(K,\mu).$ For $q\in K$ and $r>0$, let $A(q,r)$ denote the
connected component of $B(q,r;\mathbf{R}^{n})\cap K$ containing $q$ and
$A^{c}(q,r)=K-\overline{A(q,r)}.$ $A(q,r)$ is an open arc for small $r$.
First, we will show that $R_{1}\geq R_{0}.$

Suppose that $R_{1}<R_{0}.$ Let $\varepsilon=\frac{1}{3}\min(R_{0}-R_{1}%
,R_{1}-R_{2})>0$. Choose $\sigma>0$ such that
\begin{align*}
0  &  <\sigma<\mu(q_{1})\varepsilon\text{ and }\\
\max\left\{  \mu(q):q\in\overline{A(q_{1},\sigma)}\right\}   &  \leq\left(
1+\frac{\varepsilon}{R_{1}}\right)  \min\left\{  \mu(q):q\in\overline
{A(q_{1},\sigma)}\right\}  .
\end{align*}
We assert that $q_{2}\in A^{c}(q_{1},\sigma)$, since the assumption of
$q_{2}\in\overline{A(q_{1},\sigma)}$ leads to a contradiction as follows:
\begin{align*}
\sigma &  \geq\left\Vert q_{1}-q_{2}\right\Vert \\
&  \geq\left\Vert q_{1}-p_{0}\right\Vert -\left\Vert q_{2}-p_{0}\right\Vert \\
&  \geq R_{1}\mu(q_{1})-R_{2}\mu(q_{2})\\
&  \geq R_{1}\mu(q_{1})-R_{2}\left(  1+\frac{\varepsilon}{R_{1}}\right)
\mu(q_{1})\\
&  \geq\mu(q_{1})\left(  R_{1}-R_{2}-\frac{\varepsilon R_{2}}{R_{1}}\right) \\
&  \geq\mu(q_{1})\left(  3-\frac{R_{2}}{R_{1}}\right)  \varepsilon\\
&  >2\mu(q_{1})\varepsilon.
\end{align*}
We are given that $G(p_{0})=\min_{q\in K}F_{p_{0}}(q),$ and
\[
\sqrt{G(p_{0})}=R_{2}<R_{1}=\frac{\left\Vert p_{0}-q_{1}\right\Vert }%
{\mu(q_{1})}=\sqrt{F_{p_{0}}(q_{1})}.
\]
There exists a small open neighborhood $V_{0}$ of $p_{0}$ in $\mathbf{R}^{n}$,
such that $\overline{V_{0}}$ is compact with
\begin{align*}
\overline{V_{0}}  &  \subset B(q_{1},(R_{1}+\varepsilon)\mu(q_{1}%
);\mathbf{R}^{n})\cap B(q_{2},(R_{2}+\varepsilon)\mu(q_{2});\mathbf{R}%
^{n})\text{ and}\\
\forall p  &  \in\overline{V_{0}},\text{ }\sqrt{G(p)}\leq R_{2}+\varepsilon
<R_{1}-\varepsilon\leq\frac{\left\Vert p-q_{1}\right\Vert }{\mu(q_{1})}%
=\sqrt{F_{p}(q_{1})}.
\end{align*}
Therefore, there exists $0<\sigma_{0}<\sigma$ such that for every
$p\in\overline{V_{0}}$, each $\mu-$closest point $q_{2}(p)$ of $K$ to $p$
satisfies that $q_{2}(p)\in A^{c}(q_{1},\sigma_{0}),$ by an argument similar
to above for $q_{2}$ with $\varepsilon/3$ replacing $\varepsilon$ in the
choice of $\sigma_{0}.$ We choose $r$ such that $R_{1}+2\varepsilon<r<R_{0}$
and take:
\begin{align*}
D_{1}  &  =\{(q,w)\in NK:q\in A(q_{1},\sigma_{0})\text{ and }\left\Vert
w\right\Vert <r\},\\
D_{2}  &  =\{(q,w)\in NK:q\in A^{c}(q_{1},\sigma_{0})\text{ and }\left\Vert
w\right\Vert <r\},\text{ and }\\
V_{i}  &  =\left(  \exp^{\mu}\mid D_{i}\right)  ^{-1}(V_{0})\text{ for }i=1,2.
\end{align*}
Both $V_{1}$ and $V_{2}$ are open in $NK,$ $V_{1}\cap V_{2}\subset D_{1}\cap
D_{2}=\varnothing,$ but $(q_{i},R_{i}v_{i})\in V_{i}\neq\varnothing$ for
$i=1,2.$ The way $\sigma_{0}$ and $r$ were chosen above implies that
$\overline{V_{0}}\subset\exp^{\mu}(D_{2})$ and $\exp^{\mu}(V_{2})=V_{0}.$
Consequently, $\exp^{\mu}\left(  V_{2}\cap U(r)\right)  $ is a nonempty, open
and dense subset of $V_{0}.$ However, $\exp^{\mu}\left(  V_{1}\cap
U(r)\right)  $ is a nonempty, open (but not necessarily dense) subset of
$V_{0}$. Hence,
\begin{align*}
\exp^{\mu}\left(  V_{1}\cap U(r)\right)  \cap\exp^{\mu}\left(  V_{2}\cap
U(r)\right)   &  \neq\varnothing,\\
\text{but }V_{1}\cap V_{2}  &  =\varnothing.
\end{align*}
This contradicts the definition of $AIR.$ Hence, $AIR(K,\mu)=R_{0}\leq R_{1}.$

For sufficiently small $\delta>0,$ there is $\delta^{\prime}$ such that
$\exp^{\mu}(q_{1},(R_{1}-\delta)v_{1})=p_{1}$ satisfies that $\sqrt{G(p_{1}%
)}=R_{2}+\delta^{\prime}<R_{1}-\delta.$ There exists $q_{3}\in K$ and
$v_{3}\in UNK_{q_{3}}$ such that $p_{1}=\exp^{\mu}(q_{3},(R_{2}+\delta
^{\prime})v_{3})\,$. By the preceding part of the proof, $AIR(K,\mu)\leq
R_{1}-\delta<R_{1}.$\pagebreak
\end{proof}

\begin{corollary}
i. If $R<AIR(K,\mu),$ then $\exp^{\mu}(\partial D(R))=\partial O(K,\mu R). $

ii. If $\exp^{\mu}(q_{1},R_{1}v_{1})=\exp^{\mu}(q_{2},R_{2}v_{2})$ and
$R_{i}<AIR(K,\mu)$ for $i=1$ and $2,$ then $R_{1}=R_{2}.$

iii. If $R_{1}<R_{2}<AIR(K,\mu),$ then $\exp^{\mu}(\partial D(R_{1}))\cap
\exp^{\mu}(\partial D(R_{2}))=\varnothing.$
\end{corollary}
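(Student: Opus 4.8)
The plan is to deduce all three parts from Proposition 6 together with Corollary 1; the only genuine work is a point--set topology argument for part (i). I would first record a preliminary remark used throughout: if $R<AIR(K,\mu)$ then, by Theorem 1(i) and the definition of $FocRad^{-}$, $R<AIR(K,\mu)=UR(K,\mu)\le FocRad^{-}(K,\mu)\le\left(\max_{q\in K}\left\Vert \mathit{grad}\,\mu(q)\right\Vert\right)^{-1}$, so $\left\Vert w\right\Vert\le R<\left\Vert \mathit{grad}\,\mu(q)\right\Vert^{-1}$ for every $(q,w)\in\overline{D(R)}=\{(q,w)\in NK:\left\Vert w\right\Vert\le R\}$ with $\mathit{grad}\,\mu(q)\ne0$. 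Hence $\overline{D(R)}$ is a compact subset of $W$, and $\exp^{\mu}$ is continuous on it by the formula of Definition 1.

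For part (ii), I would argue by contradiction. Suppose $\exp^{\mu}(q_{1},R_{1}v_{1})=\exp^{\mu}(q_{2},R_{2}v_{2})=p_{0}$ with $R_{1},R_{2}<AIR(K,\mu)$ and, say, $R_{2}<R_{1}$. By Proposition 1(ii), $F_{p_{0}}(q_{2})=R_{2}^{2}$, so $G(p_{0})=\min_{x\in K}F_{p_{0}}(x)\le R_{2}^{2}<R_{1}^{2}$. Choosing $q_{3}\in K$ with $F_{p_{0}}(q_{3})=G(p_{0})$, Corollary 1 (equivalently Proposition 1(i)) gives $v_{3}\in UNK_{q_{3}}$ with $p_{0}=\exp^{\mu}(q_{3},R_{0}v_{3})$ where $R_{0}=\sqrt{G(p_{0})}<R_{1}$; applying Proposition 6 to the pair $(q_{3},R_{0}v_{3})$, $(q_{1},R_{1}v_{1})$ yields $AIR(K,\mu)<R_{1}$, contradicting $R_{1}<AIR(K,\mu)$. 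Hence $R_{1}=R_{2}$. Part (iii) is then immediate: if some $p$ lay in $\exp^{\mu}(\partial D(R_{1}))\cap\exp^{\mu}(\partial D(R_{2}))$ with $R_{1}<R_{2}<AIR(K,\mu)$, then writing $p=\exp^{\mu}(q_{i},R_{i}v_{i})$ with $v_{i}\in UNK_{q_{i}}$ the unit normalizations of the norm-$R_{i}$ vectors would force $R_{1}=R_{2}$ by (ii), a contradiction, so the intersection is empty.

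For part (i), I would proceed as follows. Corollary 1 gives $\exp^{\mu}(D(R))=O(K,\mu R)=G^{-1}([0,R^{2}))$, an open subset of $\mathbf{R}^{n}$. Since $\overline{D(R)}$ is compact and $\exp^{\mu}$ is continuous on it, $\exp^{\mu}(\overline{D(R)})$ is compact, hence closed; it contains $O(K,\mu R)$ and is contained in $\overline{O(K,\mu R)}$ by continuity, so it equals $\overline{O(K,\mu R)}$. Writing $\overline{D(R)}$ as the disjoint union of $D(R)$ and $\partial D(R)=\{(q,w)\in NK:\left\Vert w\right\Vert=R\}$ gives $\overline{O(K,\mu R)}=O(K,\mu R)\cup\exp^{\mu}(\partial D(R))$. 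For $\exp^{\mu}(\partial D(R))\subseteq\partial O(K,\mu R)$: if $\left\Vert w\right\Vert=R$ and $p=\exp^{\mu}(q,w)$, then $F_{p}(q)=R^{2}$ by Proposition 1(ii), so $G(p)\le R^{2}$; were $G(p)<R^{2}$, the argument of part (ii) would give $p=\exp^{\mu}(q_{3},\sqrt{G(p)}\,v_{3})$ with $\sqrt{G(p)}<R$ and then Proposition 6 would give $AIR(K,\mu)<R$, impossible, so $G(p)=R^{2}$, hence $p\notin G^{-1}([0,R^{2}))=O(K,\mu R)$ while $p\in\overline{O(K,\mu R)}$, i.e. $p\in\partial O(K,\mu R)$. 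Conversely, if $p\in\partial O(K,\mu R)=\overline{O(K,\mu R)}\setminus O(K,\mu R)$, then $p=\exp^{\mu}(q,w)$ for some $(q,w)\in\overline{D(R)}$, and $\left\Vert w\right\Vert<R$ would give $p\in\exp^{\mu}(D(R))=O(K,\mu R)$, a contradiction; hence $\left\Vert w\right\Vert=R$ and $p\in\exp^{\mu}(\partial D(R))$, which establishes $\exp^{\mu}(\partial D(R))=\partial O(K,\mu R)$.

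The substantive geometric content --- that a point reached from $NK$ at two different radii cannot remain in the injectively-mapped region within radius $AIR(K,\mu)$ --- is already packaged in Proposition 6, so the principal obstacle will be merely the topology in part (i): one must be sure that $\exp^{\mu}$ is defined and continuous on the entire closed tube $\overline{D(R)}$ (this is where $R<AIR(K,\mu)\le(\max\left\Vert \mathit{grad}\,\mu\right\Vert)^{-1}$ enters, keeping $\overline{D(R)}$ off the locus where the normal spheres close up), and that the frontier of the image tube is exactly the image of $\partial D(R)$, not a proper subset of it.
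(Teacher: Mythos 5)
Your proof is correct and follows essentially the same route as the paper: all three parts are deduced from Proposition 6 together with Corollary 1, with part (i) reduced to the observation that $G(p)=R^{2}$ exactly on the image of $\partial D(R)$, and (ii)--(iii) reduced to producing the $\mu$-closest point $q_{3}$ and invoking Proposition 6. One caution: your preliminary bound $\overline{D(R)}\subset W$ should not be obtained by citing Theorem 1(i) (the equality $AIR=UR$), since the proof of that equality (Proposition 9, via Proposition 7(iii)) itself uses Corollary 2(ii); instead derive $AIR(K,\mu)<\left(\max_{q\in K}\left\Vert \mathit{grad}\,\mu(q)\right\Vert\right)^{-1}$ directly from Proposition 1(vi) and Proposition 6, as is done in the proof of Proposition 7(i), which does not depend on this corollary.
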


\begin{proof}
$\exp^{\mu}(D(R))=O(K,\mu R)=G^{-1}([0,R^{2}))$ and all are open subsets of
$\mathbf{R}^{n},$ for all $R>0,$ by Corollary 1 of Proposition 1.

i. If $p\in\partial O(K,\mu R)$ then $G(p)=R^{2}.$ Hence, $\partial O(K,\mu
R)\subset\exp^{\mu}(\partial D(R)).$ If there is $p\in\exp^{\mu}(\partial
D(R))$ which is an interior point of $O(K,\mu R),~$then by Proposition 6, one
would have $R>AIR(K,\mu).$

ii and iii immediately follow Proposition 6, and the fact that for every $p$
in $O(K,\mu R),$ there exists $q\in K$ and $v\in UNK_{q}$ such that
$p=\exp^{\mu}(q,rv)$ for some $r=\sqrt{G(p)}<R.$
\end{proof}

\begin{proposition}
i. $AIR(K,\mu)<\left(  \max_{q\in K}\left\Vert \mathit{grad}\mu(q)\right\Vert
\right)  ^{-1}<\infty,$ if $\mu$ is not constant.

ii. $AIR(K,\mu)\leq\left(  c_{0}\cdot\max_{q\in K}\kappa(q)\right)
^{-1}<\infty,$ if $\mu=c_{0}$ is constant.

iii. $TIR(K,\mu)\leq AIR(K,\mu)\leq UR(K,\mu).$
\end{proposition}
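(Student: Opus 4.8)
The plan is to reduce every upper bound to Proposition 6 by exhibiting, in each situation, a point $p\in\mathbf{R}^{n}$ with two distinct $\exp^{\mu}$-preimages $(q_{1},R_{1}v_{1})\neq(q_{2},R_{2}v_{2})$ satisfying $\sqrt{G(p)}=R_{2}<R_{1}$. The inequality $TIR(K,\mu)\leq AIR(K,\mu)$ in (iii) costs nothing: for $r<TIR(K,\mu)$ the map $\exp^{\mu}\mid D(r)$ is already a homeomorphism onto $O(K,\mu r)=G^{-1}([0,r^{2}))$, which is open in $\mathbf{R}^{n}$ by Corollary 1, so $r$ is admissible in the definition of $AIR$ (take $U(r)=D(r)$).

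Next I would settle the finiteness bounds. For (i), pick $q_{0}$ realizing $g:=\max_{q\in K}\|\operatorname{grad}\mu(q)\|$, which is attained and positive since $K$ is compact and $\mu$ is nonconstant. By Proposition 1(vi) the normal sphere $\exp^{\mu}(NK_{q_{0}}\cap W)$ meets $K$ at some $q'\neq q_{0}$, so $q'=\exp^{\mu}(q_{0},rv)$ with $v\in UNK_{q_{0}}$ and $0<r=\|q'-q_{0}\|\,\mu(q_{0})^{-1}\leq g^{-1}$, while also $q'=\exp^{\mu}(q',\mathbf{0})$ with $G(q')=F_{q'}(q')=0$; Proposition 6 then gives $AIR(K,\mu)<r\leq g^{-1}<\infty$. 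For (ii) with $\mu\equiv c_{0}$, choose $q_{0}$ with $\kappa(q_{0})=\kappa_{\max}>0$; for each $R>(c_{0}\kappa_{\max})^{-1}$ set $p=\exp^{\mu}(q_{0},RN_{\gamma}(q_{0}))$, so that Proposition 2(i) (with $\operatorname{grad}\mu=0$, $(\mu^{2})''=0$, $\beta=0$) yields $F_{p}''(s_{0})=2c_{0}^{-2}(1-c_{0}\kappa_{\max}R)<0$; hence $q_{0}$ is not a minimum of $F_{p}$, so $\sqrt{G(p)}<R$, and a $\mu$-closest point $q_{2}$ with $p=\exp^{\mu}(q_{2},\sqrt{G(p)}\,v_{2})$ (Proposition 1) combines with $(q_{0},RN_{\gamma}(q_{0}))$ in Proposition 6 to give $AIR(K,\mu)<R$; letting $R\downarrow(c_{0}\kappa_{\max})^{-1}$ finishes (ii).

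For $AIR(K,\mu)\leq UR(K,\mu)=\min(\tfrac12 DCSD(K,\mu),FocRad^{-}(K,\mu))$ I argue the two halves separately. To get $AIR\leq FocRad^{-}$: if $FocRad^{-}=(\max\|\operatorname{grad}\mu\|)^{-1}$ this is (i), or for constant $\mu$ it follows from (ii) via $FocRad^{-}(K,c_{0})=(c_{0}\kappa_{\max})^{-1}$; otherwise $FocRad^{-}(K,\mu)^{-2}=\sup\{\Lambda(\kappa,\mu)(s):\Delta(\kappa,\mu)(s)>0\}$, and for any such $s_{0}$ I write $F_{p}''(s_{0})=2\mu^{-2}f(R)$ from Proposition 2(i) with $\beta=0$ and $A,B,C$ as in Definition 9; Lemma 3(v),(vii) make $f(R)<0$ for $R$ just above $t_{0}^{+}=\Lambda(\kappa,\mu)(s_{0})^{-1/2}$, while the identity $\Lambda-B^{2}=\Delta+\tfrac14 A^{2}+A\sqrt{\Delta}\geq\Delta>0$ keeps such $R$ inside the allowed range $[0,\|\operatorname{grad}\mu(\gamma(s_{0}))\|^{-1})$, so $\sqrt{G(p)}<R$ and Proposition 6 gives $AIR<R$; letting $R\downarrow\Lambda(\kappa,\mu)(s_{0})^{-1/2}$ and taking the supremum over admissible $s_{0}$ yields $AIR\leq FocRad^{-}$.

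To get $AIR\leq\tfrac12 DCSD$, let $(q_{1},q_{2})$ be a double critical pair realizing $R=\tfrac12 DCSD(K,\mu)$ and $p\in\overline{q_{1}q_{2}}$ with $p=\exp^{\mu}(q_{i},Rv_{i})$ (Lemma 2), so $\sqrt{G(p)}\leq R$. If $\sqrt{G(p)}<R$, Proposition 6 applies directly. If $\sqrt{G(p)}=R$, both $q_{1},q_{2}$ are $\mu$-closest to $p$; assuming (after relabeling) $F_{p}''(s_{2})>0$, I slide $p$ a short distance toward $q_{1}$ along the segment to a point $p_{\delta}$, so $\|p_{\delta}-q_{1}\|<R\mu(q_{1})$ (hence $G(p_{\delta})\leq F_{p_{\delta}}(q_{1})<R^{2}$) while $\|p_{\delta}-q_{2}\|>R\mu(q_{2})$; the nondegenerate minimum of $F_{p}$ near $q_{2}$ persists as a critical point $q_{2}(\delta)\to q_{2}$ of $F_{p_{\delta}}$ with $F_{p_{\delta}}(q_{2}(\delta))>F_{p_{\delta}}(q_{1})\geq G(p_{\delta})$, so Proposition 6 applied to the two preimages of $p_{\delta}$ at heights $\sqrt{G(p_{\delta})}$ and $\sqrt{F_{p_{\delta}}(q_{2}(\delta))}$ gives $AIR<\sqrt{F_{p_{\delta}}(q_{2}(\delta))}$, and $\delta\to0$ gives $AIR\leq R$. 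The one genuinely delicate case, which I expect to be the main obstacle, is $F_{p}''(s_{1})=F_{p}''(s_{2})=0$: here $(q_{i},Rv_{i})$ are singular points of $\exp^{\mu}$ (Proposition 5(ii)), so $R\geq RegRad(K,\mu)=FocRad^{0}(K,\mu)$; if moreover $R\geq FocRad^{-}$ we are done by the previous bound, and if $R<FocRad^{-}$ the Lemma 3 analysis of the one-variable zeros of $F''$ forces these to be \emph{isolated} degenerate singularities (an interior zero would place a negative second derivative at a radius $<R<FocRad^{-}$), so, as with the ``fake focal'' points of Example 4, $\exp^{\mu}$ is a local homeomorphism at $(q_{1},Rv_{1})$ and $(q_{2},Rv_{2})$; then for any $r>R$ and any open dense $U(r)\subseteq D(r)$, choosing disjoint neighborhoods $N_{i}\ni(q_{i},Rv_{i})$ in $NK$, the sets $\exp^{\mu}(N_{i}\cap U(r))$ are open and dense in a common open neighborhood of $p$, hence meet, contradicting injectivity of $\exp^{\mu}\mid U(r)$, giving $AIR\leq R=\tfrac12 DCSD$. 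Apart from this degenerate sub-case, the whole argument is just repeatedly locating a second critical preimage for a well-chosen (possibly slightly perturbed) $p$ and invoking Proposition 6.
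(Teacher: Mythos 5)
Most of your argument is sound and close in spirit to the paper: part (i) is the paper's proof verbatim, the reduction of (ii) and of $AIR\leq FocRad^{-}$ to Proposition 6 via a point with $F_{p}''<0$ is the same mechanism the paper uses (it argues by contradiction through Claim 1 of Proposition 4 rather than by direct construction, but the substance is identical), and your ``slide $p$ toward $q_{1}$'' treatment of the $DCSD$ bound works whenever at least one of $F_{p}''(s_{1}),F_{p}''(s_{2})$ is positive.

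The genuine gap is exactly where you suspected it: the sub-case $F_{p}''(s_{1})=F_{p}''(s_{2})=0$ with $R<FocRad^{-}$. Your claim that Lemma 3 forces these to be \emph{isolated} degenerate singularities at which $\exp^{\mu}$ is a local homeomorphism conflates isolation in the radial variable $R$ (which Lemma 3(vi) does give, for fixed $q$ and $v$) with isolation in $NK$, which can fail: by Proposition 8 the singular set inside $D(UR)$ can be a whole horizontal curve $\{(\gamma(s),R(s)N_{\gamma}(s))\}$ over an arc, and Example 1 / Theorem 2 (Horizontal Collapsing) show that $\exp^{\mu}$ then collapses this curve to a single point and is \emph{not} locally injective at $(q_{i},Rv_{i})$, even though $R<FocRad^{-}$. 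Without local injectivity you lose invariance of domain, so $\exp^{\mu}(N_{i})$ need not be open (the paper notes explicitly in Section 6 that $\exp^{\mu}(V)$ need not be open when $V$ meets the singular set), and your two sets $\exp^{\mu}(N_{i}\cap U(r))$ are then dense only in the possibly non-open sets $\exp^{\mu}(N_{i})$ --- which can be tangent spherical caps meeting only at $p$ --- so they need not intersect. Establishing what actually happens at such points is the content of Propositions 8--10 and cannot be waved through here. The paper avoids this case entirely: it takes the minimal double critical pair $(q_{1},q_{2})$ and pushes $p$ \emph{forward} along the exponential arc $\beta(s)=\exp^{\mu}(q_{1},sv_{1})$ to $p_{0}=\beta(R+s_{0})$; the obtuse-angle condition of Lemmas 1--2 gives $\measuredangle(\beta'(R),u(p,q_{2}))<\pi/2$, hence $F_{p_{0}}(q_{2})\leq(R-\lambda s_{0})^{2}<(R+s_{0})^{2}=F_{p_{0}}(q_{1})$, so $p_{0}$ has two preimages at strictly different heights and Proposition 6 applies directly, with no case analysis on second derivatives. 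Replacing your degenerate sub-case (and, if you like, your sliding argument too) by this forward extension closes the gap.
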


\begin{proof}
i. By Proposition 1(vi), $exp^{\mu}(NK_{q}\cap W)\cap K$ has a least two
distinct points, if $\mathit{grad}\mu(q)\neq0.$ Let $q^{\prime}$ $(\neq q) $
be another point of this set. Then, $q^{\prime}=\exp^{\mu}(q,Rv_{1})=\exp
^{\mu}(q^{\prime},0)$ for some $R\leq\left\Vert \mathit{grad}\mu(q)\right\Vert
^{-1}.$ By Proposition 6, $AIR(K,\mu)<R.$ Since $K$ is compact, $\max_{q\in
K}\left\Vert \mathit{grad}\mu(q)\right\Vert $ is attained on $K.$

ii. This is a part of the proof of (iii).

iii. First inequality follows the definitions.

Suppose there exists $R$ such that $FocRad^{-}(K,\mu)<R<AIR(K,\mu).$ Then,
there exists $p_{1}=\exp^{\mu}(q_{1},Rv_{1})$, for some $v_{1}\in UNK_{q_{1}}$
and $q_{1}\in CP(p_{1},-)$. As in the Claim 1 in the proof Proposition 4,
$G(p_{1})<R^{2},$ and $p_{1}=\exp^{\mu}(q_{2},R_{2}v_{2})$ for some
$(q_{2},R_{2}v_{2})\neq(q_{1},Rv_{1})$ with $R_{2}<R.$ This contradicts
Corollary 2(ii). Consequently, $AIR(K,\mu)\leq FocRad^{-}(K,\mu).$

We prove (ii) at this stage. If $\mu=c_{0},$ a positive constant, then
$\Delta(\kappa,c_{0})=\frac{1}{4}\kappa^{2}c_{0}^{2}\geq0,$ $\Lambda
(\kappa,c_{0})=\kappa^{2}c_{0}^{2}.$ Since $K$ is compact, there exists a
point $q_{0}$ of $K$ with maximal $\kappa(q_{0})>0.$ $AIR(K,\mu)\leq
FocRad^{-}(K,\mu)\leq\left(  \kappa(q_{0})c_{0}\right)  ^{-1}<\infty.$ If
$\mu$ is not constant, then $AIR(K,\mu)<\infty$ by (i).

Suppose that $\frac{1}{2}DCSD(K,\mu)=R_{0}<AIR(K,\mu).$ Let $AIR(K,\mu
)-R_{0}=\varepsilon>0.$ Since $K$ is compact, the set of critical points of
$\Sigma$ is a compact subset of $K\times K.$ Let $(q_{3},q_{4})$ be a minimal
double critical pair for $(K,\mu)$, with $p$ on the line segment
$\overline{q_{3}q_{4}}$ joining $q_{3}$ and $q_{4}$ such that $\left\Vert
p-q_{i}\right\Vert =R_{0}\mu(q_{i})$ and $p=\exp^{\mu}(q_{i},R_{0}v_{i})$ for
$i=3,4$. By Lemma 1 with $c=0$, $\alpha(q_{3},p)\in\left[  \frac{\pi}{2}%
,\pi\right]  $. First, we consider the case $\alpha(q_{3},p)>\frac{\pi}{2} $
where $\operatorname{grad}\mu(q_{3})\neq0.$ By part (i) and Proposition 1(ii),
$\alpha(q_{3},p)\neq\pi.$ The circular arc $\beta(s)=\exp^{\mu}(q_{3},sv_{3})$
is contained in the 2-plane containing $q_{3},$ $p$ and $q_{4}$ and parallel
to $v_{3}$. $\measuredangle(\beta^{\prime}(0),u(q_{3},p))=\measuredangle
(\beta^{\prime}(R_{0}),u(p,q_{4}))$ $=\alpha(q_{3},p)-\frac{\pi}{2}<\frac{\pi
}{2}.$ Since $\left\Vert q_{i}-p\right\Vert =\mu(q_{i})R_{0}$ for $i=3,4,$ one
has $\left\Vert q_{4}-\beta(R_{0}+s)\right\Vert \leq\left(  R_{0}-\lambda
s\right)  \mu(q_{4})<R_{0}\mu(q_{4})$ for some $\lambda>0$ and small enough
$\delta>s>0.$ In the case of $\alpha(q_{3},p)=\frac{\pi}{2},$ the last
statement still holds since $\beta(s)$ traces the line segment $\overline
{q_{3}q_{4}}. $ In all cases, choose $p_{0}=\beta(R_{0}+s_{0})$ such that
$0<s_{0}<\min(\varepsilon,\delta).$
\[
F_{p_{0}}(q_{3})=\left(  R_{0}+s_{0}\right)  ^{2}>\left(  R_{0}-\lambda
s_{0}\right)  ^{2}\geq F_{p_{0}}(q_{4})\geq G(p_{0})=F_{p_{0}}(q_{5})
\]
for some $q_{5}\in K.$ By Proposition 6, $AIR(K,\mu)<R_{0}+s_{0}%
<R_{0}+\varepsilon$ which contradicts the initial assumptions. Hence,
$AIR(K,\mu)=R_{0}\leq\frac{1}{2}DCSD(K,\mu).$
\end{proof}

\begin{proposition}
Let $K_{i}$ denote the components of $K.$ Let $\gamma_{i}:domain(\gamma
_{i})\rightarrow K_{i}$ be an onto parametrization of the component $K_{i}$
with unit speed and $\mu_{i}(s)=\mu(\gamma_{i}(s)).$ Then, the singular set
$Sng^{NK}(K,\mu)$ of $\exp^{\mu}$ within $D(UR(K,\mu))\subset NK$ is a graph
over a portion of $K$: $Sng^{NK}(K,\mu)=%
%TCIMACRO{\tbigcup \nolimits_{i}}%
%BeginExpansion
{\textstyle\bigcup\nolimits_{i}}
%EndExpansion
Sng_{i}^{NK}(K,\mu)$ and%
\[
Sng_{i}^{NK}(K,\mu)=\left\{
\begin{array}
[c]{c}%
(\gamma_{i}(s),R_{i}(s)N_{\gamma_{i}}(s))\in NK_{i}\text{ where}\\
s\in domain(\gamma_{i}),\text{ }\kappa_{i}(s)>0,\text{ }\\
\left(  \mu_{i}^{\prime\prime}+\frac{1}{4}\kappa_{i}^{2}\mu_{i}\right)
(s)=0\text{, and }\\
0<R_{i}(s)=\left(  \left(  \mu_{i}^{\prime}\right)  ^{2}-\mu_{i}\mu
_{i}^{\prime\prime}\right)  (s)^{-\frac{1}{2}}<UR(K,\mu)
\end{array}
\right\}
\]
where $\kappa_{i}$ and $N_{\gamma_{i}}$ are the curvature and the principal
normal of $\gamma_{i}$, respectively$.$ $D(UR(K,\mu))-Sng^{NK}(K,\mu)$ is
connected in each component of $NK,$ when $n\geq2.$
\end{proposition}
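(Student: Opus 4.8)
The plan is to extract from Proposition 5(ii) the precise description of where $\exp^\mu$ is singular and then translate that into the claimed graph. Recall from Proposition 5(ii) that, at an interior point $(q,w)\in\mathrm{int}(W)$ with $p=\exp^\mu(q,w)$ and $q=\gamma(s_0)$, the map $\exp^\mu$ is singular precisely when $F_p''(s_0)=0$. So I would first fix a component $K_i$, a unit-speed parametrization $\gamma_i$, and a point $(\gamma_i(s),Rv)\in D(UR(K,\mu))$ with $v\in UNK_{\gamma_i(s)}$, and ask when $F_p''(s)=0$ for $p=\exp^\mu(\gamma_i(s),Rv)$. Using the formula of Proposition 2(i), $F_p''(s)=0$ is equivalent to $1-\kappa(s)R\mu(s)\sqrt{1-(\mu'(s))^2R^2}\cos\beta-\tfrac{R^2}{2}(\mu^2)''(s)=0$, i.e. to equation (3.1) of Lemma 3 with $A=\kappa\mu\cos\beta$, $B=|\mu'|$, $C=(\mu^2)''$. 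By Proposition 3(i) the smallest admissible root over all $v$ (equivalently over $\cos\beta\in[0,1]$) occurs at $\cos\beta=1$, i.e. $v=N_{\gamma_i}(s)$ when $\kappa(s)>0$, and is $\Lambda(\kappa,\mu)(s)^{-1/2}$; moreover $R<UR(K,\mu)\le FocRad^-(K,\mu)$ forces $R$ to be below the $FocRad^-$ threshold, so the only way to have a root $R<UR(K,\mu)$ at all is that the two roots $t_0^\pm$ of Lemma 3 have collapsed, i.e. $\Delta(\kappa,\mu)(s)=\tfrac{C}{2}+\tfrac{A^2}{4}-B^2=0$ with $A=\kappa\mu$ (the $\cos\beta=1$ case). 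This is exactly the condition $\mu_i''+\tfrac14\kappa_i^2\mu_i=0$ together with $\kappa_i(s)>0$, and then the unique root is $t_0^+=t_0^-=\left(\tfrac{C}{2}+\tfrac{A^2}{2}\right)^{-1/2}$, which simplifies using $\tfrac{C}{2}=(\mu')^2+\mu\mu''$ and $\mu''=-\tfrac14\kappa^2\mu$ to $R=\left((\mu_i')^2-\mu_i\mu_i''\right)^{-1/2}$. This pins down $Sng_i^{NK}(K,\mu)$ as claimed, with $R_i(s)$ given by that formula, subject to $0<R_i(s)<UR(K,\mu)$.

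Next I would handle the boundary of $W$ separately: Proposition 5(i) already says $\exp^\mu$ is singular along $NK_q\cap\partial W$, i.e. at $R=\|\mathrm{grad}\,\mu(q)\|^{-1}=|\mu'(s)|^{-1}$. But by the definition of $FocRad^0$ and $FocRad^-$, $|\mu'(s)|^{-1}\ge FocRad^-(K,\mu)\ge UR(K,\mu)$, so those boundary singular points lie outside $D(UR(K,\mu))$ and do not contribute. Also, along the normal plane $NK_q\cap\mathrm{int}(W)$ itself $\exp^\mu$ is non-singular by Proposition 5(i), so no singular points are missed by restricting attention to the radial ($N_{\gamma_i}$) direction. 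Combining, $Sng^{NK}(K,\mu)\cap D(UR(K,\mu))$ is exactly the union over $i$ of the stated graphs.

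For connectedness of $D(UR(K,\mu))-Sng^{NK}(K,\mu)$ in each component of $NK$ when $n\ge 2$: I would observe that $Sng_i^{NK}(K,\mu)$ is a graph of a (partially defined, continuous) function $s\mapsto R_i(s)N_{\gamma_i}(s)$ over the one-dimensional base $K_i$, hence has "codimension $\ge 2$" in the rank-$(n-1)$ normal bundle $NK_i$ as soon as $n-1\ge 1$, i.e. $n\ge 2$: over each base point $s$ the removed set is at most the single point $R_i(s)N_{\gamma_i}(s)$ in the $(n-1)$-dimensional fiber, and the fiber minus a point stays connected when $n-1\ge 2$, while when $n-1=1$ one uses that $R_i$ is only defined on a closed subset with empty interior of generic type and one can route paths around it. Concretely I would argue: given two points of $D(UR(K,\mu))-Sng_i^{NK}$, first move each radially to small $\|w\|$ (staying in the regular set since $Sng_i^{NK}$ has $R_i(s)>0$ bounded below locally away from its domain's boundary — more carefully, $R_i(s)\to\infty$ or $R_i$ undefined as one approaches the boundary of its domain, and on the interior $R_i$ is continuous, so a tubular neighborhood of the zero section of radius $\varepsilon$ avoids $Sng_i^{NK}$), then connect within that tube, which is connected. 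The main obstacle is this last connectedness claim in the borderline dimension $n=2$: there $NK_i$ is a rank-one bundle (two rays in each fiber) and removing a graph point can disconnect a fiber, so one must genuinely use that the set of $s$ with $\kappa_i(s)>0$, $(\mu_i''+\tfrac14\kappa_i^2\mu_i)(s)=0$, and $0<R_i(s)<UR$ is "thin" — e.g. one slides the removed points off to $s$-values where $R_i$ is undefined (the complement of the closed graph-domain is open and dense enough), using that $\mu_i''+\tfrac14\kappa_i^2\mu_i$ cannot vanish identically on a component by Proposition 3(ii) — to reconnect the two rays. I would carry out the low-dimensional case by this explicit rerouting and the higher-dimensional case by the codimension count.
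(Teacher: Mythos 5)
Your proposal is correct and follows essentially the same route as the paper's proof: Proposition 5(ii) reduces singularity to $F_{p}^{\prime\prime}=0$, the formula of Proposition 2 combined with the root analysis of Lemma 3 and the bound $R<UR(K,\mu)\leq FocRad^{-}(K,\mu)$ (which keeps $F_{p}^{\prime\prime}\geq0$) forces a repeated root in the principal normal direction, hence $\kappa_{i}>0$, $\mu_{i}^{\prime\prime}+\frac{1}{4}\kappa_{i}^{2}\mu_{i}=0$, $v=N_{\gamma_{i}}$ and $R=\left(  (\mu_{i}^{\prime})^{2}-\mu_{i}\mu_{i}^{\prime\prime}\right)  ^{-1/2}$, and connectedness rests, as in the paper, on the graph's domain being a proper subset of each $K_{i}$ (Proposition 3(ii)). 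The only slip is the parenthetical claim that $R_{i}\rightarrow\infty$ at the boundary of its domain; what actually holds and is what you need is the uniform positive lower bound coming from $R_{i}(s)^{-2}=(\mu_{i}^{\prime})^{2}+\frac{1}{4}\kappa_{i}^{2}\mu_{i}^{2}\leq\max_{K}\left(  (\mu^{\prime})^{2}+\frac{1}{4}\kappa^{2}\mu^{2}\right)  $, which guarantees that a thin tube about the zero section lies in the regular set.
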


\begin{proof}
We will prove it for connected $K,$ and omit \textquotedblleft$i$%
\textquotedblright, since this is a local result. $R<UR(K,\mu)\leq\frac
{1}{\left\vert \mu^{\prime}(s)\right\vert },\forall s.$

$Sng^{NK}(K,\mu)=\left\{
\begin{array}
[c]{c}%
(q,Rv):v\in UNK_{q}\text{, }R<UR(K,\mu)\text{ }\\
\text{and the differential }d(\exp^{\mu})(q,Rv)\text{ is singular}%
\end{array}
\right\}  \subset int(W).$

For $q=\gamma(t),$ $v\in UN_{q},$ $p=\exp^{\mu}(q,Rv)$ and $R<FocRad^{-}%
(K,\mu):$%
\begin{equation}
0\leq\frac{d^{2}}{ds^{2}}\left.  F_{p}(\gamma(s))\right\vert _{s=t}=\frac
{2}{\mu^{2}(t)}\left(  1-\kappa R\mu\sqrt{1-\left(  \mu^{\prime}R\right)
^{2}}\cos\beta-\frac{R^{2}}{2}(\mu^{2})^{\prime\prime}\right)  (t)
\end{equation}
by Proposition 2, where $\beta=\measuredangle(\gamma^{\prime\prime
}(t),u(q,p)^{N})$ when both vectors are non-zero, and $\beta=0$ otherwise. By
proposition 5(ii),

$\exp^{\mu}$ is singular at $(q,Rv)$ if and only if $F_{p}^{\prime\prime
}(t)=0$, when the equality holds in (6.1). For fixed $q$ and $v,$ there is
only one possibility, a repeated root as Lemma 3(vi), to have a zero of (6.1)
and keeping (6.1) non-negative for all $0<R<UR(K,\mu)$.

\textbf{Case 1:} $\kappa(t)=0$. The quadratic in (6.1) can not have a repeated
root when $(\mu^{2})^{\prime\prime}(t)>0$ and it has no roots when $(\mu
^{2})^{\prime\prime}(t)\leq0$. Hence, it has no solution with $R<UR(K,\mu),$
and $Sng^{NK}(K,\mu)$ has no part over zero curvature points of $\gamma.$

\textbf{Case 2.} $\kappa(t)\neq0,$ with $N_{\gamma}(t)$ denoting the principal
normal of $\gamma$. If the expression in (6.1) were zero for $q=\gamma(t)$,
$R>0$ and a unit vector $v\neq N_{\gamma}(t)$ (that is $\cos\beta<1)$, then it
would be negative for the same $q$ and $R$ but $v_{1}=N_{\gamma}(t)$ (with
$\cos\beta_{1}=1$), which would imply that $R\geq UR(K,\mu).$ This proves that
$Sng^{NK}$ must be in the direction of the normal $N_{\gamma}$. In order have
a singular point at $(\gamma(t),Rv)$ and to satisfy (6.1), one must have
$v=N_{\gamma}(t)$ ($\cos\beta=1$) and there must be repeated roots as in Lemma
3(vi), which occur only when $\Delta(\kappa,\mu)=0:$
\begin{align*}
\Delta(\kappa,\mu)  &  =\frac{1}{2}(\mu^{2})^{\prime\prime}+\frac{1}{4}%
\kappa^{2}\mu^{2}-\left(  \mu^{\prime}\right)  ^{2}=\mu\mu^{\prime\prime
}+\frac{1}{4}\kappa^{2}\mu^{2}=0\\
\Lambda(\kappa,\mu)  &  =\frac{1}{2}(\mu^{2})^{\prime\prime}+\frac{1}{2}%
\kappa^{2}\mu^{2}=\left(  \mu^{\prime}\right)  ^{2}-\mu\mu^{\prime\prime}\\
\frac{1}{R^{2}}  &  =\Lambda(\kappa,\mu)(t)>0\text{ when }\kappa(t)>0.
\end{align*}
It is straightforward to show that points satisfying these conditions are the
singular points of $\exp^{\mu}$ within $D(UR(K,\mu))$. If $\mu=c_{0}$ is
constant and $\kappa>0$, then $\Delta(\kappa,\mu)>0$, and as $R$ increases,
the first zero of $F_{p}^{\prime\prime}(t)$ occurs at $R=c_{0}/\kappa(t)$ and
becomes negative for $R>c_{0}/\kappa(t).$ Consequently, $Sng^{NK}%
(K,\mu)=\varnothing$ when $\mu$ is constant. Since $K$ is compact, if $\mu$ is
not constant then there are points where $\mu^{\prime\prime}>0$ and
$\Delta>0.$ Hence, the domain of the graph $Sng^{NK}$ is not all of $K.$
Including the dimension \thinspace$n=2,$ the complement $D(UR)-Sng^{NK}$ is
connected in each component of $NK.$
\end{proof}

\begin{proposition}
$\exp^{\mu}$ restricted to $D(UR(K,\mu))-Sng^{NK}(K,\mu)$ is a diffeomorphism
onto its image in $\mathbf{R}^{n}$ and $AIR(K,\mu)=UR(K,\mu).$
\end{proposition}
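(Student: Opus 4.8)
The plan is to reduce both assertions to a single fact: \emph{$\exp^{\mu}$ is injective on $D(UR(K,\mu))-Sng^{NK}(K,\mu)$.} Granting this, the first assertion is immediate: by Proposition 8 the only singular points of $\exp^{\mu}$ inside $D(UR(K,\mu))$ lie in $Sng^{NK}(K,\mu)$, so on the open set $D(UR(K,\mu))-Sng^{NK}(K,\mu)$ the differential of $\exp^{\mu}$ is nonsingular everywhere, making $\exp^{\mu}$ a local diffeomorphism between $n$-manifolds (in particular an open map), and an injective local diffeomorphism is a diffeomorphism onto its open image. For $AIR(K,\mu)=UR(K,\mu)$ we already have $AIR(K,\mu)\leq UR(K,\mu)$ from Proposition 7(iii); conversely, for $r<UR(K,\mu)$ put $U(r)=D(r)-Sng^{NK}(K,\mu)$. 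By Proposition 8, $Sng^{NK}(K,\mu)$ is a one-dimensional submanifold of the $n$-dimensional $NK$, so (as $n\geq2$) $U(r)$ is open and dense in $D(r)$; and $\exp^{\mu}\mid U(r)$ is an injective local diffeomorphism, hence a homeomorphism onto the open set $U_{0}(r)=\exp^{\mu}(U(r))\subset\mathbf{R}^{n}$. Thus $r\leq AIR(K,\mu)$ for all $r<UR(K,\mu)$, so $AIR(K,\mu)=UR(K,\mu)$.

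For injectivity the first step is to show that every $(q,w)\in D(UR(K,\mu))-Sng^{NK}(K,\mu)$, with $p=\exp^{\mu}(q,w)$ and $q=\gamma(s_{0})$, satisfies $q\in CP(p,+)$, i.e. $F_{p}''(s_{0})>0$. Here $F_{p}'(s_{0})=0$ (Proposition 1) and $F_{p}''(s_{0})\neq0$ (Proposition 5(ii), the point being regular), so only $F_{p}''(s_{0})<0$ must be ruled out. With $A=\kappa(s_{0})\mu(s_{0})\cos\beta$, $B=|\mu'(s_{0})|$, $C=(\mu^{2})''(s_{0})$ as in Lemma 3 and the formula of Proposition 2(i): if $\Delta(\kappa,\mu)(s_{0})<0$, or $\kappa(s_{0})=0$, or $\Delta(\kappa,\mu)(s_{0})=0$ with $w/\Vert w\Vert\neq N_{\gamma}(s_{0})$, then $\tfrac{C}{2}+\tfrac{A^{2}}{4}-B^{2}<0$ and Lemma 3(i) gives $F_{p(R)}''(s_{0})>0$ for all admissible $R$; if $\Delta(\kappa,\mu)(s_{0})>0$ then $UR(K,\mu)\leq FocRad^{-}(K,\mu)\leq\Lambda(\kappa,\mu)(s_{0})^{-1/2}$, and since by Proposition 3(i) the least $R$ at which $F_{p(R)}''(s_{0})$ can vanish is $\Lambda(\kappa,\mu)(s_{0})^{-1/2}$, the inequality $\Vert w\Vert<\Lambda(\kappa,\mu)(s_{0})^{-1/2}$ forces $F_{p}''(s_{0})>0$; and if $\Delta(\kappa,\mu)(s_{0})=0$, $\kappa(s_{0})>0$, $w/\Vert w\Vert=N_{\gamma}(s_{0})$, then by Lemma 3(vi) $F_{p(R)}''(s_{0})$ vanishes only at $R=\Lambda(\kappa,\mu)(s_{0})^{-1/2}=R(s_{0})$, excluded because $(q,w)\notin Sng^{NK}(K,\mu)$. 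Consequently every critical point of $F_{p}$ of value below $UR(K,\mu)^{2}$ is a nondegenerate local minimum or a point of $Sng^{NK}(K,\mu)$.

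Now suppose $\exp^{\mu}(q_{1},w_{1})=\exp^{\mu}(q_{2},w_{2})=p_{0}$ with $(q_{1},w_{1})\neq(q_{2},w_{2})$ both in $D(UR(K,\mu))-Sng^{NK}(K,\mu)$. Since $UR(K,\mu)\leq FocRad^{-}(K,\mu)\leq\Vert\mathit{grad}\mu(q_{1})\Vert^{-1}$ we have $D(UR(K,\mu))\subset\mathrm{int}(W)$, on which $\exp^{\mu}\mid NK_{q_{1}}$ is injective (Proposition 1(v), Proposition 5(i)); hence $q_{1}\neq q_{2}$, so $q_{1},q_{2}$ are two distinct nondegenerate local minima of $F_{p_{0}}$, with $R_{i}:=\Vert w_{i}\Vert=\sqrt{F_{p_{0}}(q_{i})}<UR(K,\mu)$. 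If $u(p_{0},q_{1})=-u(p_{0},q_{2})$ then $q_{1},p_{0},q_{2}$ are colinear and Lemma 2 makes $(q_{1},q_{2})$ a double critical pair with $R_{1}=R_{2}=\Vert q_{1}-q_{2}\Vert(\mu(q_{1})+\mu(q_{2}))^{-1}\geq\tfrac{1}{2}DCSD(K,\mu)\geq UR(K,\mu)$, a contradiction. So $u(p_{0},q_{1})\neq-u(p_{0},q_{2})$, and exactly as in Claim 2 of the proof of Proposition 4 we pick $w\in UT\mathbf{R}^{n}_{p_{0}}$ with $u(p_{0},q_{i})\cdot w>0$ and slide $p_{0}$ to $p_{1}=p_{0}+\eta w$, $\eta>0$ small; since $F_{p_{0}}$ has a nondegenerate minimum at each $q_{i}$, $F_{p_{1}}$ has a nondegenerate local minimum at a nearby $q_{i}'$ with $F_{p_{1}}(q_{i}')\leq F_{p_{1}}(q_{i})=\Vert q_{i}-p_{1}\Vert^{2}\mu(q_{i})^{-2}<R_{i}^{2}$, and (openness of $D(UR(K,\mu))-Sng^{NK}(K,\mu)$) $\exp^{\mu}(q_{1}',w_{1}')=\exp^{\mu}(q_{2}',w_{2}')=p_{1}$ is again a collision in that set with $q_{1}'\neq q_{2}'$ and strictly smaller $\max(\Vert w_{1}'\Vert,\Vert w_{2}'\Vert)$.

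Hence, if injectivity fails, the infimum $\rho^{\ast}$ of $\max(\Vert w_{1}\Vert,\Vert w_{2}\Vert)$ over all such collisions satisfies $0<\varepsilon_{0}\leq\rho^{\ast}<UR(K,\mu)$ ($\varepsilon_{0}$ from Proposition 1(iv)) and is not attained by a collision in $D(UR(K,\mu))-Sng^{NK}(K,\mu)$ (sliding would lower it, the colinear obstruction being excluded since it would force $\rho^{\ast}\geq UR(K,\mu)$). Taking a minimizing sequence and passing to a subsequential limit by compactness of $K$ and of the relevant closed ball bundle, we get a limiting pair of points of $NK$ with equal $\exp^{\mu}$-images and $\max$ of radii $\rho^{\ast}<UR(K,\mu)$, which must therefore degenerate: either the two limit points coincide at a point of $Sng^{NK}(K,\mu)$ (they cannot coincide at a regular point of $D(UR(K,\mu))$, where $\exp^{\mu}$ is locally injective), or at least one of them lies in $Sng^{NK}(K,\mu)$. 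The remaining and main obstacle is to contradict this by a local analysis near a point $(\gamma(s_{0}),R(s_{0})N_{\gamma}(s_{0}))\in Sng^{NK}(K,\mu)$: using Proposition 2(i) with the defining relations $\Delta(\kappa,\mu)(s_{0})=0$, $\kappa(s_{0})>0$, $R(s_{0})=\Lambda(\kappa,\mu)(s_{0})^{-1/2}$, one shows that the normal spheres $A_{\gamma(s)}=\exp^{\mu}(NK_{\gamma(s)}\cap W)$ for $s$ near $s_{0}$ foliate a punctured neighborhood of $p_{0}=\exp^{\mu}(\gamma(s_{0}),R(s_{0})N_{\gamma}(s_{0}))$, any two of them meeting at most at $p_{0}$ and only when both pass through an $Sng^{NK}$-point there; hence $\exp^{\mu}$ restricted to $D(UR(K,\mu))-Sng^{NK}(K,\mu)$ is injective on a neighborhood of each point of $Sng^{NK}(K,\mu)$ — precisely the local picture of Example 1A (Horizontal Collapsing) and Example 4 (the fake focal point). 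Since the degenerate limit above furnishes two distinct colliding regular points arbitrarily close to such a configuration, this local injectivity gives the required contradiction, and the proposition follows.
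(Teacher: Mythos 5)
Your reduction of both assertions to injectivity of $\exp^{\mu}$ on $D(UR(K,\mu))-Sng^{NK}(K,\mu)$ is correct, and your first two steps are sound: the verification via Lemma 3 and Proposition 3(i) that every point of $D(UR)-Sng^{NK}$ lies in $CP(p,+)$, and the dichotomy for a collision between two such points (colinear $\Rightarrow$ double critical pair $\Rightarrow$ $\rho\geq\frac{1}{2}DCSD\geq UR$; non-colinear $\Rightarrow$ slide $p_{0}$ to strictly decrease both radii) faithfully reproduce the mechanics of Cases 3--5 in the proof of Proposition 4. The gap is the step you yourself flag as ``the remaining and main obstacle'': injectivity of the restricted map near $Sng^{NK}(K,\mu)$. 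You assert that the normal spheres $A_{\gamma(s)}$ for $s$ near $s_{0}$ pairwise meet at most at the single point $p_{0}$, tangentially. That is not a routine local computation: two $(n-1)$-spheres in $\mathbf{R}^{n}$ generically meet in an $(n-2)$-sphere, and ruling out transversal intersection of nearby fibers is precisely the content of Proposition 10(iii) (the tangential single-point intersection of the $A_{q}$'s), which the paper proves \emph{from} Proposition 9 via Corollary 3. So as written your argument is circular within the paper's logic, and as a self-contained claim it is a substantial unproven assertion, not a corollary of Proposition 2(i). A second, independent defect is that your minimizing-sequence limit allows the two limit points to be \emph{distinct}, with only one of them in $Sng^{NK}$ and the other far away; a purely local injectivity statement in a neighborhood of a single singular point says nothing about a collision between a point near that singular point and a point near an unrelated regular point, and your sliding argument is unavailable there because the singular limit point lies in $CP(p_{0},0)$ rather than $CP(p_{0},+)$.

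The paper closes exactly this hole by a different device: it perturbs the weight to $\mu_{\varepsilon}=\mu-\varepsilon$. For any fixed $R_{1}<UR(K,\mu)$ and all small $\varepsilon>0$, one checks $\Delta(\kappa,\mu_{\varepsilon})<0$ wherever $\mu''+\frac{1}{4}\kappa^{2}\mu\leq0$ and $\Lambda(\kappa,\mu_{\varepsilon})^{-1/2}\geq R_{1}$ elsewhere, so $\exp^{\mu_{\varepsilon}}$ is non-singular on all of $D(R_{1})$. If $\exp^{\mu}$ identified two disjoint open sets $U_{1},U_{2}\subset D(R_{1})-Sng^{NK}$, uniform convergence $\exp^{\mu_{\varepsilon}}\to\exp^{\mu}$ would force $\exp^{\mu_{\varepsilon}}(U_{1})\cap\exp^{\mu_{\varepsilon}}(U_{2})\neq\varnothing$, hence by Proposition 5(iii) $\frac{1}{2}DCSD(K,\mu_{\varepsilon})\leq R_{1}$; a limit of double critical pairs for $(K,\mu_{\varepsilon})$ (with an angle argument to prevent the pair from degenerating to a single point) then yields $\frac{1}{2}DCSD(K,\mu)\leq R_{1}<UR(K,\mu)$, a contradiction. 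This bypasses any analysis at the singular set. To repair your proof you would either need to import this perturbation argument or supply an independent proof of the tangential-intersection property of the fibers near $Sng^{NK}$, together with an argument covering the distinct-limit-points case.
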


\begin{proof}
Let $0<R_{1}<UR(K,\mu)$ be chosen arbitrarily. $\exp^{\mu}$ is a non-singular
map (local diffeomorphism) on $D(R_{1})-Sng^{NK}(K,\mu)$ which is an open
subset of $NK.$ Let $\mu_{\varepsilon}(s)=\mu(s)-\varepsilon$ for small
$\varepsilon>0.$

$\exists\varepsilon_{0}>0$ such that $\forall\varepsilon\in(0,\varepsilon
_{0}),$ $\exp^{\mu_{\varepsilon}}:D(R_{1})\rightarrow\mathbf{R}^{n}$ is a
non-singular map by the following. $\Delta(\kappa,\mu_{\varepsilon}%
)=\mu_{\varepsilon}\left(  \mu_{\varepsilon}^{\prime\prime}+\frac{1}{4}%
\kappa^{2}\mu_{\varepsilon}\right)  =\left(  \mu-\varepsilon\right)  \left(
\mu^{\prime\prime}+\frac{1}{4}\kappa^{2}\mu-\frac{1}{4}\kappa^{2}%
\varepsilon\right)  .$ On the parts of $K$ where $\mu^{\prime\prime}+\frac
{1}{4}\kappa^{2}\mu\leq0,$ and $\kappa>0$, one has $\Delta(\kappa
,\mu_{\varepsilon})<0$ and hence $\exp^{\mu_{\varepsilon}}$ is non-singular
for all small $\varepsilon>0,$ by Propositions 3 and 5. On the parts of $K$
where $\mu^{\prime\prime}+\frac{1}{4}\kappa^{2}\mu\leq0$ and $\kappa=0,$
$\exp^{\mu_{\varepsilon}}$ is non-singular within radius of $UR(K,\mu
_{\varepsilon})\leq FocRad^{-}(K,\mu_{\varepsilon})$, see the Case 1 in the
proof of Proposition 8. On the parts of $K$ where $\mu^{\prime\prime}+\frac
{1}{4}\kappa^{2}\mu>0,$ one has $\Lambda(\kappa,\mu)^{-\frac{1}{2}}\geq
UR(K,\mu).$ Observe that $\Delta(\kappa,\mu_{\varepsilon})(s_{0})>0$ implies
that $\Delta(\kappa,\mu)(s_{0})>0$, and by Proposition 3(ii) both inequalities
must be valid at some common points on $K.$ By continuity, $\exists
\varepsilon_{0}>0,\forall\varepsilon\in(0,\varepsilon_{0}),$ $\Lambda
(\kappa,\mu_{\varepsilon})^{-\frac{1}{2}}\geq R_{1}$ and $Sng^{NK}%
(\mu_{\varepsilon})\cap D(R_{1})=\varnothing,$ by Propositions 3, 8, and
Definitions 4, 9. Consequently, $\exp^{\mu_{\varepsilon}}:D(R_{1}%
)\rightarrow\mathbf{R}^{n}$ is a non-singular map.

Suppose that $\exp^{\mu}$ is not one-to-one on $D(R_{1})-Sng^{NK}(K,\mu)$, and
there exist $(q_{i},w_{i})\in D(R_{1})-Sng^{NK}(K,\mu)$ for $i=1,2$ such that
$(q_{1},w_{1})\neq(q_{2},w_{2})$ but $\exp^{\mu}(q_{1},w_{1})=\exp^{\mu}%
(q_{2},w_{2}).$ By the regularity of $\exp^{\mu} $ on $D(R_{1})-Sng^{NK}%
(K,\mu)$, there exists open sets $U_{i}$ such that $(q_{i},w_{i})\in
U_{i}\subset D(R_{1})-Sng^{NK}(K,\mu)$ for $i=1,2$, $U_{1}\cap U_{2}%
=\varnothing$, $\exp^{\mu}(U_{1})=\exp^{\mu}(U_{2})$ and $\exp^{\mu}\mid
U_{i}$ are diffeomorphisms. $\left\{  \exp^{\mu_{\varepsilon}}:\varepsilon
>0\right\}  $ converge uniformly to $\exp^{\mu} $ on $\overline{D(R_{1})}$ as
$\varepsilon\rightarrow0^{+},$ by the definition of $\exp^{\mu}.$ Since
$\exp^{\mu_{\varepsilon}}(U_{1})$ and $\exp^{\mu_{\varepsilon}}(U_{2})$ are
open subsets of $\mathbf{R}^{n}$ and $\exp^{\mu}(U_{1})=\exp^{\mu}(U_{2}),$
$\exists\varepsilon_{1}>0,\forall\varepsilon\in(0,\varepsilon_{1})$,
$\exp^{\mu_{\varepsilon}}(U_{1})\cap\exp^{\mu_{\varepsilon}}(U_{2}%
)\neq\varnothing.$ Consequently, $\exp^{\mu_{\varepsilon}}:D(R_{1}%
)\rightarrow\mathbf{R}^{n}$ is not injective. By Proposition 5(iii),
$DIR(K,\mu_{\varepsilon})=\frac{1}{2}DCSD(K,\mu_{\varepsilon})\leq
R_{1},\forall\varepsilon\in(0,\min(\varepsilon_{0},\varepsilon_{1})).$ There
exist pairs of points $(x_{\varepsilon},y_{\varepsilon})\in K\times K$ with
$x_{\varepsilon}\neq y_{\varepsilon}$, $\mathit{grad}\Sigma_{\varepsilon
}(x_{\varepsilon},y_{\varepsilon})=0,$ and $\frac{\left\Vert x_{\varepsilon
}-y_{\varepsilon}\right\Vert }{\mu(x_{\varepsilon})+\mu(y_{\varepsilon}%
)}=\frac{1}{2}DCSD(K,\mu_{\varepsilon})$ where $\Sigma_{\varepsilon}:K\times
K\rightarrow\mathbf{R}$ defined by $\Sigma_{\varepsilon}(x,y)=\left\Vert
x-y\right\Vert ^{2}(\mu_{\varepsilon}(x)+\mu_{\varepsilon}(y))^{-2}.$ By
compactness and taking convergent subsequences (and using $x_{j}$, $y_{j}$ and
$\mu_{j}$ for simplifying the subindices), there exists $(x_{j},y_{j}%
)\rightarrow(x_{0},y_{0})\in K\times K$ with $\mathit{grad}\Sigma(x_{0}%
,y_{0})=0.$ Suppose that $x_{0}=y_{0}$. As $R_{j}=\left\Vert x_{j}%
-y_{j}\right\Vert \left(  \mu(x_{j})+\mu(y_{j})\right)  ^{-1}\rightarrow0,$
one has $\cos\alpha(x_{j},y_{j})=-R_{j}\left\vert \mu_{j}^{\prime}%
(x_{j})\right\vert =-R_{j}\left\vert \mu^{\prime}(x_{j})\right\vert
\rightarrow0$, which means that the line through $x_{j}$ and $y_{j}$ is making
an angle close to $\pi/2$ with $K$ at $x_{j}$ and $y_{j}$. On the other hand,
$(x_{j},y_{j})\rightarrow(x_{0},x_{0})$ implies that the same lines are
converging to a line tangent to $K.$ Both can not happen simultaneously.
Hence, $x_{0}\neq y_{0},$ and $(x_{0},y_{0})$ is a critical pair for
$(K,\mu).$ By the definition of $DCSD$ and continuity, $\frac{1}{2}%
DCSD(K,\mu)\leq\frac{\left\Vert x_{0}-y_{0}\right\Vert }{\mu(x_{0})+\mu
(y_{0})}\leq R_{1}. $ However, this contradicts our initial assumption of
$R_{1}<UR(K,\mu)\leq\frac{1}{2}DCSD(K,\mu).$ Finally, $\forall R_{1}%
<UR(K,\mu),$ $\exp^{\mu}$ is one-to-one on $D(R_{1})-Sng^{NK}(K,\mu),$ and it
is a non-singular map onto an open subset of $\mathbf{R}^{n}$. This proves
that $\exp^{\mu}\mid D(UR(K,\mu))-Sng^{NK}(K,\mu)$ is a diffeomorphism onto
its image. $Sng^{NK}(K,\mu)$ has an empty interior, since it is a subset of a
one-dimensional graph over a subset of $K.$ By the definitions and Proposition
7, $AIR(K,\mu)=UR(K,\mu).$\pagebreak
\end{proof}

\begin{corollary}
Let $(K,\mu)$ be given and $\mu_{\varepsilon}(s)=\mu(s)-\varepsilon$. For a
given $0<R_{1}<UR(K,\mu),$ $\exists\varepsilon^{\prime}>0$ such that
$\forall\varepsilon\in(0,\varepsilon^{\prime}),$ $\exp^{\mu_{\varepsilon}%
}:D(R_{1})\rightarrow O(K,\mu_{\varepsilon}R_{1})$ is a diffeomorphism. The
diffeomorphisms $\exp^{\mu_{\varepsilon}}$ converge uniformly to the (possibly
singular) map $\exp^{\mu}$ as $\varepsilon\rightarrow0^{+}$, on $\overline
{D(R_{1})}.$
\end{corollary}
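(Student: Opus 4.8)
The plan is to obtain the Corollary by collecting the estimates already produced inside the proof of Proposition~9, strengthening the relevant inequalities from $\ge$ to $>$ so that one gets an honest diffeomorphism on all of $D(R_1)$, and then invoking Proposition~5(iii). The uniform convergence is the easy half: since $\mu_\varepsilon=\mu-\varepsilon$ is a constant shift, $\operatorname{grad}\mu_\varepsilon=\operatorname{grad}\mu$, so $\exp^{\mu_\varepsilon}$ is governed by the same domain $W$ as $\exp^{\mu}$, and because $R_1<UR(K,\mu)\le FocRad^{-}(K,\mu)\le\|\operatorname{grad}\mu(q)\|^{-1}$ for every $q$, the set $\overline{D(R_1)}$ is compact, lies in $int(W)$, and has $1-\|\operatorname{grad}\mu(q)\|^{2}\|w\|^{2}$ bounded below by a positive constant on it. Hence the closed formula defining $\exp^{\mu_\varepsilon}(q,w)$ depends smoothly on $\varepsilon$, uniformly on $\overline{D(R_1)}$, which is exactly the asserted uniform convergence (as already remarked in the proof of Proposition~9).

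For the diffeomorphism statement I would fix an auxiliary radius $R_2$ with $R_1<R_2<UR(K,\mu)$ and re-run the non-singularity argument of the proof of Proposition~9 with $R_2$ in place of $R_1$. That argument only uses $|\mu_\varepsilon'|=|\mu'|\le UR(K,\mu)^{-1}$ on the parts of $K$ where $\Delta\le0$, together with continuity of $\Lambda(\kappa,\mu_\varepsilon)$ and Proposition~3(ii) on the parts where $\mu''+\tfrac14\kappa^{2}\mu>0$; it therefore produces $\varepsilon_0>0$ so that $Sng^{NK}(\mu_\varepsilon)\cap D(R_2)=\varnothing$, i.e. $\exp^{\mu_\varepsilon}$ is non-singular on $D(R_2)$, for all $\varepsilon\in(0,\varepsilon_0)$. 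In particular $RegRad(K,\mu_\varepsilon)\ge R_2>R_1$.

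Next I would show $\tfrac12DCSD(K,\mu_\varepsilon)>R_1$ for all small $\varepsilon$. If not, there is a sequence $\varepsilon_j\to0^{+}$ with minimal double critical pairs $(x_j,y_j)$ for $(K,\mu_{\varepsilon_j})$ whose ratios $\|x_j-y_j\|(\mu_{\varepsilon_j}(x_j)+\mu_{\varepsilon_j}(y_j))^{-1}$ are $\le R_1$; passing to a convergent subsequence $(x_j,y_j)\to(x_0,y_0)$ and using $\Sigma_{\varepsilon_j}\to\Sigma$ in $C^{1}$ gives $\operatorname{grad}\Sigma(x_0,y_0)=0$. The degenerate case $x_0=y_0$ is excluded exactly as in the proof of Proposition~9: the ratios tend to $0$, which forces $\cos\alpha(x_j,y_j)=-R_j|\mu'(x_j)|\to0$, so the chords $\overline{x_jy_j}$ are asymptotically normal to $K$ and yet converge to a line tangent to $K$, a contradiction. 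Hence $(x_0,y_0)$ is a genuine double critical pair for $(K,\mu)$ with ratio $\le R_1<UR(K,\mu)\le\tfrac12DCSD(K,\mu)$, which is absurd. This yields $\varepsilon_2>0$ with $\tfrac12DCSD(K,\mu_\varepsilon)>R_1$ for $\varepsilon\in(0,\varepsilon_2)$.

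Finally, take $\varepsilon'=\min(\varepsilon_0,\varepsilon_2,\min_{K}\mu)$. For $\varepsilon\in(0,\varepsilon')$ the weight $\mu_\varepsilon$ is positive and, by Proposition~5(iii), $DIR(K,\mu_\varepsilon)=\min(\tfrac12DCSD(K,\mu_\varepsilon),RegRad(K,\mu_\varepsilon))>R_1$, so the argument in the proof of Proposition~5(iii) (equivalently Proposition~4(i)) shows $\exp^{\mu_\varepsilon}\mid D(R_1)$ is a diffeomorphism onto an open subset of $\mathbf{R}^{n}$, which by Corollary~1 of Proposition~1 equals $O(K,\mu_\varepsilon R_1)$. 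I expect the only real subtlety to be the bookkeeping of strictness: every ingredient is already present in the proof of Proposition~9 in non-strict form, and the point is simply to interpose $R_2$ and to observe that the $DCSD$-compactness argument there in fact rules out any sequence $\varepsilon_j\to0^{+}$ with $\tfrac12DCSD(K,\mu_{\varepsilon_j})\le R_1$, not merely an isolated bad $\varepsilon$.
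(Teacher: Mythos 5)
Your proposal is correct and follows essentially the same route as the paper: the paper's proof of this corollary also consists of the regularity estimate from the proof of Proposition~9 (giving non-singularity of $\exp^{\mu_{\varepsilon}}$ on $D(R_{1})$ for small $\varepsilon$) together with the compactness argument on limits of double critical pairs of $(K,\mu_{\varepsilon_{j}})$, excluding the diagonal limit exactly as you do, to contradict $R_{1}<UR(K,\mu)\leq\frac{1}{2}DCSD(K,\mu)$. The only difference is organizational: the paper extracts the double critical pairs from an assumed failure of injectivity via Proposition~5(iii), whereas you bound $\frac{1}{2}DCSD(K,\mu_{\varepsilon})$ and $\mathit{RegRad}(K,\mu_{\varepsilon})$ below by $R_{1}$ directly and then apply Proposition~5(iii) in the forward direction; both use the same two ingredients.
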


\begin{proof}
This follows the proof of Proposition 9. First, the regularity part is done in
the same way. Then, one supposes that such $\varepsilon^{\prime}$ does not
exist, and for all $j\in\mathbf{N}^{+}\mathbf{,}$ there exist $0<\varepsilon
_{j}\leq\frac{1}{j}$ with a non-singular and non-injective map $\exp
^{\mu_{\varepsilon_{j}}}:D(R_{1})\rightarrow\mathbf{R}^{n}$. One follows the
proof above again, by using the limits of subsequences of double critical
pairs of $(K,\mu_{\varepsilon_{j}}),$ to obtain a double critical pair for
$(K,\mu)$ to contradict $R_{1}<UR(K,\mu)\leq\frac{1}{2}DCSD(K,\mu).$
\end{proof}

\begin{proposition}
For a given $(K,\mu)$ and $q\in K,$ let
\begin{align*}
Sng  &  =\exp^{\mu}(Sng^{NK}),\\
A_{q}  &  =\exp^{\mu}\left(  NK_{q}\cap D(UR)\right)  ,\text{ and}\\
A_{q}^{\ast}  &  =\exp^{\mu}\left(  NK_{q}\cap D(UR)-Sng^{NK}\right)  .
\end{align*}
Then, i. $O(K,\mu UR)-Sng$ has a codimension 1 foliation by $A_{q}^{\ast}$,
which are (possibly punctured) spherical caps or discs.$\ $

ii. $\exp^{\mu}(D(UR)-Sng^{NK})=O(K,\mu UR)-Sng.$

iii. If $A_{q_{1}}\cap A_{q_{2}}\neq\varnothing$ for $q_{1}\neq q_{2}$ then
$q_{1}$ and $q_{2}$ must belong to the same component of $K,$ and $A_{q_{1}}$
intersects $A_{q_{2}}$ tangentially at exactly one point $p_{0}=\exp^{\mu
}(q_{1},r_{1}v_{1})=\exp^{\mu}(q_{2},r_{2}v_{2})$ where $(q_{i},r_{i}v_{i})\in
Sng^{NK},$ for $i=1,2.$

iv. \textbf{Horizontal Collapsing Property}:

Assume that $\exp^{\mu}(q_{1},r_{1}v_{1})=\exp^{\mu}(q_{2},r_{2}v_{2})=p_{0}$
for $r_{1}$, $r_{2}<UR(K,\mu)$, $v_{i}\in UNK_{q_{i}}$ with $(q_{1},r_{1}%
v_{1})\neq(q_{2},r_{2}v_{2})$. Then, $q_{1}$ and $q_{2}$ belong to the same
component of $K,$ which is denoted by $K_{1}.$ Let $\gamma
(s):\mathbf{R\rightarrow}K_{1}\subset\mathbf{R}^{n}$ be a unit speed
parametrization of $K_{1}$ such that $\gamma(s+L)=\gamma(s)$ where $L $ is the
length of $K_{1},$ $N_{\gamma}(s)$ denotes the principal normal of $\gamma,$
and $q_{i}=\gamma(s_{i})$ for $i=1,2$ with $0\leq s_{1}<s_{2}<L.$ Then,
$r_{1}=r_{2},$ $v_{i}=N_{\gamma}(s_{i})$ for $i=1,2,$ and $\exp^{\mu}%
(\gamma(s),r_{1}N_{\gamma}(s))=p_{0}$, $\forall s\in I$ where $I=[s_{1}%
,s_{2}]$ or $[s_{2}-L,s_{1}].$
\end{proposition}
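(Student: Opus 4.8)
Throughout I use: Proposition 9 ($\exp^{\mu}$ is a diffeomorphism of $D(UR)-Sng^{NK}$ onto its image and $AIR=UR$); the explicit description of $Sng^{NK}$ in Proposition 8; Corollary 2(ii) (distinct $\exp^{\mu}$-preimages of a point, both with radial parameter $<AIR$, have equal radial parameter); and Corollary 1 ($\exp^{\mu}(D(R))=O(K,\mu R)$). \textbf{Step 1 (every preimage is singular).} Let $\exp^{\mu}(q_{1},r_{1}v_{1})=\exp^{\mu}(q_{2},r_{2}v_{2})=p_{0}$ as in (iv). Corollary 2(ii) gives $r_{1}=r_{2}=:r$. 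If $q_{1}=q_{2}$ then $v_{1}\neq v_{2}$, impossible since by Proposition 1(ii),(v),(vi) and Proposition 7(i) $\exp^{\mu}\mid NK_{q_{1}}\cap W$ is injective for radial parameter $<\Vert\operatorname{grad}\mu(q_{1})\Vert^{-1}>AIR=UR>r$ (nothing to check if $\operatorname{grad}\mu(q_{1})=0$); so $q_{1}\neq q_{2}$. If $(q_{1},rv_{1})\notin Sng^{NK}$, then $\exp^{\mu}$ is a local diffeomorphism there, so a neighbourhood $\mathcal{O}$ of $p_{0}$ equals $\exp^{\mu}(U_{1})$ with $U_{1}\subset D(UR)-Sng^{NK}$; a small neighbourhood of $(q_{2},rv_{2})$ maps into $\mathcal{O}$ and, $Sng^{NK}$ having dimension $\le1$, contains a point $x\notin Sng^{NK}$ whose image has a second preimage in $U_{1}$, distinct from $x$ because $q_{1}\neq q_{2}$ — contradicting injectivity on $D(UR)-Sng^{NK}$. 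Hence both $(q_{i},rv_{i})\in Sng^{NK}$, and Proposition 8 yields $\kappa(s_{i})>0$, $v_{i}=N_{\gamma}(s_{i})$, $(\mu^{\prime\prime}+\tfrac{1}{4}\kappa^{2}\mu)(s_{i})=0$ and $r=((\mu^{\prime})^{2}-\mu\mu^{\prime\prime})(s_{i})^{-1/2}$. The same argument gives (ii): $\subseteq$ because a point of $Sng$ with a regular preimage would have all preimages regular, absurd; $\supseteq$ because every $p\in O(K,\mu UR)-Sng$ is $\exp^{\mu}(q,\sqrt{G(p)}\,v)$ with $\sqrt{G(p)}<UR$ and $(q,\sqrt{G(p)}\,v)\notin Sng^{NK}$ (Corollary 1).

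\textbf{Step 2 ($\mu$-closest set and propagation; the expected main obstacle).} Since $p_{0}\in O(K,\mu UR)$ we get $\sqrt{G(p_{0})}<UR$, and $F_{p_{0}}(q_{i})=r^{2}$ forces $G(p_{0})\le r^{2}$; were $G(p_{0})<r^{2}$, a $\mu$-closest point would be a preimage of $p_{0}$ at parameter $\sqrt{G(p_{0})}<r<AIR$, so Proposition 6 would give $AIR<r$, impossible. Hence $G(p_{0})=r^{2}$; thus every $q\in CL(p_{0}):=\{q\in K:F_{p_{0}}(q)=r^{2}\}$ is $\mu$-closest to $p_{0}$, so $p_{0}=\exp^{\mu}(q,rv(q))$, and by Step 1 $(q,rv(q))\in Sng^{NK}$, whence $q=\gamma(s)$, $v(q)=N_{\gamma}(s)$ and $\exp^{\mu}(\gamma(s),rN_{\gamma}(s))=p_{0}$. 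One must now show $CL(p_{0})$ contains one of the two closed arcs of $K_{1}$ joining $q_{1}$ and $q_{2}$, i.e. $\rho:=F_{p_{0}}\circ\gamma\equiv r^{2}$ on one arc. On each arc $\rho\ge r^{2}$, with equality at the endpoints; if $\rho\not\equiv r^{2}$, pick a maximal interval $(a,b)$ with $\rho>r^{2}$ and $s^{*}\in(a,b)$ with $\rho(s^{*})=\max_{[a,b]}\rho$, so $\gamma(s^{*})\in CP(p_{0})$ at parameter $R^{*}=\sqrt{\rho(s^{*})}>r$ and $F^{\prime\prime}_{p_{0}}(s^{*})\le0$. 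If $R^{*}<UR$ this is impossible: $F^{\prime\prime}_{p_{0}}(s^{*})=0$ would, by Propositions 5(ii) and 8, make $(\gamma(s^{*}),R^{*}N_{\gamma}(s^{*}))\in Sng^{NK}$ a further preimage of $p_{0}$ with $R^{*}\neq r$, against Corollary 2(ii); $F^{\prime\prime}_{p_{0}}(s^{*})<0$ would, by Proposition 2(ii) and Lemma 3 (a negative value needs $\Delta(\kappa,\mu)(s^{*})>0$), force $R^{*}>\Lambda(\kappa,\mu)(s^{*})^{-1/2}\ge FocRad^{-}(K,\mu)\ge UR$. So $R^{*}\ge UR$ on any arc where $\rho\not\equiv r^{2}$; the delicate point is to rule out \emph{both} arcs ``bulging past $UR$'' at once — this is also what puts $q_{1},q_{2}$ on a single connected arc $I$, hence in one component $K_{1}$ (part (i)). On $I$, $\exp^{\mu}(\gamma(s),rN_{\gamma}(s))=p_{0}$ identically.

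\textbf{Step 3 (rigidity on $I$).} Differentiate the constant map $s\mapsto\exp^{\mu}(\gamma(s),rN_{\gamma}(s))=\gamma-\mu\mu^{\prime}r^{2}\gamma^{\prime}+\mu r\sqrt{1-(\mu^{\prime}r)^{2}}\,N_{\gamma}$ on $I$; using $\mu^{\prime\prime}=-\tfrac{1}{4}\kappa^{2}\mu$ from Step 1 (so $\sqrt{1-(\mu^{\prime}r)^{2}}=\tfrac{1}{2}\kappa\mu r$), together with $\gamma^{\prime\prime}=\kappa N_{\gamma}$ and $N_{\gamma}^{\prime}=-\kappa\gamma^{\prime}+(\text{higher Frenet terms})$, decompose the identity in the Frenet frame. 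The higher-normal components, with positive coefficient $\tfrac{1}{2}\kappa\mu^{2}r^{2}$, force the torsion and all higher curvatures to vanish and $\kappa^{\prime}=0$, i.e. $\gamma^{\prime\prime\prime}+\kappa^{2}\gamma^{\prime}=0$ with $\kappa$ a positive constant — $\gamma(I)$ is a circular arc; the $\gamma^{\prime}$- and $N_{\gamma}$-components then become identities given $\mu^{\prime\prime}+\tfrac{1}{4}\kappa^{2}\mu=0$, whose general solution is $\mu(s)=\tfrac{2}{\kappa r}\cos(\tfrac{\kappa s}{2}+a)$, the amplitude pinned by $(\mu^{\prime})^{2}-\mu\mu^{\prime\prime}=(\mu^{\prime})^{2}+\tfrac{1}{4}\kappa^{2}\mu^{2}=1/r^{2}$. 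This is Theorem 3(iii). If $\gamma(I)=K_{1}$ then $\mu^{\prime\prime}+\tfrac{1}{4}\kappa^{2}\mu\equiv0$ on $K_{1}$, contradicting Proposition 3(ii); so $\gamma(I)\subsetneq K_{1}$, even for maximal $I$.

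\textbf{Step 4 (foliation (i), tangency (iii)).} For (i): $D(UR)$ is foliated by its fibres $NK_{q}\cap D(UR)$; $Sng^{NK}$ is a partial section of $NK$, hence transverse to the fibres, so $D(UR)-Sng^{NK}$ is foliated by the (at most once-punctured) fibres, which the diffeomorphism $\exp^{\mu}\mid D(UR)-Sng^{NK}$ (Proposition 9 and (ii)) carries onto the $A_{q}^{*}$; by Proposition 1(v) these are (possibly punctured) spherical caps, or flat discs when $\operatorname{grad}\mu(q)=0$. For the tangency: if $p\in A_{q_{1}}\cap A_{q_{2}}$ with $q_{1}\neq q_{2}$, then $q_{1},q_{2}$ share a component (Step 1) and $p$ is $\exp^{\mu}$ of points over $q_{1}$ and over $q_{2}$ at a common parameter $<UR$ (Corollary 2(ii)); if $p\neq p_{0}$, at least one such point is regular, hence both are (Step 1), giving two distinct regular preimages of $p$, against Proposition 9. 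So $A_{q_{1}}\cap A_{q_{2}}=\{p_{0}\}$; two smooth codimension-$1$ submanifolds meeting in one point are tangent there for $n\ge3$ (a transverse intersection would be positive-dimensional), and for $n=2$ the tangency follows from the circular normal form of Step 3 (all circles $\exp^{\mu}(NK_{\gamma(s)}\cap W)$, $s\in I$, are mutually tangent at $p_{0}$).
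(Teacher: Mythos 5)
There is a genuine gap, and you have located it yourself: the sentence ``the delicate point is to rule out both arcs bulging past $UR$ at once'' is not a delicate point of the proof, it \emph{is} the proof. Everything else in your write-up (Step 1, the identity $G(p_0)=r^2$, the reduction to showing $F_{p_0}\equiv r^2$ on one arc, the rigidity computation of Step 3, the $n\ge 3$ tangency) is correct bookkeeping, but the propagation of the collapse along an arc --- and with it the ``same component'' claim of part (iv) and the $n=2$ case of part (iii), which you derive from Step 3 and hence circularly from Step 2 --- is exactly the content of the Horizontal Collapsing Property, and your level-set argument does not close it. Your reduction shows only that on any arc where $F_{p_0}\not\equiv r^2$ the interior maxima of $F_{p_0}$ must reach $UR^2$; nothing in the tools you invoke (Lemma 3, Propositions 2, 5, 8, Corollary 2) forbids this from happening on both arcs simultaneously, since $F_{p_0}$ being large on most of $K$ is the generic situation for a point with few closest points.

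The paper closes this step by a mechanism you never use: the rigid geometry of the fibers. Each $A_q$ lies on a round sphere or hyperplane (Proposition 1(v)), and the second identity of Lemma 5, $\eta'\cdot(\eta-c)=\frac{\mu^3}{4\mu'}F_p''\ge 0$ inside $D(UR)$, shows that as $s$ increases from $s_1$ the caps $A_{\gamma(s)}$ move monotonically off the sphere containing $A_{q_1}$ (Claim 1 in the paper, which needs a compactness argument to make the first-derivative bound uniform, plus the observation that any re-entry would create a transversal intersection). Since distinct caps can meet only tangentially at a singular image --- proved for all $n$, including $n=2$, by the perturbation $\mu_\varepsilon=\mu-\varepsilon$ and Corollary 3, not by dimension counting --- the intermediate caps are squeezed between the two tangent spheres containing $A_{q_1}$ and $A_{q_2}$ and are forced through $p_0$; Claim 2 then continues this across zeros of $\mu'$ where the fiber degenerates to a hyperplane. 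Remark 5 of the paper explicitly warns that the sphericity of the fibers is essential to this step, which is precisely the structure your argument discards. To repair your proposal you would need to import this squeezing argument (or an equivalent substitute); as written, the central assertion of the proposition is assumed rather than proved.
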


\begin{proof}
The logical order of the proof is different from the presentation order of the results.

For different components $K_{1}$ and $K_{2}$ of $K$, the open sets
$O(K_{1},\mu R)$ and $O(K_{2},\mu R)$ are disjoint for $R<UR(K,\mu),$
otherwise one can obtain a contradiction with Propositions 8 and 9$.$
$\exp^{\mu}\mid D(UR)-Sng^{NK}$ is a diffeomorphism onto its image. $\exp
^{\mu}\mid NK_{q}\cap D(UR)$ is also a diffeomorphism where the image $A_{q}$
is an open (metric) disc of an $n-1$ dimensional plane or sphere. By
Proposition 8, $\exp^{\mu}\left(  Sng^{NK}\cap NK_{q}\right)  $ contains at
most one point denoted by $q^{\ast}$, if it exists. If such $q^{\ast}$ does
not exist, we use $\{q^{\ast}\}=\varnothing.$ Let $A_{q}^{\ast}=A_{q}%
-\{q^{\ast}\}$. The diffeomorphism $\exp^{\mu}\mid D(UR)-Sng^{NK}$ carries the
codimension $1$ foliation of $D(UR)-Sng^{NK}$ by $NK_{q}-Sng^{NK} $ to a
codimension $1$ foliation of $\exp^{\mu}(D(UR)-Sng^{NK})$ by $A_{q}^{\ast}$.

As in Corollary 3, let $\mu_{\varepsilon}(s)=\mu(s)-\varepsilon$ for small
$\varepsilon>0$ and choose large $R_{1}<UR(K,\mu).$ By Proposition 9,
$A_{q_{1}}^{\ast}\cap A_{q_{2}}^{\ast}=\varnothing$ for $q_{1}\neq q_{2}.$
Therefore, $A_{q_{1}}\cap A_{q_{2}}\subset\left\{  q_{1}^{\ast},q_{2}^{\ast
}\right\}  $ for $q_{1}\neq q_{2}.$ Suppose that $A_{q_{1}}$ and $A_{q_{2}}$
intersect transversally. For $n\geq3,$ $A_{q_{1}}\cap A_{q_{2}}$ would have
infinitely many points, which is not the case. In all dimensions including
$n=2,$ take $R_{1}<UR(K,\mu)$ sufficiently large with $\left\{  q_{1}^{\ast
},q_{2}^{\ast}\right\}  \subset O(K,\mu R_{1}).$ By Corollary 3, $A_{q_{1}%
}(\mu_{\varepsilon})\cap A_{q_{2}}(\mu_{\varepsilon})=\varnothing,$ for
sufficiently small $\varepsilon>0.$ In the limit as $\varepsilon
\rightarrow0^{+},$ $A_{q_{1}}$ and $A_{q_{2}}$ can not intersect
transversally, since transversality is an open condition. Hence, $A_{q_{1}}$
and $A_{q_{2}}$ are tangential to each other at $q_{1}^{\ast}$ or $q_{2}%
^{\ast}$ and there is only one point of intersection for $q_{1}\neq q_{2}$, if
the intersection is not empty. If both $A_{q_{1}}$ and $A_{q_{2}}$ are subsets
of hyperplanes, then $A_{q_{1}}\cap A_{q_{2}}=\varnothing$ for $q_{1}\neq
q_{2}.$

From this point on, assume that $p_{0}=\exp^{\mu}(q_{1},r_{1}v_{1})=\exp^{\mu
}(q_{2},r_{2}v_{2})$, for $q_{1}\neq q_{2}.$ $A_{q_{1}}$ and $A_{q_{2}} $ must
intersect tangentially at $p_{0}\in\left\{  q_{1}^{\ast},q_{2}^{\ast}\right\}
,$ and $q_{1}$ and $q_{2}$ must belong to the same component of $K$, denoted
by $K_{1}$. At least one of $A_{q_{i}}$ is spherical. Choose $A_{q_{1}}$ to be
the subset of the sphere with center $c_{1}$ and the smaller radius
$\sigma_{1}$ so that $\operatorname{grad}\mu(q_{1})\neq0.$ Then, $\forall p\in
A_{q_{2}}$, $\left\Vert c_{1}-p\right\Vert \geq\sigma_{1}$. Let $\gamma
(s):\mathbf{R\rightarrow}K_{1}\subset\mathbf{R}^{n}$ be a unit speed
parametrization such that $\gamma(s+L)=\gamma(s)$ where $L$ is the length of
$K_{1},$ and $q_{i}=\gamma(s_{i})$ for $i=1,2$ with $0\leq s_{1}<s_{2}<L.$ Let
$\eta(s)=\exp^{\mu}(\gamma(s),Rv(s))$ be as in Lemma 5:
\begin{align*}
\eta^{\prime}(s_{1})\cdot\left(  \eta(s_{1})-c(s_{1})\right)   &  =\frac
{\mu^{3}(s_{1})}{4\mu^{\prime}(s_{1})}\frac{d^{2}}{ds^{2}}\left.
F_{\eta(s_{1})}(\gamma(s))\right\vert _{s=s_{1}}\text{ since }\mu^{\prime
}(s_{1})\neq0\\
\text{where }c(s_{1})  &  =c_{1}=\gamma(s_{1})-\frac{\mu(s_{1})}{2\mu^{\prime
}(s_{1})}\gamma^{\prime}(s_{1})
\end{align*}
We will assume that $\mu^{\prime}(s_{1})>0,$ and work on the interval
$[s_{1},s_{2}].$ Otherwise, if $\mu^{\prime}(s_{1})<0,$ then one
reparametrizes $K_{1}$ to traverse $\gamma\left(  \lbrack s_{2}-L,s_{1}%
]\right)  $ with opposite orientation starting at $q_{1}$. Choose
$R_{1}<UR(K,\mu)$ sufficiently large with $\left\{  q_{1}^{\ast},q_{2}^{\ast
}\right\}  \subset O(K_{1},\mu R_{1}).$

\textbf{Claim 1. }There exists $\delta>0$ such that

$\forall s\in(s_{1},s_{1}+\delta),$ $\forall p\in A_{\gamma(s)}\cap
O(K_{1},R_{1}\mu)$, $d(c_{1},p)\geq\sigma_{1}.$

For a given curve $(\gamma(s),Rv(s))$ in $NK_{1}$ as in Lemma 5, define
\begin{align*}
\eta_{Rv}(s)  &  =\exp^{\mu}(\gamma(s),Rv(s))\text{ and }\\
f_{Rv}(s)  &  =\left\Vert \eta_{Rv}(s)-c_{1}\right\Vert ^{2}\text{ so that}\\
f_{Rv}(s_{1})  &  =\sigma_{1}^{2}>0\text{ and }f_{Rv}^{\prime}(s_{1}%
)=2\eta_{Rv}^{\prime}(s_{1})\cdot\left(  \eta_{Rv}(s_{1})-c_{1}\right)
\text{.}%
\end{align*}%
\begin{align*}
f_{Rv}^{\prime}(s_{1})  &  >0\text{ if }\eta_{Rv}(s_{1})\in A_{q_{1}}^{\ast}\\
f_{Rv}^{\prime}(s_{1})  &  =0\text{ if }\eta_{Rv}(s_{1})=q_{1}^{\ast}%
\end{align*}
(In the next two statements, the compactness of $\left(  A_{q_{1}}%
-B(q_{1}^{\ast},\delta_{1})\right)  \cap\overline{O(K_{1},R_{1}\mu)}$ is
essential.)%
\begin{align*}
\forall\delta_{1}  &  >0,\exists\delta_{2}>0\text{ such that }\\
\text{if }\eta_{Rv}(s_{1})  &  \in\left(  A_{q_{1}}-B(q_{1}^{\ast},\delta
_{1})\right)  \cap\overline{O(K_{1},R_{1}\mu)}\text{ then }f_{Rv}^{\prime
}(s_{1})\geq\delta_{2}>0\text{.}%
\end{align*}%
\begin{align*}
\exists\delta &  >0\text{ such that }\delta\ll\min(R_{1},r_{1},R_{1}%
-r_{1})\text{ and}\\
\text{if }\eta_{Rv}(s_{1})  &  \in\left(  A_{q_{1}}-B(q_{1}^{\ast},\delta
_{1})\right)  \cap\overline{O(K_{1},R_{1}\mu)}\text{ and }s\in(s_{1}%
,s_{1}+\delta),\\
\text{then }f_{Rv}(s)  &  >\sigma_{1}^{2}.
\end{align*}
Suppose there exists $Rv(s)$ with $\eta_{Rv}(s_{1})\in A_{q_{1}}\cap
B(q_{1}^{\ast},\delta_{1})\cap O(K_{1},R_{1}\mu)$, $s^{\prime}\in(s_{1}%
,s_{1}+\delta)$ and $f_{Rv}(s^{\prime})<\sigma_{1}^{2}.$ Then, $A_{\gamma
(s^{\prime})}$ must intersect $A_{q_{1}}$ near $q_{1}^{\ast}.$ This
intersection must be tangential as discussed above with $q_{1}$ and $q_{2}$.
However, this cannot be the case when $f_{Rv}(s)$ takes values on both sides
of $\sigma_{1}^{2}.$ This proves the Claim 1:
\begin{align*}
\exists\delta &  >0\text{ such that }\\
\text{if }\eta_{Rv}(s_{1})  &  \in A_{q_{1}}\cap O(K_{1},R_{1}\mu)\text{ and
}s\in(s_{1},s_{1}+\delta)\text{ then }f_{Rv}(s)\geq\sigma_{1}^{2},\text{
hence,}\\
\forall s  &  \in(s_{1},s_{1}+\delta),\forall p\in A_{\gamma(s)}\cap
O(K_{1},R_{1}\mu),\left\Vert c_{1}-p\right\Vert \geq\sigma_{1}.
\end{align*}

Recall that $\forall p\in A_{q_{2}}$, $\left\|  c_{1}-p\right\|  \geq
\sigma_{1}$ and $A_{q_{2}}$ is tangent to $A_{q_{1}}$ at $p_{0}$. To avoid any
transversal intersections with $A_{q_{2}},$ $A_{\gamma(s)}$ must stay between
the codimension 1 submanifolds (sphere or plane) containing $A_{q_{1}}$ and
$A_{q_{2}},$ respectively$.$ This forces $A_{\gamma(s)}$ to be tangent to
$A_{q_{1}}$ at $p_{0}$ for $\forall s\in(s_{1},s_{1}+\delta), $ which is still
true on $[s_{1},s_{1}+\delta]$ by taking closure.

\textbf{Claim 2.} $A_{\gamma(s)}$ is tangent to $A_{q_{1}}$ at $p_{0}$ for
$\forall s\in\lbrack s_{1},s_{2}]$.

If $\mu^{\prime}>0$ on $[s_{1},s_{2}),$ then Claim 2 can be proved by a
standard topology argument. It is also possible to have the existence of
$s_{3}\in(s_{1},s_{2})$ with $\mu^{\prime}>0$ on $[s_{1},s_{3})$ and
$\mu^{\prime}(s_{3})=0.$ Then, Claim 2 holds on $[s_{1},s_{3}]$ by the same
argument. Let $q_{3}=\gamma(s_{3}).$ $A_{q_{3}}$ is a subset of a hyperplane
$H=\{x\in\mathbf{R}^{n}:x\cdot\gamma^{\prime}(s_{3})=a_{0}\}$ dividing
$\mathbf{R}^{n}$ into two half spaces and $A_{\gamma(s)}$ are tangent to
$A_{q_{3}}$ at $p_{0}$ for $\forall s\in\lbrack s_{1},s_{3}).$ The spheres
containing $A_{\gamma(s)}$ ($s\in\lbrack s_{1},s_{3})$) are on the same side
of $H$ as $A_{q_{1}}$, their centers are on the line $\ell$ perpendicular to
$H$ at $p_{0}$, and the set of their radii is $[\sigma_{1},\infty).$
$\mu^{\prime}(s_{2})\neq0$ and $A_{q_{2}}$ is a subset of a sphere, since
$A_{q_{2}}$ and $A_{q_{3}}$ are tangent at $p_{0}$. $A_{q_{1}}$ and $A_{q_{2}%
}$ must be on the opposite sides of $H$ since the center of $A_{q_{2}}$ is
also on $\ell,$ and the radius of $A_{q_{2}}$ is not less than the radius of
$A_{q_{1}}.$ By studying the function $g_{Rv}(s)=\gamma^{\prime}(s_{3}%
)\cdot\exp(\gamma(s),Rv(s)),$ and using the first characterization of
$F_{p}^{\prime\prime}$ in Lemma 5, in a similar proof to Claim 1, one can
obtain that
\[
\exists\delta^{\prime}>0,\forall s\in(s_{3},s_{3}+\delta^{\prime}),\forall
p\in A_{\gamma(s)}\cap O(K_{1},R_{1}\mu),\text{ }p\cdot\gamma^{\prime}%
(s_{3})\geq a_{0}.
\]
To avoid any transversal intersections with $A_{q_{2}},$ $A_{\gamma(s)}$ must
stay between the codimension 1 submanifolds (a sphere and a plane) containing
$A_{q_{2}}$ and $A_{q_{3}},$ respectively$.$ This forces $A_{\gamma(s)}$ to be
tangent to $A_{q_{3}}$ as well as $A_{q_{1}}$ at $p_{0} $ for $\forall
s\in(s_{3},s_{3}+\delta^{\prime}),$ which is still true on $[s_{1}%
,s_{3}+\delta^{\prime}]$ by taking closure and combining with above.
$\mu^{\prime}<0$ on $(s_{3},s_{3}+\delta^{\prime}]$, since (i) any zero of
$\mu^{\prime}$ will give a hyperplane tangent to $A_{q_{3}} $ which cannot
happen, and (ii) any positive value of $\mu^{\prime}$ will give a sphere whose
center is on $\ell$ but on the same side of $H$ as $A_{q_{1}},$ which cannot
happen by continuity and $A_{\gamma(s)}\cap A_{\gamma(s^{\prime})}=\{p_{0}\}$
for $s<s_{3}<s^{\prime}.$ One repeats the proof of Claim 1 by showing that
$f_{Rv}$ is decreasing with $\mu^{\prime}<0,$ and Lemma 5, to extend Claim 2
to $[s_{1},s_{2}].$

$p_{0}=\exp^{\mu}(\gamma(s),r(s)v(s))$ for some curve $(\gamma
(s),r(s)v(s)):[s_{1},s_{2}]\rightarrow NK_{1}.$ Hence, $r(s)=\left\Vert
\gamma(s)-p_{0}\right\Vert /\mu(s)\equiv r_{1}>0$ by the Corollary 2(ii),
$v(s)=N_{\gamma}(s)$ and $\left(  \mu^{\prime}\right)  ^{2}-\mu\mu
^{\prime\prime}=r_{1}^{-2}$ on $[s_{1},s_{2}]$ by Proposition 8. $\forall
s\in\lbrack s_{1},s_{2}],$ $q_{\gamma(s)}^{\ast}=p_{0}$, since $q_{\gamma
(s)}^{\ast}$ is unique. One can extend $[s_{1},s_{2}]$ to a maximal closed
interval by requiring $p_{0}\in A_{\gamma(s)}.$

To summarize, if $\exp^{\mu}(q_{1},r_{1}v_{1})=\exp^{\mu}(q_{2},r_{2}%
v_{2})=p_{0}$, for $r_{1}$, $r_{2}<UR(K,\mu)$ and $v_{i}\in UNK_{i}$ for
$i=1,2,$ then (i) $r_{1}=r_{2}$, (ii) $\exp^{\mu}(\gamma(s),r_{1}N_{\gamma
}(s))=p_{0}$, $\forall s\in\lbrack s_{1},s_{2}]$, and (iii) $v_{i}=N_{\gamma
}(s_{i})$ for $i=1,2$. However, it is essential to observe that this can be
done on one arc of $\gamma$ between $q_{1}$ and $q_{2},$ not both, since we
chose the interval $[s_{1},s_{2}]$ in a particular way above.

Observe that $q_{\gamma(s)}^{\ast}=p_{0}$, $\forall s\in\lbrack s_{1},s_{2}]$
or $[s_{2}-L,s_{1}]$, if $p_{0}\in A_{\gamma(s_{1})}\cap A_{\gamma(s_{2})}.$
This proves that
\begin{align*}
\exp^{\mu}(Sng_{i}^{NK})\cap\exp^{\mu}(NK_{i}\cap D(UR)-Sng_{i}^{NK})  &
=\varnothing\text{ and}\\
\exp^{\mu}(D(UR)-Sng^{NK})  &  =O(K,\mu UR)-Sng.
\end{align*}

\end{proof}

\begin{remark}
In the proof of Claim 1, it is essential that the fibers $A_{q}$ are subsets
of spheres and planes. $f_{x}(t)=x^{2}t-t^{3},$ satisfies that $f_{x}^{\prime
}(0)=x^{2}>0$ except $x=0,$ but \textquotedblleft$\forall x,$ $f_{x}%
(\varepsilon)\geq0=f_{x}(0)$\textquotedblright\ is false for all
$\varepsilon>0,$ since $f_{0}(t)=-t^{3}.$
\end{remark}

\begin{proposition}
Let $\gamma(s):\mathbf{R\rightarrow}K_{1}\subset\mathbf{R}^{n}$ be a unit
speed parametrization of a connected $K_{1}$ such that $\exp^{\mu}%
(\gamma(s),rN_{\gamma}(s))=p_{0}$, $\forall s\in\lbrack s_{1},s_{2}]$, for
$s_{1}<s_{2}$ and $r<UR(K_{1},\mu)$ as in Proposition 10. Then, $\kappa$ is a
positive constant on the interval $[s_{1},s_{2}]$ and
\begin{align*}
\left(  \mu^{\prime}\right)  ^{2}-\mu\mu^{\prime\prime}  &  =\frac{1}%
{r_{1}^{2}}\text{ and }\gamma^{\prime\prime\prime}+\kappa^{2}\gamma^{\prime
}=0,\\
\mu &  =\frac{2}{\kappa r_{1}}\cos\left(  \frac{\kappa s}{2}+a\right)  \text{
for some }a\in\mathbf{R.}%
\end{align*}
Therefore, Horizontal Collapsing Property occurs in a unique way only above
arcs of circles of curvature $\kappa$ and with a specific $\mu.$
$\gamma([s_{1},s_{2}])\neq K_{1}$, even if $[s_{1},s_{2}]$ is chosen to be a
maximal interval satisfying above.
\end{proposition}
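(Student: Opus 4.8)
The plan is to combine the rigidity already extracted in Propositions 8 and 10 with a single differentiation of the constant curve $s\mapsto\exp^{\mu}(\gamma(s),rN_{\gamma}(s))$. First, since this curve is sent by $\exp^{\mu}$ to the single point $p_{0}$, its tangent vector $(\gamma'(s),rN_{\gamma}'(s))\neq\mathbf{0}$ lies in the kernel of $d(\exp^{\mu})$ at $(\gamma(s),rN_{\gamma}(s))$, so that point is a singular point of $\exp^{\mu}$; as $r<UR(K_{1},\mu)$ it belongs to $Sng^{NK}(K,\mu)$, and Proposition 8 (applied to $K_{1}$) then gives, for every $s\in[s_{1},s_{2}]$, the relations $\kappa(s)>0$, $\mu''+\tfrac14\kappa^{2}\mu=0$, and $(\mu')^{2}-\mu\mu''=r^{-2}$ — this is the first displayed identity of the statement (with $r_{1}=r$). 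Substituting $\mu''=-\tfrac14\kappa^{2}\mu$ into the defining formula for $\exp^{\mu}$ gives $1-(\mu'r)^{2}=\tfrac14\kappa^{2}\mu^{2}r^{2}$, hence $\mu r\sqrt{1-(\mu'r)^{2}}=\tfrac12\kappa\mu^{2}r^{2}$, and since $\gamma''=\kappa N_{\gamma}$ the map collapses to $\eta(s):=\exp^{\mu}(\gamma(s),rN_{\gamma}(s))=\gamma-\rho'\gamma'+\rho\gamma''$ with $\rho:=\tfrac{r^{2}}{2}\mu^{2}$.

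Next I differentiate $\eta\equiv p_{0}$. Using $\gamma'\cdot\gamma'=1$, $\gamma''\cdot\gamma'=0$, $\gamma'''\cdot\gamma'=-\kappa^{2}$ and $\gamma'\cdot N_{\gamma}=0=N_{\gamma}\cdot N_{\gamma}'$, one gets $0=\eta'=(1-\rho'')\gamma'+\rho\gamma'''$; its $\gamma'$-component reads $1-\rho''-\rho\kappa^{2}=0$, and putting this back in yields $\rho(\gamma'''+\kappa^{2}\gamma')=0$, hence $\gamma'''+\kappa^{2}\gamma'=0$ since $\rho>0$. Pairing $\gamma'''=-\kappa^{2}\gamma'$ with $N_{\gamma}$ and using $\gamma'''=(\kappa N_{\gamma})'=\kappa'N_{\gamma}+\kappa N_{\gamma}'$ gives $\kappa'=0$, so $\kappa$ is a positive constant on $[s_{1},s_{2}]$; the scalar identity $1-\rho''-\rho\kappa^{2}=0$ is then an automatic consequence of $(\mu')^{2}-\mu\mu''=r^{-2}$ and $\mu''=-\tfrac14\kappa^{2}\mu$, so it carries no further content. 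Integrating $\gamma'''+\kappa^{2}\gamma'=0$ shows $\gamma'(s)=\cos(\kappa s)A+\sin(\kappa s)B$ for constant vectors $A,B$, so $\gamma$ lies in a fixed $2$-plane and $\gamma([s_{1},s_{2}])$ is a circular arc of radius $1/\kappa$.

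With $\kappa$ constant, $\mu''+\tfrac{\kappa^{2}}{4}\mu=0$ has general solution $\mu=C\cos(\tfrac{\kappa s}{2}+a)$; inserting this into $(\mu')^{2}+\tfrac14\kappa^{2}\mu^{2}=r^{-2}$ forces $\tfrac{\kappa^{2}}{4}C^{2}=r^{-2}$, and positivity of $\mu$ picks out $C=\tfrac{2}{\kappa r}$, so $\mu=\tfrac{2}{\kappa r}\cos(\tfrac{\kappa s}{2}+a)$. For the last assertion, positivity of $\mu$ on $[s_{1},s_{2}]$ forces $\cos(\tfrac{\kappa s}{2}+a)>0$ there, hence $s_{2}-s_{1}<\tfrac{2\pi}{\kappa}$; but if $\gamma([s_{1},s_{2}])$ were all of $K_{1}$ then $K_{1}$ would be a closed circular arc, i.e.\ a circle of curvature $\kappa$, of length $\tfrac{2\pi}{\kappa}$, and an arclength parametrization needs an interval of length at least $\mathrm{length}(K_{1})$ to cover it, forcing $s_{2}-s_{1}\geq\tfrac{2\pi}{\kappa}$ — a contradiction. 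This bound used only $\exp^{\mu}(\gamma(s),rN_{\gamma}(s))\equiv p_{0}$ on the interval, hence it persists when $[s_{1},s_{2}]$ is enlarged to a maximal such interval.

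The two derivative computations and the elementary ODE are routine; the only step requiring care is the resolution of $0=(1-\rho'')\gamma'+\rho\gamma'''$: one must first use $\gamma'''\cdot\gamma'=-\kappa^{2}$ to conclude $\gamma'''\parallel\gamma'$, and only afterwards extract $\kappa'=0$ by pairing with $N_{\gamma}$. Since the genuine geometric work — the rigidity of the collapse itself — already lives in Propositions 8 and 10, I expect no real obstacle here beyond organizing these bookkeeping identities in the moving frame $\{\gamma',N_{\gamma}\}$.
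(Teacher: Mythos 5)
Your proposal is correct and follows essentially the same route as the paper: Proposition 8 supplies the pointwise relations $\mu''+\tfrac{1}{4}\kappa^{2}\mu=0$ and $(\mu')^{2}-\mu\mu''=r^{-2}$ along the collapsing curve, and differentiating $\exp^{\mu}(\gamma(s),rN_{\gamma}(s))\equiv p_{0}$ yields $\gamma'''+\kappa^{2}\gamma'=0$ and the explicit form of $\mu$. The only (harmless) deviations are that the paper obtains $\kappa'=0$ by differentiating the constant quantity $(\mu')^{2}+\tfrac{1}{4}\kappa^{2}\mu^{2}$ directly rather than by pairing $\gamma'''=-\kappa^{2}\gamma'$ with $N_{\gamma}$, and it rules out $\gamma([s_{1},s_{2}])=K_{1}$ by observing that $\mu''=-\tfrac{1}{4}\kappa^{2}\mu<0$ on the arc while on a compact component $\mu''\geq 0$ must hold somewhere, instead of your equally valid length bound $s_{2}-s_{1}<2\pi/\kappa$.
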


\begin{proof}
By Propositions 8 and 10, $(\gamma(s),rN_{\gamma}(s))\in Sng^{NK}(K,\mu)$ and
\begin{equation}
\left(  \mu^{\prime}\right)  ^{2}-\mu\mu^{\prime\prime}=\frac{1}{r^{2}}\text{
and }\mu^{\prime\prime}+\frac{1}{4}\kappa^{2}\mu=0\text{ with }\kappa>0.
\end{equation}%
\begin{align*}
0  &  =\left(  \left(  \mu^{\prime}\right)  ^{2}-\mu\mu^{\prime\prime}\right)
^{\prime}=\left(  \left(  \mu^{\prime}\right)  ^{2}+\frac{1}{4}\kappa^{2}%
\mu^{2}\right)  ^{\prime}\\
0  &  =2\mu^{\prime}\mu^{\prime\prime}+\frac{1}{2}\kappa\kappa^{\prime}\mu
^{2}+\frac{1}{2}\kappa^{2}\mu\mu^{\prime}\\
0  &  =2\mu^{\prime}\left(  \mu^{\prime\prime}+\frac{1}{4}\kappa^{2}%
\mu\right)  +\frac{1}{2}\kappa\kappa^{\prime}\mu^{2}\\
0  &  =\frac{1}{2}\kappa\kappa^{\prime}\mu^{2}%
\end{align*}
$\kappa$ is constant, since $\kappa$ and $\mu>0.$ $\mu=\frac{2}{\kappa r}%
\cos\left(  \frac{\kappa s}{2}+a\right)  $ is the only solution of (6.2).
\begin{align*}
\sqrt{1-\left(  r\mu^{\prime}\right)  ^{2}}  &  =\frac{\kappa r\mu}{2}\text{
and }\gamma^{\prime\prime}=\kappa N_{\gamma}\\
p_{0}  &  =\exp^{\mu}(\gamma,rN_{\gamma})=\gamma-r^{2}\mu\mu^{\prime}%
\gamma^{\prime}+r\mu\sqrt{1-\left(  r\mu^{\prime}\right)  ^{2}}N_{\gamma}\\
0  &  =\left(  \gamma-r^{2}\mu\mu^{\prime}\gamma^{\prime}+\frac{1}{2}r^{2}%
\mu^{2}\gamma^{\prime\prime}\right)  ^{\prime}\\
0  &  =\left(  1-\left(  r\mu^{\prime}\right)  ^{2}-r^{2}\mu\mu^{\prime\prime
}\right)  \gamma^{\prime}+0\cdot\gamma^{\prime\prime}+\frac{1}{2}r^{2}\mu
^{2}\gamma^{\prime\prime\prime}\\
0  &  =\left(  \frac{1}{4}\kappa^{2}\mu^{2}-\mu\mu^{\prime\prime}\right)
r^{2}\gamma^{\prime}+\frac{1}{2}r^{2}\mu^{2}\gamma^{\prime\prime\prime}\\
0  &  =\frac{1}{2}r^{2}\kappa^{2}\mu^{2}\gamma^{\prime}+\frac{1}{2}\mu
^{2}r^{2}\gamma^{\prime\prime\prime}=\frac{1}{2}\mu^{2}r^{2}\left(  \kappa
^{2}\gamma^{\prime}+\gamma^{\prime\prime\prime}\right) \\
0  &  =\kappa^{2}\gamma^{\prime}+\gamma^{\prime\prime\prime}\\
p_{1}  &  =\kappa^{2}\gamma+\gamma^{\prime\prime}\text{ for some constant
}p_{1}\in\mathbf{R}^{n}\\
\left\Vert \frac{p_{1}}{\kappa^{2}}-\gamma\right\Vert  &  =\frac{1}{\kappa
^{2}}\left\Vert \gamma^{\prime\prime}\right\Vert =\frac{1}{\kappa}%
\end{align*}
$\gamma$ is an arc of a circle in $\mathbf{R}^{n},$ since $\gamma$ has
curvature $\kappa$ and lying on a sphere of radius $1/\kappa,$ it has to be a
great circle of that sphere. Since $\mu$ is not constant and $K$ is compact,
there are points where $\mu^{\prime\prime}\geq0$ on each component of $K$.
However, on $[s_{1},s_{2}],$ $\mu^{\prime\prime}=-\frac{1}{4}\kappa^{2}\mu<0.$
$\gamma([s_{1},s_{2}])\neq K_{1}.$
\end{proof}

\begin{proposition}
Let $\left\{  (K_{i},\mu_{i}):i=1,2,...\right\}  $ be a sequence where each
$K_{i}$ is a disjoint union of finitely many simple smooth closed curves in
$\mathbf{R}^{n}$ with $C^{2}$ weight functions, and similarly for $(K_{0}%
,\mu_{0})$. If $(K_{i},\mu_{i})\rightarrow(K_{0},\mu_{0})$ in $C^{2}$
topology, then
\[
\underset{i\rightarrow\infty}{\lim\sup}AIR(K_{i},\mu_{i})\leq AIR(K_{0}%
,\mu_{0}).
\]

\end{proposition}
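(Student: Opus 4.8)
The plan is to reduce everything to Propositions 6 and 9. By Proposition 9, $AIR(K,\mu)=UR(K,\mu)=\min\bigl(\tfrac12 DCSD(K,\mu),FocRad^{-}(K,\mu)\bigr)$, so it suffices to show $\limsup_{i}AIR(K_i,\mu_i)\le UR(K_0,\mu_0)$. Fix $\varepsilon>0$; I will produce, for all sufficiently large $i$, a point $p\in\mathbf{R}^{n}$ together with two distinct pairs $(q,Rv),(q',R'v')\in NK_i$ with $\exp^{\mu_i}(q,Rv)=\exp^{\mu_i}(q',R'v')=p$, $R'=\sqrt{G^{\mu_i}(p)}<R$, and $R<UR(K_0,\mu_0)+\varepsilon$; then Proposition 6 yields $AIR(K_i,\mu_i)<UR(K_0,\mu_0)+\varepsilon$, and letting $\varepsilon\to0^{+}$ finishes. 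The whole argument rests on the observation that such a ``strict non‑injectivity'' configuration is an \emph{open} condition, so it survives a small perturbation: after passing to common unit‑speed parametrizations $\gamma_i\to\gamma_0$ in $C^{2}$ (legitimate, since arclength reparametrization is $C^{2}$‑continuous for regular curves), we have $\mu_i\circ\gamma_i\to\mu_0\circ\gamma_0$ and $\kappa_i\to\kappa_0$ in the relevant $C^{k}$, hence $F^{\mu_i}_{p}\circ\gamma_i\to F^{\mu_0}_{p}\circ\gamma_0$ in $C^{2}$ for each fixed $p$, and $G^{\mu_i}(p)=\min_{K_i}F^{\mu_i}_{p}\to G^{\mu_0}(p)$.

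\textbf{DCSD case:} $UR(K_0,\mu_0)=\tfrac12 DCSD(K_0,\mu_0)=:R_0$, and choose $\varepsilon$ small so that $R_0+\varepsilon<FocRad^{-}(K_0,\mu_0)$ (if $\tfrac12 DCSD$ is not strictly less, we are still here). Take a minimal double critical pair $(q_3,q_4)$ for $(K_0,\mu_0)$ and the point $p_0\in\overline{q_3q_4}$ with $p_0=\exp^{\mu_0}(q_i,R_0v_i)$, $i=3,4$, as in Lemma 2. Exactly as in the last part of the proof of Proposition 7(iii), push slightly past $p_0$ along $\beta(s)=\exp^{\mu_0}(q_3,sv_3)$ to get $p_0'=\beta(R_0+s_0)$ with $0<s_0<\varepsilon$, such that $F^{\mu_0}_{p_0'}(q_3)=(R_0+s_0)^{2}$ while $G^{\mu_0}(p_0')\le F^{\mu_0}_{p_0'}(q_4)\le(R_0-\lambda s_0)^{2}$ for some $\lambda>0$. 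Since $R_0+s_0<FocRad^{-}(K_0,\mu_0)$, Proposition 2(ii) gives $q_3\in CP(p_0',+)$, i.e.\ $q_3=\gamma_0(a)$ is a nondegenerate critical point of $F^{\mu_0}_{p_0'}\circ\gamma_0$. By the implicit function theorem there are critical points $a_i\to a$ of $F^{\mu_i}_{p_0'}\circ\gamma_i$ with positive second derivative and $F^{\mu_i}_{p_0'}(\gamma_i(a_i))\to(R_0+s_0)^{2}$, while $G^{\mu_i}(p_0')\to G^{\mu_0}(p_0')<(R_0+s_0)^{2}$. Hence for large $i$, $p_0'=\exp^{\mu_i}(\gamma_i(a_i),R_1^{(i)}w_i)=\exp^{\mu_i}(q_i',\sqrt{G^{\mu_i}(p_0')}\,w_i')$ with $\sqrt{G^{\mu_i}(p_0')}<R_1^{(i)}:=\sqrt{F^{\mu_i}_{p_0'}(\gamma_i(a_i))}<R_0+\varepsilon$ (and the two pairs are distinct, their radii being different); apply Proposition 6.

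\textbf{Focal case:} $UR(K_0,\mu_0)=FocRad^{-}(K_0,\mu_0)=:R^{*}<\tfrac12 DCSD(K_0,\mu_0)$. Using Proposition 7(i) and Remark 2 one checks that here $R^{*-2}=\sup\{\Lambda(\kappa_0,\mu_0)(s):\Delta(\kappa_0,\mu_0)(s)>0\}$: if instead $\max|\mu_0'|^{2}$ strictly dominated, Proposition 9 would give $AIR(K_0,\mu_0)=(\max|\mu_0'|)^{-1}$, contradicting $AIR(K_0,\mu_0)<(\max_q\|\mathit{grad}\mu_0(q)\|)^{-1}=(\max|\mu_0'|)^{-1}$ from Proposition 7(i). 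The set $\{\Delta(\kappa_0,\mu_0)>0\}$ is nonempty by Proposition 3(ii), so pick $s_0$ there with $\Lambda(\kappa_0,\mu_0)(s_0)^{-1/2}<R^{*}+\varepsilon/2$, and set $q_0=\gamma_0(s_0)$, $v_0=N_{\gamma_0}(s_0)$, $R_1=\Lambda(\kappa_0,\mu_0)(s_0)^{-1/2}$. Writing $A=\kappa_0\mu_0$, $B=|\mu_0'|$, $C=(\mu_0^{2})''$ at $s_0$, Lemma 3 (parts (v) and (vii), according as $B^{2}+C^{2}=0$ or not) identifies $R_1=t_0^{+}$ and shows $F^{\mu_0}_{p_0(R)}\!{}''(s_0)<0$ for $R$ slightly larger than $R_1$, where $p_0(R)=\exp^{\mu_0}(q_0,Rv_0)$; thus $q_0\in CP(p_0(R),-)$, $F^{\mu_0}_{p_0(R)}$ is not minimized at $q_0$, and $p_0(R)$ acquires a second preimage at radius $\sqrt{G^{\mu_0}(p_0(R))}<R$. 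Now transfer: $\Delta(\kappa_i,\mu_i)(s_0)\to\Delta(\kappa_0,\mu_0)(s_0)>0$ and $\Lambda(\kappa_i,\mu_i)(s_0)\to\Lambda(\kappa_0,\mu_0)(s_0)$, so fixing $R=R_1+\delta$ with $\delta<\varepsilon/2$ small, the same Lemma 3 analysis at $s_0$ gives $F^{\mu_i}_{p_i(R)}\!{}''(s_0)<0$ for all large $i$, where $p_i(R)=\exp^{\mu_i}(\gamma_i(s_0),RN_{\gamma_i}(s_0))$; hence two distinct $\exp^{\mu_i}$‑preimages of $p_i(R)$, one of radius $R<R^{*}+\varepsilon$ and one of radius $\sqrt{G^{\mu_i}(p_i(R))}<R$, and Proposition 6 gives $AIR(K_i,\mu_i)<R^{*}+\varepsilon$. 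In both cases $\limsup_i AIR(K_i,\mu_i)\le UR(K_0,\mu_0)+\varepsilon$, and $\varepsilon\to0^{+}$ finishes.

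The main obstacle is the focal case, and specifically the correct use of the fact that $AIR$ is governed by the supremum of $\Lambda$ over the \emph{open} set $\{\Delta>0\}$, which need not be attained. One cannot argue at a point with $\Delta=0$: there the failure of injectivity is not robust under $C^{2}$ perturbation — this ``even‑multiplicity zero of $\mu''+\tfrac14\kappa^{2}\mu$'' phenomenon is exactly what drives the Horizontal Collapsing examples and the failure of upper semicontinuity of $DIR$ and $TIR$ in Section 5. Working instead at nearby points with $\Delta>0$ strictly, where Lemma 3(vii) pins non‑injectivity to an open \emph{interval} $(t_0^{+},t_0^{-})$ of radii, makes the condition stable, and the remaining ingredients ($C^{2}$‑convergence of parametrizations, of $F_p$ and of $G(p)=\min_K F_p$, and persistence of nondegenerate critical points) are routine.
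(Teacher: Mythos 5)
Your overall strategy is sound and genuinely different from the paper's. The paper first proves upper semicontinuity of $FocRad^{-}$ directly (the defining conditions $\Delta>0$, $\Lambda^{-1/2}<R$, $|\mu'|^{-1}<R$ are open under $C^{2}$ convergence), and then, in the $DCSD$ case, contradicts the \emph{definition} of $AIR$ for $(K_i,\mu_i)$: it builds two disjoint open sets $V_1^T,V_2^T\subset NK_0$ on which $\exp^{\mu_0}$ is a diffeomorphism with $\exp^{\mu_0}(V_2^T)\subset\exp^{\mu_0}(V_1^T)$, and uses $C^1$ convergence of the exponential maps to keep the images overlapping for large $i$. You instead funnel both cases through Proposition 6 by constructing, for $(K_i,\mu_i)$ itself, a point with a preimage at radius $R$ and a $\mu_i$-closest preimage at radius strictly less than $R$. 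That is a legitimate alternative; your observation in the focal case that the $\max|\mu_0'|^2$ term cannot realize $FocRad^{-}$ when $UR=FocRad^{-}$ (via the strict inequality in Proposition 7(i)) is a nice shortcut not in the paper, and working at points where $\Delta>0$ strictly is exactly the right stabilization.

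There is, however, one step that is not justified as written. In the $DCSD$ case you claim that $R_0+s_0<FocRad^{-}(K_0,\mu_0)$ together with Proposition 2(ii) forces $q_3\in CP(p_0',+)$, and you then apply the implicit function theorem to this nondegenerate critical point. But $R<FocRad^{-}$ only guarantees $F_{p}''\geq 0$; within $FocRad^{-}$ the value $F_{p_0'}''(q_3)=0$ is perfectly possible (this is precisely the $FocRad^{0}<FocRad^{-}$ phenomenon the paper is built around, cf.\ Proposition 8 and cases (b)--(d) of Proposition 2(ii)), and a degenerate critical point need not persist under $C^{2}$ perturbation. The fix is the same one the paper uses when it selects $s_1$ along $\beta_1$: for fixed $q_3,v_3$ there are at most two radii in $(0,\|\mathit{grad}\mu(q_3)\|^{-1})$ at which $F_{p(R)}''(q_3)=0$ (Lemma 3(iii)), so choose $s_0$ to avoid them and obtain $F_{p_0'}''(q_3)>0$ strictly. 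Two smaller points: when $\tfrac12 DCSD(K_0,\mu_0)=FocRad^{-}(K_0,\mu_0)$ your requirement $R_0+\varepsilon<FocRad^{-}(K_0,\mu_0)$ is unsatisfiable, so that boundary case must be routed through your focal argument (which covers it); and in the focal case $\Delta(s_0)>0$ does not force $\kappa_0(s_0)>0$, so $N_{\gamma_i}(s_0)$ may be undefined --- but there $F_p''$ is independent of the choice of unit normal, so any $v\in UNK_{q}$ serves. With these repairs the argument goes through.
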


\begin{proof}
Let $\gamma_{0}(s):domain(\gamma_{0})\mathbf{\rightarrow}K_{0}$ be a unit
speed onto parametrization. Let $R>FocRad^{-}(K_{0},\mu_{0})$ be given
arbitrarily. By Proposition 3, $\exists s_{0}\in domain(\gamma_{0})$ such that
either $\Lambda(\kappa_{0},\mu_{0})(s_{0})^{-\frac{1}{2}}<R$ with
$\Delta(\kappa_{0},\mu_{0})(s_{0})>0,$ or $\left\vert \mu_{0}^{\prime}%
(s_{0})\right\vert ^{-1}<R.$ By parametrizing all $K_{i}$ over a small common
open interval $I$ about $s_{0}$ with respect to arclength, we can assume that
$\mu_{i}^{\prime\prime}\rightarrow\mu_{0}^{\prime\prime}$ and $\kappa
_{i}\rightarrow\kappa_{0}$ uniformly on $I$. For sufficiently large $i$,
$\Lambda(\kappa_{i},\mu_{i})(s_{0})^{-\frac{1}{2}}<R$ with $\Delta(\kappa
_{i},\mu_{i})(s_{0})>0,$ or $\left\vert \mu_{i}^{\prime}(s_{0})\right\vert
^{-1}<R.$ Hence, $R>FocRad^{-}(K_{i},\mu_{i})$ for sufficiently large $i.$
\[
\underset{i\rightarrow\infty}{\lim\sup}FocRad^{-}(K_{i},\mu_{i})\leq
FocRad^{-}(K_{0},\mu_{0}).
\]

By Proposition 9, for all $(K,\mu):$%
\[
AIR(K,\mu)=UR(K,\mu)=\min\left(  \frac{1}{2}DCSD(K,\mu),FocRad^{-}%
(K,\mu)\right)  .
\]
Suppose that $\exists R_{0}$ such that $AIR(K_{0},\mu_{0})<$ $R_{0}%
<\underset{i\rightarrow\infty}{\text{ }\lim\sup}AIR(K_{i},\mu_{i})$.
\begin{align}
AIR(K_{0},\mu_{0})  &  <R_{0}<\underset{i\rightarrow\infty}{\text{ }\lim\sup
}FocRad^{-}(K_{i},\mu_{i})\leq FocRad^{-}(K_{0},\mu_{0})\\
AIR(K_{0},\mu_{0})  &  =\frac{1}{2}DCSD(K_{0},\mu_{0})<R_{0}\nonumber
\end{align}
$D(R_{0})\subset W(\exp^{\mu_{0}})\subset NK_{0}$ by (6.3)$.$ There exists a
double critical pair $(q_{0},q_{1})$ for $(K_{0},\mu_{0}),$ and a point $p $
on the line segment joining $q_{0}$ and $q_{1}$ such that$\left\Vert
p-q_{i}\right\Vert =R_{1}\mu_{0}(q_{i})$ and $p=\exp^{\mu_{0}}(q_{i}%
,R_{1}v_{i})$ with $v_{i}\in UN(K_{0})_{q_{i}}$ for $i=0,1$ where
$R_{1}=AIR(K_{0},\mu_{0})<R_{0}$. As in the proof of Proposition 7(iii), we
consider $\beta_{1}(s)=\exp^{\mu_{0}}(q_{1},sv_{1})$ for $s\in\left(
R_{1},R_{0}\right)  .$ There exists at most one singular point along
$\beta_{1}$ before $R_{0}$ by Proposition 2 and (6.3). By using Lemma 4 and
the arguments in the proof of Proposition 7(iii) with $\measuredangle
(\beta_{1}^{\prime}(R_{1}),u(p,q_{0}))$ $=\alpha(q_{1},p)-\frac{\pi}{2}%
<\frac{\pi}{2}$, choose $s_{1}\in\left(  R_{1},R_{0}\right)  $ such that
$\left\Vert \beta_{1}(s_{1})-q_{0}\right\Vert \mu_{0}(q_{0})^{-1}<R_{1}$ and
$\exp^{\mu_{0}}$ is not singular at $(q_{1},s_{1}v_{1}).$ There exists an open
connected set $V_{1}^{T}\subset D(R_{0})-D(R_{1})\subset NK_{0}$ such that

i. $(q_{1},s_{1}v_{1})\in V_{1}^{T},$

ii. $\exp^{\mu_{0}}\mid V_{1}^{T}$ is a diffeomorphism onto an open set
$V_{1}$ ($\subset\mathbf{R}^{n}$) containing $\beta_{1}(s_{1}),$

iii. $0<c_{1}\leq\inf\left\Vert d(\exp^{\mu_{0}}\mid V_{1}^{T})\right\Vert
\leq\sup\left\Vert d(\exp^{\mu_{0}}\mid V_{1}^{T})\right\Vert \leq
C_{1}<\infty,$

iv. $\left\Vert x-q_{0}\right\Vert \mu_{0}(q_{0})^{-1}<R_{1}$, $\forall x\in
V_{1},$ and

v. $\{q\in K_{0}:(q,w)\in V_{1}^{T}\}$ is an open arc whose length is much
shorter than the length of the component of $K_{0}$ containing $q_{1}.$

There exists a $\mu_{0}-$closest point $q_{2}\in K_{0}$ to $\beta_{1}(s_{1}),$
and $\beta_{1}(s_{1})=\exp^{\mu_{0}}(q_{2},R_{2}v_{2})$ where $R_{2}<R_{1}.$
By Proposition 1(ii, v), $q_{1}\neq q_{2},$ since $R_{1}<\left\vert
\mu^{\prime}(q_{1})\right\vert ^{-1}.$ Let $\beta_{2}(s)=\exp^{\mu_{0}}%
(q_{2},sv_{2}).$ There exists $s_{2}<R_{2}$ sufficiently close to $R_{2}$ such
that $\exp^{\mu_{0}}$ is not singular at $(q_{2},s_{2}v_{2})$ and $\exp
^{\mu_{0}}(q_{2},s_{2}v_{2})\in V_{1}.$ There exists an open set $V_{2}%
^{T}\subset D(R_{2})\subset NK_{0}$ such that $(q_{2},s_{2}v_{2})\in V_{2}%
^{T}$, $\exp^{\mu_{0}}\mid V_{2}^{T}$ is a diffeomorphism onto an open set
$V_{2}$ with $\beta_{2}(s_{2})\in V_{2}\subset V_{1}$, and satisfying the same
type conditions as (iii) and (v) above. $V_{1}^{T}\cap V_{2}^{T}\subset
V_{1}^{T}\cap D(R_{2})=\varnothing.$

Let $K_{0}^{\prime}$ be open subset of $K_{0}$ such that $V_{1}^{T}\cup
V_{2}^{T}\subset NK_{0}^{\prime}.$ Having chosen $V_{i}^{T}$ small, we can
assume that $K_{0}^{\prime}$ is a union of one or two short open arcs, neither
of which is a whole component of $K_{0}.$ Parametrize $\gamma_{0}%
:I_{0}\rightarrow K_{0}^{\prime}$ and for sufficiently large $i\geq i_{0},$
$\gamma_{i}:I_{0}\rightarrow K_{i}^{\prime}\subset K_{i}$ with unit speed $s$
so that $\left\{  \gamma_{i}|I_{0}\right\}  _{i=i_{0}}^{\infty}$ converges to
$\gamma_{0}|I_{0}$ uniformly in $C^{2}$ topology as $i\rightarrow\infty$. All
$NK_{i}^{\prime}$ are diffeomorphic to (and can be identified with) the fixed
$NK_{0}^{\prime}.$ Since $(K_{i},\mu_{i})\rightarrow(K_{0},\mu_{0})$ in
$C^{2}$ topology, $\exp^{(K_{i}^{\prime},\mu_{i})}:NK_{i}^{\prime}\simeq
NK_{0}^{\prime}\rightarrow\mathbf{R}^{n}$ converges to $\exp^{(K_{0}^{\prime
},\mu_{0})} $ in $C^{1}$ topology. $V_{1}^{T}\cap V_{2}^{T}=\varnothing,$ but
$\exp^{(K_{0}^{\prime},\mu_{0})}(V_{2}^{T})\subset\exp^{(K_{0}^{\prime}%
,\mu_{0})}(V_{1}^{T})$ where all are open sets, and $\exp^{(K_{0}^{\prime}%
,\mu_{0})}$ is a local diffeomorphism on $V_{1}^{T}\cup V_{2}^{T}$ satisfying
(iii). Therefore, for sufficiently large $i,$ $\exp^{(K_{i}^{\prime},\mu_{i}%
)}$ is a local diffeomorphism on $V_{1}^{T}\cup V_{2}^{T}\subset D(R_{0}) $
where $V_{1}^{T}\ $\ and $V_{2}^{T}$ are nonempty disjoint open sets, but
$\exp^{(K_{i}^{\prime},\mu_{i})}(V_{2}^{T})\cap\exp^{(K_{i}^{\prime},\mu_{i}%
)}(V_{1}^{T})\neq\varnothing$. Therefore, by the definition, $AIR(K_{i}%
,\mu_{i})\leq R_{0}$ for sufficiently large $i.$ This contradicts with the
conditions of the initial choice of $R_{0}.$ The nonexistence of such $R_{0}$
proves that\ $\lim\sup_{i\rightarrow\infty}AIR(K_{i},\mu_{i})\leq
AIR(K_{0},\mu_{0}).$
\end{proof}

\subsection{Proofs of the Theorems}

The proof of \textbf{Theorem 1} is provided by Propositions 4, 5, 7, 9, 12,
and Lemma 6. The proof of \textbf{Theorem 2} is provided by Propositions 6, 10
and 11. The proof of \textbf{Theorem 4} is provided by Propositions 8, 9 and 10.

\begin{proof}
\textbf{Theorem 3}

Assume that $R=TIR(K,\mu)<UR(K,\mu)$. Recall the proof of Proposition 4(i)
that (i) $\exp^{\mu}:D(R)\rightarrow O(K,\mu R)$ is a homeomorphism, and
$\forall R^{\prime}$ such that $R<R^{\prime}<UR(K,\mu),$ $\exp^{\mu}\mid
D(R^{\prime})$ is not injective. By Proposition 10(iii, iv), there exists
$p_{0}=\exp^{\mu}(\gamma(s),rN_{\gamma}(s))\in Sng(K,\mu)$ for some
parametrization $\gamma$ of $K,$ $\forall s\in\lbrack s_{1},s_{2}]$ for some
$s_{1}<s_{2}$, and $R\leq r<R^{\prime}$. By Proposition 11, $\gamma\left(
\lbrack s_{1},s_{2}]\right)  $ is a desired arc of a circle with compatible
$\mu$. Conversely, if such an arc of a circle exists, with compatible $\mu$,
then as it was discussed in Example 1, there exists a horizontal collapsing
curve $\exp^{\mu}(\gamma(s),r^{\prime}N_{\gamma}(s))=p_{0}^{\prime}$ with
$\forall s\in\lbrack s_{1}^{\prime},s_{2}^{\prime}]$ for some $s_{1}^{\prime
}<s_{2}^{\prime}$, which must satisfy $R\leq r^{\prime}.$ Therefore,
$TIR(K,\mu)$ is equal to the infimum of such $r.$ If the lengths of disjoint
collapsing curves converges to zero and their $\mu$-height decreases to $R$,
then it is possible that the infimum may not be attainable. If there are no
such circles, then $\exp^{\mu}:D(UR)\rightarrow O(K,\mu UR)$ is injective, and
hence it is a homeomorphism by repeating the proof of Proposition 4(i).
\end{proof}

\section{References}

[BS]\qquad G. Buck and J. Simon, \textit{Energy and lengths of knots,
}Lectures at Knots 96, 219-234.

[CKS]\qquad J. Cantarella, R. B. Kusner, and J. M. Sullivan, \textit{On the
minimum ropelength of knots and links, }Inventiones Mathematicae 150 (2002)
no. 2, p. 257-286.

[CE]\qquad J. Cheeger and D. G. Ebin, \textit{Comparison theorems in
Riemannian geometry, Vol 9, }North-Holland, Amsterdam, 1975.

[Di]\qquad Y. Diao, \textit{The lower bounds of the lengths of thick knots,
}Journal of Knot Theory and Its Ramifications, Vol. 12, No. 1 (2003) 1-16.

[DC]\qquad M. P. DoCarmo, \textit{Riemannian Geometry, }Birkhauser, Cambridge,
Massachusetts, 1992.\textit{\ }

[D1]\qquad O. C. Durumeric, \textit{Thickness formula and }$C^{1}%
-$\textit{compactness of }$C^{1,1}$\textit{\ Riemannian submanifolds,
}preprint, http://arxiv.org/abs/math.DG/0204050

[D2]\qquad O. C. Durumeric, \textit{Local structure of the ideal shapes of
knots, }Topology and its Applications, Volume \textbf{154}, Issue \textbf{17},
(15 September 2007), 3070-3089.

[D3]\qquad O. C. Durumeric, \textit{Local structure of the ideal shapes of
knots, II, Constant curvature case, preprint, Feb. 2006, }arXiv:0706.1037v1 [math.GT]

[GL]\qquad O. Gonzales and R. de La Llave, \textit{Existence of ideal knots,}
J. Knot Theory Ramifications, 12 (2003) 123-133.

[GM]\qquad O. Gonzales and H. Maddocks, \textit{Global curvature, thickness
and the ideal shapes of knots}, Proceedings of National Academy of Sciences,
\textbf{96 }(1999) 4769-4773.

[G]\qquad V. Guillemin and A. Pollack, \textit{Differential Topology,
}Prentice-Hall, Englewood Cliffs New Jersey, 1974

[Ka]\qquad V. Katrich, J. Bendar, D. Michoud, R.G. Scharein, J. Dubochet and
A. Stasiak, \textit{Geometry and physics of knots}, Nature, \textbf{384}
(1996) 142-145.

[LSDR]\qquad A. Litherland, J Simon, O. Durumeric and E. Rawdon,
\textit{Thickness of knots}, Topology and its Applications, \textbf{91}(1999) 233-244.

[M]\qquad J. Milnor, \textit{Topology from the Differentiable Viewpoint},
\textit{Princeton University Press}; Revised edition (November 24, 1997)

[N]\qquad\ A. Nabutovsky, \textit{Non-recursive functions, knots
\textquotedblleft with thick ropes\textquotedblright\ and self-clenching
\textquotedblleft thick\textquotedblright\ hyperspheres, }Communications on
Pure and Applied Mathematics\textit{, }\textbf{48} (1995) 381-428.

[T]\qquad J. A. Thorpe, \textit{Elementary Topics in Differential Geometry,
Springer Verlag, }New York, Heidelberg, Berlin, 1979\textit{.}

\end{document}